\newtheoremstyle{exercise} 
  {3pt} 
  {3pt} 
  {\small\rmfamily} 
  {
} 
  {\rmfamily\scshape} 
  {.} 
  {.5em} 
  {} 
\newtheoremstyle{newplain}
  {5pt}
  {5pt}
  {\itshape}
  {}
  {\rmfamily\scshape}
  {. ---}
  {.5em}
  {}
\newtheoremstyle{newremark}
  {5pt}
  {5pt}
  {\rmfamily}
  {}
  {\rmfamily\scshape}
  {. ---}
  {.5em}
  {}
\theoremstyle{newplain}
\newtheorem*{Theorem*}{Theorem} 
\theoremstyle{newplain}
\newtheorem{Theorem}{Theorem}
\newtheorem{Lemma}[Theorem]{Lemma}
\newtheorem{Corollary}[Theorem]{Corollary}
\newtheorem{Proposition}[Theorem]{Proposition}
\newtheorem{Conjecture}[Theorem]{Conjecture}
\newtheorem{Definition}[Theorem]{Definition}
\theoremstyle{newremark}
\newtheorem{Empty}[Theorem]{}
\newtheorem{Remark}[Theorem]{Remark}
\newtheorem{Claim}[Theorem]{Claim}
\theoremstyle{exercise}
\numberwithin{Theorem}{section}
\numberwithin{Exercise}{section}
\newcommand{\N}{\mathbb{N}} 
\newcommand{\R}{\mathbb{R}} 
\newcommand{\Rm}{\R^m}
\newcommand{\Rn}{\R^n}
\newcommand{\ind}{\mathbbm{1}} 
\newcommand{\calA}{\mathscr{A}}
\newcommand{\calB}{\mathscr{B}}
\newcommand{\calC}{\mathscr{C}}
\newcommand{\calE}{\mathscr{E}}
\newcommand{\calF}{\mathscr{F}}
\newcommand{\calG}{\mathscr{G}}
\newcommand{\calH}{\mathscr{H}}
\newcommand{\calI}{\mathscr{I}}
\newcommand{\calK}{\mathscr{K}}
\newcommand{\calL}{\mathscr{L}}
\newcommand{\calM}{\mathscr{M}}
\newcommand{\calN}{\mathscr{N}}
\newcommand{\calP}{\mathscr{P}}
\newcommand{\calU}{\mathscr{U}}
\newcommand{\calZ}{\mathscr{Z}}
\newcommand{\frA}{\frak A}
\newcommand{\frB}{\frak B}
\newcommand{\frE}{\frak E}
\newcommand{\sfForget}{\mathsf{Forget}}
\newcommand{\sfLOC}{\mathsf{LOC}}
\newcommand{\sfMSN}{\mathsf{MSN}}
\newcommand{\sfLOCsp}{\mathsf{LOC_{sp}}}
\newcommand{\sfLLDsp}{\mathsf{LLD_{sp}}}
\newcommand{\balpha}{\boldsymbol{\alpha}}
\newcommand{\biota}{\boldsymbol{\iota}}
\newcommand{\bB}{\mathbf{B}}
\newcommand{\bE}{\mathbf{E}}
\newcommand{\bL}{\mathbf{L}}
\newcommand{\bM}{\mathbf{M}}
\newcommand{\bZ}{\mathbf{Z}}
\newcommand{\bg}{\mathbf{g}}
\newcommand{\bh}{\mathbf{h}}
\newcommand{\bk}{\mathbf{k}}
\newcommand{\bp}{\mathbf{p}}
\newcommand{\bq}{\mathbf{q}}
\newcommand{\br}{\mathbf{r}}
\newcommand{\bs}{\mathbf{s}}
\DeclareMathOperator{\rmcard}{\mathrm{card}} 
\DeclareMathOperator{\rmesssup}{\mathrm{ess\,sup}} 
\DeclareMathOperator{\rmHom}{\mathrm{Hom}} 
\DeclareMathOperator{\rmid}{\mathrm{id}} 
\DeclareMathOperator{\rmloc}{\mathrm{loc}}
\newcommand{\rmpu}{\mathrm{pu}}
\DeclareMathOperator{\rmTan}{\mathrm{Tan}} 
\newcommand{\rmccc}{\mathrm{ccc}}
\newcommand{\hel} {
\hskip2.5pt{\vrule height7pt width.5pt depth0pt}
\hskip-.2pt\vbox{\hrule height.5pt width7pt depth0pt}
\, }
\newcommand{\shel} {
\hskip2.5pt{\vrule height5pt width.4pt depth0pt}
\hskip-.2pt\vbox{\hrule height.4pt width5pt depth0pt}
\, }
\def\XXint#1#2#3{{%
\setbox0=\hbox{$#1{#2#3}{\int}$}
\vcenter{\hbox{$#2#3$}}\kern-.5\wd0}}
\newcommand{\vphi}{\varphi}
\newcommand{\la}{\langle}
\newcommand{\ra}{\rangle}
\renewcommand{\em}{\bf}
\renewcommand{\leq}{\leqslant}
\renewcommand{\geq}{\geqslant}
\renewcommand{\subset}{\subseteq}
\renewcommand{\supset}{\supseteq}
\newlength{\drop}
\newcommand{\defeq}{\mathrel{\mathop:}=}
\newcommand{\sfMSNsp}{\mathsf{MSN_{sp}}}
\begin{document}



\title[Localizable locally determined measurable spaces with
  negligibles]{Localizable locally determined\\ measurable spaces with
  negligibles}

\def\curraddrname{{\itshape On leave of absence from}}

\author[Ph. Bouafia]{Philippe Bouafia}

\address{F\'ed\'eration de Math\'ematiques FR3487 \\
  CentraleSup\'elec \\
  3 rue Joliot Curie \\
  91190 Gif-sur-Yvette
}

\email{philippe.bouafia@centralesupelec.fr}

\author[Th. De Pauw]{Thierry De Pauw}

\address{School of Mathematical Sciences\\
Shanghai Key Laboratory of PMMP\\ 
East China Normal University\\
500 Dongchuang Road\\
Shanghai 200062\\
P.R. of China\\
and NYU-ECNU Institute of Mathematical Sciences at NYU Shanghai\\
3663 Zhongshan Road North\\
Shanghai 200062\\
China}
\curraddr{Universit\'e Paris Diderot\\ 
Sorbonne Universit\'e\\
CNRS\\ 
Institut de Math\'ematiques de Jussieu -- Paris Rive Gauche, IMJ-PRG\\
F-75013, Paris\\
France}
\email{thdepauw@math.ecnu.edu.cn,thierry.de-pauw@imj-prg.fr}

\keywords{Measurable space with negligibles,Radon-Nikod\'ym Theorem,strictly localizable measure space,integral geometric measure,purely unrectifiable}

\subjclass[2020]{Primary 28A15; Secondary 28A75,28A05}

\thanks{The first author was partially supported by the Science and Technology Commission of Shanghai (No. 18dz2271000).}



\begin{abstract}
  We study measurable spaces equipped with a
  $\sigma$-ideal of negligible sets. We find conditions under which
  they admit a localizable locally determined version -- a kind of fiber
  space that describes locally their directions -- defined by a universal property in an appropriate category that we introduce. These methods allow to promote each measure space $(X, \calA,
  \mu)$ to a strictly localizable version $(\hat{X}, \hat{\calA},
  \hat{\mu})$, so that the dual of $\bL_1(X, \calA, \mu)$ is
  $\bL_\infty(\hat{X}, \hat{\calA}, \hat{\mu})$. Corresponding to this
  duality is a generalized Radon-Nikod\'ym theorem. We also provide a
  characterization of the strictly localizable version in special
  cases that include integral geometric measures, when the negligibles are the purely unrectifiable sets in a given dimension.
\end{abstract}

\maketitle

\tableofcontents


\section{Foreword}

The Radon-Nikod\'ym Theorem does not hold for every measure space
$(X,\calA,\mu)$.
One way to phrase this precisely is to consider the canonical
embedding
\begin{equation*}
  \Upsilon : \bL_\infty(X,\calA,\mu) \to \bL_1(X,\calA,\mu)^*.
\end{equation*}
The following hold.
\begin{enumerate}
\item[(A)] $\Upsilon$ is injective (this corresponds to the uniqueness
  almost everywhere of Radon-Nikod\'ym derivatives) if and only if
  $(X,\calA,\mu)$ is semi-finite.
\item[(B)] $\Upsilon$ is surjective (this corresponds to the existence
  of Radon-Nikod\'ym derivatives) if and only if the Boolean algebra
  $\calA / \calN_{\mu,\rmloc}$ is Dedekind complete (i.e. order
  complete as a lattice).
\end{enumerate}
While (A) is classical, see e.g. \cite[243G(a)]{FREMLIN.II}, (B) is
recent and due to the second author, see \cite[4.6]{DEP.19.b}.
Let us recall the relevant definitions.
Given a measure space $(X,\calA,\mu)$, we abbreviate $\calA^f \defeq
\calA \cap \{ A : \mu(A) < \infty \}$ and $\calN_\mu \defeq \calA \cap
\{ N : \mu(N) = 0 \}$.
We say that $(X,\calA,\mu)$ is {\em semi-finite} if every $A \in
\calA$ of infinite measure contains some $F \in \calA^f \setminus
\calN_\mu$.
Equivalently, $\mu(A) = \sup \{ \mu(F) : A \supset F \in \calA^f \}$.
We further define the $\sigma$-ideal of {\em locally $\mu$-null} sets
as follows: $\calN_{\mu,\rmloc} \defeq \calA \cap \{ A : A \cap F \in
\calN_\mu \text{ for all } F \in \calA^f \}$.
It is easy to see \cite[4.4]{DEP.19.b} that $(X,\calA,\mu)$ is
semi-finite if and only if $\calN_{\mu,\rmloc}=\calN_\mu$.
Thus we obtain the following classical criterion, \cite[243G(b)]{FREMLIN.II}.
\begin{enumerate}
\item[(C)] $\Upsilon$ is an isometric isomorphism if and only if
  $(X,\calA,\mu)$ is semi-finite and the Boolean algebra $\calA /
  \calN_\mu$ is Dedekind complete.
\end{enumerate}
Though semi-finiteness is a natural property, Caratheodory's method
does not always provide it.
For instance, the measure spaces $(\R^2,\calA_{\calH^1},\calH^1)$ and
$(\R^2,\calB(\R^2),\calI^1_\infty)$ are not semi-finite -- see
\cite[439H]{FREMLIN.IV} and \cite[3.3.20]{GMT}.
Here, $\calH^1$ is the 1-dimensional Hausdorff measure in the
Euclidean plane \cite[2.10.2]{GMT}, $\calA_{\calH^1}$ is the
$\sigma$-algebra consisting of $\calH^1$-measurable sets in
Caratheodory's sense, $\calI^1_\infty$ is a 1-dimensional integral
geometric measure \cite[2.10.5(1)]{GMT}, and $\calB(\R^2)$ is the
$\sigma$-algebra whose members are the Borel subsets of $\R^2$.
Both $\calH^1(\Gamma)$ and $\calI^1_\infty(\Gamma)$ coincide with the
usual Euclidean length of $\Gamma$ when this is a Lipschitz curve.
\par It is natural to want to associate, with an arbitrary
$(X,\calA,\mu)$, an improved version of itself -- in a universal way
-- ideally one for which the Radon-Nikod\'ym Theorem holds.
This is one of our several achievements in this paper.
It is not difficult to modify slightly the measure $\mu$, keeping the
underlying measurable space $(X,\calA)$ untouched, in order to make it
semi-finite.
Specifically, letting $\mu_{\mathrm{sf}}(A) = \sup \{ \mu(A \cap F) :
F \in \calA^f \}$, for $A \in \calA$, one checks that
$(X,\calA,\mu_{\mathrm{sf}})$ is semi-finite and that
$\calN_{\mu_{\mathrm{sf}}} = \calN_{\mu,\rmloc}$.
However, it appears to be a more delicate task to modify
$(X,\calA,\mu)$ in a canonical way in order for $\Upsilon$ to become
surjective.
\par An idea for testing whether $\Upsilon$ is surjective is as
follows.
Given $\alpha \in \bL_1(X,\calA,\mu)^*$ we apply the Radon-Nikod\'ym
Theorem ``locally'', as it is valid on each finite measure subspace
$(F,\calA_F,\mu_F)$, $F \in \calA^f$, i.e. we represent by integration
the functional $\alpha \circ \iota_F \in \bL_1(F,\calA_F,\mu_F)^*$,
where $\iota_F \colon \bL_1(F, \calA_F, \mu_F) \to \bL_1(X, \calA,
\mu)$ is the obvious map.
This produces a family of Radon-Nikod\'ym derivatives $\la f_F \ra_{F
  \in \calA^f}$.
By the almost everywhere uniqueness of Radon-Nikod\'ym derivatives in
finite measure spaces, this is a {\em compatible family} in the sense
that $F \cap F' \cap \{ f_F \neq f_{F'} \} \in \calN_\mu$, for every
$F,F' \in \calA^f$.
In order to obtain a globally defined Radon-Nikod\'ym derivative, one
ought to be able to ``glue'' together the functions of this family.
A {\em gluing} of $\la f_F \ra_{F \in \calA^f}$ is, by definition, an
$\calA$-measurable function $f \colon X \to \R$ such that $F \cap \{ f
\neq f_F \} \in \calN_\mu$ for every $F \in \calA^f$.
\par The question whether such a gluing exists takes us away from the
realm of measure spaces, as it rather pertains to {\em measurable
  spaces with negligibles}, abbreviated MSNs, i.e. triples
$(X,\calA,\calN)$ where $(X,\calA)$ is a measurable space and $\calN
\subset \calA$ is a $\sigma$-ideal.
The notion of compatible family $\la f_E \ra_{E \in \calE}$ of
$\calA_E$-measurable functions $E \to \R$ subordinated to an arbitrary
collection $\calE \subset \calA$ readily makes sense in this more
general setting, as does the notion of gluing of such a compatible
family.
That each compatible family of partially defined measurable functions
admits a gluing is equivalent to the Boolean algebra $\calA / \calN$
being Dedekind complete.
In this case we say that $(X,\calA,\calN)$ is {\em localizable}.
Equivalently, $(X,\calA,\calN)$ is localizable if and only if every
collection $\calE \subset \calA$ admits an $\calN$-essential supremum
(see \ref{essentialSup} for a definition), which corresponds to taking
an actual supremum in the Boolean algebra $\calA / \calN$.
For a proof of these classical equivalences, see
e.g. \cite[3.13]{DEP.19.b}.
It will be convenient to call {\em $\calN$-generating} a collection
$\calE \subset \calA$ that admits $X$ as an $\calN$-essential
supremum.
For instance, one easily checks that if $(X,\calA,\mu)$ is a
semi-finite measure space, then $\calA^f$ is $\calN_\mu$-generating.
\par Let $(X,\calA,\calN)$ be a localizable MSN, $\calE \subset \calA$
be $\calN$-generating, and $\la f_E \ra_{E \in \calE}$ be a compatible
family of partially defined measurable functions.
The problem of gluing this compatible family in our setting is
reminiscent of the fact that, for a topological space $X$, the functor
of continuous functions on open sets is a sheaf.
However, unlike in the case of continuous functions, in order to
define $f$ globally, we ought to make choices on the domains $E \cap
E'$, for $E,E' \in \calE$, because $f_E$ and $f_{E'}$ do not coincide
everywhere there, but merely {\it almost everywhere}.
In an attempt to avoid the issue, one can replace $\calE$ with an
almost disjointed refinement of itself, say $\calF$.
By this we mean that each member of $\calF$ is contained in a member
of $\calE$, that $\calF$ is $\calN$-generating, and that $F \cap F'
\in \calN$ whenever $F,F' \in \calF$ are distinct.
The existence of $\calF$ follows from Zorn's Lemma, see \ref{zorn}.
Still, $F \cap F'$ may not be empty whenever $F,F' \in \calF$ are
distinct and we are again in a position to make choices.
A step further along the road would be to produce from $\calF$ a
disjointed family $\calG$ whose union is conegligible.
From classical measure theory, we learn of two situations when this is
doable.
First, in the presence of a lower density of $(X,\calA,\calN)$ (see
\ref{ldens} for a definition), and second when $\rmcard \calF \leq
\mathfrak{c}$ (see the proof of \ref{existslld}).
In those cases, a gluing exists.
In fact, in the context of measure spaces, the existence of a lower
density yields a somewhat stronger structure than localizability.
In order to state this, we need one more definition.
We say that a measure space $(X,\calA,\mu)$ is {\em locally
  determined} if it is semi-finite and if the following holds:
\begin{equation*}
  \forall A \subset X : \big[ \forall F \in \calA^f : A \cap F \in
    \calA \big] \Rightarrow A \in \calA.
\end{equation*}
A complete locally determined measure space $(X,\calA,\mu)$ admits a
lower density if and only if it is strictly localizable, which means,
by definition, that there exists a partition $\calG \subset \calA$ of
$X$ such that $\calA = \calP(X) \cap \{ A : A \cap G \in \calA \text{
  for all } G \in \calG \}$ and $\mu(A) = \sum_{G \in \calG} \mu(A
\cap G)$, for $A \in \calA$.
See \cite[341M]{FREMLIN.III} for a proof.
The existence of a lower density for a strictly localizable measure
space follows from the case of finite measure spaces by gluing, and
the case of finite measure spaces is a consequence of a martingale
convergence theorem.
Even though the notion of a lower density makes sense for MSNs, their
existence does not hold for even the most natural generalization of
finite measure spaces, namely ccc MSNs (satisfying the countable chain
condition, \ref{ccc.def} and \ref{finite=>loc}), see \cite{SHE.98}.
\par Both notions of localizability (of an MSN) and local
determination (of a measure space) seem to express in different ways
the fact that ``there are enough measurable sets''.
For instance, one easily checks that an MSN $(X,\calA,\{\emptyset\})$,
such that $\calA$ contains all singletons, is localizable if and only
if $\calA = \calP(X)$.
Thus, given an arbitrary MSN $(X,\calA,\calN)$, one may naively
attempt to ``add measurable sets'' in a smart way in order to obtain a
localizable MSN $(X,\hat{\calA},\hat{\calN})$, just as many as needed,
and that would be a ``localizable version'' of $(X,\calA,\calN)$.
Unfortunately, within ZFC this cannot always be done while ``sticking
in the base space $X$'', as shown by the following, quoted from
\cite{DEP.19.b}.
\begin{Theorem*}
  Assume that:
  \begin{enumerate}
  \item[(1)] $C \subset [0,1]$ is some Cantor set of Hausdorff dimension 0;
  \item[(2)] $X = C \times [0,1]$;
  \item[(3)] $\calA$ is a $\sigma$-algebra such that $\calB(X) \subset
    \calA \subset \calP(X)$;
  \item[(4)] $\calN = \calN_{\calH^1}$ or $\calN = \calN_{\rmpu}$.
  \end{enumerate}
  Then $(X,\calA,\calN)$ is consistently not localizable.
\end{Theorem*}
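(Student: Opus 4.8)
The plan is to use the criterion recalled in the foreword: $(X,\calA,\calN)$ is localizable exactly when every subfamily of $\calA$ admits an $\calN$-essential supremum \cite{DEP.19.b}. I would exhibit, under a hypothesis consistent with ZFC, a subfamily of $\calB(X)$ with no such supremum. Keeping the family Borel is what makes the argument uniform in $\calA$: it then lies in every admissible $\calA$, and since the candidate suprema only shrink as $\calA$ shrinks, it suffices to rule out a supremum already inside $\calP(X)$, whence none exists for any $\calA$ with $\calB(X)\subset\calA\subset\calP(X)$.

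The first step is to describe the negligibles geometrically, using that $C$ is totally disconnected and $\calH^1$-null (being of Hausdorff dimension $0$). For $\calN=\calN_{\rmpu}$ one checks that $A\subset X$ is purely $1$-unrectifiable if and only if it meets every vertical fibre $\{c\}\times[0,1]$ in an $\calH^1$-null set: an arbitrary rectifiable curve splits into vertical arcs, each lying in a single fibre, together with a remainder whose first coordinate ranges inside the $\calH^1$-null set $C$ and which therefore meets $X$ in measure zero \cite{GMT}. For $\calN=\calN_{\calH^1}$ the negligibles are the $\calH^1$-null sets, and the input I need is that a graph over a countable subset of $C$ is $\calH^1$-null, whereas there is a Borel $g\colon C\to[0,1]$ whose graph $\Gamma_g$ has positive $\calH^1$ measure --- take a Borel isomorphism of $C$ onto a positive-measure subset of $[0,1]$, so that $\Gamma_g$ projects $1$-Lipschitzly onto that set and hence has positive length.

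These two descriptions lead to two related obstructions. For $\calN=\calN_{\rmpu}$, the fibrewise characterization identifies $\calP(X)/\calN_{\rmpu}$ with the power $\big(\calP([0,1])/\mathrm{Null}\big)^{C}$, where $\mathrm{Null}$ is the ideal of Lebesgue-null sets; a product of Boolean algebras is Dedekind complete precisely when each factor is, so localizability collapses to the Dedekind completeness of the single quotient $\calP([0,1])/\mathrm{Null}$. For $\calN=\calN_{\calH^1}$ I would take the family $\calE=\{\Gamma_g : g\colon C\to[0,1]\ \text{Borel}\}$. Each $S_D=(C\setminus D)\times[0,1]$ with $D\subset C$ countable is an essential upper bound, since $\Gamma_g\setminus S_D$ is a graph over $D$ and hence $\calH^1$-null, while $\calH^1(X\setminus S_D)>0$; as $D$ grows these bounds decrease strictly, and any set dominated modulo $\calN$ by all of them must have every vertical section $\calH^1$-null. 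An essential supremum would therefore be a single $S$ with all vertical sections null that nonetheless $\calH^1$-essentially contains every Borel graph, i.e.\ $\calH^1(\Gamma_g\setminus S)=0$ for all $g$; I would show that the existence of such a \emph{paradoxical} set is equivalent to localizability here.

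The main obstacle is the final, set-theoretic step. For $\calN_{\calH^1}$, under CH the paradoxical set $S$ can be built by a Sierpi\'nski-type transfinite recursion that keeps each vertical section small, so there the conclusion is one of genuine \emph{consistency}: the task is to force the continuum to be large enough that the null subsets of $[0,1]$ cannot be chosen coherently along all $\mathfrak{c}$ fibres at once. For $\calN_{\rmpu}$ the question is the Dedekind completeness of $\calP([0,1])/\mathrm{Null}$, which is sensitive to the saturation of the null ideal and fails in suitable models. Either way this is a cardinal-invariant/forcing argument, in the same spirit as the consistent failure of measurable liftings on ccc spaces invoked earlier \cite{SHE.98}; carrying it out, and checking that the witnessing family can be realized inside $\calB(X)$ so as to defeat every admissible $\calA$ at once, is where the real difficulty resides.
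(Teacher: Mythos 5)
Your proposal has a fatal error in the $\calN_{\rmpu}$ branch and leaves out the essential step in the $\calN_{\calH^1}$ branch. (Note also that the paper itself does not prove this theorem: it is quoted from \cite{DEP.19.b}, so what follows measures your argument against what it would actually need.) The fibrewise characterization on which your $\calN_{\rmpu}$ case rests is false: a set $A \subset X$ meeting every vertical fibre $\{c\}\times[0,1]$ in an $\calH^1$-null set need \emph{not} be purely $1$-unrectifiable, and your justification --- that a rectifiable curve splits into vertical arcs plus a remainder meeting $X$ in measure zero --- fails, because a Lipschitz or $C^1$ curve can have vertical tangent on a large set without containing a single vertical segment. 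Concretely, let $\mu$ be a diffuse probability measure on $C$, set $\psi(x) = x + \mu(C \cap [\min C, x])$ and $K = \psi(C)$; then $\calL^1(K)=1$ and $g = \psi^{-1} \colon K \to C$ is increasing and $1$-Lipschitz (since $\psi(b)-\psi(a) \geq b-a$), so it extends, affinely across the gaps of $K$, to a strictly increasing Lipschitz function. After an affine change of the $y$-variable, $\Gamma = \{(g(y),y)\}$ is a rectifiable curve and the set $A = \{(g(y),y) : y \in K,\ g(y)\in C\} \subset \Gamma \cap X$ has $\calH^1(A) \geq \calL^1(K)/2 > 0$, while $A$ meets each vertical fibre in at most one point because $g$ is injective. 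Thus $A$ is fibrewise negligible yet as far from purely unrectifiable as possible. Consequently $\calP(X)/\calN_{\rmpu}$ is \emph{not} the power $\bigl(\calP([0,1])/\calN_{\calL^1}\bigr)^{C}$, and the reduction to Dedekind completeness of a single factor collapses. This is not a repairable technicality: the fact that a point of $X$ sees many inequivalent curve directions is precisely the phenomenon the germ-space construction of Sections \ref{sec:pu}--\ref{sec:appl} (see \ref{unrectifiablenull}) is built to handle, and any correct treatment of $\calN_{\rmpu}$ must work with the full family of Lipschitz graphs $x = g(y)$, not just the vertical lines.

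In the $\calN_{\calH^1}$ branch, the reduction is sound as far as it goes: the Borel graphs and the competitors $S_D = (C\setminus D)\times[0,1]$ are Borel, hence lie in every admissible $\calA$, so an essential supremum in any such $\calA$ would have to be a ``paradoxical'' set. (Be aware, though, that this Borel-ness is what makes the argument uniform in $\calA$; your preliminary claim that ``candidate suprema only shrink as $\calA$ shrinks, so it suffices to rule out a supremum inside $\calP(X)$'' is backwards, since enlarging $\calA$ adds competitors as well as candidates, and non-existence relative to $\calP(X)$ implies nothing for smaller $\calA$.) The genuine gap is the final step, which is not a detail but the entire content of the theorem: producing a model of ZFC in which no paradoxical set exists. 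You note yourself that under CH such sets exist; the same transfinite recursion works under MA, where the null ideal is $<\mathfrak{c}$-additive, so the exceptional sets of size $<\mathfrak{c}$ accumulated at each stage remain null --- hence the hypothesis you need fails under both CH and MA, and ``forcing the continuum to be large'' cannot by itself suffice. No candidate axiom, no model, and no forcing argument is offered; the proposal explicitly defers exactly the part where the theorem lives. As it stands, neither branch establishes the statement; both the consistency input and the correct handling of $\calN_{\rmpu}$ are what \cite{DEP.19.b} supplies.
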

Here, $\calN_{\rmpu}$ consists of those subsets $S$ of $X$ that are {\em
  purely 1-unrectifiable}, i.e. $\calH^1(S \cap \Gamma) = 0$ for every
Lipschitz (or, for that matter, $C^1$) curve $\Gamma \subset \R^2$.
Thus, one may need to also add points to the base space $X$ and, in
particular cases such as the one above, we give a very specific way of
doing so, in the last section of this paper.
In the case of a general measure space $(X,\calA,\mu)$, we can get a
feeling of what needs to be done, when trying to define the gluing of
a compatible family $\la f_F \ra_{F \in \calA^f}$.
Indeed, each $x \in X$ may belong to several $F \in \calA^f$ and this
calls for considering an appropriate quotient of the fiber bundle
$\{(x,F) : x \in F \in \calA^f\}$.  \par One of the tasks that we
assign ourselves in this paper is to define a general notion of
``localization'' of an MSN and to prove existence results in some
cases.
Since a definition of ``localization'' will involve a universal
property, it is critical to determine which category is appropriate
for our purposes.
As this offers unexpected surprises, we describe the several steps in
some detail.
The objects of our first category $\sfMSN$ are the {\em saturated} MSN
$(X,\calA,\calN)$, by what we mean that for every $N,N' \subset X$, if
$N \subset N'$ and $N' \in \calN$, then $N \in \calN$.
This is in analogy with the notion of a complete measure space. 
In order to define the morphisms between two objects $(X,\calA,\calN)$
and $(Y,\calB,\calM)$, we say that a map $f \colon X \to Y$ is
$[(\calA,\calN),(\calB,\calM)]$-measurable if $f^{-1}(B) \in \calA$
for every $B \in \calB$ and $f^{-1}(M) \in \calN$ for every $M \in
\calM$.
For instance, if $X$ is a Polish space and $\mu$ is a diffuse
probability measure on $X$, there exists \cite[3.4.23]{SRIVASTAVA} a
Borel isomorphism $f \colon X \to [0,1]$ such that $f_\# \mu = \calL^1$,
where $\calL^1$ is the Lebesgue measure, thus $f$ is
$[(\calB(X),\calN_\mu),(\calB([0,1]),\calN_{\calL^1})]$-measurable.
We define an equivalence relation for such measurable maps $f,f' \colon X
\to Y$ by saying that $f \sim f'$ if and only if $\{ f \neq f' \} \in
\calN$.
The morphisms in the category $\sfMSN$ between the objects
$(X,\calA,\calN)$ and $(Y,\calB,\calM)$ are the equivalence classes of
$[(\calA,\calN),(\calB,\calM)]$-measurable maps.
At this stage, we need to suppose that $(X, \calA, \calN)$ is
saturated for the relation of equality almost everywhere to be
transitive~\ref{MSNMorphism}. With this assumption, the composition of
measurable maps is also compatible with $\sim$, see~\ref{lemma28}.
\par We let $\sfLOC$ be the full subcategory of $\sfMSN$ whose objects
are the localizable MSNs.
We may be tempted to define the localization of a saturated MSN
$(X,\calA,\calN)$ as its coreflection (if it exists) along the
forgetful functor $\sfForget \colon \sfLOC \to \sfMSN$, and the question of
existence in general becomes that of the existence of a right adjoint
to $\sfForget$.
Specifically, we may want to say that a pair
$[(\hat{X},\hat{\calA},\hat{\calN}),\bp]$, where
$(\hat{X},\hat{\calA},\hat{\calN})$ is saturated localizable MSN and
$\bp$ is a morphism $\hat{X} \to X$, is a localization of
$(X,\calA,\calN)$ whenever the following universal property holds.
For every pair $[(Y,\calB,\calM),\bq]$, where $(Y,\calB,\calM)$ is a
saturated localizable MSN and $\bq$ is a morphism $Y \to X$, there
exists a unique morphism $\br \colon Y \to \hat{X}$ such that $\bq = \bp
\circ \br$.
\begin{equation}
\label{diagram}
  \begin{tikzcd}
    (Y, \calB, \calM) \arrow[rd, swap, "\bq"] \arrow[rr, dotted,
      "\exists!\br"] & & (\hat{X}, \hat{\calA}, \hat{\calN})
    \arrow[ld, "\bp"] \\ & (X, \calA, \calN) &
  \end{tikzcd} \tag{$\bigtriangledown$}
\end{equation}
\par However, we now illustrate that the notion of morphism defined so
far is not yet the appropriate one that we are after.
We consider the MSN $(X,\calA,\{\emptyset\})$ where $X = \R$ and
$\calA$ is the $\sigma$-algebra of Lebesgue measurable subsets of
$\R$.
We recall that we want the localization of $(X,\calA,\{\emptyset\})$
to be $[(X,\calP(X),\{\emptyset\}),\bp]$ with $\bp$ induced by the
identity $\rmid_X$.
Assume if possible that this is the case.
In the diagram above we consider $(Y, \calB,\calM) =
(X,\calA,\calN_{\calL^1})$ and $\bq$ induced by the identity.
Note that this is, indeed, a localizable MSN since it is associated
with a $\sigma$-finite measure space (see \ref{finite=>loc} and
\ref{ccc=>loc}).
Thus, there would exist a morphism $\br$ in $\sfMSN$ such that $\bp
\circ \br = \bq$.
Picking $r \in \br$, this implies that $X \cap \{ x: r(x) \neq x \}$
is Lebesgue negligible.
The measurability of $r$ would then imply that $r^{-1}(S) \in \calA$
for every $S \in \calP(X)$, contradicting the existence of non
Lebesgue measurable subsets of $\R$.
\par The problem with the example above is that the objects
$(X,\calA,\calN_{\calL^1})$ and $(X,\calA,\{\emptyset\})$ should not
be compared, in other words that $\bq$ should not be a legitimate
morphism.
We say that a morphism $\mathbf{f} \colon (X,\calA,\calN) \to
(Y,\calB,\calM)$ of the category $\sfMSN$ is {\em supremum preserving}
if the following holds for (one, and therefore every) $f
\in\mathbf{f}$.
If $\calF \subset \calB$ admits an $\calM$-essential supremum $S \in
\calB$, then $f^{-1}(S)$ is an $\calN$-essential supremum of
$f^{-1}(\calF)$.
It is easy to see that adding this condition to the definition of
morphism rules out the $\bq$ considered in the preceding paragraph.
We define the category $\sfMSNsp$ to be that whose objects are the
saturated MSNs and whose morphisms are those morphisms of $\sfMSN$
that are supremum preserving.
We define similarly $\sfLOCsp$.
We now define the {\em localizable version} (if it exists) of a
saturated MSN with the similar universal property illustrated in
\eqref{diagram}, except for we now require all morphisms to be in
$\sfMSNsp$, i.e. supremum preserving.
In other words, it is a coreflection of an object of $\sfMSNsp$ along
$\sfForget \colon \sfLOCsp \to \sfMSNsp$.
Unfortunately, this is not quite yet the right setting.
Indeed, we show in \ref{counterexamplelv} that if $X$ is uncountable
and $\calC(X)$ is the countable-cocountable $\sigma$-algebra of $X$,
then $[(X,\calP(X),\{\emptyset\}),\biota]$ (with $\biota$ induced by
$\rmid_X$) is not the localizable version of
$(X,\calC(X),\{\emptyset\})$.
This prompts us to introduce a new category.
\par We say that an object $(X,\calA,\calN)$ of $\sfMSN$ is {\em
  locally determined} if for every $\calN$-generating collection
$\calE \subset \calA$ the following holds:
\begin{equation*}
  \forall A \subset X : \big[ \forall E \in \calE : A \cap E \in \calA
    \big] \Rightarrow A \in \calA.
\end{equation*}
In case $(X,\calA,\calN)$ is the MSN associated with some complete
semi-finite measure space $(X,\calA,\mu)$, then it is locally
determined (in the sense of MSNs) if and only if $(X,\calA,\mu)$ is
locally determined (in the sense of measure spaces) -- see
\ref{elemld}(F) -- even though the latter sounds stronger because we
test with any generating family $\calE$.
We say that an object of $\sfMSN$ is {\em lld} if it is both
localizable and locally determined, and we let $\sfLLDsp$ be the
corresponding full subcategory of $\sfLOCsp$.
We now define the {\em lld version} of an object of $\sfMSNsp$ to be
its coreflection (if it exists) along $\sfForget \colon \sfLLDsp \to
\sfMSNsp$, i.e. it satisfies the corresponding universal property
illustrated in \eqref{diagram} with $Y$ and $\hat{X}$ being lld, and
the morphisms being supremum preserving.
This definition is satisfactory in at least the simplest case,
\ref{atomic.lld} : If $(X,\calA,\{\emptyset\})$ is so that $\calA$
contains all singletons, then it admits
$[(X,\calP(X),\{\emptyset\}),\biota]$ as its lld version.
\par Our general question has now become whether $\sfForget : \sfLLDsp
\to \sfMSNsp$ admits a right adjoint.
Freyd's Adjoint Functor Theorem \cite[3.3.3]{BORCEUX.1} could prove
useful, however do not know whether it applies, mostly because we do
not know whether coequalizers exist in $\sfMSNsp$.
We gather in Table \ref{table} the information that we know about
limits and colimits in the three categories we introduced.

\begin{table}[h]
\label{table}
\begin{tabular}{|c|c|c|c|}
\hline
& $\sfMSNsp$ & $\sfLOCsp$ & $\sfLLDsp$  \\ \hline
\hline
equalizers &  exist if $\{f=g\}$ is meas. \ref{propsp}(C) & $\boldsymbol{?}$ & exist \ref{eq.lld} \\ \hline
products & (countable) exist \ref{prodMSN} & $\boldsymbol{?}$ & $\boldsymbol{?}$ \\ \hline
coequalizers & $\boldsymbol{?}$ & $\boldsymbol{?}$ see \ref{coeqlld} & $\boldsymbol{?}$ see \ref{coeqlld} \\ \hline
coproducts & exist \ref{propsp}(D) & exist \ref{coprodLoc} & exist \ref{coprodLoc} and \ref{elemld}(D)\\ \hline
\end{tabular}\hskip 1cm
\vskip .3cm
\caption{Limits and colimits in the three categories of MSNs.}
\end{table}
\par In view of proving some partial existence result for lld
versions, we introduce the intermediary notion of a {\em cccc}
saturated MSN, short for coproduct (in $\sfMSNsp$) of ccc saturated
MSNs.
It is easy to see that cccc MSNs are lld, \ref{coprodLoc} and \ref{ccc=>loc}.
The {\em cccc version} of an object of $\sfMSNsp$ is likewise defined
by its universal property in diagram \eqref{diagram}, using supremum
preserving morphisms.
Our main results are about {\em locally ccc} MSNs, i.e. those
saturated MSNs $(X,\calA,\calN)$ such that $\calE_{\rmccc} = \calA
\cap \{ Z : \text{ the subMSN } (Z,\calA_Z,\calN_Z) \text{ is ccc}\}$
is $\calN$-generating.
A complete semi-finite measure space $(X,\calA,\mu)$ is clearly
locally ccc, since $\calA^f$ is $\calN_\mu$-generating.
Similarly, one can define the more general {\em locally localizable}
objects in $\sfMSNsp$.
In \ref{4.14}, we give an example of an MSN which is not even locally
localizable.
\begin{Theorem*}
  Let $(X,\calA,\calN)$ be a saturated locally ccc MSN. The following
  hold.
  \begin{enumerate}
  \item[(1)] $(X,\calA,\calN)$ admits a cccc version,
    \ref{existsccccv}.
  \item[(2)] If furthermore $\calE_{\rmccc}$ contains an
    $\calN$-generating subcollection $\calE$ such that $\rmcard \calE
    \leq \mathfrak{c}$ and each $(Z,\calA_Z)$ is countably separated,
    for $Z \in \calE$, then $(X,\calA,\calN)$ admits an lld version
    which is also its cccc version.
  \end{enumerate}
\end{Theorem*}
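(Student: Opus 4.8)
The plan is to exhibit a single candidate $(\hat X,\hat\calA,\hat\calN)$, built as a coproduct of ccc summands, and to verify the coreflection universal property twice: against cccc test objects for (1), and, under the stronger hypotheses, against all lld test objects for (2). For the construction I would apply \ref{zorn} to the $\calN$-generating collection $\calE_{\rmccc}$ to extract an almost disjointed $\calN$-generating refinement $\calF \subset \calE_{\rmccc}$, each $F \in \calF$ carrying a saturated ccc subMSN, and set $(\hat X,\hat\calA,\hat\calN) \defeq \coprod_{F \in \calF}(F,\calA_F,\calN_F)$, the coproduct in $\sfMSNsp$ furnished by \ref{propsp}(D) and \ref{coprodLoc}. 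This object is cccc by definition and lld by \ref{coprodLoc} together with \ref{ccc=>loc}. The inclusions $\iota_F \colon F \hookrightarrow X$ induce a canonical morphism $\bp \colon \hat X \to X$ with $\bp \circ \sigma_F = \iota_F$ for the coproduct injections $\sigma_F \colon F \to \hat X$, and I would check $\bp$ is supremum preserving directly from the fact that $\calF$ is almost disjointed and $\calN$-generating.

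For the universal property in (1), let $(Y,\calB,\calM)=\coprod_j(Y_j,\calB_{Y_j},\calM_{Y_j})$ be cccc with supremum preserving $\bq \colon Y \to X$ and components $\bq_j \colon Y_j \to X$. By the coproduct universal property of $Y$ it suffices to build each $\br_j \colon Y_j \to \hat X$ with $\bp \circ \br_j = \bq_j$ and assemble. The decisive point is that $\{\bq_j^{-1}(F)\}_{F \in \calF}$ is an almost disjointed $\calM_{Y_j}$-generating family in the \emph{ccc} space $Y_j$, so only countably many of its members are non-negligible; a countable almost disjointed family is trivially disjointifiable, yielding genuinely disjoint $D_F \subset \bq_j^{-1}(F)$ with conegligible union. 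Setting $\br_j|_{D_F} \defeq \sigma_F \circ (\bq_j|_{D_F})$ defines $\br_j$ on a conegligible set, and measurability, supremum preservation, and $\bp \circ \br_j = \bq_j$ follow piecewise. For uniqueness I would use that any competitor $\br_j'$, being supremum preserving, pulls the disjoint summands of $\hat X$ back to a disjoint $\calM_{Y_j}$-generating family contained in $\{\bq_j^{-1}(F)\}$; this coincides with $\{D_F\}$ modulo $\calM_{Y_j}$, and injectivity of $\bp$ on each summand forces $\br_j'=\br_j$ almost everywhere.

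For (2), $\hat X$ is already lld, so only the universal property against a \emph{general} lld object $(Y,\calB,\calM)$ remains, and such $Y$ need no longer split as a coproduct of ccc pieces, so the counting argument above is unavailable. Instead I would disjointify on $X$ itself: using that $\calE$ is $\calN$-generating with $\rmcard\calE \leq \mathfrak{c}$ and that each $(Z,\calA_Z)$ is countably separated, I embed the summands into standard Borel spaces and run the transfinite recursion of length $\leq \mathfrak{c}$ from the proof of \ref{existslld} to produce a genuinely \emph{disjointed} $\calN$-generating family $\calG$ refining $\calF$, each $G \in \calG$ lying in some $F(G) \in \calF$. Pulling back, $\{\bq^{-1}(G)\}_{G \in \calG}$ is a disjoint $\calM$-generating family, and $\br|_{\bq^{-1}(G)} \defeq \sigma_{F(G)} \circ \bq$ is now unambiguous because the pieces are honestly disjoint. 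Local determination of $Y$ upgrades this piecewise definition to a global morphism $\br$, while $\bp \circ \br = \bq$, supremum preservation, and uniqueness go through exactly as in (1).

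I expect the genuine disjointification in (2) to be the principal obstacle. Converting the almost disjointed family into an honestly disjointed one with conegligible union is precisely the difficulty highlighted in the foreword: the partial overlaps $\bigcup_{\eta<\xi}(F_\xi \cap F_\eta)$ are unions of possibly uncountably many $\calN$-sets, which need be neither measurable nor negligible, and it is exactly the bound $\rmcard\calE \leq \mathfrak{c}$ together with countable separation that lets the recursion of \ref{existslld} keep each stage measurable and control the accumulated overlap. Without these hypotheses the almost-everywhere ambiguity cannot be eliminated inside ZFC, consistent with the non-localizability example quoted in the introduction.
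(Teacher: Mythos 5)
Your part (1) is essentially the paper's own argument: Zorn's Lemma (\ref{zorn}, \ref{lemma72}) produces the almost disjointed $\calN$-generating refinement, the coproduct of its members is the candidate, and the component-wise countable disjointification you perform inside each ccc summand of the test object is exactly the mechanism of Proposition \ref{Gluingcccc}, which the paper invokes in Theorem \ref{thm61} (with Lemma \ref{propGluing} and Proposition \ref{uniqueGluing} supplying measurability, supremum preservation, and uniqueness). So far, so good.

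Part (2), however, contains a genuine gap, and it is located exactly where you predicted the main difficulty would be. You propose to produce a genuinely disjointed $\calN$-generating family $\calG$ refining $\calF$ \emph{on $X$ itself}, citing ``the transfinite recursion of length $\leq\mathfrak{c}$ from the proof of \ref{existslld}''. There is no such recursion in that proof, and the step it is supposed to deliver cannot be carried out in general. In the proof of Theorem \ref{existslld}(A) the disjointification takes place inside the hypothesized \emph{lld version} $\hat{X}$: one chooses an injection $c \colon \calE \to \R$, glues the constant maps $f_Z \equiv c_Z$ (defined on $p^{-1}(Z)$) into a single measurable $f \colon \hat{X} \to \R$, and takes the partition $\la f^{-1}\{c_Z\}\ra_{Z \in \calE}$. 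That gluing is available only because $\hat{X}$ is \emph{localizable} -- which is precisely the property that $X$ lacks (otherwise there is nothing to prove); the foreword's remark that a disjointification is possible when $\rmcard\calF \leq \mathfrak{c}$ is made under the standing assumption that the ambient MSN is localizable. On $X$, the naive recursion $G_\xi \defeq F_\xi \setminus \bigcup_{\eta<\xi} F_\eta$ breaks down at stages of uncountable index: the accumulated union is an uncountable union of measurable sets and need not belong to $\calA$, and negligibility of the pairwise overlaps does not help because $\calN$ is only a $\sigma$-ideal. Countable separation of the individual summands $(Z,\calA_Z)$ is irrelevant to this obstruction: it is a property of each summand separately and says nothing about measurability of uncountable unions in $X$. (When the paper does need a disjointification on the base space, in Section \ref{sec:pu}, it has to \emph{assume} a compatible family of lower densities, cf.\ Proposition \ref{ldens=>cccc}, precisely because the cardinality bound alone does not provide one.) The paper's actual route for (2) avoids disjointification on $X$ entirely and uses the hypotheses on the \emph{target} side: $\rmcard\calE \leq \mathfrak{c}$ together with countable separation of the summands makes $\hat{X} = \coprod_{Z\in\calE}(Z,\calA_Z,\calN_Z)$ countably separated (Proposition \ref{prop68}), so the compatible family $\la q_Z \ra_{Z\in\calE}$ can be glued by Proposition \ref{gluelld}: embed $\hat{X}$ injectively into $\R$, glue the resulting real-valued family using the \emph{localizability} of the lld test object $Y$, then pull back and recover measurability using the \emph{local determination} of $Y$. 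That target-side use of the hypotheses is the idea missing from your outline.
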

\par By saying that a measurable space $(Z,\calA_Z)$ is countably
separated we mean that $\calA_Z$ contains a countable subcollection
that separates points in $Z$.
The cccc version $(\hat{X},\hat{\calA},\hat{\calN})$ is obtained as a
coproduct $\coprod_{Z \in \calE} (Z,\calA_Z,\calN_Z)$ where $\calE$ is
an $\calN$-generating almost disjointed refinement of
$\calE_{\rmccc}$, whose existence ensues from Zorn's Lemma.
In order to establish that this, in fact, is also the lld version
under the extra assumptions in (2), we need to build an appropriate
morphism $\br$ in diagram \eqref{diagram}, associated with an lld pair
$[(Y,\calB,\calM),\bq]$.
It is obtained as a gluing of $\la q_Z \ra_{Z \in \calE}$ where $q_Z
\colon q^{-1}(Z) \to \hat{X}$ is the obvious map.
Since $\calE$ is almost disjointed, $\la q_Z \ra_{Z \in \calE}$ is
compatible and, since $(Y,\calB,\calM)$ in diagram \eqref{diagram} is
localizable, the only obstruction to gluing is that $\hat{X}$ is not
$\R$.
Notwithstanding, $(\hat{X},\hat{\calA}) = \coprod_{Z \in \calE}
(Z,\calA_Z)$ is itself countably separated because $\rmcard \calE \leq
\mathfrak{c}$, \ref{prop68} so that the local determinacy of
$(Y,\calB,\calM)$ and the fact that $q^{-1}(\calE)$ is
$\calM$-generating (because $\calE$ is $\calN$-generating and $q$ is
supremum preserving) provides a gluing $r$, \ref{gluelld}.
\par We now explain how this applies to associating, in a canonical
way, a strictly localizable measure space with {\it any} measure space
$(X,\calA,\mu)$.
First, we recall that without changing the base space $X$ we can
render the measure space complete and semi-finite.
In that case, $\calA^f$ is $\calN_\mu$-generating and witnesses the
fact that the saturated MSN $(X,\calA,\calN_\mu)$ is locally ccc.
By the theorem above, it admits a cccc version
$[(\hat{X},\hat{\calA},\hat{\calN}),\bp]$.
\begin{Theorem*}
  Let $(X,\calA,\mu)$ be a complete semi-finite measure space and
  $[(\hat{X},\hat{\calA},\hat{\calN}),\bp]$ its corresponding cccc
  version. Let $p \in \bp$. There exists a unique (and independent of
  the choice of $p$) measure $\hat{\mu}$ defined on $\hat{\calA}$ such
  that $p_\# \hat{\mu} = \mu$ and $\calN_{\hat{\mu}}=\hat{\calN}$.
  Furthermore $(\hat{X},\hat{\calA},\hat{\mu})$ is a strictly
  localizable measure space, and the Banach spaces
  $\bL_1(X,\calA,\calN)$ and $\bL_1(\hat{X},\hat{\calA},\hat{\mu})$
  are isometrically isomorphic.
\end{Theorem*}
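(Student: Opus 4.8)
The plan is to use the explicit description of the cccc version recalled above: $(\hat X,\hat\calA,\hat\calN)=\coprod_{Z\in\calE}(Z,\calA_Z,\calN_Z)$ for an $\calN_\mu$-generating almost disjointed $\calE\subset\calE_{\rmccc}$, with any $p\in\bp$ restricting to the inclusion $Z\hookrightarrow X$ on each summand. Thus every $\hat A\in\hat\calA$ decomposes as $\hat A=\coprod_{Z\in\calE}\hat A_Z$ with $\hat A_Z\in\calA_Z$, and I would simply try the coproduct measure $\hat\mu(\hat A)\defeq\sum_{Z\in\calE}\mu(\hat A_Z)$, identifying $\hat A_Z$ with $p(\hat A_Z)\in\calA_Z\subset\calA$. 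Countable additivity of $\hat\mu$ is a routine Tonelli interchange of the two nonnegative sums. That $\calN_{\hat\mu}=\hat\calN$ is immediate, since $\hat\mu(\hat A)=0$ forces $\mu(\hat A_Z)=0$, i.e.\ $\hat A_Z\in\calN_Z$, for every $Z$, which is exactly membership in the coproduct ideal $\hat\calN$.

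The one genuinely measure-theoretic point --- and what I expect to be the main obstacle --- is the identity
\begin{equation*}
\mu(A)=\sum_{Z\in\calE}\mu(A\cap Z)\qquad(A\in\calA),
\end{equation*}
which is precisely $p_\#\hat\mu=\mu$ and also underlies strict localizability. I would prove $\leq$ and $\geq$ separately. Since $\calE$ is almost disjointed, any finite subfamily gives $\sum_i\mu(A\cap Z_i)=\mu(\bigcup_i A\cap Z_i)\leq\mu(A)$, whence $\sum_Z\mu(A\cap Z)\leq\mu(A)$. For the reverse I would first treat $F\in\calA^f$: as $\sum_Z\mu(F\cap Z)\leq\mu(F)<\infty$, only countably many $Z_n$ contribute, and the key step is that $F\setminus\bigcup_n Z_n$ is $\mu$-null. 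This is where the $\calN_\mu$-generating hypothesis enters: writing $W\defeq X\setminus(F\setminus\bigcup_n Z_n)$, one has $Z\setminus W\in\calN_\mu$ for every $Z\in\calE$ (for $Z=Z_n$ because $F\cap Z_n\subset W$, otherwise because $\mu(F\cap Z)=0$), so the essential-supremum property of $X$ forces $X\setminus W=F\setminus\bigcup_n Z_n\in\calN_\mu$. Countable additivity along the almost disjointed $\{F\cap Z_n\}$ then yields $\mu(F)=\sum_Z\mu(F\cap Z)$, and semi-finiteness, $\mu(A)=\sup\{\mu(F):A\supset F\in\calA^f\}$, upgrades this to arbitrary $A$.

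With the displayed identity in hand, $p_\#\hat\mu=\mu$ and strict localizability with partition $\calG=\{Z:Z\in\calE\}$ both follow at once: the condition $\hat\mu(\hat A)=\sum_Z\hat\mu(\hat A\cap Z)$ is the definition of $\hat\mu$, and $\hat\calA=\calP(\hat X)\cap\{\hat A:\hat A\cap Z\in\hat\calA\ \forall Z\}$ is the local determinacy of the lld MSN applied to the $\hat\calN$-generating partition $\calG$. For uniqueness I would show any $\hat\mu'$ with $p_\#\hat\mu'=\mu$ and $\calN_{\hat\mu'}=\hat\calN$ equals $\hat\mu$. Testing $p_\#\hat\mu'=\mu$ on $A\in\calA_Z$, whose preimage is $A$ on the summand $Z$ together with a remainder $\coprod_{Z'\neq Z}(A\cap Z')$ lying in $\hat\calN=\calN_{\hat\mu'}$ (by almost disjointedness), shows $\hat\mu'$ restricts to $\mu$ on each summand. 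Then for any $\hat A$ with $\hat\mu(\hat A)<\infty$ only countably many summands carry mass, the complement lying in $\hat\calN=\calN_{\hat\mu'}$, so countable additivity pins down $\hat\mu'(\hat A)=\sum_Z\mu(\hat A_Z)=\hat\mu(\hat A)$; the infinite case follows since $\hat\mu'(\hat A)\geq\sum_Z\mu(\hat A_Z)$ always holds. Independence of $p$ is then a corollary: if $p'\in\bp$ then $\{p\neq p'\}\in\hat\calN=\calN_{\hat\mu}$, so $p'^{-1}(A)\triangle p^{-1}(A)$ is $\hat\mu$-null, giving $p'_\#\hat\mu=p_\#\hat\mu=\mu$, whence $\hat\mu$ is also the unique measure attached to $p'$.

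Finally, for the isometry I would take $p^*\colon\bL_1(X,\calA,\mu)\to\bL_1(\hat X,\hat\calA,\hat\mu)$, $f\mapsto f\circ p$. The change-of-variables formula for $p_\#\hat\mu=\mu$ gives $\int_{\hat X}|f\circ p|\,d\hat\mu=\int_X|f|\,d\mu$, so $p^*$ is isometric, in particular injective. For surjectivity, given $\hat g\in\bL_1(\hat X,\hat\calA,\hat\mu)$, integrability $\sum_Z\int_Z|\hat g_Z|\,d\mu=\|\hat g\|_1<\infty$ leaves only countably many nonzero summands $Z_n$; disjointifying them and setting $f\defeq\sum_n\hat g_{Z_n}\ind_{Z_n}$ produces $f\in\bL_1(X,\calA,\mu)$ with $f\circ p=\hat g$ $\hat\mu$-almost everywhere, so $p^*$ is onto. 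Here $\bL_1(X,\calA,\calN)$ is read as $\bL_1(X,\calA,\mu)$, which is legitimate because $\mu$ is semi-finite. Beyond bookkeeping, the only places needing real care are the essential-supremum step in the displayed identity and the verification that the coproduct $\sigma$-algebra and ideal are exactly as recalled, so that the summand decomposition is literally a (possibly uncountable) measurable partition.
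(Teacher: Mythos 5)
Correct, and essentially the paper's own proof: exactly as in \ref{para72}--\ref{prop74}, you realize the cccc version as the coproduct $\coprod_{Z\in\calE}(Z,\calA_Z,\calN_Z)$ over an almost disjointed $\calN_\mu$-generating family, equip it with the sum measure, reduce everything to the identity $\mu(A)=\sum_{Z\in\calE}\mu(A\cap Z)$ (Proposition~\ref{prophatmu}(A)), and transport $\bL_1$ by composition with $p$. The differences are only in execution, not in route: the paper draws $\calE$ from $\calA^f$ rather than from the ccc sets (so the partition pieces have finite measure and strict localizability is literally immediate, whereas with ccc pieces one should note that a ccc subMSN of a semi-finite measure space carries a $\sigma$-finite restriction of $\mu$), it proves the key identity by a countable/uncountable dichotomy plus the Distributivity Lemma~\ref{supDistrib} instead of your semi-finiteness reduction, and it obtains uniqueness and the surjectivity of $f\mapsto f\circ p$ from the local isomorphism property of Proposition~\ref{localiso} (via Proposition~\ref{prophatmu}(C)) where you compute directly on the summands.
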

\par Of course, the general process for constructing $\hat{X}$ is non
constructive, as it involves the axiom of choice to turn $\calA^f$
into an almost disjointed generating family.
This is why, in the last two sections of this paper, we explore a
particular case where we are able to describe explicitly $\hat{X}$ as
a quotient of a fiber bundle, all ``hands on''.
We start with the measure space $(\Rm,\calB(\Rm),\calI^k_\infty)$
where $1 \leq k \leq m-1$ are integers, $\calB(\Rm)$ is the
$\sigma$-algebra of Borel subsets of $\Rm$, and $\calI^k_\infty$ is
the integral geometric measure described in \cite[2.10.5(1)]{GMT} and
\cite[5.14]{MATTILA}.
Note that it is not semi-finite, \cite[3.3.20]{GMT}.
Thus, we replace it with its complete semi-finite version
$(\Rm,\widetilde{\calB(\Rm)},\tilde{\calI}^k_\infty)$.
We let $\calE$ be the collection of $k$-dimensional submanifolds $M
\subset \Rm$ of class $C^1$ such that $\phi_M = \calH^k \hel M$ is
locally finite.
It follows from the Besicovitch Structure Theorem \cite[3.3.14]{GMT}
that $\calE$ is $\calN_{\tilde{\calI}^k_\infty}$-generating,
\ref{igm}(ii).
Now, for each $x \in \Rm$ we define $\calE_x = \calE \cap \{ M : x \in
M \}$ and we define on $\calE_x$ an equivalence relation as follows.
We declare that $M \sim_x M'$ if and only if
\begin{equation*}
  \lim_{r \to 0^+} \frac{\calH^k(M \cap M' \cap \bB(x,r))}{\balpha(k)r^k} 
= 1.
\end{equation*}
Letting $[M]_x$ denote the equivalence class of $M \in \calE_x$, we
prove \ref{igm} that underlying set of the cccc, lld, and strictly
localizable version of the MSN
$(\Rm,\widetilde{\calB(\Rm)},\calN_{\tilde{\calI}^k_\infty})$ can be
taken to be
\begin{equation*}
  \hat{X} = \{ (x,[M]_x ) : x \in \Rm \text{ and } M \in \calE_x \} \,.
\end{equation*}
This leads to an explicit description of the dual of
$\bL_1(\Rm,\widetilde{\calB(\Rm)},\tilde{\calI}^k_\infty)$ as
$\bL_\infty( \hat{X},\hat{\calA},\hat{\calN})$.
\par We are indebted to David Fremlin whose point of view on measure
theory -- generously shared in his immense treatise \cite{FREMLIN.I,
  FREMLIN.II, FREMLIN.III, FREMLIN.IV, FREMLIN.V.1,FREMLIN.V.2} -- influenced our
work in this paper. It is the second author's pleasure to record
useful conversations with Francis Borceux.

\section{Measurable spaces with negligibles}

\begin{Empty}[$\sigma$-algebra]
  Let $X$ be a set.  A {\em $\sigma$-algebra} on $X$ is a set $\calA
  \subset \calP(X)$ such that
  \begin{enumerate}
  \item[(1)] $\emptyset \in \calA$;
  \item[(2)] If $A \in \calA$ then $X \setminus A \in \calA$;
  \item[(3)] If $\la A_n \ra_{n \in \N}$ is a sequence in $\calA$ then
    $\bigcup_{n \in \N} A_n \in \calA$.
  \end{enumerate}
  If $\calA$ is a $\sigma$-algebra on $X$ then $X \in \calA$ and
  $\bigcap_{n \in \N} A_n \in \calA$ whenever $\la A_n \ra_{n \in \N}$ is
  a sequence in $\calA$.  Clearly $\{\emptyset,X\}$ and $\calP(X)$ are
  $\sigma$-algebras on $X$, respectively the coarsest and the finest.
  If $\la \calA_i \ra_{i \in I}$ is a nonempty family of
  $\sigma$-algebras on $X$ then $\bigcap_{i \in I} \calA_i$ is a
  $\sigma$-algebra on $X$.  Thus each $\calE \subset \calP(X)$ is
  contained in a coarsest $\sigma$-algebra on $X$ which we will denote
  by $\sigma(\calE)$.  If $\calE,\calA \subset \calP(X)$ and $\calA =
  \sigma(\calE)$ we say that the $\sigma$-algebra $\calA$ {\em is
    generated} by $\calE$.  Clearly, if $\calE_1 \subset \calE_2
  \subset \calP(X)$ then $\sigma(\calE_1) \subset \sigma(\calE_2)$.  A
  {\em measurable space} is a couple $(X,\calA)$ where $X$ is a set
  and $\calA$ is a $\sigma$-algebra on $X$.  In this case, if no
  confusion is possible we call {\em measurable} the members of
  $\calA$.
\end{Empty}

\begin{Empty}[Measurable maps]
  Let $(X,\calA)$ and $(Y,\calB)$ be measurable spaces and $f \colon X
  \to Y$.  We say that $f$ is {\em $(\calA,\calB)$-measurable} (or
  simply {\em measurable} if no confusion can occur) if $f^{-1}(B) \in
  \calA$ whenever $B \in \calB$. Measurable spaces, together with
  measurable maps, form a well-defined category, as one can check that
  the composition of two measurable maps is measurable.
\end{Empty}

\begin{Empty}[$\sigma$-ideal]
  Let $(X, \calA)$ be a measurable space. A {\em $\sigma$-ideal}
  $\calN$ of $\calA$ is a subset of $\calA$ that satisfies the
  following requirements:
  \begin{enumerate}
  \item[(1)] $\emptyset \in \calN$;
  \item[(2)] If $A \in \calA$, $N \in \calN$ and $A \subset N$ then $A \in \calN$;
  \item[(3)] If $\la N_n\ra_{n \in \N}$ is a sequence in $\calN$, then
    $\bigcup_{n \in\N} N_n \in \calN$.
  \end{enumerate}
\end{Empty}

\begin{Empty}[Measurable space with negligibles]
  A {\em measurable space with negligibles} (abbreviated {\em MSN}) is
  a triple $(X, \calA, \calN)$ where $(X, \calA)$ is a measurable
  space and $\calN$ is a $\sigma$-ideal of $\calA$.  Given an MSN $(X,
  \calA, \calN)$, elements belonging to $\calN$ are referred to as
       {\em $\calN$-negligible} sets (simply negligible sets if no
       confusion can occur). Complements of $\calN$-negligible sets
       are called {\em $\calN$-conegligible} sets (or simply
       conegligible sets).

  We can associate to any measure space $(X, \calA, \mu)$ the MSN $(X,
  \calA, \calN_\mu)$ where $\calN_\mu$ is the $\sigma$-ideal
  $\calN_{\mu} = \calA \cap \{N : \mu(N) = 0\}$.  Conversely, any MSN
  $(X, \calA, \calN)$ derives from a measure space: it suffices to
  consider the measure $\mu \colon\calA \to [0,
    \infty]$ that sends negligible sets to $0$ and the remaining sets
  to $\infty$.
\end{Empty}

\begin{Empty}[Saturated MSNs]
  An MSN $(X, \calA, \calN)$ is called {\em saturated} whenever the
  following property holds: For all $N \in \calN$, any subset $N'
  \subset N$ is $\calA$-measurable -- therefore, in fact, $N' \in \calN$. 
This
  property is of purely technical nature, as an MSN $(X, \calA,
  \calN)$ that does not have it can be turned into a saturated MSN
  ($X, \bar{\calA}, \bar{\calN})$, by setting:
  \begin{equation*}
  \begin{split}
    \bar{\calN} & = \calP(X) \cap \{\bar{N} : \bar{N}
    \subset N \text{ for some }  N \in \calN\} \\ 
    \bar{\calA}&  = \calP(X) \cap \{\bar{A} : A \ominus \bar{A} \in \bar{\calN} \text{ for some } A \in \calA\} \\
    & = \calP(X) \cap \{ A \ominus \bar{N} : A \in \calA \text{ and } \bar{N} \in \bar{\calN} \} .
    \end{split}
  \end{equation*}
  Here, $\ominus$ denotes the symmetric difference of sets.  We call
  $(X,\bar{\calA},\bar{\calN})$ the {\em saturation} of
  $(X,\calA,\calN)$. In case the original MSN corresponds to a measure
  space $(X,\calA,\mu)$, its saturation corresponds to the measure
  space usually referred to as the completion of $(X,\calA,\mu)$. We
  will denote the latter by $(X,\bar{\calA},\bar{\mu})$.
\end{Empty}

\begin{Empty}
  \label{ANBMMeasurability}
  Let $(X, \calA, \calN)$ and $(Y, \calB, \calM)$ be two MSNs. We say
  that a map $f \colon X \to Y$ is {\em $[(\calA, \calN), (\calB,
      \calM)]$-measurable} (or simply {\em measurable}) if
  \begin{enumerate}
  \item[(1)] $f$ is $(\calA, \calB)$-measurable;
  \item[(2)] $f^{-1}(M) \in \calN$ for every $M \in \calM$.
  \end{enumerate}
  It is easy to check that measurability in the above sense is
  preserved by composition.
\end{Empty}

\begin{Empty}[Morphisms of saturated MSNs]
  \label{MSNMorphism}
  Let $(X, \calA, \calN)$ and $(Y, \calB, \calM)$ be two saturated
  MSNs. A {\em morphism} from $(X, \calA, \calN)$ to $(Y, \calB,
  \calM)$ is an equivalence class of $[(\calA, \calN), (\calB,
    \calM)]$-measurable maps under the relation $\sim$ of equality
  almost everywhere: $f \sim f'$ whenever $\{f \neq f'\} \in \calN$.
  In order to check that this relation is, indeed, transitive, it is
  important to assume that $(X,\calA,\calN)$ is saturated for
  otherwise we would not know that $\{f \neq f'' \} \in \calA$ when
  $f,f'' : X \to Y$ are both
  $[(\calA,\calN),(\calB,\calM)]$-measurable.  Also, in the special
  case where $X$ is $\calN$-negligible and $Y = \emptyset$, we follow
  the convention that there is unique morphism from $(X, \calA,
  \calN)$ to $(\emptyset, \{\emptyset\}, \{\emptyset\})$.
\end{Empty}

\begin{Lemma}
  \label{lemma28}
  Let $(X, \calA, \calN)$, $(Y, \calB, \calM)$ and $(Z, \calC, \calP)$
  be MSNs and let $f, f' \colon X \to Y$ and $g, g' \colon Y
  \to Z$ be maps. If
  \begin{enumerate}
  \item[(A)] $f, f'$ are $[(\calA, \calN), (\calB, \calM)]$-measurable;
  \item[(B)] $g, g'$ are $[(\calB, \calM), (\calC, \calP)]$-measurable;
  \item[(C)] $(X,\calA,\calN)$ is saturated;
  \item[(D)] $f \sim f'$, $g \sim g'$,
  \end{enumerate}
  then $g \circ f$, $g' \circ f'$ are $[(\calA, \calN), (\calC,
    \calP)]$-measurable and $g \circ f \sim g' \circ f'$.
\end{Lemma}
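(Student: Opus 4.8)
The plan is to treat the two assertions separately: first the $[(\calA,\calN),(\calC,\calP)]$-measurability of the composites, and then their compatibility with $\sim$.

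For measurability I would simply unwind preimages, as already remarked in \ref{ANBMMeasurability}. For $C \in \calC$ one has $(g \circ f)^{-1}(C) = f^{-1}(g^{-1}(C))$; since $g$ is $(\calB,\calC)$-measurable, $g^{-1}(C) \in \calB$, and since $f$ is $(\calA,\calB)$-measurable, $f^{-1}(g^{-1}(C)) \in \calA$. Replacing $C \in \calC$ by $P \in \calP$ and using that $g^{-1}(P) \in \calM$ (measurability of $g$) together with $f^{-1}(M) \in \calN$ for every $M \in \calM$ (measurability of $f$) gives $(g \circ f)^{-1}(P) \in \calN$. The same computation applies verbatim to $g' \circ f'$.

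The substance lies in showing $g \circ f \sim g' \circ f'$, that is $\{ g \circ f \neq g' \circ f' \} \in \calN$. The idea is \emph{not} to argue that this set is measurable directly -- in the absence of a separation hypothesis on $(Z,\calC)$ there is no reason for the locus where two measurable maps disagree to belong to $\calA$ -- but rather to sandwich it inside a negligible set and let saturation finish the job. Pointwise, if $f(x) = f'(x)$ and $g(f'(x)) = g'(f'(x))$, then $(g \circ f)(x) = (g \circ f')(x) = (g' \circ f')(x)$; taking the contrapositive yields
\begin{equation*}
  \{ g \circ f \neq g' \circ f' \} \subset \{ f \neq f' \} \cup (f')^{-1}\big( \{ g \neq g' \} \big).
\end{equation*}
The first set on the right lies in $\calN$ because $f \sim f'$. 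For the second, $\{ g \neq g' \} \in \calM$ because $g \sim g'$, whence $(f')^{-1}(\{ g \neq g' \}) \in \calN$ since $f'$ is $[(\calA,\calN),(\calB,\calM)]$-measurable. Thus the right-hand side, a finite union of members of $\calN$, belongs to $\calN$.

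Finally I would invoke hypothesis (C): because $(X,\calA,\calN)$ is saturated, every subset of an $\calN$-negligible set is $\calA$-measurable and hence itself negligible. Applying this to the displayed inclusion gives $\{ g \circ f \neq g' \circ f' \} \in \calN$, which is the desired conclusion. The one delicate point -- indeed the only place the argument can break -- is precisely this last step: without saturation the disagreement locus might escape $\calA$ entirely, so (C) is genuinely essential and cannot be dropped, exactly as foreshadowed in \ref{MSNMorphism}.
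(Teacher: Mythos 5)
Your proof is correct and follows essentially the same route as the paper's: composition preserves measurability (unwinding preimages), the pointwise inclusion $\{ g \circ f \neq g' \circ f' \} \subset \{ f \neq f' \} \cup (f')^{-1}(\{ g \neq g' \})$ (the paper writes $f^{-1}$ in place of $(f')^{-1}$, an immaterial symmetric choice), and saturation to conclude that the disagreement set, a priori possibly non-measurable, is negligible. Your closing remark on why (C) is indispensable matches the paper's own comment in \ref{MSNMorphism}.
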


\begin{proof}
  The first conclusion follows from hypotheses (A) and (B) and
  Paragraph~\ref{ANBMMeasurability}.  The second conclusion is a
  consequence of $\{g \circ f \neq g' \circ f'\} \subset \{f \neq f'\}
  \cup f^{-1}(\{g \neq g'\})$ and hypotheses (A), (C) and (D).
\end{proof}

\begin{Empty}[Category $\sfMSN$]
  Thanks to the preceding result, there is a notion of composition for
  morphisms between saturated MSNs: if $\mathbf{f} \colon (X, \calA,
  \calN) \to (Y, \calB, \calM)$ and $\bg \colon (Y, \calB, \calM) \to
  (Z, \calC, \calP)$ are morphisms, we let $\bg \circ \mathbf{f}
  \colon (X, \calA, \calN) \to (Z, \calC, \calP)$ be the equivalence
  class of $g \circ f$ where $f \in \mathbf{f}$ and $g \in
  \mathbf{g}$.

  This allows to define the category $\sfMSN$ whose objects are
  saturated MSNs and whose morphisms are described in the
  paragraph~\ref{MSNMorphism}. Additionally, we add the convention
  that, for a negligible saturated MSN, i.e. an MSN of the form $(X,
  \calP(X), \calP(X))$, there is a unique morphism from $(X, \calP(X),
  \calP(X))$ to $(\emptyset, \{\emptyset\}, \{\emptyset\})$. This way,
  negligible saturated MSNs are isomorphic to one another in the
  category $\sfMSN$.

  The categorical point of view is rarely considered in measure theory,
  mainly due to the lack of a well-behaved notion of morphism between
  measure spaces. The category
  $\sfMSN$ also appears in the work \cite{PAV.20} under the
  name $\mathsf{StrictEMS}$. We start to investigate the existence of
  limits and colimits in this category.
\end{Empty}

\begin{Empty}[subMSN]
  Let $(X, \calA, \calN)$ be an MSN and $Z \in \calP(X)$. We define
  the {\em subMSN} $(Z, \calA_Z, \calN_Z)$, where $\calA_Z \defeq \{A
  \cap Z: A \in \calA\}$ and $\calN_Z \defeq \{N \cap Z : N \in
  \calN\}$. Note that in the special case where $Z$ is
  $\calA$-measurable, we have $\calA_Z = \calA \cap \{A : A \subset
  Z\}$ and likewise $\calN_Z = \calN \cap \{N : N \subset Z\}$.

  The inclusion map $\iota_Z \colon Z \to X$ is $[(\calA_Z, \calN_Z),
    (\calA, \calN)]$-measurable and induces a morphism
  $\boldsymbol{\iota}_Z$ between $(Z, \calA_Z, \calN_Z)$ and $(X,
  \calA, \calN)$.
\end{Empty}

\begin{Proposition}
  \label{equalizerMSN}
  Let $\mathbf{f}, \bg$ be a pair of
  morphisms $(X, \calA, \calN) \to (Y, \calB, \calM)$ in the category $\sfMSN$, represented by
  the maps $f \in \mathbf{f}$ and $g \in \bg$, and set $Z \defeq \{f =
  g\}$. Then the equalizer of $\mathbf{f}, \bg$ is $[(Z, \calA_Z,
  \calN_Z), \boldsymbol{\iota}_Z]$.
\end{Proposition}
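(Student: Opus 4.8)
The plan is to verify the universal property of the equalizer directly, keeping in mind that the object $(Z,\calA_Z,\calN_Z)$ is available for \emph{any} subset $Z$ via the subMSN construction. First I would check that $[(Z,\calA_Z,\calN_Z),\boldsymbol{\iota}_Z]$ is a legitimate fork, i.e. $\mathbf{f}\circ\boldsymbol{\iota}_Z = \mathbf{g}\circ\boldsymbol{\iota}_Z$. Since $Z=\{f=g\}$, the maps $f\circ\iota_Z$ and $g\circ\iota_Z$ from $Z$ to $Y$ coincide \emph{literally} (not merely almost everywhere), and each is $[(\calA_Z,\calN_Z),(\calB,\calM)]$-measurable by composition, see \ref{ANBMMeasurability}; hence the associated morphisms agree. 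I would also record that $(Z,\calA_Z,\calN_Z)$ is a saturated MSN, so that it is indeed an object of $\sfMSN$: if $N'\subset N\cap Z$ with $N\in\calN$, then $N'\subset N$, so $N'\in\calN$ by saturation of $(X,\calA,\calN)$, whence $N'=N'\cap Z\in\calN_Z$.

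Next, given a competing fork $[(W,\calC,\calP),\mathbf{h}]$ with $\mathbf{f}\circ\mathbf{h}=\mathbf{g}\circ\mathbf{h}$, I would fix a representative $h\in\mathbf{h}$. The equalizing condition unwinds, by definition of $\sim$, to $\{f\circ h\neq g\circ h\}\in\calP$, and this set equals $h^{-1}(X\setminus Z)=W\setminus h^{-1}(Z)$. Thus $W_0\defeq h^{-1}(Z)$ is $\calP$-conegligible, and in particular $W_0\in\calC$. I would then define $\tilde h\colon W\to Z$ by $\tilde h=h$ on $W_0$ and $\tilde h\equiv z_0$ off $W_0$, for an arbitrary fixed $z_0\in Z$; the degenerate case $Z=\emptyset$ forces $W=W\setminus W_0\in\calP$, so $(W,\calC,\calP)$ is a negligible saturated MSN and the factorization is supplied by the convention of \ref{MSNMorphism}.

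The verification that $\tilde h$ is $[(\calC,\calP),(\calA_Z,\calN_Z)]$-measurable is where the argument really turns. For $A\in\calA$ one computes $\tilde h^{-1}(A\cap Z)=\big(h^{-1}(A)\cap W_0\big)\cup E$, where $E\in\{\emptyset,\,W\setminus W_0\}$, and both pieces lie in $\calC$ precisely because $W_0\in\calC$ and $W\setminus W_0\in\calP\subset\calC$; similarly $\tilde h^{-1}(M\cap Z)\in\calP$ for $M\in\calN$, since $h$ preserves negligibles and $W\setminus W_0\in\calP$. Because $\iota_Z\circ\tilde h$ agrees with $h$ on the conegligible set $W_0$, we obtain $\boldsymbol{\iota}_Z\circ\tilde{\mathbf{h}}=\mathbf{h}$, giving the required factorization. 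Uniqueness is the cleanest step and exploits injectivity of $\iota_Z$: if $\tilde{\mathbf{h}}'$ also factors $\mathbf{h}$, then $\iota_Z\circ\tilde h'$ and $\iota_Z\circ\tilde h$ are both almost everywhere equal to $h$, hence to each other (transitivity of $\sim$ being available since $(W,\calC,\calP)$ is saturated), so $\{\iota_Z\circ\tilde h'\neq\iota_Z\circ\tilde h\}\in\calP$; injectivity of $\iota_Z$ identifies this set with $\{\tilde h'\neq\tilde h\}$, whence $\tilde{\mathbf{h}}'=\tilde{\mathbf{h}}$.

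The main subtlety, and the step I would be most careful about, is that $Z=\{f=g\}$ need \emph{not} be $\calA$-measurable, so the construction cannot rely on $Z\in\calA$. The point that rescues the argument is that measurability of $Z$ is never actually used: what matters is that $h^{-1}(Z)$ is $\calP$-conegligible in each test object $W$, and this follows \emph{from the equalizing hypothesis itself} rather than from any property of $Z$. This single observation places $W_0$ in $\calC$ and thereby powers all the measurability checks above.
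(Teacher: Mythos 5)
Your proof is correct and follows essentially the same route as the paper: verify the fork, observe that the equalizing condition forces $h^{-1}(Z)$ to be conegligible, redefine $h$ off that set so it lands in $Z$, and deduce uniqueness from injectivity of $\iota_Z$. You merely spell out details the paper leaves implicit (saturation of the subMSN, the measurability check for $\tilde h$, and the degenerate case $Z=\emptyset$ handled by the convention of \ref{MSNMorphism}), which is fine.
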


\begin{proof}
  As $f \circ \iota_Z = g \circ \iota_Z$, we have clearly $\mathbf{f}
  \circ \boldsymbol{\iota}_Z = \bg \circ \boldsymbol{\iota}_Z$. Let
  $\bh \colon (T, \calC, \calP) \to (X, \calA, \calN)$ be any other
  morphism that satisfies the relation $\mathbf{f} \circ \bh = \bg
  \circ \bh$ and let $h \in \bh$. Then $h^{-1}(Z)$ is conegligible in
  $T$. Up to modifying $h$, we can suppose that it has values in
  $Z$. The restriction $h' \colon T \to Z$ of $h$ is $[(\calC, \calP),
    (\calA_Z, \calN_Z)]$-measurable and we have $h = \iota_Z \circ
  h'$, leading to a factorization $\bh = \boldsymbol{\iota}_Z \circ
  \bh'$. This factorization is unique, as any morphism $\bh'$
  satisfying $\bh = \boldsymbol{\iota}_Z \circ \bh'$ must derive from
  a map $h' \colon T \to Z$ that coincides almost everywhere with $h$.
\end{proof}

\begin{Proposition}
  The category $\sfMSN$ has coproducts. Let $\la (X_i, \calA_i,
  \calN_i)\ra_{i \in I}$ be a family of saturated MSNs. Its coproduct
  is the MSN $(X, \calA, \calN)$ whose underlying set is $X =
  \coprod_{i \in I} X_i$, and whose $\sigma$-algebra and
  $\sigma$-ideal are defined by
  \begin{gather*}
    \calA = \calP(X) \cap \{A : A \cap X_i \in \calA_i \text{ for all } 
i \in I\}, \\
    \calN = \calP(X) \cap \{N : N \cap X_i \in \calN_i \text{ for all } 
i \in I\}.
  \end{gather*}
  For $i \in I$, the canonical morphism $\boldsymbol{\iota}_i \colon
  (X_i, \calA_i, \calN_i) \to (X, \calA, \calN)$ is the morphism
  induced by the inclusion map $\iota_i \colon X_i \to X$.
\end{Proposition}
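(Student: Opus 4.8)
The plan is to verify three things in turn: that the proposed triple $(X,\calA,\calN)$ is a bona fide object of $\sfMSN$, that each $\boldsymbol{\iota}_i$ is a morphism, and that the family $\la \boldsymbol{\iota}_i \ra_{i \in I}$ enjoys the universal property of a coproduct, i.e. that for every saturated MSN $(Y,\calB,\calM)$ and every family of morphisms $\bg_i \colon (X_i,\calA_i,\calN_i) \to (Y,\calB,\calM)$ there is a unique morphism $\bg \colon (X,\calA,\calN) \to (Y,\calB,\calM)$ with $\bg \circ \boldsymbol{\iota}_i = \bg_i$ for all $i$.

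First I would check that $(X,\calA,\calN)$ is a saturated MSN. That $\calA$ is a $\sigma$-algebra and $\calN$ a $\sigma$-ideal of $\calA$ is immediate from the defining conditions, since complementation and countable union commute with the intersections $(\,\cdot\,)\cap X_i$ and each $(\calA_i,\calN_i)$ has the matching closure properties; for instance $(X\setminus A)\cap X_i = X_i\setminus(A\cap X_i)\in\calA_i$ and $(\bigcup_n A_n)\cap X_i=\bigcup_n(A_n\cap X_i)$, and similarly for subsets and countable unions inside $\calN$. Saturation is one place where the hypothesis on the factors enters: if $N\in\calN$ and $N'\subset N$, then $N'\cap X_i\subset N\cap X_i\in\calN_i$, and since each $(X_i,\calA_i,\calN_i)$ is saturated we get $N'\cap X_i\in\calN_i\subset\calA_i$ for every $i$, whence $N'\in\calN\subset\calA$.

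Next, for each $i$ the inclusion satisfies $\iota_i^{-1}(A)=A\cap X_i\in\calA_i$ for $A\in\calA$ and $\iota_i^{-1}(N)=N\cap X_i\in\calN_i$ for $N\in\calN$, straight from the definitions of $\calA$ and $\calN$, so $\iota_i$ is $[(\calA_i,\calN_i),(\calA,\calN)]$-measurable and induces $\boldsymbol{\iota}_i$. For the universal property I would take $(Y,\calB,\calM)$ and morphisms $\bg_i$ as above, choose representatives $g_i\in\bg_i$, and assemble the single map $g\colon X\to Y$ determined by $g|_{X_i}=g_i$ (well defined since $X=\coprod_{i\in I} X_i$). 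Measurability of $g$ is checked fiber by fiber: for $B\in\calB$ one has $g^{-1}(B)\cap X_i=g_i^{-1}(B)\in\calA_i$, so $g^{-1}(B)\in\calA$; likewise $g^{-1}(M)\cap X_i=g_i^{-1}(M)\in\calN_i$ for $M\in\calM$, so $g^{-1}(M)\in\calN$. Thus $g$ is $[(\calA,\calN),(\calB,\calM)]$-measurable, and $g\circ\iota_i=g_i$ on the nose, so the induced $\bg$ satisfies $\bg\circ\boldsymbol{\iota}_i=\bg_i$. A routine check that replacing the $g_i$ by $\sim$-equivalent representatives alters $g$ only on an $\calN$-negligible set shows that $\bg$ does not depend on the choices.

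The step I expect to carry the real weight is uniqueness, and it is exactly here that the componentwise definition of $\calN$ pays off. Suppose $\bg'$ is another morphism with $\bg'\circ\boldsymbol{\iota}_i=\bg_i$ for all $i$, and pick $g'\in\bg'$. Equality of the morphisms $\bg'\circ\boldsymbol{\iota}_i=\bg_i$ means precisely that $\{g'|_{X_i}\neq g_i\}=\{g\neq g'\}\cap X_i\in\calN_i$ for every $i$, this difference set being genuinely $\calA_i$-measurable because each factor is saturated. Consequently $\{g\neq g'\}\cap X_i\in\calA_i$ for all $i$, so $\{g\neq g'\}\in\calA$, and then $\{g\neq g'\}\in\calN$ directly from the definition of $\calN$; hence $g\sim g'$ and $\bg=\bg'$. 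The delicate point worth emphasizing is that, had $\calN$ been defined through a countable-additivity clause instead, this argument would fail for uncountable $I$, since $\{g\neq g'\}$ is then an uncountable union of the negligible pieces $\{g\neq g'\}\cap X_i$. It is precisely the \emph{local} definition of both $\calA$ and $\calN$, testing measurability and negligibility fiber by fiber, that makes the conclusion valid for an arbitrary index set and that simultaneously supplies the measurability of $\{g\neq g'\}$ via the saturation of the factors.
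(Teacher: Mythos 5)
Your proof is correct and follows essentially the same route as the paper's: assemble the map $g = \coprod_i g_i$ componentwise, verify measurability fiber by fiber, and deduce uniqueness from the fact that $\calN$ tests negligibility on each $X_i$ separately, which is valid for an arbitrary index set. You simply fill in details the paper leaves implicit (the saturation check and independence of representatives), and your closing remark about why a countable-union definition of $\calN$ would break uniqueness is exactly the right observation.
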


\begin{proof}
  Notice that, indeed, $(X,\calA,\calN)$ is a saturated MSN.  Let $(Y,
  \calB, \calM)$ be a saturated MSN and $\la \mathbf{f}_i \ra_{i \in
    I}$ be a collection of morphisms from $(X_i, \calA_i, \calN_i)$ to
  $(Y, \calB, \calM)$, each $\mathbf{f}_i$ being represented by a
  measurable map $f_i$. We set $f = \coprod_{i \in I} f_i$, the map
  such that $f \circ \iota_i = f_i$ for any $i \in I$. It is clear
  that $f$ is $[(\calA, \calN), (\calB, \calM)]$-measurable and
  $\mathbf{f} \circ \boldsymbol{\iota}_i = \mathbf{f}_i$ holds for all
  $i \in I$. We need to show that $\mathbf{f}$ is the unique morphism
  $(X, \calA, \calN) \to (Y, \calB, \calM)$ with this property.

  Suppose $\bg \colon (X, \calA, \calN) \to (Y, \calB, \calM)$ is
  another morphism, represented by a measurable map $g \colon X \to
  Y$, for which $\bg \circ \boldsymbol{\iota}_i = \mathbf{f}_i$ for
  all $i \in I$. Then $f$ and $g$ coincide almost everywhere on each
  $X_i$, which implies, due to the choice of $\calN$, that $f$ and $g$
  are equal almost everywhere and $\mathbf{f} = \bg$.
\end{proof}

\begin{Proposition}
  \label{prodMSN}
  The category $\sfMSN$ has countable products. Let $\la (X_i,
  \calA_i, \calN_i) \ra_{i \in I}$ be a countable family of saturated
  MSNs. Its product is the MSN $(X, \calA, \calN)$, whose underlying
  set is the product $X = \prod_{i \in I} X_i$, whose $\sigma$-ideal is
  \[
  \calN = \calP(X) \cap \left\{ N : \exists \la N_i \ra_{i \in I} \in
  \prod_{i \in I} \calN_i, \quad N \subset \bigcup_{i \in I} \pi_i^{-1}(N_i)
  \right\},
  \]
  where $\pi_i \colon X \to X_i$ denotes the projection map, and whose
  $\sigma$-algebra is the saturation of $\bigotimes_{i \in I}
  \calA_i$:
  \[
  \calA = \left\{ A \ominus N : A \in \bigotimes_{i \in I} \calA_i
  \text{ and } N \in \calN\right\}.
  \]
  For $i \in I$, the projection morphism $\boldsymbol{\pi}_i \colon
  (X, \calA, \calN) \to (X_i, \calA_i, \calN_i)$ is the map induced by
  the projection map $\pi_i$.
\end{Proposition}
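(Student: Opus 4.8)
The plan is to verify three things in turn: that $(X,\calA,\calN)$ is a well-defined saturated MSN, that each $\bpi_i$ is a morphism, and that $[(X,\calA,\calN),\la\bpi_i\ra_{i\in I}]$ satisfies the universal property of a product. The cleanest way to handle the structural part is to recognize $(X,\calA,\calN)$ as the saturation of the MSN $(X,\bigotimes_{i\in I}\calA_i,\calN_0)$, where $\calN_0\defeq\calN\cap\bigotimes_{i\in I}\calA_i$. The key observation is that for any family $\la N_i\ra_{i\in I}\in\prod_{i\in I}\calN_i$ the set $\bigcup_{i\in I}\pi_i^{-1}(N_i)$ belongs to $\bigotimes_{i\in I}\calA_i$, being a \emph{countable} union of cylinders; as it also lies in $\calN$, it lies in $\calN_0$. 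Hence every member of $\calN$ is a subset of a member of $\calN_0$, so $\calN$ is exactly the ideal of subsets of $\calN_0$-sets and $\calA=\{A\ominus N:A\in\bigotimes_{i\in I}\calA_i,\ N\in\calN\}$ is exactly the associated saturation $\sigma$-algebra. The general saturation construction then delivers that $(X,\calA,\calN)$ is a saturated MSN, once one records the routine facts that $\calN_0$ is a $\sigma$-ideal of $\bigotimes_{i\in I}\calA_i$ and that $\calN$ is closed under subsets and countable unions.

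Next I would check that each $\bpi_i$ is a morphism. For $A_i\in\calA_i$ one has $\pi_i^{-1}(A_i)\in\bigotimes_{i\in I}\calA_i\subset\calA$, and for $N_i\in\calN_i$ the set $\pi_i^{-1}(N_i)$ lies in $\calN$, witnessed by the family equal to $N_i$ at index $i$ and to $\emptyset$ elsewhere; thus $\pi_i$ is $[(\calA,\calN),(\calA_i,\calN_i)]$-measurable.

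For the universal property, let $(Y,\calB,\calM)$ be saturated and let $\la\mathbf{f}_i\ra_{i\in I}$ be morphisms into the factors, represented by measurable maps $f_i$. I would define $f\colon Y\to X$ by $\pi_i\circ f=f_i$ for all $i$. Since $\bigotimes_{i\in I}\calA_i$ is generated by the cylinders, $f$ is $(\calB,\bigotimes_{i\in I}\calA_i)$-measurable. To see it is $[(\calB,\calM),(\calA,\calN)]$-measurable, take $N\in\calN$ with $N\subset\bigcup_{i\in I}\pi_i^{-1}(N_i)$; then $f^{-1}(N)\subset\bigcup_{i\in I}f_i^{-1}(N_i)$, a countable union of members of $\calM$, hence a member of $\calM$ by saturation of $(Y,\calB,\calM)$. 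Measurability against $\calA$ then follows from $f^{-1}(A\ominus N)=f^{-1}(A)\ominus f^{-1}(N)$, both terms lying in $\calB$. This induces a morphism $\mathbf{f}$, and $\bpi_i\circ\mathbf{f}=\mathbf{f}_i$ holds exactly because $\pi_i\circ f=f_i$. For uniqueness, if $\bg$ is represented by $g$ with $\bpi_i\circ\bg=\mathbf{f}_i$ for all $i$, then $\{\pi_i\circ g\neq f_i\}\in\calM$; since $f(y)\neq g(y)$ forces $\pi_i(f(y))\neq\pi_i(g(y))$ for some $i$, we obtain $\{f\neq g\}=\bigcup_{i\in I}\{f_i\neq\pi_i\circ g\}\in\calM$, whence $f\sim g$ and $\mathbf{f}=\bg$.

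The main obstacle, and the exact place where countability of $I$ is indispensable, is the measurability of the three countable unions above: that $\bigcup_{i\in I}\pi_i^{-1}(N_i)\in\bigotimes_{i\in I}\calA_i$ (so that $\calA$ really is the expected saturation), that $f^{-1}(N)\in\calM$, and that $\{f\neq g\}\in\calM$. For uncountable $I$ these unions need not be measurable—members of $\bigotimes_{i\in I}\calA_i$ depend on only countably many coordinates—which is precisely why the statement is confined to countable products.
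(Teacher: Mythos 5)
Your proof is correct, and its core — defining $f$ by $\pi_i \circ f = f_i$, getting $(\calB, \bigotimes_{i \in I}\calA_i)$-measurability from the cylinders, handling negligibles via the countable union $\bigcup_{i\in I} f_i^{-1}(N_i)$ together with saturation of $(Y,\calB,\calM)$, and proving uniqueness from the countably many coordinate functions — is essentially the paper's own argument. The one genuine addition is your opening step: identifying $(X,\calA,\calN)$ as the saturation of $(X,\bigotimes_{i\in I}\calA_i,\calN_0)$ with $\calN_0 \defeq \calN\cap\bigotimes_{i\in I}\calA_i$, which rests on the observation that $\bigcup_{i\in I}\pi_i^{-1}(N_i)$ is itself product-measurable precisely because $I$ is countable. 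The paper never verifies that $(X,\calA,\calN)$ is a well-defined saturated MSN (it is asserted in the statement and used silently in the proof), so this is a worthwhile supplement rather than a detour: it delivers the structural claims for free from the general saturation construction, and it isolates cleanly the places where countability of $I$ is indispensable, as you point out at the end. You also make explicit the small step the paper compresses — that $f^{-1}(A\ominus N)=f^{-1}(A)\ominus f^{-1}(N)\in\calB$, using $\calM\subset\calB$ — which is exactly what upgrades $(\calB,\bigotimes_{i\in I}\calA_i)$-measurability to $(\calB,\calA)$-measurability.
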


\begin{proof}
  First note that the projection maps $\pi_i$ are $[(\calA, \calN),
    (\calA_i, \calN_i)]$-measurable by construction of $\calA$ and
  $\calN$. Let $(Y, \calB, \calM)$ be a saturated MSN and $\la
  \mathbf{f}_i \ra_{i \in I}$ be a collection of morphisms from $(Y,
  \calB, \calM)$ to $(X_i, \calA_i, \calN_i)$, each $\mathbf{f}_i$
  being represented by a measurable map $f_i \colon Y \to X_i$. We
  define $f = \prod_{i \in I} f_i \colon Y \to \prod_{i \in I} X_i$
  that assigns $y \in Y$ to $\la f_i(y) \ra_{i \in I}$. Clearly, $f$
  is $(\calB, \bigotimes_{i \in I} \calA_i)$-measurable. Moreover, for
  any negligible set $N \in \calN$, we can find a sequence $\la N_i
  \ra_{i \in I}$ such that $N \subset \bigcup_{i \in I}
  \pi_i^{-1}(N_i)$. Thus $f^{-1}(N) \subset \bigcup_{i \in I} (\pi_i
  \circ f)^{-1}(N_i) = \bigcup_{i \in I} f_i^{-1}(N_i)$. As $I$ is
  countable and $f_i^{-1}(N_i) \in \calM$ for all $i \in I$, we find
  that $f^{-1}(N) \in \calM$, which entails that the map $f$ is
  $[(\calB, \calM), (\calA, \calN)]$-measurable.

  Let $\bg \colon (Y, \calB, \calM) \to (X, \calA, \calN)$ be another
  morphism satisfying the identities $\boldsymbol{\pi}_i \circ \bg =
  \mathbf{f}_i$ for $i \in I$. Let $g \colon Y \to X$ be a
  representative of $\bg$. The coordinate functions $\pi_i \circ g$
  must coincide with $f_i$ almost everywhere. As there are only
  countably many of them, we conclude that $f$ and $g$ are equal
  almost everywhere, that is, $\mathbf{f} = \bg$.
\end{proof}

\begin{Remark}
In case $(X_i,\calA_i,\calN_i)$ are associated with measure spaces $(X_i,\calA_i,\mu_i)$, $i=1,2$, the $\sigma$-ideal $\calN$ considered in the above proposition may not coincide with $\calN_{\mu_1 \otimes \mu_2}$. This is the case, for instance, when $(X_i,\calA_i,\mu_i)=(\R,\calB(\R),\calL^1)$, $i=1,2$, since the diagonal $D = \R^2 \cap \{ (x,x) : x \in 
\R \} \in \calB(\R) \otimes \calB(\R)$ is $\calL^2$-negligible but does not belong to $\calN$.
\end{Remark}

\section{Supremum preserving morphisms}

\begin{Empty}[Motivation]
  One of the reasons we were led to introduce MSNs is that the
  category of measure spaces and (equivalence classes of) measure
  preserving measurable maps does not have good properties at
  all. Roughly speaking, this can be attributed to the fact that it
  has very few arrows. One way to increase their number is to define
  as morphisms $(X, \calA, \mu) \to (Y, \calB, \nu)$ the $(\calA,
  \calB)$-measurable maps $\varphi \colon X \to Y$ such that the
  pushforward measure $\varphi_\# \mu$ is absolutely continuous with
  respect to $\nu$. If we drop the measures and retain only which sets
  have measure zero, we get the notion of $[(\calA, \calN_\mu),
    (\calB, \calN_{\nu})]$-measurability
  of~\ref{ANBMMeasurability}. However, doing so, we may introduce some
  ``irregular'' maps. For example, if $\calA$ is the $\sigma$-algebra
  of Lebesgue measurable sets of the real line, $\calL^1$ the Lebesgue
  measure and $\nu$ the counting measure on $(\R, \calA)$ then the
  identity map induces a morphism $(\R, \calA, \calL^1) \to (\R,
  \calA, \nu)$. But, $\calL^1$ does not really compare to $\nu$,
  although it is absolutely continuous with respect to $\nu$. For
  instance, $\calL^1$ has no Radon-Nikod\'{y}m density with respect to
  $\nu$, not even in the generalized sense of
  Section~\ref{sec:RN}. Forgetting the measures, the morphism of MSNs
  $(\R,\calA, \calN_{\calL^1}) \to (\R, \calA, \{\emptyset\})$ is
  still somehow inappropriate. To avoid this, we restrict our
  attention to the supremum preserving morphisms introduced
  below. This will allow us to define a new category $\sfMSNsp$ of
  saturated MSNs with supremum preserving morphisms. Later, we will be
  able to define localizable versions of MSNs and similar notions by
  means of universal properties to be satisfied in $\sfMSNsp$.
\end{Empty}

\begin{Empty}[Boolean algebras]
\label{boole}
  Many of the properties that we will introduce underneath for MSNs
  are related to their {\em Boolean algebra}, defined in the
  following way: given an MSN $(X, \calA, \calN)$, we observe that the
  $\sigma$-algebra $\calA$ is a Boolean algebra and $\calN$ is an
  ideal of $\calA$ in the ring-theoretic sense; we then associate to
  $(X, \calA, \calN)$ the quotient Boolean algebra $\calA/\calN$.

  When we restrict our attention to saturated MSNs, this construction
  becomes functorial. Call $\mathsf{Bool}(X, \calA, \calN) =
  \calA/\calN$ the Boolean algebra of a saturated MSN. Given a
  morphism $\mathbf{f} \colon (X, \calA, \calN) \to (Y, \calB, \calM)$
  represented by a measurable map $f \colon X \to Y$, we define
  $\mathsf{Bool}(f) \colon \mathsf{Bool}(Y, \calB,\calM) \to
  \mathsf{Bool}(X, \calA, \calN)$ that maps the equivalence class of
  $B \in \calB$ to the equivalence class of $f^{-1}(B)$. This map is
  well-defined because of the $[(\calA,\calN),
    (\calB,\calM)]$-measurability of $f$, it is a morphism of Boolean
  algebras, and it does not depend on the representative of
  $\mathbf{f}$, as one can easily check.
\end{Empty}

\begin{Empty}
  \label{essentialSup}
  Let $(X, \calA, \calN)$ be an MSN and $\calE$ be a subcollection of
  $\calA$. We say that $U \in \calA$ is an {\em $\calN$-essential
    upper bound} of $\calE$ whenever $E \setminus U \in \calN$ for all
  $E \in \calE$. Furthermore, a measurable set $S \in \calA$ is an
  {\em $\calN$-essential supremum} of $\calE$ whenever
  \begin{enumerate}
  \item[(1)] $S$ is an $\calN$-essential upper bound of $\calE$;
  \item[(2)] If $S'$ is an $\calN$-essential upper bound of
    $\calE$, then $S \setminus S' \in \calN$.
  \end{enumerate}
  In particular, if $S$, $S'$ are both $\calN$-essential suprema of
  $\calE$, their symmetric difference $S \ominus S'$ is negligible. In
  other words, an essential supremum, when it exists, is unique up to
  negligible sets. In fact, it corresponds to a (unique) supremum in
  $\mathsf{Bool}(X,\calA,\calN)$. A collection $\calE \subset \calA$
  that admits $X$ as an $\calN$-essential supremum is called {\em
    $\calN$-generating}. We will use the following repeatedly. If
  $\calE \subset \calA$ and $S \in \calA$ is an $\calN$-essential
  supremum of $\calE$, then $\calE \cup \{ X \setminus S \}$ is
  $\calN$-generating. 

  The next ubiquitous lemma expresses that $\cap$ is distributive over
  the (partially defined) operation of taking essential suprema. It
  implies the following fact, which we will use frequently: If $\calE$
  is $\calN$-generating and $A \in \calA \setminus \calN$, then $E
  \cap A \not \in \calN$ for some $E \in \calE$.
\end{Empty}

\begin{Lemma}[Distributivity Lemma]
  \label{supDistrib}
  Let $(X, \calA, \calN)$ be an MSN, $\calE \subset \calA$ be a
  collection that has an $\calN$-essential supremum $S$, and $C \in
  \calA$. Then $C \cap S$ is an $\calN$-essential supremum of $\{C
  \cap E : E \in \calE\}$.
\end{Lemma}

\begin{proof}
  Condition (1) in Definition~\ref{essentialSup} is met because $C
  \cap E \setminus C \cap S = C \cap (E \setminus S) \in \calN$ for
  all $E \in \calE$. As for (2), we let $S'$ be an $\calN$-essential
  upper bound for $\{C \cap E: E \in \calE\}$. We claim that $S''
  \defeq S' \cup (X \setminus C)$ is an $\calN$-essential upper bound
  for $\calE$. Indeed, for any $E \in \calE$, we have $E \setminus S''
  = (C \cap E) \setminus S' \in \calN$. It follows that $(C \cap S)
  \setminus S' = S \setminus S'' \in \calN$.
\end{proof}

\begin{Empty}
  Note that if $(X,\calA,\calN)$ and $(Y,\calB,\calM)$ are MSNs, $f :
  X \to Y$ is $[(\calA,\calN),(\calB,\calM)]$-measurable, $\calE
  \subset \calB$, and $S \in \calB$ is an $\calM$-essential upper
  bound of $\calE$, then $f^{-1}(S)$ is an $\calN$-essential upper
  bound of $f^{-1}(\calE)$. However, if $S$ is an $\calM$-essential
  supremum of $\calE$ then $f^{-1}(S)$ may not be an $\calN$-essential
  supremum of $f^{-1}(\calE)$. Consider, for instance,
  $(X,\calA,\calN)=(\R,\calB(\R),\calN_{\calL^1})$,
  $(Y,\calB,\calM)=(\R,\calB(\R),\{\emptyset\})$, $f=\rmid_{\R}$, and
  $\calE = \{ \{x\} : x \in \R \}$. Then $\R$ is an
  $\{\emptyset\}$-essential supremum of $\calE$, $\emptyset$ is an
  $\calN_{\calL^1}$-essential supremum of $\calE = f^{-1}(\calE)$, and
  $\R \setminus \emptyset \not\in \calN_{\calL^1}$.
\end{Empty}

\begin{Empty}
  There are several objects that we can call {\em supremum
    preserving}. For saturated MSNs $(X, \calA, \calN)$ and $(Y,
  \calB, \calM)$, we define
  \begin{itemize}
  \item A morphism of Boolean algebras $\varphi \colon \frA \to \frB$
    is called supremum preserving if, for any family $\frE \subset \frA$ that admits a
    supremum, the family $\varphi(\frE)$ admits a supremum and $\varphi(\sup \frE) = \sup
    \varphi(\frE)$.
  \item A morphism $\mathbf{f} \colon (X, \calA, \calN) \to (Y, \calB,
    \calM)$ is called supremum preserving whenever
    $\mathsf{Bool}(\mathbf{f})$ is.
  \item An $[(\calA, \calN), (\calB, \calM)]$-measurable map $f \colon
    X \to Y$ is called supremum preserving if, for any collection $\calE
    \subset \calB$ possessing an $\calM$-essential supremum $S$, then
    $f^{-1}(S)$ is an $\calN$-essential supremum of $f^{-1}(\calE)
    \defeq \{f^{-1}(E) : E \in \calE\}$.
  \end{itemize}
  For a morphism $\mathbf{f}$ represented by $f \in \mathbf{f}$, the
  supremum preserving characters of $f$, $\mathbf{f}$ and
  $\mathsf{Bool}(\mathbf{f})$ are all equivalent. Also, the
  composition of two supremum preserving morphisms is supremum
  preserving. We call $\sfMSNsp$ the subcategory of $\sfMSN$ that
  consists of saturated MSNs and supremum preserving morphisms. In the
  next proposition, we gather some basic facts about supremum
  preserving morphisms and the category $\sfMSNsp$.
\end{Empty}

\begin{Proposition}
  \label{propsp}
  The following hold. 
  \begin{enumerate}
  \item[(A)] Two saturated MSNs are isomorphic in $\sfMSN$ if and only
    if they are isomorphic in $\sfMSNsp$.
  \item[(B)] Let $(X, \calA, \calN)$ be a saturated MSN and $Z \in
    \calA$. The morphism $\boldsymbol{\iota}_Z \colon (Z, \calA_Z,
    \calN_Z) \to (X, \calA, \calN)$ induced by the inclusion map
    $\iota_Z \colon Z \to X$ is supremum preserving.
  \item[(C)] Let $\mathbf{f}, \bg \colon (X, \calA, \calN) \to (Y,
    \calB, \calM)$ be a pair of morphisms in $\sfMSNsp$, represented
    by $f \in \mathbf{f}$ and $g \in \mathbf{g}$. If $Z \defeq \{f =
    g\}$ is $\calA$-measurable, then $((Z, \calA_Z, \calN_Z),
    \boldsymbol{\iota}_Z)$ is the equalizer of $\mathbf{f}, \bg$ in $\sfMSNsp$.
  \item[(D)] The category $\sfMSNsp$ has coproducts, which are
    preserved by the forgetful functor $\sfMSNsp \to \sfMSN$.
  \end{enumerate}
\end{Proposition}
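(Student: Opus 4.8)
The plan is to treat the four items in turn, leaning throughout on the Distributivity Lemma~\ref{supDistrib} and on the elementary observation that, when $Z \in \calA$, the notions of $\calN$-negligibility and $\calN_Z$-negligibility agree on subsets of $Z$; consequently an $\calN_Z$-essential supremum of a family in $\calA_Z$ coincides with its $\calN$-essential supremum computed in $(X,\calA,\calN)$. For (A), the ``if'' direction is immediate, since $\sfMSNsp$ is a subcategory of $\sfMSN$ and an isomorphism there is a fortiori one in $\sfMSN$. For ``only if'', I would take an isomorphism $\mathbf{f}$ in $\sfMSN$ with inverse $\bg$ and pass to the induced maps $\mathsf{Bool}(\mathbf{f})$ and $\mathsf{Bool}(\bg)$, which are mutually inverse isomorphisms of Boolean algebras. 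As a Boolean algebra isomorphism is an order isomorphism, it preserves every supremum that exists; hence both $\mathbf{f}$ and $\bg$ are supremum preserving, belong to $\sfMSNsp$, and remain mutually inverse there.

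Part (B) I expect to follow directly from the Distributivity Lemma. Given $\calE \subset \calA$ with $\calN$-essential supremum $S$, applying~\ref{supDistrib} with $C = Z$ shows that $Z \cap S = \boldsymbol{\iota}_Z^{-1}(S)$ is an $\calN$-essential supremum of $\boldsymbol{\iota}_Z^{-1}(\calE) = \{Z \cap E : E \in \calE\}$. Since $Z \in \calA$, every set involved is a subset of $Z$ and the two upper-bound conditions transfer verbatim between $(X,\calA,\calN)$ and $(Z,\calA_Z,\calN_Z)$, so $Z \cap S$ is in fact an $\calN_Z$-essential supremum; this is precisely supremum preservation for $\boldsymbol{\iota}_Z$.

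For (C), I would start from Proposition~\ref{equalizerMSN}, which already exhibits $[(Z,\calA_Z,\calN_Z),\boldsymbol{\iota}_Z]$ as the equalizer in $\sfMSN$ and supplies, for each competing $\bh$ with $\mathbf{f}\circ\bh = \bg\circ\bh$, a unique factorization $\bh = \boldsymbol{\iota}_Z \circ \bh'$. By (B) the morphism $\boldsymbol{\iota}_Z$ already lies in $\sfMSNsp$, so it remains only to check that the factoring morphism $\bh'$ is supremum preserving whenever $\bh$ is. Choosing $h \in \bh$ with values in $Z$ and letting $h' \in \bh'$ be its corestriction, I would invoke the negligibility coincidence above: for $\calF \subset \calA_Z$ its $\calN_Z$-essential supremum $S_Z$ is also its $\calN$-essential supremum in $(X,\calA,\calN)$, and since $h$ is supremum preserving while $(h')^{-1}(S_Z) = h^{-1}(S_Z)$ and $(h')^{-1}(\calF) = h^{-1}(\calF)$, it follows that $(h')^{-1}(S_Z)$ is a $\calP$-essential supremum of $(h')^{-1}(\calF)$. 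Uniqueness of the factorization descends from $\sfMSN$, and any factorization in $\sfMSNsp$ is in particular one in $\sfMSN$.

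Finally, for (D), I would take the coproduct $(X,\calA,\calN)$ of $\la(X_i,\calA_i,\calN_i)\ra_{i\in I}$ constructed in $\sfMSN$. Each $X_i$ is $\calA$-measurable with subMSN equal to $(X_i,\calA_i,\calN_i)$, so by (B) the canonical morphisms $\boldsymbol{\iota}_i$ are supremum preserving. For the universal property, given supremum preserving $\mathbf{f}_i$ with amalgamation $\mathbf{f} = \coprod_i \mathbf{f}_i$ represented by $f$ and a family $\calG \subset \calB$ with $\calM$-essential supremum $S$, condition~(1) of an essential supremum holds automatically for a measurable map, so I would verify the nontrivial condition~(2) fiberwise: for any $\calN$-essential upper bound $U$ of $f^{-1}(\calG)$, the trace $U \cap X_i$ is an $\calN_i$-essential upper bound of $f_i^{-1}(\calG)$, whence supremum preservation of $\mathbf{f}_i$ gives $(f^{-1}(S) \setminus U) \cap X_i = f_i^{-1}(S) \setminus (U \cap X_i) \in \calN_i$; the fiberwise description of $\calN$ in the coproduct then yields $f^{-1}(S) \setminus U \in \calN$, so $\mathbf{f}$ is supremum preserving. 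Uniqueness again descends from $\sfMSN$, and preservation by the forgetful functor is built into the construction. I expect the only genuinely delicate step to be the passage of supremum preservation to the corestriction $\bh'$ in (C); all remaining work reduces to bookkeeping with traces on measurable subsets.
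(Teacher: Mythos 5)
Your proposal is correct and follows essentially the same route as the paper's proof: (A) via the order-isomorphism of Boolean algebras, (B) via the Distributivity Lemma~\ref{supDistrib}, (C) by reducing to Proposition~\ref{equalizerMSN} and checking that the corestriction $h'$ remains supremum preserving, and (D) by verifying supremum preservation of the amalgamated map fiberwise in the $\sfMSN$-coproduct. The only difference is that you make explicit the transfer between $\calN_Z$-essential suprema in $(Z,\calA_Z,\calN_Z)$ and $\calN$-essential suprema in $(X,\calA,\calN)$ for measurable $Z$, a point the paper leaves implicit.
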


\begin{proof}
  (A) Let $\mathbf{f}$ be an isomorphism in $\sfMSN$. Then
  $\mathsf{Bool}(\mathbf{f})$ is an isomorphism of Boolean
  algebras. More specifically, it is an isomorphism of posets and for
  this reason it preserves suprema.

  (B) This is the content of Lemma~\ref{supDistrib}.

  (C) By Proposition~\ref{equalizerMSN}, $((Z, \calA_Z, \calN_Z),
  \boldsymbol{\iota}_Z)$ is the equalizer of $\mathbf{f}, \bg$ in
  $\sfMSN$ and by (B) the morphism $\boldsymbol{\iota}_Z$ is a morphism of $\sfMSNsp$. Let $\bh \colon (T, \calC, \calP) \to (X, \calA, \calN)$
  be a supremum preserving morphism that satisfies $\mathbf{f} \circ \bh = \bg
  \circ \bh$. Recalling the proof of Proposition~\ref{equalizerMSN},
  there is a representative $h \in \bh$ with values in $Z$, and its
  restriction $h' \colon T \to Z$ induces the unique morphism $\bh'$
  such that $\bh = \boldsymbol{\iota}_Z \circ \bh'$. The results
  follows from the fact that $h'$ is easily checked to be supremum
  preserving.
  
  (D) Let $\la (X_i, \calA_i, \calN_i)\ra_{i \in I}$ be a family of
  saturated MSNs, $(X, \calA, \calN)$ be their coproduct in the
  category $\sfMSN$, and $\la \mathbf{f}_i \ra_{i \in I}$ be a family
  of supremum preserving morphisms from $(X_i, \calA_i, \calN_i)$ to a
  saturated $(Y, \calB, \calM)$, each represented by $f_i \colon X_i
  \to Y$. We need to show that $f \defeq \coprod_{i \in I} f_i \colon
  X \to Y$ is supremum preserving. For this, let $\calE \subset \calB$
  be a collection that has an $\calM$-essential supremum $S$. We
  observe that $f^{-1}(S) = \coprod_{i \in I} f^{-1}_i(S)$ is an
  $\calM$-essential upper bound of $f^{-1}(\calE)$. Let $U$ be a
  second $\calN$-essential upper bound of $f^{-1}(\calE)$. Then $X_i
  \cap U$ is an $\calN_i$-essential upper bound of $\{X_i \cap
  f^{-1}(E) : E \in \calE\} = f_i^{-1}(\calE)$. It follows that
  $f_i^{-1}(S) \setminus (X_i \cap U) \in \calN_i$. As this happens
  for all $i \in I$, we get that $f^{-1}(S) \setminus U \in \calN$.
\end{proof}

\section{Localizable, cccc and strictly localizable MSNs}

\begin{Empty}[Localizable MSN]
  An MSN $(X, \calA, \calN)$ is {\em localizable} whenever each
  collection $\calE \subset \calA$ admits an $\calN$-essential
  supremum. Equivalently, $(X, \calA, \calN)$ is localizable whenever
  its Boolean algebra $\calA / \calN$ is {\em Dedekind complete}, that
  is, each subset of $\calA/ \calN$ has a supremum.
  
  Originally, localizability was introduced by Segal \cite{SEG.51} in
  the context of measure spaces. Since then, many minor variations
  over the definition in that context have been proposed (see \cite{OKA.RIC.19} for
  an overview). We will follow the definition
  in~\cite[Chapter~2]{FREMLIN.II}. A measure space $(X, \calA, \mu)$ is
  called {\em localizable} whenever
  \begin{itemize}
  \item[(1)] it is {\em semi-finite}, i.e. for all $A \in
    \calA$ with $\mu(A) > 0$, there is a measurable set $A' \subset A$
    such that $0 < \mu(A') < \infty$;
  \item[(2)] the underlying MSN $(X, \calA, \calN_\mu)$ is
    localizable.
  \end{itemize}
  
\begin{Empty}[Semi-finite measure space] 
\label{semi-finite} 
  In the definition of localizable measure space, semi-finiteness
  plays on important r\^ole. Let us rephrase it. Given $(X,\calA,\mu)$
  a measure space, we abbreviate $\calA^f \defeq \calA \cap \{E :
  \mu(E) < \infty\}$. We say that $N \in \calA$ is {\em locally
    $\mu$-negligible} whenever $N \cap E \in \calN_\mu$ for every $E
  \in \calA^f$. We let $\calN_{\mu,\rmloc}$ be the $\sigma$-ideal
  consisting of locally $\mu$-negligible measurable sets. The
  following are equivalent:
  \begin{enumerate}
  \item[(1)] $\calN_\mu = \calN_{\mu,\rmloc}$.
  \item[(2)] $(X,\calA,\mu)$ is semi-finite.
  \item[(3)] $\calA^f$ is $\calN_\mu$-generating.
  \end{enumerate}
  
  The only non trivial part is $(3) \Rightarrow (1)$. If $N \in \calA$ then $N$ is an $\calN_\mu$-essential supremum of $\{ N \cap F : F \in \calA^f \}$, according to the Distributivity Lemma \ref{supDistrib}. If also $N \in \calN_{\mu,\rmloc}$, then it follows that $N \in \calN_\mu$. The notion of locally $\mu$-negligible sets will appear again in \ref{def.ld}.
\end{Empty}  
  
  Next we introduce some classes of localizable MSNs that will appear
  throughout the paper.
\end{Empty}

\begin{Empty}[Countable chain condition]
\label{ccc.def}
  Let $(X, \calA, \calN)$ be an MSN. A family $\calE \subset \calA
  \setminus \calN$ is called {\em almost disjointed} whenever $E \cap
  E' \in \calN$ for any pair of distinct $E, E' \in \calE$. The MSN $(X,
  \calA, \calN)$ is said to have the {\em countable chain condition}
  (in short: is {\em ccc}) whenever an almost disjointed family in
  $\calA \setminus \calN$ is at most countable.

  The previous notions have counterparts in the realm of Boolean
  algebras. Given a Boolean algebra $\frA$, a subset $\frE \subset
  \frA$ is called {\em disjointed} whenever $x \wedge y = 0$ for any
  pair of distinct elements $x, y \in \frE$. The Boolean algebra
  $\frA$ has the {\em countable chain condition} (or: is {\em ccc})
  whenever each of its disjointed families is at most countable. Of
  course, an MSN $(X, \calA, \calN)$ is ccc if and only if its Boolean
  algebra $\calA/\calN$ is. In the following proposition, we show that
  being ccc is stronger than localizability. It is related to the
  fact, first established in~\cite{TAR.37}, that a Dedekind
  $\sigma$-complete Boolean algebra (that is, a Boolean algebra where
  countable collections have suprema) having the countable chain
  condition is Dedekind complete.
\end{Empty}

\begin{Proposition}
  \label{ccc=>loc}
  If an MSN $(X, \calA, \calN)$ is ccc and $\calE \subset \calA$ is a
  collection, then there is a countable subcollection $\calE' \subset
  \calE$ such that $\bigcup \calE'$ is an $\calN$-essential supremum
  of $\calE$. In particular, $(X, \calA, \calN)$ is localizable.
\end{Proposition}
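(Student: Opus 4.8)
The plan is to isolate the real content. Once we produce a \emph{countable} $\calE' \subset \calE$ whose union $U = \bigcup \calE'$ is merely an $\calN$-essential upper bound of $\calE$, it is automatically an $\calN$-essential supremum. Indeed, condition (2) in~\ref{essentialSup} comes for free: if $U'$ is any $\calN$-essential upper bound of $\calE$, then $E \setminus U' \in \calN$ for each $E \in \calE'$, whence $U \setminus U' = \bigcup_{E \in \calE'}(E \setminus U') \in \calN$ as a countable union of negligible sets. So everything reduces to finding a countable $\calE'$ with $E \setminus \bigcup \calE' \in \calN$ for every $E \in \calE$.

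To build such an $\calE'$, I would run an exhaustion argument driven by the countable chain condition. Consider the families $\calD \subset \calA \setminus \calN$ that are pairwise disjoint and such that each $D \in \calD$ is contained in some member $E_D \in \calE$. Ordered by inclusion, the union of a chain of such families is again one, so Zorn's Lemma furnishes a maximal such $\calD$; being pairwise disjoint (hence almost disjointed) it is at most countable by ccc. Set $\calE' = \{ E_D : D \in \calD \}$, a countable subcollection of $\calE$, and $U = \bigcup \calE'$.

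It then remains to check $U$ is an $\calN$-essential upper bound. Fix $E \in \calE$ and suppose, for contradiction, that $E \setminus U \notin \calN$. Since $U \in \calA$ is a countable union, $E \setminus U$ is a non-negligible \emph{measurable} set, disjoint from $U \supset \bigcup \calD$ and hence from every $D \in \calD$, and contained in $E \in \calE$; thus $\calD \cup \{ E \setminus U \}$ is a strictly larger admissible family, contradicting maximality. Therefore $E \setminus U \in \calN$ for all $E \in \calE$, as needed. The final assertion is immediate: an arbitrary $\calE \subset \calA$ acquires an $\calN$-essential supremum, so by definition $(X,\calA,\calN)$ is localizable.

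I expect the only delicate point to be the correct invocation of ccc. One must take the ``new pieces'' to be genuinely disjoint (not merely almost disjoint) so that maximality of $\calD$ can be contradicted cleanly, while recording that a pairwise disjoint family of non-negligible sets is in particular almost disjointed, so that ccc forces $\calD$---and hence $\calE'$---to be countable. A transfinite-recursion variant (repeatedly adjoining some $E_\alpha \in \calE$ with $E_\alpha \setminus \bigcup_{\beta<\alpha} E_\beta \notin \calN$) would work equally well; there the subtlety is purely bookkeeping, since ccc guarantees the disjoint increments $E_\alpha \setminus \bigcup_{\beta<\alpha} E_\beta$ are countably many, so the running unions stay countable and measurable and the process must halt before $\omega_1$.
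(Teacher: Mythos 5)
Your proof is correct, and it takes a genuinely different (though related) route from the paper's. The paper argues by contradiction with transfinite recursion: assuming no countable subcollection works, it builds a sequence $\la E_\alpha \ra_{\alpha < \omega_1}$ in $\calE$ whose increments $F_\alpha \defeq E_\alpha \setminus \bigcup_{\beta<\alpha} E_\beta$ are non-negligible, so that $\{F_\alpha : \alpha<\omega_1\}$ is an uncountable disjointed family violating ccc. You instead construct the countable subcollection directly: Zorn's Lemma gives a maximal disjoint family $\calD \subset \calA \setminus \calN$ refining $\calE$, ccc forces $\calD$ (hence $\calE' = \{E_D : D \in \calD\}$) to be countable, and maximality of $\calD$ yields that $\bigcup \calE'$ is an essential upper bound. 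Both arguments rest on the same two pillars: ccc applied to a disjointed family of increments, and the observation that a countable union of members of $\calE$ which is an $\calN$-essential upper bound is automatically an $\calN$-essential supremum (your opening reduction). A merit of your writeup is that it makes this second point explicit, whereas the paper uses it silently: to produce $E_\alpha$ at stage $\alpha$ of the recursion one needs that $\bigcup_{\beta<\alpha}E_\beta$ fails to be an essential \emph{upper bound}, and this follows from its failure to be an essential \emph{supremum} only via exactly that observation. Your maximality argument also runs parallel to the paper's own Lemma~\ref{zorn} on almost disjointed refinements (specialized to genuinely disjoint pieces, which, as you note, is what makes the contradiction with maximality clean), so it integrates naturally with the toolkit the paper develops anyway; the paper's transfinite version is more self-contained but needs the ordinal bookkeeping you relegate to a closing remark.
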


\begin{proof}
  Suppose the existence of a collection $\calE \subset \calA$ for
  which one cannot find a countable subcollection $\calE' \subset
  \calE$ whose union is an $\calN$-essential supremum of $\calE$. This
  assumption allows us to construct transfinitely a sequence $\la
  E_\alpha \ra_{\alpha < \omega_1}$ with values in $\calE$ such that
  for every $\alpha < \omega_1$, one has $F_\alpha \defeq E_\alpha
  \setminus \bigcup_{\beta < \alpha} E_\beta \not\in \calN$. But the
  disjointed family $\{F_\alpha : \alpha < \omega_1\}$ contradicts the
  fact that $(X, \calA, \calN)$ is ccc.
\end{proof}

\begin{Proposition}
  \label{finite=>loc}
  Let $(X, \calA, \mu)$ be a finite measure space. Then $(X, \calA,
  \calN_\mu)$ is ccc.
\end{Proposition}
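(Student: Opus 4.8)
The plan is to verify the definition of ccc from \ref{ccc.def} directly: I will show that every almost disjointed family $\calE \subset \calA \setminus \calN_\mu$ is at most countable. The whole argument rests on the single quantitative feature of a finite measure space, namely that $\mu(X) < \infty$, which forces only finitely many members of $\calE$ to have measure above any fixed positive threshold.

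First I would establish finite additivity of $\mu$ along almost disjointed families. Given finitely many distinct $E_1, \dots, E_k \in \calE$, the pairwise intersections lie in $\calN_\mu$, so an easy induction gives $\mu\!\left(\bigcup_{i=1}^k E_i\right) = \sum_{i=1}^k \mu(E_i)$; the inductive step uses only subadditivity applied to the null sets, since $\mu\!\left( \big(\bigcup_{i<k} E_i\big) \cap E_k \right) \leq \sum_{i<k} \mu(E_i \cap E_k) = 0$. As $\bigcup_{i=1}^k E_i \subset X$, this yields the bound $\sum_{i=1}^k \mu(E_i) \leq \mu(X)$ valid for every finite subfamily of $\calE$. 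Next, for each $n \in \N$ I set $\calE_n \defeq \calE \cap \{ E : \mu(E) > 1/n \}$. If $\calE_n$ contained $m$ distinct members, the previous bound would give $m/n < \mu(X)$, so $\calE_n$ has at most $n\,\mu(X)$ elements and is therefore finite. Since each $E \in \calE$ satisfies $\mu(E) > 0$ (because $E \notin \calN_\mu$), we have $\calE = \bigcup_{n \in \N} \calE_n$, a countable union of finite sets, hence countable. The localizability of $(X,\calA,\calN_\mu)$ would then follow at once from \ref{ccc=>loc}.

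I do not anticipate a genuine obstacle here. The only step that is not purely formal is the finite additivity on almost disjointed families, and that reduces immediately to subadditivity of $\mu$ on the negligible intersections; everything else is bookkeeping, with the finiteness of $\mu(X)$ carrying the essential content.
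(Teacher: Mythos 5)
Your proof is correct and takes essentially the same approach as the paper: both decompose the almost disjointed family as $\calE = \bigcup_n \calE_n$ with $\calE_n = \calE \cap \{E : \mu(E) > n^{-1}\}$ and use $\mu(X) < \infty$ together with almost disjointedness to force each $\calE_n$ to be finite. The only difference is that you spell out explicitly the finite-additivity-modulo-null-sets step that the paper compresses into the single inequality $\mu(X) \geq \mu\bigl(\bigcup \calE_n\bigr) \geq n^{-1}\rmcard\calE_n$.
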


\begin{proof}
  Let $\calE \subset \calA \setminus \calN$ be an almost disjointed
  family. For each positive integer $n$, set $\calE_n = \calE \cap \{E
  : \mu(E) > n^{-1}\}$. As $\mu(X) \geq \mu\left(\bigcup
  \calE_n\right) \geq n^{-1}\rmcard \calE_n$, we have that $\calE_n$
  is finite. Consequently, $\calE$ is at most countable.
\end{proof}

\begin{Proposition}
  \label{coprodLoc}
  A coproduct $\coprod_{i \in I} (X_i, \calA_i, \calN_i)$ of saturated
  localizable MSNs is localizable.
\end{Proposition}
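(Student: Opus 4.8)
The plan is to exploit the fact that both $\calA$ and $\calN$ are defined coordinatewise, so that essential suprema can be computed factor by factor. Write $(X,\calA,\calN)=\coprod_{i\in I}(X_i,\calA_i,\calN_i)$ and let $\calE\subset\calA$ be an arbitrary collection. For each $i\in I$ I would set $\calE_i \defeq \{E\cap X_i : E\in\calE\}$, a subcollection of $\calA_i$. Since $(X_i,\calA_i,\calN_i)$ is localizable, $\calE_i$ admits an $\calN_i$-essential supremum $S_i\in\calA_i$. I then define $S\defeq\bigcup_{i\in I}S_i$; because $S\cap X_i=S_i\in\calA_i$ for every $i$, the construction of the coproduct $\sigma$-algebra guarantees $S\in\calA$. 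The claim to be verified is that $S$ is an $\calN$-essential supremum of $\calE$.

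First I would check that $S$ is an $\calN$-essential upper bound. The point is that for any $E\in\calE$ one has $(E\setminus S)\cap X_i=(E\cap X_i)\setminus S_i$, and since $E\cap X_i\in\calE_i$ while $S_i$ is an $\calN_i$-essential upper bound of $\calE_i$, this lies in $\calN_i$. As this holds for every $i$, the definition of $\calN$ forces $E\setminus S\in\calN$, which is condition (1) of \ref{essentialSup}.

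The heart of the argument — and the only step requiring a little care — is the minimality condition. Given another $\calN$-essential upper bound $S'$ of $\calE$, I would first observe that $S'\cap X_i$ is an $\calN_i$-essential upper bound of $\calE_i$: indeed for $E\in\calE$ we have $(E\cap X_i)\setminus(S'\cap X_i)=(E\setminus S')\cap X_i$, and $E\setminus S'\in\calN$ gives $(E\setminus S')\cap X_i\in\calN_i$. Invoking now that $S_i$ is an $\calN_i$-essential supremum (not merely an upper bound) of $\calE_i$, I obtain $S_i\setminus(S'\cap X_i)\in\calN_i$, that is $(S\setminus S')\cap X_i\in\calN_i$ for every $i$, whence $S\setminus S'\in\calN$. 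This establishes condition (2) of \ref{essentialSup} and completes the proof.

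I expect no serious obstacle: the whole argument is a coordinatewise transfer of essential suprema through the two defining identities for $\calA$ and $\calN$. The one place to stay attentive is to use the supremum property — rather than just the upper-bound property — of each $S_i$ exactly once, in the minimality step; everything else reduces to the mechanical observation that the operation $(\,\cdot\,)\cap X_i$ commutes with set difference.
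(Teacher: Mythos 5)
Your proof is correct and follows exactly the paper's approach: decompose $\calE$ coordinatewise into $\calE_i=\{E\cap X_i : E\in\calE\}$, take the $\calN_i$-essential supremum $S_i$ in each factor, and verify that $S=\bigcup_{i\in I}S_i$ is an $\calN$-essential supremum of $\calE$ using the coordinatewise definitions of $\calA$ and $\calN$. The paper leaves this verification as a ``routine check''; your write-up simply supplies the details of that check, correctly.
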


\begin{proof}
  Let $\calE$ be a collection of measurable sets of a coproduct
  $\coprod_{i \in I} (X_i ,\calA_i, \calN_i)$. For each $i \in I$, the
  collection $\calE_i \defeq \{X_i \cap E : E \in \calE\}$ has an
  $\calN_i$-essential supremum $S_i \subset X_i$. We then routinely
  check that $S \defeq \coprod_{i \in I} S_i \in \calA$ is an
  $\calN$-essential supremum of $\calE$.
\end{proof}

\begin{Empty}[Stronger notions of localizability]
  An MSN is called {\em strictly localizable} if it is isomorphic to a
  coproduct of the form $\coprod_{i \in I} (X_i, \calA_i,
  \calN_{\mu_i})$, where $(X_i, \calA_i, \mu_i)$ are complete finite
  measure spaces.  Examples of strictly localizable MSNs are provided
  by MSNs associated to complete $\sigma$-finite measure spaces $(X,
  \calA, \mu)$. Indeed, denoting $\la X_i \ra_{i \in I}$ a countable
  partition of $X$ into measurable subsets of finite $\mu$ measure,
  one can verify that $(X, \calA, \calN_\mu)$ is isomorphic to
  $\coprod_{i \in I} (X_i, \calA_{X_i}, \calN_{\mu \shel X_i})$.

  Likewise, we say that an MSN is {\em cccc} whenever it is isomorphic
  to a coproduct of saturated ccc MSNs. We have the chain of
  implications
  \[
  \text{strictly localizable} \implies \text{cccc} \implies \text{localizable}.
  \]
  The first implication comes from Proposition~\ref{finite=>loc}, the
  second one from Propositions~\ref{ccc=>loc}
  and~\ref{coprodLoc}. Examples of non localizable spaces are provided
  by the next results.
\end{Empty}

\begin{Lemma}
  \label{47}
  Let $(X, \calA, \calN)$ be a localizable MSN, $\calE \subset \calA
  \setminus \calN$ an almost disjointed family. Then $\rmcard (\calA /
  \calN) \geq 2^{\rmcard \calE}$.
\end{Lemma}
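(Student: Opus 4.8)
The plan is to exhibit an injection from the power set $\calP(\calE)$ into the quotient Boolean algebra $\calA/\calN$, which immediately yields $\rmcard(\calA/\calN) \geq \rmcard \calP(\calE) = 2^{\rmcard \calE}$. Since $(X,\calA,\calN)$ is localizable, every subfamily $\calF \subset \calE$ possesses an $\calN$-essential supremum $S_\calF \in \calA$; fixing one such choice for each $\calF$, I would define $\Phi \colon \calP(\calE) \to \calA/\calN$ by sending $\calF$ to the class $[S_\calF]$ of $S_\calF$ modulo $\calN$. The entire content of the lemma then reduces to the injectivity of $\Phi$.

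To prove injectivity, take distinct $\calF, \calF' \subset \calE$ and, without loss of generality, a witness $E_0 \in \calF \setminus \calF'$. Two facts drive the argument. First, since $E_0 \in \calF$ and $S_\calF$ is an $\calN$-essential upper bound of $\calF$, one has $E_0 \setminus S_\calF \in \calN$, so $E_0$ and $E_0 \cap S_\calF$ agree modulo $\calN$. Second --- and this is the step where I expect the real work --- I claim $E_0 \cap S_{\calF'} \in \calN$. Here I would invoke the Distributivity Lemma~\ref{supDistrib}: $E_0 \cap S_{\calF'}$ is an $\calN$-essential supremum of $\{ E_0 \cap E' : E' \in \calF' \}$. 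But $E_0 \notin \calF'$ forces $E' \neq E_0$ for every $E' \in \calF'$, so almost disjointedness gives $E_0 \cap E' \in \calN$; hence $\emptyset$ is an $\calN$-essential upper bound of that family, and by the essential uniqueness of essential suprema, $E_0 \cap S_{\calF'} \in \calN$.

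Combining the two facts, the set-theoretic identity $E_0 \setminus \big[(E_0 \cap S_\calF) \setminus S_{\calF'}\big] = (E_0 \setminus S_\calF) \cup (E_0 \cap S_{\calF'})$ shows that $E_0$ and $(E_0 \cap S_\calF) \setminus S_{\calF'}$ differ by an element of $\calN$. Since $E_0 \notin \calN$, the measurable set $(E_0 \cap S_\calF) \setminus S_{\calF'}$ cannot be negligible, for otherwise $E_0$ would be a union of two negligible sets. As $(E_0 \cap S_\calF) \setminus S_{\calF'} \subset S_\calF \setminus S_{\calF'} \subset S_\calF \ominus S_{\calF'}$ and $\calN$ is downward closed, we conclude $S_\calF \ominus S_{\calF'} \notin \calN$, i.e. $[S_\calF] \neq [S_{\calF'}]$. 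This establishes injectivity of $\Phi$ and hence the cardinality bound.

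The only delicate point is the second driving fact, the negligibility of $E_0 \cap S_{\calF'}$; everything else is routine bookkeeping with symmetric differences. The natural tool is precisely the Distributivity Lemma, which converts $E_0 \cap S_{\calF'}$ into an essential supremum of the family $\{E_0 \cap E'\}_{E' \in \calF'}$ whose members are already negligible by the almost disjointedness hypothesis.
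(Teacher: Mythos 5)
Your proof is correct and follows essentially the same route as the paper's: the same map $\calP(\calE) \to \calA/\calN$ sending a subfamily to the class of its essential supremum, with injectivity established via a witness $E_0 \in \calF \setminus \calF'$, the upper-bound property giving $E_0 \setminus S_\calF \in \calN$, and the Distributivity Lemma plus almost disjointedness giving $E_0 \cap S_{\calF'} \in \calN$. The only difference is that you spell out the final symmetric-difference bookkeeping that the paper leaves implicit.
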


\begin{proof}
  Consider the application $\calP(\calE) \to \calA / \calN$ which maps
  each subcollection $\calE' \subset \calE$ to the equivalence class
  of its $\calN$ essential supremum. We claim that this map is
  injective. Indeed, suppose $\calE', \calE'' \subset \calE$ are
  distinct. Call $S'$ (resp. $S''$) an $\calN$ essential supremum of
  $\calE'$ (resp. $\calE''$). Without loss of generality, there is $F
  \in \calE' \setminus \calE''$. By Lemma~\ref{supDistrib}, $F \cap
  S'$ (resp. $F \cap S''$) is an essential supremum of $\{F\cap E: E
  \in \calE'\}$ (resp. $\{F \cap E : E \in \calE''\}$). We deduce that $F 
\setminus S' \in \calN$ and, taking the
  almost disjointed character of $\calE$ into account, that $F \cap S'' \in \calN$. This
  implies that $S'$ and $S''$ do not induce the same equivalence class
  in $\calA/\calN$.
\end{proof}

We will use the following many times.

\begin{Lemma}
\label{zorn}
Let $(X,\calA,\calN)$ be an MSN and let $\calC \subset \calA$ be $\calN$-generating. There exists $\calE \subset \calA \setminus \calN$ with the following properties.
\begin{enumerate}
\item[(A)] $\calE$ is almost disjointed.
\item[(B)] For each $E \in \calE$, there exists $C \in \calC$ such that $E \subset C$.
\item[(C)] $\calE$ is $\calN$-generating. 
\end{enumerate}
\end{Lemma}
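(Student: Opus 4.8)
The plan is to produce $\calE$ as a maximal element of a suitably chosen poset, via Zorn's Lemma, and then to show that maximality forces the $\calN$-generating property (C). Concretely, I would let $\mathcal{P}$ denote the set of all subcollections $\calE \subset \calA \setminus \calN$ that satisfy (A) and (B), partially ordered by inclusion. This poset is nonempty, since $\emptyset \in \mathcal{P}$ vacuously. Given a chain $\la \calE_i \ra_{i}$ in $\mathcal{P}$, its union $\bigcup_i \calE_i$ again lies in $\mathcal{P}$: property (B) is inherited memberwise, and almost disjointedness holds because any two distinct members of the union already belong to a common $\calE_i$ (the family being totally ordered by inclusion), where they are almost disjoint. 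Hence every chain has an upper bound, and Zorn's Lemma yields a maximal element $\calE \in \mathcal{P}$, which by construction already satisfies (A) and (B).

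It remains to verify (C), namely that $X$ is an $\calN$-essential supremum of this maximal $\calE$. Since $X$ is automatically an $\calN$-essential upper bound of any subcollection of $\calA$, the only condition that could fail is condition (2) of \ref{essentialSup}: that every $\calN$-essential upper bound $U$ of $\calE$ satisfies $X \setminus U \in \calN$. So I would argue by contradiction, supposing there is an $\calN$-essential upper bound $U \in \calA$ of $\calE$ with $W \defeq X \setminus U \notin \calN$; by definition of essential upper bound this means $E \cap W \in \calN$ for every $E \in \calE$. Now I invoke the hypothesis that $\calC$ is $\calN$-generating, together with the consequence of the Distributivity Lemma recorded in \ref{essentialSup}: since $W \in \calA \setminus \calN$, there is some $C \in \calC$ with $C \cap W \notin \calN$.

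Setting $E_0 \defeq C \cap W$, this set witnesses the failure of maximality and closes the argument. Indeed $E_0 \in \calA \setminus \calN$ and $E_0 \subset C \in \calC$, so (B) holds for $E_0$; and for every $E \in \calE$ we have $E_0 \cap E \subset W \cap E \in \calN$, whence $E_0 \cap E \in \calN$ because $\calN$ is a $\sigma$-ideal, so $\calE \cup \{E_0\}$ is still almost disjointed. Moreover $E_0 \notin \calE$, since $E_0 \notin \calN$ whereas $E_0 \cap E \in \calN$ for all $E \in \calE$. Thus $\calE \cup \{E_0\} \in \mathcal{P}$ strictly contains $\calE$, contradicting maximality; hence no such $U$ exists and $\calE$ is $\calN$-generating. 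The only genuinely delicate point is the passage to (C): the Zorn set-up is routine, and the crux is recognizing that a hypothetical gap $W$ in the essential cover must meet some member of the generating family $\calC$ non-negligibly, which is exactly what allows one to enlarge $\calE$ and reach the contradiction.
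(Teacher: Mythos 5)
Your proof is correct and is essentially the paper's own argument: both apply Zorn's Lemma to the poset of subfamilies of $\calA \setminus \calN$ satisfying (A) and (B) ordered by inclusion, and both obtain (C) by contradiction, using that $\calC$ is $\calN$-generating to produce $C \in \calC$ with $C \cap (X \setminus U) \notin \calN$ and adjoining this set to violate maximality. Your write-up is in fact slightly more careful than the paper's (which speaks of chains having "maximal elements" rather than upper bounds, and restricts to $\calN \neq \calA$ to get nonemptiness, whereas you simply note $\emptyset$ belongs to the poset and verify strictness of the enlargement).
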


\begin{proof}
There is no restriction to assume that $\calN \neq \calA$; in
particular, $\calC \neq \emptyset$. Consider the set $\bE$ consisting
of those $\calE \subset \calA \setminus \calN$ that satisfy conditions
(A) and (B) above, ordered by inclusion. Thus, $\bE$ is nonempty and
one readily checks that every chain in $\bE$ possesses a maximal
element. Therefore, $\bE$ admits a maximal element $\calE$, according
to Zorn's Lemma. We ought to show that $\calE$ is
$\calN$-generating. If this were not the case, there would exist an
$\calN$-essential upper bound $U \in \calA$ of $\calE$ such that $X
\setminus U \not \in \calN$. The latter, together with the fact that
$\calC$ is $\calN$-generating, implies the existence of $C \in \calC$
such that $C \cap (X \setminus U) \not \in \calN$. Then, $\calE \cup
\{ C \cap (X \setminus U) \}$ contradicts the maximality of $\calE$.
\end{proof}

\begin{Proposition}[$\mathsf{ZFC} + \mathsf{CH}$]
  Let $X$ be a Polish space endowed with its Borel $\sigma$-algebra
  $\calB(X)$ and $\mu \colon \calB(X) \to [0, \infty]$ be a
  semi-finite Borel measure. Under the Continuum Hypothesis, one has
  the following dichotomy: either $\mu$ is $\sigma$-finite, or the
  MSN $(X, \calB(X), \calN_{\mu})$ is not localizable.
\end{Proposition}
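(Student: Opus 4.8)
The plan is to prove the contrapositive: assuming the Continuum Hypothesis, I will show that if $(X,\calB(X),\calN_\mu)$ is localizable, then $\mu$ is $\sigma$-finite. The whole argument hinges on a cardinality count. Since $X$ is Polish, hence second countable, its Borel $\sigma$-algebra satisfies $\rmcard \calB(X) \leq \mathfrak{c}$, and therefore $\rmcard(\calB(X)/\calN_\mu) \leq \mathfrak{c}$. Under $\mathsf{CH}$ this reads $\rmcard(\calB(X)/\calN_\mu) \leq \aleph_1$.

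The key step, and the one I expect to be the crux, is to deduce that $(X,\calB(X),\calN_\mu)$ is ccc. Suppose it were not, so that there is an uncountable almost disjointed family $\calE \subset \calB(X) \setminus \calN_\mu$; then $\rmcard \calE \geq \aleph_1$. Localizability lets me invoke Lemma~\ref{47}, which yields $\rmcard(\calB(X)/\calN_\mu) \geq 2^{\rmcard \calE} \geq 2^{\aleph_1}$. Since $2^{\aleph_1} > \aleph_1$ by Cantor's theorem, this contradicts the bound of the previous paragraph. This is precisely where $\mathsf{CH}$ enters: it forces the only admissible infinite cardinality of an almost disjointed family to be $\aleph_0$. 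Hence every almost disjointed family in $\calB(X) \setminus \calN_\mu$ is countable, i.e. the MSN is ccc.

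It then remains to show that a semi-finite ccc measure is $\sigma$-finite. Because $\mu$ is semi-finite, $\calA^f$ is $\calN_\mu$-generating by~\ref{semi-finite}. Applying Lemma~\ref{zorn} with $\calC = \calA^f$ produces an almost disjointed, $\calN_\mu$-generating family $\calE$ each of whose members is contained in a set of finite measure, hence is itself of finite measure. By the ccc property just established, $\calE$ is countable, say $\calE = \{E_n : n \in \N\}$. For a countable family the union $\bigcup_n E_n$ is automatically an $\calN_\mu$-essential supremum: it is an essential upper bound, and any other essential upper bound $U$ satisfies $\big(\bigcup_n E_n\big) \setminus U = \bigcup_n (E_n \setminus U) \in \calN_\mu$. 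Since $\calE$ is $\calN_\mu$-generating, $X$ is also an essential supremum of $\calE$, so by uniqueness up to negligible sets $X \setminus \bigcup_n E_n \in \calN_\mu$, i.e. $\mu\big(X \setminus \bigcup_n E_n\big) = 0$. Thus $X$ is covered by the null set $X \setminus \bigcup_n E_n$ together with the countably many finite-measure sets $E_n$, which is exactly $\sigma$-finiteness.

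I expect the only real subtlety to lie in the first two paragraphs: pinning down $\rmcard \calB(X) \leq \mathfrak{c}$ and correctly leveraging $\mathsf{CH}$ in the cardinal arithmetic $2^{\aleph_1} > \aleph_1 = \mathfrak{c}$. By contrast, the passage from ccc to $\sigma$-finiteness is a routine maximality-plus-countability argument built on Lemmas~\ref{zorn} and~\ref{supDistrib}.
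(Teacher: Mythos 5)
Your proof is correct and follows essentially the same route as the paper: both rest on Lemma~\ref{zorn} applied to the finite-measure sets, Lemma~\ref{47}, the bound $\rmcard \calB(X) \leq \mathfrak{c}$, and $\mathsf{CH}$ to turn an uncountable almost disjointed family into a cardinality contradiction with localizability. The only difference is organizational — you package the contradiction as the intermediate claim ``localizable $\Rightarrow$ ccc'' and then run the countable-union argument, whereas the paper performs a direct dichotomy on the cardinality of the Zorn family — but the mathematical content is identical.
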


\begin{proof}
  Let $\calE$ be associated with $\calC \defeq \calB(X) \cap \{ A :
  \mu(A) < \infty \}$ in Lemma \ref{zorn}. Recall \ref{semi-finite}
  that $\calC$ is $\calN_\mu$-generating.  If $\calE$ is countable,
  then $\bigcup \calE$ is measurable and, accordingly, an
  $\calN_\mu$-essential upper bound of $\calE$. Thus $X \setminus \bigcup
  \calE \in \calN_\mu$, since $\calE$ is $\calN_\mu$-generating. We
  have proven that $\mu$ is $\sigma$-finite.

  On the other hand, if $\calE$ is uncountable, the Continuum
  Hypothesis guarantees that it has cardinal greater or equal to
  $\mathfrak{c}$. Assume if possible that $(X, \calB(X), \mu)$ is
  localizable. As the map $\calB(X) \to \calB(X) / \calN_{\mu}$ is
  onto, we deduce from Lemma~\ref{47} that $\rmcard \calB(X) \geq
  2^{\mathfrak{c}} > \mathfrak{c}$. However, Borel sets are Suslin,
  and Suslin sets are continuous images of closed subsets of a
  particular Polish space, the Baire space, see e.g
  \cite[3.3.18]{SRIVASTAVA}. This gives the upper bound $\rmcard \calB(X)
  \leq \mathfrak{c}$, contradicting the preceding inequality.
\end{proof}

\begin{Empty}[$\calP$-version of an MSN]
  \label{pversion}
  Let $\calP$ be a property associated to MSNs. We suppose that the
  property $\calP$ is {\em hereditary}: if $(X, \calA, \calN)$ has
  $\calP$ then the MSNs $(Z, \calA_Z, \calN_Z)$ also has $\calP$ for
  all $Z \in \calA$. ``Being strictly localizable'', ``being cccc'' or
  ``being localizable'' are examples of hereditary properties.

  Let $(X, \calA, \calN)$ be a saturated MSN. We define a {\em
    $\calP$-version} of $(X, \calA, \calN)$ to be a couple $[(\hat{X},
    \hat{\calA}, \hat{\calN}), \bp]$ consisting of a saturated MSN
  $(\hat{X}, \hat{\calA}, \hat{\calN})$ with the property $\calP$ and
  a supremum preserving morphism $\bp \colon (\hat{X}, \hat{\calA},
  \hat{\calN}) \to (X, \calA, \calN)$ satisfying the following
  property: For any saturated MSN $(Y, \calB, \calM)$ with the
  property $\calP$ and any supremum preserving morphism $\bq \colon
  (Y,\calB, \calM) \to (X, \calA, \calN)$, there is a unique supremum
  preserving morphism $\br\colon (Y, \calB, \calM) \to (\hat{X},
  \hat\calA, \hat{\calN})$ such that $\bq =\bp \circ \br$.
  \[
  \begin{tikzcd}
    (Y, \calB, \calM) \arrow[rd, swap, "\bq"] \arrow[rr, dotted,
      "\exists!\br"] & & (\hat{X}, \hat{\calA}, \hat{\calN})
    \arrow[ld, "\bp"] \\ & (X, \calA, \calN) &
  \end{tikzcd}
  \]
  By this definition, a $\calP$-version must satisfy a universal
  property, and as such it is unique up to a unique isomorphism of the
  category $\sfMSNsp$. More specifically, if $[(\hat{X}, \hat{\calA},
  \hat{\calN}), \bp]$ and $[(\hat{X}', \hat{\calA}', \hat{\calN}'),
  \bp']$ are two $\calP$-versions, then we easily check that there is
  a unique isomorphism $\br \colon (\hat{X}, \hat{\calA}, \hat{\calN})
  \to (\hat{X}', \hat{\calA}', \hat{\calN}')$ such that $\bp' \circ
  \br = \bp$.
\end{Empty}

\begin{Empty}[Atomic MSNs]
  \label{atomic}
  One of our motivations in this article is to find a universal
  construction that transforms an MSN into something with better
  localizability properties. As such, it is wise to first have a look
  at the not so easy case of MSNs $(X, \calA, \calN)$ such that all
  singletons are $\calA$-measurable and $ \calN = \{\emptyset\}$. We
  call such MSNs {\em atomic}.

  In an atomic MSN $(X, \calA, \{\emptyset\})$, it is easy to see that
  a subset $\calE \subset \calA$ has an $\{\emptyset\}$-essential
  supremum if and only if $\bigcup \calE \in \calA$, in which case
  $\bigcup \calE$ is the $\{\emptyset\}$-essential supremum. Therefore
  the MSN $(X, \calA, \{\emptyset\})$ is localizable if and only if
  $\calA = \calP(X)$. In other words, the non localizability of $(X,
  \calA, \{\emptyset\})$ can only be due to the lack of measurable
  sets; therefore it seems sensible to ask for $(X, \calP(X),
  \{\emptyset\})$ to be the ``localization'' of $(X, \calA,
  \{\emptyset\})$.

  Unfortunately, Proposition~\ref{counterexamplelv} gives a negative
  result. It tells us that the localizable version of an MSN, as
  defined in~\ref{pversion}, is not the right notion of
  ``localization''. This issue will be addressed in
  Section~\ref{sec:ld} by introducing a notion of local determination
  for MSNs.
\end{Empty}

\begin{Proposition}
  \label{counterexamplelv}
  Let $X$ be an uncountable set, $\calC(X)$ be its
  countable-cocountable $\sigma$-algebra. Let $\boldsymbol{\iota}$ be
  the morphism $(X, \calP(X), \{\emptyset\}) \to (X, \calC(X),
  \{\emptyset\})$ induced by the identity map. Then $[(X, \calP(X),
  \{\emptyset\}), \boldsymbol{\iota}]$ is not a localizable version of
  $(X, \calC(X), \{\emptyset\})$.
\end{Proposition}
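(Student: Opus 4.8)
The plan is to refute the universal property through an \emph{existence} failure. First note that $\boldsymbol{\iota}$ is a bona fide candidate: it is supremum preserving, because by \ref{atomic} a subfamily of $\calC(X)$ has an essential supremum in $(X,\calC(X),\{\emptyset\})$ exactly when its union lies in $\calC(X)$, that union then being the essential supremum, and $\rmid_X^{-1}$ carries it to the same union, which is its essential supremum in $(X,\calP(X),\{\emptyset\})$. Next I record that \emph{uniqueness} of a factorisation is automatic: for any MSN $(Y,\calB,\calM)$, two maps $r,r'\colon Y\to X$ represent the same morphism into $(X,\calP(X),\{\emptyset\})$ iff $\{r\neq r'\}\in\calM$, which is exactly the condition for the induced morphisms into $(X,\calC(X),\{\emptyset\})$ to coincide. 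Hence the only way $[(X,\calP(X),\{\emptyset\}),\boldsymbol{\iota}]$ can fail to be a localizable version is that some supremum preserving $\bq\colon(Y,\calB,\calM)\to(X,\calC(X),\{\emptyset\})$, with $(Y,\calB,\calM)$ saturated localizable, admits no factorisation through $\boldsymbol{\iota}$.

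Fix $q\in\bq$. A factorisation $\br$ requires in particular a representative $r\sim q$ that is $(\calB,\calP(X))$-measurable, i.e. $r^{-1}(S)\in\calB$ for every $S\subset X$. Since $r\sim q$ forces $r^{-1}(S)\ominus q^{-1}(S)\in\calM$ and $(Y,\calB,\calM)$ is saturated, this happens for some (equivalently every) $r\sim q$ if and only if $q^{-1}(S)\in\calB$ for all $S\subset X$. Thus the whole task reduces to producing a saturated localizable MSN together with a supremum preserving $q\colon Y\to X$ for which $q^{-1}(S_0)\notin\calB$ for at least one $S_0\subset X$.

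I would build $q$ from a genuine measurable partition. Suppose $(Y,\calB,\calM)$ is saturated localizable and $\{P_x\}_{x\in X}$ is a partition of $Y$ into $\calB$-sets with each $P_x\notin\calM$ and $\{P_x\}_{x\in X}$ being $\calM$-generating; define $q$ by $q\equiv x$ on $P_x$. Then $q$ is a morphism, because $q^{-1}(S)=\bigcup_{x\in S}P_x$ is a countable union of measurable sets when $S$ is countable and the complement of such when $S$ is cocountable. Moreover $q$ is \emph{automatically} supremum preserving: writing $a_x=[P_x]$, the classes $a_x$ are nonzero, pairwise disjoint, and have supremum $1$ in $\calB/\calM$ (this last being exactly $\calM$-generation), so for $S\in\calC(X)$ one gets $[q^{-1}(S)]=\sup_{x\in S}a_x$ — for countable $S$ by $\sigma$-completeness, and for cocountable $S$ from $\sup_{x\in S}a_x=1-\sup_{x\notin S}a_x=[q^{-1}(S)]$, using that a finite meet distributes over any existing join in a Boolean algebra; a general family in $\calC(X)$ reduces to singletons since a supremum of suprema is the supremum over the union. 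Consequently the only remaining point is that the fibre union $q^{-1}(S_0)=\bigcup_{x\in S_0}P_x$ fail to lie in $\calB$ for some $S_0$ with $S_0$ and $X\setminus S_0$ both uncountable.

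The hard part is producing the partitioned witness, and the subtlety is that it cannot be strictly localizable. Indeed, if $(Y,\calB,\calM)=\coprod_i(Y_i,\dots)$ is complete (hence saturated) strictly localizable, then in each block the traces $\{P_x\cap Y_i\}$ form an almost disjointed family, so by the countable chain condition all but countably many are negligible with negligible union $Y_i\setminus U_i$; completeness then makes every $q^{-1}(S)\cap Y_i$ measurable, and the lift exists. So the witness must be localizable but \emph{not} strictly localizable, the partition $\{P_x\}$ playing the role of an incompatible one: its arbitrary fibre unions escape $\calB$ while every sub-supremum survives in the Dedekind complete $\calB/\calM$. Localizable, non strictly localizable spaces are classical (for instance those extracted from an uncountable product $\{0,1\}^{\kappa}$ with a suitably coupled measure), and I expect the principal burden to be checking Dedekind completeness of $\calB/\calM$ — so that $(Y,\calB,\calM)$ is genuinely localizable — while simultaneously keeping $\bigcup_{x\in S_0}P_x$ outside $\calB$. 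Granting such a witness, $q^{-1}(S_0)\notin\calB$ contradicts the reduction of the second paragraph, and therefore $[(X,\calP(X),\{\emptyset\}),\boldsymbol{\iota}]$ is not a localizable version of $(X,\calC(X),\{\emptyset\})$.
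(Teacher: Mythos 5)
Your reduction is sound and matches the paper's: $\boldsymbol{\iota}$ is supremum preserving by \ref{atomic}, uniqueness of any factorization is automatic, and the proposition therefore fails exactly if one exhibits a saturated localizable MSN $(Y,\calB,\calM)$ and a supremum preserving $[(\calB,\calM),(\calC(X),\{\emptyset\})]$-measurable map $q$ with $q^{-1}(S_0)\notin\calB$ for some $S_0\subset X$. Your intermediate lemma — that a partition $\{P_x\}_{x\in X}$ of $Y$ into non-negligible measurable sets which is $\calM$-generating automatically yields a supremum preserving $q$ — is also correct (the Boolean-algebra computation with $a_x=[P_x]$ goes through), and it would in fact streamline the paper's direct verification of supremum preservation. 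Your observation that no strictly localizable witness can work is likewise correct and explains why the example must be somewhat exotic.

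The genuine gap is that the witness is never constructed: the final paragraph replaces the construction by ``Localizable, non strictly localizable spaces are classical \dots I expect the principal burden to be checking Dedekind completeness \dots Granting such a witness \dots''. But producing a localizable $(Y,\calB,\calM)$ carrying such a partition whose uncountable--co-uncountable fibre unions escape $\calB$ \emph{is} the entire content of the proposition; nothing in your argument shows that Dedekind completeness of $\calB/\calM$ can coexist with the non-measurability of $\bigcup_{x\in S_0}P_x$, and standard measure-theoretic examples (Fremlin's 216E space, products $\{0,1\}^{\kappa}$) do not hand you this without substantial further work — in 216E the natural sets $E_y$ are only almost disjointed, not a partition, and generation and localizability of any modification must be reproved. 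The paper resolves exactly this point by an explicit construction: $Y=X^2\times\{0,1\}$, $\calB$ the sets $B$ whose two slices $B[0],B[1]$ differ countably, $\calM$ the sets with empty $0$-slice, and $q$ built from a fixed-point-free bijection $\sigma$ of $X$; then the fibres $q^{-1}(\{x\})$ form precisely the kind of partition you postulate, localizability is checked by hand via $S=A\times\{0,1\}$, and $q^{-1}(Z)$ (for $Z$ a half of $X$ split by $\sigma$) has uncountable slice difference, hence lies outside $\calB$. Without this (or some equivalent) construction, your argument establishes only the easy half of the proof.
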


\begin{proof}
  That $\boldsymbol{\iota}$ is supremum preserving follows from the
  discussion in Paragraph~\ref{atomic}. Assume if possible that $((X,
  \calP(X), \{\emptyset\}), \boldsymbol{\iota})$ is a localizable
  version of $(X, \calA, \{\emptyset\})$. We will get a contradiction
  if we manage to build a localizable saturated MSN $(Y, \calB,
  \calM)$ and a function $q \colon Y \to X$ that is $[(\calB, \calM),
    (\calC(X), \{\emptyset\})]$-measurable, supremum preserving, but
  not $(\calB, \calP(X))$-measurable.

  We choose $Y = X^2 \times \{0, 1\}$. For any subset $B \subset Y$,
  we call $B[0]$ and $B[1]$ the subsets defined by
  \[
  B[i] \defeq X^2 \cap \{(x_1, x_2) : (x_1, x_2, i) \in B\} \quad
  \text{for } i \in \{0, 1\}
  \]
  We let $\calB = \calP(Y) \cap \{B : B[0] \ominus B[1] \text{ is
    countable}\}$. We claim that $\calB$ is a $\sigma$-algebra of
  $Y$. The stability of $\calB$ under countable unions is a
  consequence of the formula
  \[
  \left(\bigcup_{n \in \N} B_n\right)[0] \ominus \left(\bigcup_{n \in
  \N} B_n\right)[1] \subset \bigcup_{n \in \N} B_n[0] \ominus B_n[1]
  \]
  that holds for any sequence $\la B_n \ra_{n \in \N}$ of subsets in $Y$, 
and
  we leave the other points to the reader. Finally, we define the
  $\sigma$-ideal $\calM \defeq \calB \cap \{M : M[0] =
  \emptyset\}$. Clearly $(Y, \calB, \calM)$ is a saturated MSN.

  Let us show that $(Y, \calB, \calM)$ is localizable. Let $\calE
  \subset \calB$ be any collection. We set $A \defeq \bigcup_{E \in
    \calE} E[0]$ and $S \defeq A \times \{0, 1\}$. The set $S$ is
  $\calB$-measurable, because $S[0] = S[1] = A$. For any $E \in
  \calE$, we have $(E \setminus S)[0] = E[0] \setminus S[0] =
  \emptyset$, meaning that $S$ is an $\calM$-essential upper bound of
  $\calE$. Denoting by $U$ another essential upper bound of $\calE$,
  then $(E \setminus U)[0] = E[0] \setminus U[0] = \emptyset$ for all
  $E \in \calE$. It follows that $A \subset U[0]$ and $(S \setminus
  U)[0] = S[0] \setminus U[0] = \emptyset$. Thus, $S$ is an
  $\calM$-essential supremum, as we wanted.

  Now, let $\sigma \colon X \to X$ be a bijection of $X$ without
  fixed points.  For example, choose a partition $X = Z \cup Z'$ into
  subsets $Z, Z'$ that have the same cardinality as $X$, choose a
  bijection $f \colon Z \to Z'$ and set $\sigma$ so that $\sigma(x) =
  f(x)$ for all $x \in Z$ and $\sigma(x) = f^{-1}(x)$ for all $x \in
  Z'$. We define the map $q \colon Y \to X$ by
  \[
  \forall (x_1, x_2, i) \in Y, \quad q(x_1, x_2, i)
  = \begin{cases} x_2 & \text{if } i = 1 \text{ and } x_2 =
    \sigma(x_1) \\ x_1 & \text{otherwise}
  \end{cases}
  \]

  First we show that $q$ is $[(\calB, \calM), (\calC(X),
    \{\emptyset\})]$-measurable. It suffices to show that
  $q^{-1}(\{x\}) \in \calB$ for all $x \in X$. But we have
  \begin{align*}
    q^{-1}(\{x\})[0] & = \{x\} \times X \\
    q^{-1}(\{x\})[1] & =
  \big(\{x\} \times \left(X \setminus \{\sigma(x)\}\right)\big) \cup
  \{(\sigma^{-1}(x)\,, x)\}
  \end{align*}
  Consequently, $q^{-1}(\{x\})[0] \ominus q^{-1}(\{x\})[1]$ has only
  two elements. By the definition of $\calB$, this ensures the
  measurability of $q^{-1}(\{x\})$.

  However, we claim that $q$ is not $(\calB, \calP(X))$-measurable. To
  this end, we will show that $q^{-1}(Z) \not\in \calB$. We have
  \begin{align*}
    q^{-1}(Z)[0] & = Z \times X \\ q^{-1}(Z)[1] & = \{(x_1, x_2) : x_1
    \in Z, x_2 \neq \sigma(x_1)\} \cup \{(\sigma^{-1}(x), x) : x \in
    Z\}
  \end{align*}
  It follows that $q^{-1}(Z)[0] \ominus q^{-1}(Z)[1] = \{(x, \sigma(x)) 
:
  x \in Z\} \cup \{(\sigma^{-1}(x), x) : x \in Z\}$ is
  uncountable. Thus, $q^{-1}(Z) \not\in \calB$.

  It only remains to prove that $q$ is supremum preserving. Let $\calE
  \subset \calC(X)$ be a collection that has an
  $\{\emptyset\}$-essential supremum $S$. This implies that $S =
  \bigcup \calE$. We wish to prove that $q^{-1}(S)$ is an
  $\calM$-essential supremum of $q^{-1}(\calE) = \{q^{-1}\{x\} : x
  \in S\}$. First suppose that $\calE$ consists
  only of singletons. Of course, $q^{-1}(S)$ is an $\calM$-essential
  upper bound of $q^{-1}(\calE)$. Let $U$ an arbitrary $\calM$-essential upper bound of
  $q^{-1}(\calE)$.  For all $x \in S$, we have
  $q^{-1}\{x\} \setminus U \in \calM$, meaning that $\{x\} \times X =
  (q^{-1}\{x\})[0] \subset U[0]$. Thus $S \times X \subset U[0]$,
  which implies $(q^{-1}(S) \setminus U)[0] = q^{-1}(S)[0] \setminus
  U[0] = S\times X \setminus U[0] = \emptyset$. It means that
  $q^{-1}(S) \setminus U \in \calM$. Thus, we have shown that
  $q^{-1}(S)$ is an $\calM$-essential supremum of $\calE$.

  Now we turn to the general case, where $\calE$ need not consist only
  of singletons. Let $\calE' = \{ \{x\} : x \in E \in
  \calE\}$. Clearly, $\calE$ and $\calE'$ have the same
  $\{\emptyset\}$-essential supremum $S \defeq \bigcup \calE = \bigcup
  \calE'$. By what precedes, $q^{-1}(S)$ is an $\calM$-essential
  supremum of $q^{-1}(\calE')$ and it is an $\calM$-essential upper
  bound of $q^{-1}(\calE)$. An $\calM$-essential upper bound $U$ of
  $q^{-1}(\calE)$ is also an upper bound for $q^{-1}(\calE')$, as any
  member of $q^{-1}(\calE')$ is a subset of a member of
  $q^{-1}(\calE)$. Therefore, $q^{-1}(S) \setminus U \in \calM$,
  showing that $q^{-1}(S)$ is an $\calM$-essential supremum of
  $q^{-1}(\calE)$.
\end{proof}

\begin{Empty}[Example of an MSN with no localizable part]
\label{4.14}
   Consider an MSN of the form $(X, \calP(X), \calK(X))$, where $X$ is
   a set of cardinality $\aleph_1$ and $\calK(X)$ is the
   $\sigma$-ideal of countable subsets. There is a bijection $\varphi
   \colon X \to X \times X$ and we can use it to construct an
   uncountable family of ``horizontal lines'' $H_x \defeq
   \varphi^{-1}(X \times \{x\})$ indexed by $x \in X$ witnessing that
   $(X, \calP(X), \calK(X))$ is not ccc. Actually, we can do better
   and prove that it is not localizable. Suppose $\{H_x : x \in X\}$
   has a $\calK(X)$-essential supremum $S$. For each $x \in X$ choose
   a point $p_x \in S \cap H_x$. Then it is easy to see that $U \defeq
   S \setminus \{p_x : x \in X\}$ is an essential upper bound for the
   family of horizontal lines, however $S \setminus U = \{p_x : x \in
   X\}$ is not negligible, contradicting that $S$ is an essential
   supremum.

   Observe that the MSN $(X, \calP(X), \calK(X))$ is isomorphic to all
   its non negligible subMSNs. In particular, it has no nontrivial ccc
   or localizable part, an unpleasant situation that we will rule out
   in the next paragraph by introducing the notions of ``locally
   localizable'' and ``locally ccc'' MSN.

   We will prove nonetheless that $(X, \calP(X), \calK(X))$ has a cccc
   version, that is disappointingly the trivial MSN $(\emptyset,
   \{\emptyset\}, \{\emptyset\})$ (with the only morphism from there
   to $(X, \calP(X), \calK(X))$). To establish this fact, one needs to
   prove that if $(Y, \calB, \calM)$ is a cccc MSN and $\mathbf{f}
   \colon (Y, \calB, \calM) \to (X, \calP(X), \calK(X))$ is a supremum
   preserving morphism, then $Y \in \calM$ (actually, the supremum
   preserving character of $\mathbf{f}$ will not be used). We can
   reduce to the case where $(Y, \calB, \calM)$ is ccc.

   We reproduce an argument due to Ulam \cite{ULA.30}, showing that
   there is a family $\la A_{n,\alpha} \ra_{n \in \N, \alpha <
     \omega_1}$ of subsets of $X$ such that
   \begin{itemize}
   \item for all $n \in \N$, the family $\la A_{n, \alpha} \ra_{\alpha
     < \omega_1}$ is disjointed;
   \item for all $\alpha < \omega_1$, the union $\bigcup_{n \in \N}
     A_{n, \alpha}$ is conegligible (that is, cocountable).
   \end{itemize}
   Any ordinal $\beta < \omega_1$ is countable, so we can select a
   sequence $\la k_{\alpha, \beta} \ra_{\alpha < \beta}$ of distinct
   integers. Let $\la x_\beta \ra_{\beta < \omega_1}$ be an enumeration
   of all the elements in $X$. Set $A_{n, \alpha} \defeq \{x_\beta :
   \beta > \alpha \text{ and } k_{\alpha, \beta} = n\}$ for every $n
   \in \N$ and $\alpha < \omega_1$. For distinct $\alpha, \alpha' <
   \omega_1$, there cannot be some $x_\beta \in A_{n, \alpha} \cap
   A_{n, \alpha'}$, for otherwise we would have $k_{\alpha, \beta} =
   k_{\alpha', \beta}$. In addition, one has $\bigcup_{n \in \N} A_{n,
     \alpha} = \{x_\beta : \beta > \alpha\}$ whose complement in $X$ is
   the countable set $\{x_\beta : \beta \leq \alpha\}$.

   Now, fix a representative $f \in \mathbf{f}$. The family $\la
   f^{-1}(A_{n, \alpha}) \ra_{\alpha < \omega_1}$ being disjointed,
   the set $C_n \defeq \omega_1 \cap \{\alpha : f^{-1}(A_{n, \alpha})
   \not\in \calM\}$ is countable for all $n \in \N$. Hence the
   existence of some $\alpha < \omega_1 \setminus \bigcup_{n \in \N}
   C_n$. Now we see that $f^{-1}\left( \bigcup_{n \in \N} A_{n, \alpha}
   \right)$ is both negligible and conegligible in $Y$, which can
   happen only if $Y \in \calM$.
\end{Empty}

\begin{Empty}
  \label{locally}
  Let $\calP$ be a hereditary property associated to MSNs. We say that
  an MSN $(X, \calA, \calN)$ is {\em locally $\calP$} whenever one of
  the following equivalent statements holds:
  \begin{enumerate}
  \item[(A)] The collection $\calA \cap \{Z : (Z, \calA_Z, \calN_Z)
    \text{ has the property } \calP\}$ is $\calN$-generating;
  \item[(B)] for any $Y \in \calA \setminus \calN$ there is $Z \in
    \calA \setminus \calN$ such that $Z \subset Y$ and $(Z, \calA_Z,
    \calN_Z)$ has the property $\calP$.
  \end{enumerate}
  \begin{proof}[Proof of the equivalence]
    (A) $\implies$ (B) For $Y \in \calA \setminus \calN$, an
    application of Lemma~\ref{supDistrib} gives that $Y$ is an
    essential supremum of $\{Y \cap Z : Z \in \calA \text{ and } (Z,
    \calA_Z, \calN_Z) \text{ has the property } \calP\}$. Therefore,
    there must be some $Z \in \calA$ such that $Y \cap Z \not\in
    \calN$ and $(Z, \calA_Z, \calN_Z)$ has the property $\calP$. The
    subset $Y \cap Z \subset Y$ establishes (B).

    (B) $\implies$ (A) Clearly, $X$ is an $\calN$-essential upper
    bound of the collection $\calA \cap \{Z : (Z, \calA_Z, \calN_Z)
    \text{ has the property } \calP\}$. Let $S$ be another upper
    bound. If $X \setminus S$ were not negligible, (B) gives the
    existence of some measurable $Z \in \calA \setminus \calN$ such
    that $Z \subset X \setminus S$ and $(Z, \calA_Z, \calN_Z)$ has the
    property $\calP$. But $Z \setminus S = Z \not\in \calN$, which
    contradicts that $S$ is an essential upper bound.
  \end{proof}
  
  For instance, in a semi-finite measure space $(X, \calA, \mu)$, any
  non negligible set $A \in \calA \setminus \calN_\mu$ contains a
  measurable subset $Z$ of nonzero finite measure. By (B), this
  implies that the associated MSN $(X, \calA, \calN_\mu)$ is locally
  strictly localizable.

  We conclude this section with an important property of ``local
  isomorphism'' that holds for $\calP$-versions.
\end{Empty}

\begin{Proposition}
  \label{localiso}
  Let $(X, \calA, \calN)$ be a saturated MSN and $[(\hat{X},
  \hat{\calA}, \hat{\calN}), \bp]$ a $\calP$-version of it. Fix a
  representative map $p \in \bp$. For any $F \in \calA$, we set
  $\hat{F} \defeq p^{-1}(F)$ and we call $\bp_F \colon (\hat{F},
  \hat{\calA}_{\hat{F}}, \hat{\calN}_{\hat{F}}) \to (F, \calA_F,
  \calN_F)$ the morphism induced by the restriction $p_F \colon
  \hat{F} \to F$ of $p$.
  \begin{enumerate}
  \item[(A)] $[(\hat{F}, \hat{\calA}_{\hat{F}},
    \hat{\calN}_{\hat{F}}), \bp_F]$ is the $\calP$-version of $(F,
    \calA_F, \calN_F)$;
  \item[(B)] If $(F, \calA_F, \calN_F)$ has the property $\calP$, then
    $\bp_F$ is an isomorphism.
  \end{enumerate}
\end{Proposition}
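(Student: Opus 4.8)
The plan is to derive everything from the universal property of the ambient $\calP$-version $[(\hat X,\hat\calA,\hat\calN),\bp]$, using repeatedly that essential suprema pass nicely between a measurable subMSN and the ambient MSN. Precisely, for $Z\in\calA$ and $\calG\subset\calA_Z$ I will invoke: (i) if $S$ is an $\calN_Z$-essential supremum of $\calG$, then $S$ is also an $\calN$-essential supremum of $\calG$ inside $X$; and (ii) conversely, if $S\in\calA$ is an $\calN$-essential supremum of $\calG$, then $S\cap Z$ is an $\calN_Z$-essential supremum of $\calG$. Both are immediate from $\calN_Z=\calN\cap\{N:N\subset Z\}$ together with the fact that every member of $\calG$ lies in $Z$ (for (ii) one may also compare with the Distributivity Lemma~\ref{supDistrib} applied with $C=Z$).

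First I would verify the hypotheses on $\bp_F$. Since $\hat F=p^{-1}(F)\in\hat\calA$ and $\calP$ is hereditary, $(\hat F,\hat{\calA}_{\hat{F}},\hat{\calN}_{\hat{F}})$ has $\calP$. To see that $\bp_F$ is supremum preserving, take $\calG\subset\calA_F$ with $\calN_F$-essential supremum $S$; by (i) $S$ is an $\calN$-essential supremum of $\calG$ in $X$, so the supremum preservation of $\bp$ makes $p^{-1}(S)$ an $\hat\calN$-essential supremum of $p^{-1}(\calG)$ in $\hat X$; as $p^{-1}(S)\subset\hat F$, (ii) turns this into the statement that $p^{-1}(S)=p_F^{-1}(S)$ is an $\hat{\calN}_{\hat{F}}$-essential supremum of $p_F^{-1}(\calG)$, as wanted.

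The heart of (A) is the factorization. Let $(Y,\calB,\calM)$ have $\calP$ and let $\bq\colon(Y,\calB,\calM)\to(F,\calA_F,\calN_F)$ be supremum preserving. Composing with the inclusion $\boldsymbol{\iota}_F$, which is supremum preserving by~\ref{propsp}(B), yields a supremum preserving morphism $\boldsymbol{\iota}_F\circ\bq\colon(Y,\calB,\calM)\to(X,\calA,\calN)$; applying the universal property of $[(\hat X,\hat\calA,\hat\calN),\bp]$ produces a unique supremum preserving $\tilde{\br}$ with $\bp\circ\tilde{\br}=\boldsymbol{\iota}_F\circ\bq$. The step I expect to be the crux is the observation that any representative $\tilde r\in\tilde{\br}$ takes values in $\hat F$ almost everywhere: choosing $q\in\bq$ with values in $F$, one has $p\circ\tilde r=q$ off some $N_0\in\calM$, so for $y\notin N_0$ the point $p(\tilde r(y))=q(y)$ lies in $F$, i.e. $\tilde r(y)\in p^{-1}(F)=\hat F$. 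Redefining $\tilde r$ on $N_0$ (measurability being preserved because $(Y,\calB,\calM)$ is saturated) gives a map $r\colon Y\to\hat F$ such that $\br\defeq[r]$ satisfies $\boldsymbol{\iota}_{\hat F}\circ\br=\tilde{\br}$.

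It remains to check that $\br$ works and is unique. That $\br$ is supremum preserving follows from the supremum preservation of $\tilde{\br}=\boldsymbol{\iota}_{\hat F}\circ\br$ together with (i) and (ii), exactly as above, since $r$ and $\tilde r$ agree off a negligible set. From the identity $\boldsymbol{\iota}_F\circ\bp_F=\bp\circ\boldsymbol{\iota}_{\hat F}$ one computes $\boldsymbol{\iota}_F\circ(\bp_F\circ\br)=\bp\circ\tilde{\br}=\boldsymbol{\iota}_F\circ\bq$, and cancelling the monomorphism $\boldsymbol{\iota}_F$ (as $\iota_F$ is injective) gives $\bp_F\circ\br=\bq$. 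For uniqueness, any supremum preserving $\br'$ with $\bp_F\circ\br'=\bq$ makes $\boldsymbol{\iota}_{\hat F}\circ\br'$ satisfy $\bp\circ(\boldsymbol{\iota}_{\hat F}\circ\br')=\boldsymbol{\iota}_F\circ\bq$, so the uniqueness clause for $\hat X$ forces $\boldsymbol{\iota}_{\hat F}\circ\br'=\tilde{\br}=\boldsymbol{\iota}_{\hat F}\circ\br$, whence $\br'=\br$; the degenerate case $\hat F=\emptyset$ forces $Y\in\calM$ and is handled by the conventions on negligible MSNs. This establishes (A), and (B) is then formal: if $(F,\calA_F,\calN_F)$ has $\calP$ it is its own $\calP$-version via $\rmid$, so by~\ref{pversion} the two $\calP$-versions of $(F,\calA_F,\calN_F)$ are uniquely isomorphic over $(F,\calA_F,\calN_F)$, which says exactly that $\bp_F$ is an isomorphism.
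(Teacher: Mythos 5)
Your proof is correct and follows essentially the same route as the paper's: compose $\bq$ with $\boldsymbol{\iota}_F$, lift through the universal property of $(\hat X,\hat\calA,\hat\calN)$, observe the lift lands in $\hat F$ almost everywhere and restrict it, prove uniqueness by composing with $\boldsymbol{\iota}_{\hat F}$ and invoking the uniqueness clause upstairs, and deduce (B) from uniqueness of $\calP$-versions. The only differences are that you make explicit some details the paper leaves as ``readily checked'' (the transfer of essential suprema between a measurable subMSN and the ambient MSN, the monomorphism cancellation, and the degenerate case $\hat F=\emptyset$), which is harmless.
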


\begin{proof}
  (A) Since the property $\calP$ is hereditary, we can assert
  $(\hat{F}, \hat{\calA}_{\hat{F}}, \hat{\calN}_{\hat{F}})$ has it. We
  also readily check that $p_F$ is supremum preserving. Let $\bq
  \colon (Y, \calB, \calM) \to (F, \calA_F, \calN_F)$ be a supremum
  preserving morphism starting from a saturated MSN with the property
  $\calP$. Then $\boldsymbol{\iota_F} \circ \bq$ is a supremum
  preserving morphism ending in $(X, \calA, \calN)$. It has a lifting
  $\br \colon (Y, \calB, \calM) \to (\hat{X}, \hat{\calA},
  \hat{\calN})$. Let $r \in \br$ and $q \in \bq$ be
  representatives. As $p(r(y)) = q(y)$ for $\calM$-almost all $y \in
  Y$, we lose no generality in supposing that $r$ has values in
  $\hat{F}$. Calling its restriction $r' \colon Y \to \hat{F}$, we see
  that $p_F \circ r'$ and $q$ coincide $\calM$-almost everywhere. The
  induced morphism $\br'$ provides a factorization of $\bq$ through
  $(\hat{F}, \hat{\calA}_{\hat{F}}, \hat{\calN}_{\hat{F}})$.

  To establish the uniqueness of this factorization, we proceed as
  follows. For any morphism $\br''$ such that $\bp_F \circ \br'' =
  \bq$ we notice that $\boldsymbol{\iota}_F \circ \bq =
  \boldsymbol{\iota}_F \circ \bp_F \circ \br'' = \bp \circ
  \boldsymbol{\iota}_{\hat{F}} \circ \br''$. Since this holds for
  $\br'$ we obtain $\bp \circ \boldsymbol{\iota}_{\hat{F}} \circ \br'
  = \bp \circ \boldsymbol{\iota}_{\hat{F}} \circ \br''$ and, by
  uniqueness of the factorization relative to the universal property
  of $(\hat{X},\hat{\calA},\hat{\calN})$,
  $\boldsymbol{\iota}_{\hat{F}} \circ \br' =
  \boldsymbol{\iota}_{\hat{F}} \circ \br''$. Thus, $r'$ and $r''$
  coincide $\calM$-almost everywhere.

  (B) If $(F, \calA_F, \calN_F)$ has property $\calP$, then obviously
  $[(F, \calA_F, \calN_F), \boldsymbol{\mathrm{id}}]$ is a second
  $\calP$-version. From the uniqueness of the $\calP$-version, we
  obtain a isomorphism $\br \colon (F, \calA_F, \calN_F) \to (\hat{F},
  \hat{\calA}_{\hat{F}}, \hat{\calN}_{\hat{F}})$ such that
  $\boldsymbol{\mathrm{id}} = \bp_F \circ \br$, whence $\bp_F =
  \br^{-1}$.
\end{proof}

\section{Localizable locally determined MSNs}
\label{sec:ld}

In order to motivate the main definition in this section, we start with the following result, of which we can think as a way of testing whether an 
MSN has a property $\calP$. For instance, if each $F \in \calF$ corresponds to a ccc subMSN, then $(X,\calA,\calN)$ is cccc. The difficulty in applying this proposition stems with both hypotheses: conditions (1) and (2) 
will be turned into a definition in \ref{def.ld}, whereas condition (3), that $\calF$ is {\it disjointed} rather than merely {\it almost disjointed}, calls for techniques that transform almost disjointed generating families (whose existence, in applications, follows from Lemma \ref{zorn}) into partitions -- see the proof of Theorem \ref{existslld} in case $\rmcard \calE \leq \mathfrak{c}$ and the notion of compatible family of densities introduced in Section \ref{sec:pu}.

\begin{Proposition}
\label{disjointed.coproduct}
Let $(X,\calA,\calN)$ be an MSN and $\calF \subset \calA$. Assume that
\begin{enumerate}
\item[(1)] For every $A \subset X$ the following holds:
\[
\big[ \forall F \in \calF : A \cap F \in \calA \big] \Rightarrow A \in \calA ;
\]
\item[(2)] For every $N \subset X$ the following holds:
\[
\big[ \forall F \in \calF : N \cap F \in \calN \big] \Rightarrow N \in \calN ;
\]
\item[(3)] $\calF$ is a partition of $X$.
\end{enumerate}
Then the MSNs $(X,\calA,\calN)$ and $\coprod_{F \in \calF} (F,\calA_F,\calN_F)$ are isomorphic in $\sfMSNsp$.
\end{Proposition}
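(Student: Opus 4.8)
The plan is to show that the canonical bijection between $X$ and the underlying set of the coproduct is already an isomorphism of MSNs, and then to promote it to $\sfMSNsp$ by invoking Proposition~\ref{propsp}(A). First I would set $(X',\calA',\calN') \defeq \coprod_{F \in \calF}(F,\calA_F,\calN_F)$ and record its explicit description from the construction of coproducts in $\sfMSN$: its underlying set is $X' = \coprod_{F \in \calF} F$, and
\[ \calA' = \{ A \subseteq X' : A \cap F \in \calA_F \text{ for all } F \in \calF \}, \qquad \calN' = \{ N \subseteq X' : N \cap F \in \calN_F \text{ for all } F \in \calF \}. \]
Because $\calF$ is a partition of $X$ (hypothesis (3)), each point of $X$ lies in exactly one member of $\calF$, so the map $\theta \colon X' \to X$ that is the identity on each summand is a bijection, under which the canonical inclusions correspond to the set inclusions $F \hookrightarrow X$. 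I would henceforth identify $X'$ with $X$ along $\theta$. Since $\calF \subseteq \calA$, every $F$ is measurable, so the subMSN formulas simplify to $\calA_F = \calA \cap \{B : B \subseteq F\}$ and $\calN_F = \calN \cap \{B : B \subseteq F\}$; hence, after the identification,
\[ \calA' = \{ A \subseteq X : A \cap F \in \calA \text{ for all } F \in \calF \}, \qquad \calN' = \{ N \subseteq X : N \cap F \in \calN \text{ for all } F \in \calF \}. \]

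Next I would verify the two set equalities $\calA = \calA'$ and $\calN = \calN'$. For $\calA \subseteq \calA'$: if $A \in \calA$ then $A \cap F \in \calA$ for each $F \in \calF$ because both sets are measurable, so $A \in \calA'$; the reverse inclusion $\calA' \subseteq \calA$ is exactly hypothesis (1). For $\calN \subseteq \calN'$: if $N \in \calN$ then for each $F$ the set $N \cap F$ is measurable and contained in $N \in \calN$, hence lies in $\calN$ by the $\sigma$-ideal property, so $N \in \calN'$; the reverse inclusion $\calN' \subseteq \calN$ is exactly hypothesis (2). Thus $(X',\calA',\calN')$, transported by $\theta$, coincides with $(X,\calA,\calN)$, so $\theta$ represents an isomorphism in $\sfMSN$.

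Finally I would check that both objects genuinely live in $\sfMSNsp$ and upgrade the isomorphism. Saturation of $(X,\calA,\calN)$ passes to each measurable subMSN $(F,\calA_F,\calN_F)$, and therefore to their coproduct, so both MSNs are saturated. Since $\theta$ is an isomorphism in $\sfMSN$, Proposition~\ref{propsp}(A) gives that it is also an isomorphism in $\sfMSNsp$, which is the desired conclusion.

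I do not expect any genuine obstacle here: the entire content is the bookkeeping identification of the coproduct's $\sigma$-algebra and $\sigma$-ideal with $\calA$ and $\calN$, in which the measurability of the members of $\calF$ turns the abstract subMSN data into the concrete traces $A \cap F$, and hypotheses (1) and (2) are precisely the two non-trivial inclusions. The only point requiring a little care is ensuring the ambient MSN is saturated so that the statement is set in $\sfMSNsp$ at all; this is used only to guarantee saturation of the coproduct and to apply Proposition~\ref{propsp}(A).
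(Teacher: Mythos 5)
Your proof is correct and takes essentially the same route as the paper's: both exploit the canonical bijection induced by the partition, derive two-way measurability (equivalently, equality of the transported $\sigma$-algebra and $\sigma$-ideal with $\calA$ and $\calN$) from hypotheses (1) and (2), and upgrade the resulting $\sfMSN$-isomorphism to $\sfMSNsp$ via Proposition~\ref{propsp}(A). Your explicit set-equality formulation and the closing remark on saturation are just slightly more detailed bookkeeping of the same argument.
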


\begin{proof}
  We abbreviate $(Y,\calB,\calM)$ for $\coprod_{F \in \calF}
  (F,\calA_F,\calN_F)$. Since $\calF$ is a partition of $X$, there is
  a canonical bijection $\vphi : X \to Y$. Its inverse $\vphi^{-1}$ is
  $[(\calB,\calM),(\calA,\calN)]$-measurable, by definition of
  coproduct of MSNs. We now show that $\vphi$ is
  $[(\calA,\calN),(\calB,\calM)]$-measurable. Given $B \in \calB$, we
  note that $\vphi^{-1}(B) = \bigcup_{F \in \calF} B \cap F$, whence
  $\vphi^{-1}(B) \cap F = B \cap F \in \calA$ for every $F \in \calF$,
  by definition of $\calB$. We infer from hypothesis (1) that
  $\vphi^{-1}(B) \in \calA$. Let $M \in \calM$. As above we infer from
  the definition of $\calM$ that $\vphi^{-1}(M) \cap F \in \calN$ for
  every $F \in \calF$, whence $\vphi^{-1}(M) \in \calN$, in view of
  hypothesis (2). In other words, $(X,\calA,\calN)$ and
  $(Y,\calB,\calM)$ are isomorphic in $\sfMSN$. The conclusion follows
  from Proposition \ref{propsp}(A).
\end{proof}

\begin{Empty}
\label{def.ld}
     We borrow the following definition from \cite[211H]{FREMLIN.II}. A measure space $(X, \calA, \mu)$
  is {\em locally determined} whenever it is semi-finite and, for every
  subset $A \subset X$,
  \[
  \big[ \forall E \in \calA^f : A \cap E \in \calA \big] \Rightarrow A \in \calA,
  \]
where, as usual, $\calA^f = \calA \cap \{ E : \mu(E) < \infty \}$.

  The definition relies on the particular collection $\calA^f$ (which 
  is $\calN_\mu$-generating, recall \ref{semi-finite}). This makes sense
  because we are dealing with a measure space. It is a rather
  good surprise that we can define an analogous notion of locally
  determined MSNs, by substituting for $\calA^f$ an arbitrary
  generating collection. Namely, a saturated MSN $(X, \calA, \calN)$
  is called {\em locally determined} whenever the following holds. For every $\calN$-generating collection $\calE \subset \calA$ and every $A \subset X$,
  \[
  \big[ \forall E \in \calE : A \cap E \in \calA\big] \Rightarrow A \in \calA .
  \]
  An MSN that
  is both localizable and locally determined is called {\em lld}.
  
  The following is useful as well. We say that a saturated MSN $(X,\calA,\calN)$ has {\em locally determined negligible sets} whenever the following holds. For every $\calN$-generating collection $\calE \subset \calA$ and every $N \subset X$,
  \[
  \big[ \forall E \in \calE : N \cap E \in \calN\big] \Rightarrow N \in \calN .
  \]
  We observe that if $(X,\calA,\calN)$ is locally determined, then it has 
locally determined negligible sets. Indeed, let $\calE \subset \calA$ and 
$N \subset X$ be as above, we first infer from the local determinacy of $(X,\calA,\calN)$ that $N \in \calA$ and, in turn from the Distributivity Lemma \ref{supDistrib}, that $N$ is an $\calN$-essential supremum of $\{ N \cap E : E \in \calE \}$. Therefore, $N \in \calN$.

  Next we prove some elementary properties concerning locally
  determined MSNs. In particular, the consistency between both notions
  of local determination (for complete semi-finite measure spaces and MSNs) is established in
  Proposition~\ref{elemld}(F). Here, the semi-finiteness property of a measure space is critical as the following example shows. We consider $\calH^1$, the 1-dimensional Hausdorff measure in $\R^2$ and $\calA$ the $\sigma$-algebra consisting of $\calH^1$-measurable subsets of $\R^2$ in the sense of Carath\'eodory. The following hold:
  \begin{enumerate}
  \item[(a)] $\forall A \subset \R^2 : \big[ \forall F \in \calA^f : A \cap F \in \calA \big] \Rightarrow A \in \calA$;
  \item[(b)] The measure space $(\R^2,\calA,\calH^1)$ is not semi-finite;
  \item[(c)] The (saturated) MSN $(\R^2,\calA,\calN_{\calH^1})$ does not have locally determined negligible sets and, in particular, is not locally determined. 
  \end{enumerate}
  
  For (a), see for instance \cite[6.2]{DEP.19.b}. For (b), see \cite[439H]{FREMLIN.IV}. Now (c) follows for example from \cite[4.4]{DEP.19.b}. It follows from \ref{semi-finite} that $\calA^f$ is not $\calN_{\calH^1}$-generating.
\end{Empty}

\begin{Proposition}
  \label{elemld}
  The following hold.
  \begin{enumerate}
  \item[(A)] Being locally determined is a hereditary property.
  \item[(B)] Being locally determined is a property invariant under
    isomorphisms in $\mathsf{MSN_{sp}}$.
  \item[(C)] A saturated ccc MSN is locally determined.
  \item[(D)] A coproduct of locally determined MSNs is locally determined.
  \item[(E)] A cccc MSN is locally determined.
  \item[(F)] A complete semi-finite measure space $(X, \calA, \mu)$ is
    locally determined (as a measure space) if and only if the MSN
    $(X, \calA, \calN_\mu)$ is locally determined.
  \end{enumerate}
\end{Proposition}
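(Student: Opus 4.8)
The plan is to prove the six items in a dependency order: (A), (C) and (D) follow fairly directly from the definition of local determination by producing, in each case, an adapted $\calN$-generating family; (E) is then a formal consequence of (C), (D) and (B); and (F) --- the bridge between the measure-theoretic and the MSN notions --- is treated last, its forward implication being the genuine difficulty. Throughout, the recurring mechanism is: to test measurability of a set $A$, intersect it with the members of a cleverly chosen generating family and invoke local determination of an auxiliary MSN, mopping up a negligible remainder by saturation or completeness.

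For (A), fix $Z \in \calA$ and an $\calN_Z$-generating $\calE \subset \calA_Z$. Since each $E \in \calE$ lies in $Z$, one checks that $Z$ is an $\calN$-essential supremum of $\calE$ in the ambient MSN, so that by the remark following \ref{essentialSup} the family $\calE \cup \{X \setminus Z\}$ is $\calN$-generating. If $A \subset Z$ satisfies $A \cap E \in \calA_Z$ for every $E \in \calE$, then also $A \cap (X \setminus Z) = \emptyset \in \calA$, so $A$ meets every member of the enlarged family measurably; local determination of $(X,\calA,\calN)$ gives $A \in \calA$, hence $A \in \calA_Z$. For (B), choose representatives $f$ and $g$ of the two mutually inverse isomorphisms, so that $g \circ f \sim \rmid_X$ and $f \circ g \sim \rmid_Y$, with $f$ supremum preserving. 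Given an $\calM$-generating $\calE \subset \calB$ and $A \subset Y$ with $A \cap E \in \calB$ for all $E \in \calE$, supremum preservation of $f$ makes $f^{-1}(\calE)$ an $\calN$-generating family, and $f^{-1}(A) \cap f^{-1}(E) = f^{-1}(A \cap E) \in \calA$; local determination of $(X,\calA,\calN)$ yields $f^{-1}(A) \in \calA$. Then $(f \circ g)^{-1}(A) = g^{-1}(f^{-1}(A)) \in \calB$, while $(f \circ g)^{-1}(A) \ominus A \subset \{f \circ g \neq \rmid_Y\} \in \calM$, so saturation of $(Y,\calB,\calM)$ forces $A \in \calB$.

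For (C), suppose $(X,\calA,\calN)$ is saturated and ccc. Given an $\calN$-generating $\calE$ and $A$ with $A \cap E \in \calA$ for all $E$, Proposition \ref{ccc=>loc} supplies a countable $\calE' \subset \calE$ whose union is an $\calN$-essential supremum of $\calE$; since $\calE$ is generating, $X \setminus \bigcup \calE' \in \calN$. Writing $A = \big( A \cap \bigcup \calE' \big) \cup \big( A \setminus \bigcup \calE' \big)$, the first piece equals $\bigcup_{E \in \calE'} (A \cap E)$ and is measurable as a countable union, while the second is a subset of a negligible set, hence measurable by saturation. For (D), with $(X,\calA,\calN) = \coprod_{i} (X_i,\calA_i,\calN_i)$ a coproduct of locally determined fibers, the task is to verify $A \cap X_i \in \calA_i$ for each $i$. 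One first checks, using the Distributivity Lemma \ref{supDistrib} and the transfer of essential upper bounds between a fiber and the coproduct (sending $S \subset X_i$ to $S \cup (X \setminus X_i)$), that $\calE_i \defeq \{ X_i \cap E : E \in \calE\}$ is $\calN_i$-generating. Since $(A \cap X_i) \cap (X_i \cap E) = X_i \cap (A \cap E) \in \calA_i$, local determination of the $i$-th fiber gives $A \cap X_i \in \calA_i$, and hence $A \in \calA$. Finally, (E) follows at once: a cccc MSN is isomorphic in $\sfMSNsp$ to a coproduct of saturated ccc MSNs, which are locally determined by (C), so the coproduct is locally determined by (D), and local determination transfers across the isomorphism by (B).

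The main obstacle is the forward implication of (F). Its converse is immediate: semi-finiteness makes $\calA^f$ an $\calN_\mu$-generating family (\ref{semi-finite}), so applying MSN local determination to $\calE = \calA^f$ recovers exactly the measure-space condition. For the forward direction one must upgrade the test from the single family $\calA^f$ to an \emph{arbitrary} $\calN_\mu$-generating $\calE$. Given such $\calE$ and a set $A$ with $A \cap E \in \calA$ for all $E \in \calE$, it suffices, by measure-space local determination, to show $A \cap F \in \calA$ for each $F \in \calA^f$. Fixing $F$, the Distributivity Lemma \ref{supDistrib} makes $F$ the $\calN_\mu$-essential supremum of $\{ F \cap E : E \in \calE \}$; because $(F,\calA_F,\mu_F)$ is a finite measure space it is ccc (\ref{finite=>loc}), so Proposition \ref{ccc=>loc} extracts a countable $\calE'' \subset \calE$ with $F \setminus \bigcup_{E \in \calE''}(F \cap E) \in \calN_\mu$. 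Then $A \cap F$ is the union of the countable measurable family $\bigcup_{E \in \calE''} F \cap (A \cap E)$ and a subset of this $\mu$-null remainder, the latter being measurable by completeness. Thus $A \cap F \in \calA$, whence $A \in \calA$. The crux is precisely this passage from arbitrary generating families to the canonical $\calA^f$: it is where both the ccc property of finite-measure pieces (to reduce to a countable union) and completeness (to absorb the negligible part) are indispensable, and it explains why the equivalence fails without semi-finiteness, as the example in \ref{def.ld} shows.
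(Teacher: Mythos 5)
Your proposal is correct and follows essentially the same route as the paper's proof: the same enlargement $\calE \cup \{X \setminus Z\}$ for (A), the same transfer through $f^{-1}$ plus saturation for (B), the same countable extraction via \ref{ccc=>loc} for (C), the same fiberwise use of the Distributivity Lemma for (D), and the same reduction of (F) to finite-measure pieces $F \in \calA^f$. The only cosmetic deviations are that in (F) you inline the ccc-plus-completeness argument rather than citing (C) applied to the saturated ccc subMSN $(F, \calA_F, (\calN_\mu)_F)$ as the paper does, and in (E) you make explicit the appeal to (B) for isomorphism-invariance, which the paper's ``obviously follows from (C) and (D)'' leaves implicit.
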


\begin{proof}
  (A) Let $(X, \calA, \calN)$ be a locally determined MSN and $Z \in
  \calA$. Let $\calE$ be an $\calN_Z$-generating family in the subMSN
  $(Z, \calA_Z, \calN_Z)$ and $A \subset Z$ be such that $E \cap A \in
  \calA_Z$ for any $E \in \calE$. The family $\calE \cup \{X \setminus
  Z\}$ is $\calN$-generating in $(X, \calA, \calN)$ and $E \cap A \in
  \calA$ for all $E \in \calE \cup \{X \setminus Z\}$. It follows that
  $A \in \calA$.

  (B) Let $(X, \calA, \calN)$ and $(Y, \calB, \calM)$ be two
  saturated MSNs, $f \colon X \to Y$ and $g \colon Y \to X$ be two
  measurable supremum preserving maps that induce reciprocal
  isomorphisms. Assume that $(X, \calA, \calN)$ is locally
  determined. Let $\calE \subset \calB$ be an $\calM$-generating
  collection and $B \subset Y$ be such that $E \cap B \in \calB$ for all
  $E \in \calE$. Then $f^{-1}(E) \cap f^{-1}(B) = f^{-1}(E \cap B) \in
  \calA$. As $f$ is supremum preserving, $f^{-1}(\calE)$ is
  $\calN$-generating. And as $(X, \calA, \calN)$ is locally
  determined, we infer that $f^{-1}(B) \in \calA$. Therefore
  $g^{-1}(f^{-1}(B)) \in \calB$. But $B \ominus g^{-1}(f^{-1}(B)) \in
  \calM$ and as $(Y, \calB, \calM)$ is saturated we conclude that $B
  \in \calB$.
  
  (C) Let $(X, \calA, \calN)$ be a saturated MSN, $\calE \subset
  \calA$ an $\calN$-generating family and $A \in \calP(X)$ be such
  that $E \cap A \in \calA$ for all $E \in \calE$. By
  Proposition~\ref{ccc=>loc}, there is a countable subset $\calE'
  \subset \calE$ that is $\calN$-generating. Then
  \[
  A = \left(\bigcup_{E \in \calE'} E \cap A\right) \cup \left(A \setminus \bigcup
  \calE' \right)
  \]
  Since $A \setminus
  \bigcup \calE'$ is $\calN$-negligible and $(X, \calA, \calN)$ is saturated, we infer that $A \setminus
  \bigcup \calE'$ is
  $\calA$-measurable. Therefore, $A \in \calA$.

  (D) Let $(X, \calA, \calN)$ be the coproduct of a family $\la(X_i,
  \calA_i, \calN_i)\ra_{i \in I}$ of locally determined MSNs. It is readily saturated. Let
  $\calE \subset \calA$ be an $\calN$-generating family and $A \subset
  X$ such that $E \cap A \in \calA$ for all $E \in \calE$. For all $i
  \in I$, the family $\calE_i \defeq \{E \cap X_i : E \in \calE\}$ is
  $\calN_i$-generating in $(X_i, \calA_i, \calN_i)$ by
  Lemma~\ref{supDistrib}. This observation leads to the fact that $A
  \cap X_i \in \calA_i$ for all $i \in I$, in other words, $A \in
  \calA$.

  (E) This obviously follows from (C) and (D).

  (F) Suppose that the measure space $(X, \calA, \mu)$ is locally
  determined. Let $\calE \subset \calA$ be an $\calN_\mu$-generating
  family and $A \subset X$ be such that $E \cap A \in \calA$ for all
  $E \in \calE$. Let $F \in \calA^f$. By Lemma~\ref{supDistrib}, the
  collection $\{F \cap E : E \in \calE\}$ is $\calN_F$-generating in
  $(F, \calA_F, (\calN_\mu)_F)$ and of course $F \cap E \cap A \in
  \calA_F$ for all $E \in \calE$. On top of that, $(F, \calA_F,
  (\calN_\mu)_F)$ is a ccc MSN by Proposition~\ref{finite=>loc} and it is saturated. We
  get from (B) above that $A \cap F$ is measurable. As this happens
  for all $F \in \calA^f$, we conclude that $A \in \calA$.

  Conversely, suppose the MSN $(X, \calA, \calN_\mu)$ is locally
  determined. Owing to the semi-finiteness of $(X, \calA, \mu)$, the
  collection $\calA^f$ is $\calN_\mu$-generating. Then $(X, \calA,
  \mu)$ is easily seen to be locally determined: if $A \in \calP(X)$
  satisfies $A \cap F \in \calA$ for all $F \in \calA^f$, then $A \in
  \calA$.
\end{proof}

\begin{Empty}[lld version of an atomic MSN]
\label{atomic.lld}
  As a first result, we mention that the lld version of an atomic MSN
  $(X, \calA, \{\emptyset\})$ is $[(X, \calP(X), \{\emptyset\}),
  \boldsymbol{\iota}]$, where $\boldsymbol{\iota}$ is the morphism
  induced by the identity map (that $\boldsymbol{\iota}$ is
  supremum preserving follows from~\ref{atomic}). This amounts to
  prove that, for any lld MSN $(Y, \calB, \calM)$, a $[(\calB, \calM),
    (\calA, \{\emptyset\})]$-measurable supremum preserving map $q
  \colon Y\to X$ is automatically $(\calB,
  \calP(X))$-measurable. Indeed, let $S \in \calP(X)$. Then $q^{-1}(S)
  \cap q^{-1}\{x\}$ is either $q^{-1}\{x\}$ or $\emptyset$, hence
  $q^{-1}(S) \cap q^{-1}\{x\}$ is $\calB$-measurable for every $x \in X$. 
Besides, $q$
  is supremum preserving, thus the collection $\{q^{-1}\{x\} : x \in
  X\}$ is $\calM$-generating. By local determination in $(Y, \calB,
  \calM)$ we conclude that $q^{-1}(S) \in \calB$.
\end{Empty}

\begin{Empty}
  Call $\mathsf{LLD_{sp}}$ the full subcategory of $\sfMSNsp$ that
  consists of lld MSNs, and consider the forgetful functor
  $\mathsf{Forget} \colon \mathsf{LLD_{sp}} \to \sfMSNsp$. In
  categorical terms, an lld version is the coreflection of an MSN $(X,
  \calA, \calN)$ along the functor $\mathsf{Forget}$, see
  \cite[Chapter~3]{BORCEUX.1}. In this paper, we do not answer the question whether there exists an lld version for
  each saturated MSN. This is equivalent
  to the existence of a right adjoint $\mathsf{R}$ of
  $\mathsf{Forget}$. As a matter of fact, if such an adjoint exists,
  there would be a natural transformation $\varepsilon \colon
  \mathsf{Forget} \circ \mathsf{R} \implies \rmid_{\sfMSNsp}$ such
  that the pair $[\mathsf{R}(X, \calA, \calN), \varepsilon_{(X, \calA,
    \calN)}]$ gives the lld version of any saturated MSN $(X, \calA,
  \calN)$.

  In search for an abstract proof of the existence of $\mathsf{R}$,
  one might think of using Freyd's Adjunction Theorem,
  \cite[Theorem~3.3.3]{BORCEUX.1}. Following this path, one needs to
  establish (setting aside the solution set condition) that:
  \begin{enumerate}
  \item[(A)] The category $\sfMSNsp$ is cocomplete;
  \item[(B)] The forgetful functor $\mathsf{Forget}$ preserves small
    colimits.
  \end{enumerate}
  
  Assertion (A) boils down to showing that $\sfMSNsp$ has two types of
  small colimits: coproducts and coequalizers. The existence of the
  former is shown in Proposition~\ref{propsp}(D). We do not know
  whether coequalizers exist in $\sfMSNsp$ and it is the main
  difficulty here.

  As for (B), which, regarding the existence of lld versions, is a
  necessary condition even if (A) were to be false, we have already
  proven in Propositions~\ref{coprodLoc} and~\ref{elemld}(D) that
  coproducts of lld MSNs are lld. Our next goal is
  Proposition~\ref{coeqlld} which states that coequalizers of lld MSNs
  are lld. Before that, we need to introduce some notation and a
  lemma.

  For a saturated MSN $(X, \calA, \calN)$ and an arbitrary $\calE \subset 
\calA$, we define
  \begin{align*}
    \calA_{\calE} & \defeq \calP(X) \cap \{A : E \cap A \in \calA \text{
      for all } E \in \calE\}, \\ \calN_{\calE} & \defeq \calP(X) \cap
    \{N : E \cap N \in \calN \text{ for all } E \in \calE\}.
  \end{align*}
  It is clear that $(X, \calA_{\calE}, \calN_{\calE})$ is a saturated
  MSN.
\end{Empty}

\begin{Lemma}
  \label{lldeq}
  Let $(X, \calA, \calN)$ be a localizable saturated MSN and $\calE
  \subset \calA$ an $\calN$-generating family. Let
  $\boldsymbol{\iota}$ be the morphism $(X, \calA_{\calE}, \calN_{\calE})
  \to (X, \calA, \calN)$ induced by the identity map on $X$. Then
  $\mathsf{Bool}(\boldsymbol{\iota})$ is an isomorphism. In
  particular, $\boldsymbol{\iota}$ is supremum preserving and $(X,
  \calA_{\calE}, \calN_{\calE})$ is localizable.
\end{Lemma}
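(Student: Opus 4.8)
The plan is to prove that $\mathsf{Bool}(\boldsymbol{\iota})$ is bijective, and then read off the two ``in particular'' assertions from standard facts about isomorphisms of Boolean algebras. First I would record the basic inclusions. Since every $E \in \calE$ lies in $\calA$, we have $E \cap A \in \calA$ for each $A \in \calA$, so $\calA \subset \calA_{\calE}$; similarly $\calN \subset \calN_{\calE}$. Consequently the identity map is $[(\calA_{\calE},\calN_{\calE}),(\calA,\calN)]$-measurable, $\boldsymbol{\iota}$ is a genuine morphism, and $\mathsf{Bool}(\boldsymbol{\iota}) \colon \calA/\calN \to \calA_{\calE}/\calN_{\calE}$ is the Boolean homomorphism induced by the inclusion $\calA \hookrightarrow \calA_{\calE}$, carrying the class of $A$ modulo $\calN$ to the class of $A$ modulo $\calN_{\calE}$.

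For injectivity it suffices, since $\mathsf{Bool}(\boldsymbol{\iota})$ is a homomorphism, to check that its kernel is trivial: if $A \in \calA$ satisfies $A \in \calN_{\calE}$, then $A \in \calN$. By definition of $\calN_{\calE}$ we have $A \cap E \in \calN$ for every $E \in \calE$. Because $\calE$ is $\calN$-generating, $X$ is an $\calN$-essential supremum of $\calE$, so the Distributivity Lemma~\ref{supDistrib} gives that $A = A \cap X$ is an $\calN$-essential supremum of $\{A \cap E : E \in \calE\}$. As $\emptyset$ is an $\calN$-essential upper bound of this last family, property~(2) in the definition of essential supremum forces $A \in \calN$, as wanted.

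The main work is surjectivity, and this is where localizability enters. Given $B \in \calA_{\calE}$, every $B \cap E$ (for $E \in \calE$) lies in $\calA$, so by localizability the family $\{B \cap E : E \in \calE\}$ admits an $\calN$-essential supremum $S \in \calA$. I would then show $B \ominus S \in \calN_{\calE}$, which exhibits the class of $B$ modulo $\calN_{\calE}$ as the image of the class of $S$ modulo $\calN$. Fix $E \in \calE$; I must verify $(B \ominus S) \cap E = (B \cap E) \ominus (S \cap E) \in \calN$. One inclusion is immediate: $B \cap E$ is a member of the family, so $(B \cap E) \setminus S \in \calN$, whence $(B \cap E) \setminus (S \cap E) \in \calN$. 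For the reverse inclusion, the Distributivity Lemma~\ref{supDistrib} shows that $S \cap E$ is an $\calN$-essential supremum of $\{B \cap E' \cap E : E' \in \calE\}$; since each such set is contained in $B \cap E$, the set $B \cap E$ is an $\calN$-essential upper bound of that family, and therefore $(S \cap E) \setminus (B \cap E) \in \calN$. Combining the two inclusions gives $(B \ominus S) \cap E \in \calN$ for all $E$, i.e. $B \ominus S \in \calN_{\calE}$.

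Finally, a bijective morphism of Boolean algebras is an isomorphism, so $\mathsf{Bool}(\boldsymbol{\iota})$ is in particular an isomorphism of posets and hence preserves suprema; by the definition of supremum preserving morphism this means $\boldsymbol{\iota}$ is supremum preserving. Moreover $\calA_{\calE}/\calN_{\calE} \cong \calA/\calN$ is Dedekind complete because $(X,\calA,\calN)$ is localizable, so $(X,\calA_{\calE},\calN_{\calE})$ is localizable as well. The only delicate point throughout is the surjectivity argument, which hinges on verifying both inclusions of $(B\cap E)\ominus(S\cap E)$ through two separate applications of the Distributivity Lemma.
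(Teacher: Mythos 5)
Your proof is correct and follows the same route as the paper's: injectivity via the identity $\calA \cap \calN_{\calE} = \calN$ (a consequence of the Distributivity Lemma~\ref{supDistrib}), and surjectivity by taking $S$ to be an $\calN$-essential supremum of $\{B \cap E : E \in \calE\}$ and showing $B \ominus S \in \calN_{\calE}$. The only (harmless) divergence is in verifying $(S \cap E)\setminus(B \cap E) \in \calN$: you apply the Distributivity Lemma a second time to exhibit $S \cap E$ as the essential supremum of $\{B \cap E' \cap E : E' \in \calE\}$, dominated by the upper bound $B \cap E$, whereas the paper tests the minimality of $S$ globally against the competitor upper bound $S' = S \setminus (E_0 \cap S \setminus B)$ — both verifications rest on the same minimality property of essential suprema.
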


\begin{proof}
  First we make the following observation, to be used later in the
  proof: $\calA \cap \calN_{\calE} = \calN$. Indeed, if $N \in \calA$ is
  such that $E \cap N \in\calN$ for all $E \in \calE$, then by the
  distributivity lemma~\ref{supDistrib}, we conclude that $N \in
  \calN$. This proves the inclusion $\calA \cap \calN_{\calE} \subset
  \calN$, the reciprocal being trivial.

  The identity map $\iota \colon X \to X$ is $[(\calA_{\calE},
    \calN_{\calE}), (\calA,\calN)]$-measurable because $\calA \subset
  \calA_{\calE}$ and $\calN \subset \calN_{\calE}$. Let us show that
  $\mathsf{Bool}(\boldsymbol{\iota}) \colon \calA /\calN \to
  \calA_{\calE} / \calN_{\calE}$ is injective by inspecting its
  kernel. Let $\boldsymbol{A} \in \calA / \calN$ be a class
  represented by $A \in \calA$ such that
  $\mathsf{Bool}(\boldsymbol{\iota})(\boldsymbol{A}) = 0$, in other
  words $A = \iota^{-1}(A) \in \calN_{\calE}$. Then $A \in \calA \cap
  \calN_{\calE} = \calN$. This means that $\boldsymbol{A} = 0$ in $\calA
  / \calN$. Therefore, $\mathsf{Bool}(\boldsymbol{\iota})$ is
  injective.

  Now let us show that $\mathsf{Bool}(\boldsymbol{\iota})$ is surjective. 
To
  this end, let $\boldsymbol{H} \in \calA_{\calE} / \calN_{\calE}$ be a
  class represented by $H \in \calA_{\calE}$. We ought to prove
  that $\boldsymbol{H}$ is in the range of
  $\mathsf{Bool}(\boldsymbol{\iota})$. Set $\calF \defeq \{E \cap H :
  E \in \calE\}$. Note that $\calF \subset \calA$. The localizability
  of $(X, \calA, \calN)$ guarantees that $\calF$ has an
  $\calN$-essential supremum $S \in \calA$. In particular, $E \cap H
  \setminus S \in \calN$ for all $E \in \calE$, meaning that $H
  \setminus S \in \calN_{\calE}$.

  We also claim that $S \setminus H \in \calN_{\calE}$. Indeed, let $E_0
  \in \calE$. Set $S' \defeq S \setminus (E_0 \cap S \setminus H)$. We
  note that $S' \in \calA$. For all $E \in \calE$, we have $E \cap H
  \setminus S' = E \cap H \setminus S \in \calN$, as $H \setminus S
  \in \calN_{\calE}$. This means that $S'$ is an $\calN$-essential upper
  bound of $\calF$. It follows that $S \setminus S' = E_0 \cap H
  \setminus S \in \calN$. As $E_0\in \calE$ is arbitrary, we obtain $S
  \setminus H \in \calN_{\calE}$, as required.

  We proved that $H \ominus S \in \calN_{\calE}$. Calling
  $\boldsymbol{S}$ the equivalence class of $S$ in $\calA / \calN$, we
  have that $\mathsf{Bool}(\boldsymbol{\iota})(\boldsymbol{S}) =
  \boldsymbol{H}$.
\end{proof}

\begin{Proposition}
  \label{coeqlld}
  Consider the following diagram in $\sfMSNsp$, where $((Z, \calC,
  \calP), \bh)$ is the coequalizer of $\mathbf{f}, \bg$.
  \[
  \begin{tikzcd}
    (X, \calA, \calN) \arrow[r, shift left, "\mathbf{f}"] \arrow{r}[shift
      right, below]{\bg} & (Y, \calB, \calM) \arrow{r}{\bh} &
    (Z, \calC, \calP)
  \end{tikzcd}
  \]
  \begin{enumerate}
  \item[(A)] If $(Y, \calB, \calM)$ is
    localizable, so is $(Z, \calC, \calP)$.  
  \item[(B)] If $(Y, \calB, \calM)$ is lld,
    so is $(Z, \calC, \calP)$. 
  \end{enumerate}
\end{Proposition}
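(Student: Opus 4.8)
The plan is to transfer the relevant structure along the universal arrow $\bh$ by means of the contravariant functor $\mathsf{Bool}$ together with the two-point object $E \defeq (\{0,1\}, \calP(\{0,1\}), \{\emptyset\})$, which is a saturated MSN lying in $\sfMSNsp$ because its Boolean algebra is finite — so every homomorphism into or out of it preserves the (finite) suprema that exist. The point is that morphisms $(Z,\calC,\calP) \to E$ in $\sfMSNsp$ are in bijection with $\calC/\calP$ via $\bar\chi \mapsto \bar\chi^{-1}(1)$ (and similarly for $Y$), which converts the universal property of the coequalizer into assertions about $\mathsf{Bool}(\bh) \colon \calC/\calP \to \calB/\calM$.

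For (A), I would first note that $\bh \circ \mathbf{f} = \bh \circ \bg$ gives, after applying $\mathsf{Bool}$, the identity $\mathsf{Bool}(\mathbf{f}) \circ \mathsf{Bool}(\bh) = \mathsf{Bool}(\bg) \circ \mathsf{Bool}(\bh)$, so the image of $\mathsf{Bool}(\bh)$ lies in the subalgebra
\[
\frE \defeq \{ b \in \calB/\calM : \mathsf{Bool}(\mathbf{f})(b) = \mathsf{Bool}(\bg)(b) \}.
\]
Conversely, for $b = [B] \in \frE$ the indicator $\chi_B \colon Y \to \{0,1\}$ satisfies $\chi_B \circ \mathbf{f} = \chi_B \circ \bg$ (the two preimages of $1$ differ by the $\calN$-set witnessing $b \in \frE$), so the universal property produces $\bar\chi \colon Z \to \{0,1\}$ with $\bar\chi \circ \bh = \chi_B$, putting $b = \mathsf{Bool}(\bh)([\bar\chi^{-1}(1)])$ in the image; hence the image is exactly $\frE$. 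Injectivity holds because $\bh$, being a coequalizer, is an epimorphism: if $h^{-1}(C_1) \ominus h^{-1}(C_2) \in \calM$ then $\chi_{C_1} \circ \bh = \chi_{C_2} \circ \bh$, whence $\chi_{C_1} = \chi_{C_2}$ and $C_1 \ominus C_2 \in \calP$. Thus $\mathsf{Bool}(\bh)$ is an isomorphism of $\calC/\calP$ onto $\frE$. Since $(Y,\calB,\calM)$ is localizable, $\calB/\calM$ is Dedekind complete, and $\frE$ is closed under the suprema computed in $\calB/\calM$ (apply the supremum preserving maps $\mathsf{Bool}(\mathbf{f}), \mathsf{Bool}(\bg)$ to a supremum of elements of $\frE$); so $\frE$, and hence $\calC/\calP$, is Dedekind complete, i.e. $(Z,\calC,\calP)$ is localizable.

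For (B), localizability of $Z$ is (A), so only local determination remains. Fix a $\calP$-generating $\calG \subset \calC$ and form $(Z, \calC_\calG, \calP_\calG)$ in the notation preceding Lemma~\ref{lldeq}; I must show $\calC_\calG \subset \calC$. Because $\bh$ is supremum preserving and $\calG$ is $\calP$-generating, $h^{-1}(\calG)$ is $\calM$-generating, and then local determination of $Y$ (together with its consequence that $Y$ has locally determined negligible sets, \ref{def.ld}) shows that $h$ represents a morphism $\bh_\calG \colon (Y,\calB,\calM) \to (Z, \calC_\calG, \calP_\calG)$ satisfying $\boldsymbol{\iota}_\calG \circ \bh_\calG = \bh$, where $\boldsymbol{\iota}_\calG$ is the inclusion morphism of Lemma~\ref{lldeq} applied to the localizable MSN $(Z,\calC,\calP)$. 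By that lemma $\mathsf{Bool}(\boldsymbol{\iota}_\calG)$ is an isomorphism, so $\mathsf{Bool}(\bh_\calG) = \mathsf{Bool}(\bh) \circ \mathsf{Bool}(\boldsymbol{\iota}_\calG)^{-1}$ is supremum preserving and $\bh_\calG \in \sfMSNsp$. As $\bh_\calG$ also coequalizes $\mathbf{f}, \bg$, the universal property yields $\bar u \colon (Z,\calC,\calP) \to (Z,\calC_\calG,\calP_\calG)$ with $\bar u \circ \bh = \bh_\calG$; then $\boldsymbol{\iota}_\calG \circ \bar u \circ \bh = \bh$ forces $\boldsymbol{\iota}_\calG \circ \bar u = \boldsymbol{\mathrm{id}}$ by the uniqueness clause. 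Picking a representative $u$ of $\bar u$, this says $u = \rmid_Z$ off a set $P \in \calP$, so for $A \in \calC_\calG$ we have $u^{-1}(A) \in \calC$ and $u^{-1}(A) \ominus A \subset P$; saturation of $(Z,\calC,\calP)$ gives $A \in \calC$. Hence $\calC_\calG = \calC$, and as $\calG$ was arbitrary, $(Z,\calC,\calP)$ is locally determined, thus lld.

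The main obstacle in both parts is that the measurable structure $(\calC,\calP)$ of a coequalizer is not handed to us concretely — in particular $\calC$ need not be the final $\sigma$-algebra for $h$ — so one cannot simply pull a subset $A \subset Z$ back to $Y$, exploit the good properties of $Y$, and read off membership in $\calC$. The device that circumvents this is to encode the desired conclusion as the \emph{coequalizing} property of an auxiliary morphism (into $E$ for (A), into $(Z,\calC_\calG,\calP_\calG)$ for (B)) and then invoke the existence-and-uniqueness in the universal property of $\bh$; Lemma~\ref{lldeq} is precisely what keeps the auxiliary target localizable and the comparison morphism supremum preserving, so that the whole argument stays inside $\sfMSNsp$.
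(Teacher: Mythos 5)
Your proof is correct and takes essentially the same route as the paper's: in (A) both arguments test against the two-point MSN, identify $\mathsf{Bool}(\bh)$ as an isomorphism onto the equalizer subalgebra of $\calB/\calM$, and transfer Dedekind completeness using that $\mathsf{Bool}(\mathbf{f})$ and $\mathsf{Bool}(\bg)$ preserve suprema; in (B) both factor $h$ through $(Z,\calC_{\calG},\calP_{\calG})$ via local determination of $(Y,\calB,\calM)$, invoke Lemma~\ref{lldeq} to keep the comparison morphism supremum preserving, and use the coequalizer's uniqueness clause to split $\boldsymbol{\iota}_{\calG}$, giving $\calC_{\calG} \subset \calC$. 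The only quibble is your parenthetical claim that Boolean homomorphisms \emph{into} a finite algebra preserve existing suprema (false in general, e.g.\ for non-principal ultrafilters viewed as maps to the two-element algebra); fortunately the direction you actually use --- homomorphisms out of $\mathsf{Bool}(\boldsymbol{2})$, i.e.\ MSN-morphisms into $\boldsymbol{2}$ --- is exactly the valid one.
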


\begin{proof}
  (A) Let us call $\boldsymbol{2}$ the special MSN $(\{0, 1\},
  \calP(\{0,1\}), \{\emptyset\})$.  First we show the following
  intermediate result: for any MSN $(X, \calA, \calN)$, there is a
  one-to-one correspondence $\Upsilon_X$ between the Boolean algebra
  $\mathsf{Bool}(X, \calA,\calN)$ and the set of morphisms $\rmHom((X,
  \calA, \calN), \boldsymbol{2})$ (those are automatically supremum
  preserving since the Boolean algebra of $\boldsymbol{2}$ is
  finite). Given a class $\boldsymbol{A} \in \mathsf{Bool}(X, \calA,
  \calN)$, represented by a set $A$, the characteristic function
  $\ind_A \colon X \to \{0, 1\}$ induces a morphism $\boldsymbol{1_A}$
  which only depends on the equivalence class
  $\boldsymbol{A}$. Indeed, if $A'$ is another representative of
  $\boldsymbol{A}$, then $\ind_A$ and $\ind_{A'}$ coincide
  $\calN$-almost everywhere. We set $\Upsilon_X(\boldsymbol{A}) \defeq
  \boldsymbol{1_A}$.

  This map is surjective because each morphism $\boldsymbol{\varphi} \colon
  (X, \calA, \calN) \to \boldsymbol{2}$ is represented by a map
  $\varphi \in \boldsymbol{\varphi}$ which has the form $\varphi =
  \ind_{\varphi^{-1}(\{1\})}$. It is injective because if $\ind_A$
  coincides with $\ind_B$ almost everywhere, for measurable sets $A$
  and $B$, then $A$ and $B$ yield the same equivalence class in
  $\mathsf{Bool}(X, \calA, \calN)$.

  Now we turn to the proof of conclusion (A). By naturality of
  $\Upsilon$, the following diagram is commutative, where $\rmHom(\bh,
  \mathbf{2})$, $\rmHom(\mathbf{f}, \mathbf{2})$ and $\rmHom(\bg,
  \mathbf{2})$ denote the right composition with $\bh$, $\mathbf{f}$,
  and $\bg$, respectively.
  \[
  \begin{tikzcd}
    \rmHom((Z, \calC, \calP), \boldsymbol{2})  \arrow[rr, "{\rmHom(\bh, \boldsymbol{2})}"] & &
    \rmHom((Y, \calB, \calM), \boldsymbol{2})  \arrow[rr, shift left, "{\rmHom(\mathbf{f},
        \boldsymbol{2})}"] \arrow{rr}[shift right,
      below]{\rmHom(\mathbf{g}, \boldsymbol{2})} & & \rmHom((X, \calA,
    \calN), \boldsymbol{2}) 
    \\ \mathsf{Bool}(Z, \calC, \calP)\arrow[u,
      "{\Upsilon_Z}"] \arrow[rr,
      "{\mathsf{Bool}(\bh)}"] & & \mathsf{Bool}(Y, \calB, \calM)\arrow[u,
      "{\Upsilon_Y}"]
    \arrow[rr, shift left, "{\mathsf{Bool}(\mathbf{f})}"]
    \arrow{rr}[shift right, below]{\mathsf{Bool}(\bg)}
    &&
    \mathsf{Bool}(X,\calA,\calN)\arrow[u, "{\Upsilon_X}"]
  \end{tikzcd}
  \]
  We show that $\rmHom(\bh,\mathbf{2})$ is injective. Indeed, if
  $\boldsymbol{\vphi}$ and $\boldsymbol{\psi}$ are such that
  $\rmHom(\bh,\mathbf{2})(\boldsymbol{\vphi})=\rmHom(\bh,\mathbf{2})(\boldsymbol{\psi})$
  then, upon letting $\bk = \boldsymbol{\vphi} \circ \bh =
  \boldsymbol{\psi} \circ \bh$, we infer that $\bk \circ \mathbf{f} =
  \bk \circ \bg$. By the universal property of $(Z,\calC,\calP)$,
  there exists a unique $\boldsymbol{\ell} \in
  \rmHom((Z,\calC,\calP),\mathbf{2})$ such that $\boldsymbol{\ell}
  \circ \bh = \bk$. Since $\boldsymbol{\vphi}$ and $\boldsymbol{\psi}$
  have the property of $\boldsymbol{\ell}$, we conclude that they
  coincide. Similarly, the universal property of coequalizers tells us
  that the range of $\rmHom(\bh,\mathbf{2})$ consists of those
  morphisms $\bk$ such that $\rmHom(\mathbf{f},
  \mathbf{2})(\bk) = \rmHom(\bg,
  \mathbf{2})(\bk)$. On the second line of the
  diagram, these two observations translate to the fact that
  $\mathsf{Bool}(\bh)$ induces an isomorphism of Boolean algebras from
  $\mathsf{Bool}(Z, \calC, \calP)$ onto the Boolean subalgebra
  \[
  \frA \defeq \mathsf{Bool}(Y, \calB, \calM) \cap \{ \xi :
  \mathsf{Bool}(\mathbf{f})(\xi) = \mathsf{Bool}(\bg)(\xi)\}.
  \]
  It remains to prove that $\frA$ is Dedekind complete. Let $\frE
  \subset \frA$ be a collection. It has a supremum $s$ in
  $\mathsf{Bool}(Y, \calB, \calM)$, as $(Y, \calB, \calM)$ is
  localizable. Since $\mathbf{f}$ and $\bg$ are supremum preserving,
  we have $\mathsf{Bool}(\mathbf{f})(s) = \sup
  \mathsf{Bool}(\mathbf{f})(\frE) = \sup
  \mathsf{Bool}(\mathbf{g})(\frE) = \mathsf{Bool}(\bg)(s)$. Hence $s
  \in \frA$ and $\frA$ is Dedekind complete.

  (B) That $(Z, \calC, \calP)$ is localizable follows from (A). Let
  $\calG \subset \calC$ be any $\calP$-generating family.  We wish to
  prove that $\calC_{\calG} \subset \calC$. If we manage to do so,
  then $(Z, \calC, \calP)$ is locally determined, as $\calG$ is
  arbitrary.

  Let $h$ be a representative of $\bh$. By definition, it is $[(\calB,
    \calM), (\calC, \calP)]$-measurable and supremum preserving. We
  claim that it is, in fact, $[(\calB, \calM), (\calC_{\calG},
    \calP_{\calG})]$-measurable. Indeed, let $C \in \calC_{\calG}$. For
  all $G \in \calG$, we have $G \cap C \in \calC$ which implies that
  $h^{-1}(G) \cap h^{-1}(C) = h^{-1}(G \cap C) \in \calB$. Moreover,
  $h^{-1}(\calG)$ is $\calM$-generating, as $\calG$ is $\calP$-generating 
and $h$ is
  supremum preserving. Thus, since $(Y, \calB, \calM)$ is locally
  determined, we have that $h^{-1}(C) \in \calB$. Next, if $P \in
  \calP_{\calG}$, then $h^{-1}(P) \in \calB$ by what precedes and $h^{-1}(P) \cap
  h^{-1}(G) = h^{-1}(P \cap G) \in \calM$ for all $G \in \calG$. By
  the distributivity lemma~\ref{supDistrib}, we obtain $h^{-1}(P) \in
  \calM$.

  Denote as $\bh' \colon (Y,\calB, \calM) \to (Z, \calC_{\calG},\calP_{\calG})$
  the morphism induced by $h$, and denote as $\boldsymbol{\iota} \colon (Z,
  \calC_{\calG},\calP_{\calG}) \to (Z, \calC, \calG)$ the morphism
  induced by the identity map $\rmid_Z$. By Lemma~\ref{lldeq}, we have
  $\mathsf{Bool}(\bh') = \mathsf{Bool}(\boldsymbol{\iota})^{-1} \circ
  \mathsf{Bool}(\bh)$, which is the composition of two
  supremum preserving morphisms of Boolean algebras. Thus, $\bh'$ is a
  supremum preserving as well. Also, we recall $\bh \circ \mathbf{f} =
  \bh \circ \bg$. As $\bh$ and $\bh'$ are induced by the same map, we
  deduce that $\bh' \circ \mathbf{f} = \bh' \circ \bg$. By the
  universal property of coequalizers, there is a morphism $\bk \colon
  (Z, \calC, \calP) \to (Z, \calC_{\calG}, \calP_{\calG})$ such that $\bh'
  = \bk \circ \bh$.
  \[
  \begin{tikzcd}
    & & (Z, \calC_{\calG}, \calP_{\calG}) \arrow[d, shift left, "\boldsymbol{\iota}"]
    \\ (X, \calA, \calN) \arrow[r, shift left, "\mathbf{f}"]
    \arrow{r}[shift right, below]{\bg} & (Y, \calB, \calM)
    \arrow{r}{\bh} \arrow[ru, "\bh'"] & (Z, \calC, \calP)
    \arrow[u, "\bk"]
  \end{tikzcd}
  \]
  Hence $\boldsymbol{\iota} \circ \bk \circ \bh = \boldsymbol{\iota}
  \circ \bh' = \bh = \rmid_{(Z,\calC,\calP)} \circ \bh$. The
  uniqueness in the universal property of equalizers implies that
  $\bh$ is an epimorphism. Thus $\boldsymbol{\iota} \circ \bk =
  \rmid_{(Z,\calC,\calP)}$. A representative $k \in \bk$ must satisfy
  $z = \rmid_Z(k(z)) = k(z)$ for $\calP$-almost all $z \in Z$, 
    i.e. $P = Z \cap \{ z : z \neq k(z) \} \in \calP$. Let $C \in
  \calC_{\calG}$. Since $k$ is $(\calC,\calC_{\calG})$-measurable, it
  follows that $k^{-1}(C) \in \calC$. Since $k^{-1}(C) \ominus C
  \subset P$, we deduce that $k^{-1}(C) \ominus C \in \calC$ and, in
  turn, $C \in \calC$.
\end{proof}

\begin{Empty}
  The last two results of this section show that the category $\mathsf{LLD_{sp}}$
  has better categorical properties than $\sfMSNsp$: it has equalizers
  in full generality. This result is reminiscent of
  \cite[214Ie]{FREMLIN.II}.
\end{Empty}

\begin{Proposition}
  \label{subMSNLLDsp}
  Let $(X, \calA, \calN)$ be an lld MSN and $Y \subset X$ any
  subset. Then the subMSN $(Y, \calA_Y, \calN_Y)$ is lld and the
  canonical morphism $\boldsymbol{\iota}_Y \colon (Y, \calA_Y,
  \calN_Y) \to (X, \calA, \calN)$ is supremum preserving.
\end{Proposition}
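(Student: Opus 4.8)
The plan is to transfer everything along the trace map $A \mapsto A \cap Y$, using a \emph{measurable envelope} of $Y$ whose very existence is where the local determination of $X$ will be needed. First I would record two preliminaries. Since $(X,\calA,\calN)$ is saturated, every subset of a negligible set is negligible, so $\calN_Y = \{N \in \calN : N \subset Y\} = \calN \cap \calP(Y)$; in particular $(Y,\calA_Y,\calN_Y)$ is again saturated, and a subset of $Y$ lies in $\calN_Y$ if and only if it lies in $\calN$. This already makes the relation ``equal almost everywhere'' on $\calA_Y$ manageable.

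The key construction is the envelope. Let $\calG = \calA \cap \{A : A \cap Y \in \calN\}$; it is a $\sigma$-ideal of $\calA$, so by localizability of $X$ it has an $\calN$-essential supremum $T$, and by~\ref{essentialSup} the family $\calG \cup \{X \setminus T\}$ is $\calN$-generating. For $A \in \calG$ one has $(Y \cap T) \cap A \subset A \cap Y \in \calN$, while $(Y \cap T) \cap (X\setminus T) = \emptyset$, so the locally-determined-negligible-sets property of the lld space $X$ (which holds by~\ref{def.ld}) forces $Y \cap T \in \calN$. Setting $W \defeq (X \setminus T) \cup (Y \cap T) \in \calA$ one obtains $Y \subset W$ together with the envelope property (E): if $V \in \calA$, $V \subset W$ and $V \cap Y \in \calN$, then $V \in \calN$ (indeed such a $V$ lies in $\calG$, whence $V = V \setminus T \in \calN$).

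Next I would prove localizability of $(Y,\calA_Y,\calN_Y)$ and supremum preservation of $\boldsymbol{\iota}_Y$ simultaneously, as they reduce to one computation. Given $\calF \subset \calA_Y$, write $F = A_F \cap Y$ and let $S$ be an $\calN$-essential supremum of $\{A_F\}$ in $X$; I claim $S \cap Y$ is an $\calN_Y$-essential supremum of $\calF$. The upper-bound condition is immediate from $\calN_Y \subset \calN$. For minimality, if $S' \cap Y$ is another essential upper bound then each $A_F \setminus S'$ lies in $\calG$, while by Lemma~\ref{supDistrib} the set $S \setminus S'$ is an $\calN$-essential supremum of $\{A_F \setminus S'\}$; hence $(S \setminus S') \setminus T \in \calN$, and intersecting with $Y$ and invoking $Y \cap T \in \calN$ gives $(S \cap Y) \setminus (S' \cap Y) \in \calN_Y$. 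Applying this with $\calF = \{E \cap Y : E \in \calE\}$ for an arbitrary $\calE \subset \calA$ possessing an essential supremum $S$ shows precisely that $\boldsymbol{\iota}_Y$ is supremum preserving, while the general case gives localizability.

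Finally, local determination of $Y$ is the main obstacle. Let $\calE' \subset \calA_Y$ be $\calN_Y$-generating and $A \subset Y$ with $A \cap E' \in \calA_Y$ for all $E'$. Writing $E' = B_{E'} \cap Y$ with $B_{E'} \subset W$, one extracts $C_{E'} \in \calA$, $C_{E'} \subset B_{E'}$, with $C_{E'} \cap Y = A \cap B_{E'}$; property (E) turns $\{C_{E'}\}$ into a family that is compatible up to $\calN$, so I would glue it by letting $C$ be an $\calN$-essential supremum of $\{C_{E'}\}$. Using (E) together with Lemma~\ref{supDistrib} one checks $(C \cap B_{E'}) \ominus C_{E'} \in \calN$ for every $E'$, whence $\big((C \cap Y) \cap E'\big) \ominus (A \cap E') \in \calN_Y$. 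The symmetric difference $R \defeq (C \cap Y) \ominus A \subset Y$ then satisfies $R \cap B_{E'} \in \calN$, while $R \setminus S_0 \subset Y \setminus S_0 \in \calN$, where $S_0$ is an $\calN$-essential supremum of $\{B_{E'}\}$ (the last inclusion because $\calE'$ is $\calN_Y$-generating, via the formula of the previous paragraph). Since $\{B_{E'}\} \cup \{X \setminus S_0\}$ is $\calN$-generating, locally-determined negligibles of $X$ give $R \in \calN_Y$, so $A = (C \cap Y) \ominus R \in \calA_Y$. The hard part is exactly this gluing-and-matching step: the envelope is what converts ``agreeing on $Y$'' into ``agreeing up to $\calN$ in $X$'', and both the envelope and the concluding step rely essentially on the local determination of $X$ — which is the structural reason subspaces of merely \emph{localizable} MSNs may fail to be localizable.
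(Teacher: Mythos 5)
Your proof is correct, but it takes a genuinely different route from the paper's, so let me compare. The paper works intrinsically in $Y$ throughout: supremum preservation of $\boldsymbol{\iota}_Y$ is proved by testing $S \cap Y \setminus U$ against the generating family $\calE \cup \{X \setminus S\}$ (local determination of $X$ makes this set measurable, then distributivity makes it negligible); localizability of $Y$ then comes for free from the abstract fact that $\mathsf{Bool}(\boldsymbol{\iota}_Y)$ is a surjective supremum preserving morphism out of a Dedekind complete algebra; and local determination of $Y$ rests on an intermediate claim (the subMSN has locally determined negligible sets), applied twice to an $\calN_Y$-essential supremum $S$ of $\{E \cap A\}$ computed \emph{inside} $Y$ -- where the direction $S \setminus A \in \calN_Y$ requires the trick of comparing $S$ with $S \setminus (E_0 \cap S \setminus A)$ for each fixed $E_0$. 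You instead invest up front in a measurable envelope: the essential supremum $T$ of the $\sigma$-ideal $\calG = \calA \cap \{A : A \cap Y \in \calN\}$, the key fact $Y \cap T \in \calN$ (a structural observation the paper never isolates), and the resulting property (E) of $W = (X \setminus T) \cup (Y \cap T)$. This converts ``negligible trace on $Y$'' into ``negligible in $X$'' for measurable subsets of $W$, which lets you prove localizability and supremum preservation by one explicit trace formula (no Boolean-algebra detour), and makes the local determination step a symmetric lift-and-glue argument: lift the pieces $A \cap E'$ to $C_{E'} \subset B_{E'} \subset W$, take an essential supremum $C$ in $X$, and match $C \cap Y$ with $A$, avoiding the paper's $E_0$ trick. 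The cost is the envelope construction and the choices of lifts; both proofs ultimately lean on the same engine, namely local determination of $X$ applied to generating families of the form $\calE' \cup \{X \setminus S\}$. One small repair: in your verification of (E), the assertion ``$V = V \setminus T$'' is false in general, since $V$ may meet $T$; but the conclusion stands because $V \cap T \subset W \cap T = Y \cap T \in \calN$, so $V = (V \setminus T) \cup (V \cap T)$ is negligible by saturation.
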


\begin{proof}
  First we show that the map $\iota_Y \colon Y \to X$ is
  supremum preserving. Let $\calE \subset \calA$ and assume $S \in \calA$ 
is an $\calN$-essential supremum of $\calE$. The set $S \cap Y = \iota_Y^{-1}(S)$ is an $\calN_Y$-essential
  upper bound of $\iota_Y^{-1}(\calE)$.
  Let $U \in \calA_Y$ be an arbitrary $\calN_Y$-essential upper bound of $\iota_Y^{-1}(\calE)$. We ought to show that
  $S \cap Y \setminus U \in \calN_Y$. For all $E \in \calE$, one has
  $E \cap S \cap Y \setminus U \subset E \cap Y \setminus U \in
  \calN_Y$. As $(X, \calA, \calN)$ is saturated, $\calN_Y \subset \calN$ and, also, $E \cap S \cap Y \setminus U \in \calN$. Of course $(X \setminus S) \cap
  S \cap Y \setminus U = \emptyset$ is also $\calN$-negligible. Since the
  family $\calE \cup \{X \setminus S\}$ is $\calN$-generating and $(X,
  \calA,\calN)$ is locally determined, we deduce that $S \cap Y
  \setminus U$ is $\calA$-measurable and, in turn, that it is
  $\calN$-negligible by the distributivity lemma \ref{supDistrib}. The proof that $\boldsymbol{\iota}_Y$ is
  supremum preserving is complete.

  Since the morphism $\mathsf{Bool}(\boldsymbol{\iota}_Y) \colon \calA
  / \calN \to \calA_Y / \calN_Y$ is onto, supremum preserving, and
  $\calA / \calN$ is Dedekind complete, so is $\calA_Y / \calN_Y$,
  meaning that $(Y, \calA_Y, \calN_Y)$ is localizable. It remains to show 
that $(X,\calA,\calN)$ is locally determined.

  We claim the following: {\it If $\calE \subset \calA_Y$ is
   $\calN_Y$-generating and $N \in \calP(Y)$ satisfies $E
  \cap N \in \calN_Y$ for all $E \in \calE$, then $N \in \calN_Y$.} By
  definition of $\calA_Y$, any set $E \in \calE$ can be written as $E
  = E' \cap Y$, for some $E' \in \calA$, so there is a subset $\calE'
  \subset \calA$ such that $\calE = \iota_Y^{-1}(\calE')$. The
  localizability of $(X, \calA, \calN)$ guarantees the existence of an
  $\calN$-essential supremum $S$ of $\calE'$. For all $E' \in \calE'$
  one has $E' \cap N = E' \cap Y \cap N \in \calN_Y \subset \calN$,
  because $E' \cap Y \in \calE$. Also $S \cap Y = \iota_Z^{-1}(S)$ is
  an $\calN_Y$-essential supremum of $\calE =
  \iota_Z^{-1}(\calE')$, by the first paragraph. Recalling that $\calE$ is
  $\calN_Y$-generating, we find that $Y \setminus S = Y \setminus (S \cap Y) \in \calN_Y$. Consequently, $(X \setminus S) \cap
  N \subset Y \setminus S \in \calN_Y \subset \calN$. As $(X, \calA, \calN)$ is saturated, we
  find that $(X \setminus S) \cap N \in \calN$. In conclusion, $E'
  \cap N \in \calN$ for any $E'$ that belongs to the
  $\calN$-generating family $\calE' \cup \{X \setminus S\}$. Since $(X,
  \calA, \calN)$ is locally determined, we infer that $N \in \calA$
  and then that $N \in \calN$ by the distributivity lemma \ref{supDistrib}. As $N
  \subset Y$, we conclude that $N \in \calN_Y$.

  Now let $\calE \subset \calA_Y$ be an $\calN_Y$-generating collection
  and $A \in \calP(Y)$ be such that $E \cap A \in \calA_Y$ for all $E
  \in \calE$. We want to prove that $A \in \calA_Y$. As $(Y, \calA_Y,
  \calN_Y)$ is localizable, $\{E \cap A : E \in \calE\}$ has an
  $\calN_Y$-essential supremum $S$. This implies that $E \cap A
  \setminus S \in \calN_Y$ for all $E \in \calE$. By the claim above, $A \setminus S \in \calN_Y$.

  Fix $E_0 \in \calE$. Note that $E_0 \cap (S \setminus A) = (E_0 \cap
  S) \setminus (E_0 \cap A) \in \calA_Y$. Also, 
  \begin{align*}
    E \cap A \setminus \left((S \setminus (E_0 \cap S \setminus A)
    \right) & = E \cap A \cap ((Y \setminus S) \cup (E_0 \cap S
    \setminus A))\\ & = E \cap A \setminus S \in \calN_Y,
  \end{align*}
  for all $E \in \calE$. In other words, $S \setminus (E_0
  \cap S \setminus A)$ is an $\calN_Y$-essential upper bound of $\{E
  \cap A : E \in \calE\}$. As $S$ is an $\calN_Y$-essential supremum
  of this family, $S \setminus (S \setminus (E_0 \cap S \setminus A))
  = E_0 \cap S \setminus A \in \calN_Y$. Applying again the claim above, we deduce that $S
  \setminus A \in \calN_Y$ from the arbitrariness of $E_0$. Summing up, $A \ominus S \in \calN_Y$. As $S \in
  \calA_Y$ we infer that $A
  \in \calA_Y$. The proof that $(Y, \calA_Y, \calN_Y)$
  is locally determined is now complete.
\end{proof}

\begin{Corollary}
\label{eq.lld}
  $\mathsf{LLD_{sp}}$ has equalizers preserved by the forgetful
  functor $\mathsf{LLD_{sp}} \to \mathsf{MSN}$.
\end{Corollary}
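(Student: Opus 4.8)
The plan is to exhibit, for a parallel pair $\mathbf{f},\bg\colon (X,\calA,\calN)\to(Y,\calB,\calM)$ in $\mathsf{LLD_{sp}}$, the very same equalizer that Proposition~\ref{equalizerMSN} produces in $\sfMSN$, and to check that it stays inside $\mathsf{LLD_{sp}}$. Fix representatives $f\in\mathbf{f}$, $g\in\bg$ and set $Z\defeq\{f=g\}\subset X$. The essential point is that, in contrast with Proposition~\ref{propsp}(C) where one must assume $Z$ to be $\calA$-measurable, here no such assumption is needed: Proposition~\ref{subMSNLLDsp} applies to an \emph{arbitrary} subset $Z\subset X$ and guarantees at once that $(Z,\calA_Z,\calN_Z)$ is lld and that $\boldsymbol{\iota}_Z$ is supremum preserving. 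This is precisely where the difficulty of the statement has already been absorbed.

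Granting this, I would first record that $\boldsymbol{\iota}_Z$ is a legitimate morphism of $\mathsf{LLD_{sp}}$ and that $\mathbf{f}\circ\boldsymbol{\iota}_Z=\bg\circ\boldsymbol{\iota}_Z$, since $f\circ\iota_Z=g\circ\iota_Z$ by the very definition of $Z$. For the universal property, let $\bh\colon(T,\calC,\calP)\to(X,\calA,\calN)$ be a morphism of $\mathsf{LLD_{sp}}$ with $\mathbf{f}\circ\bh=\bg\circ\bh$. The construction in the proof of Proposition~\ref{equalizerMSN} already furnishes a unique morphism $\bh'\colon(T,\calC,\calP)\to(Z,\calA_Z,\calN_Z)$ of $\sfMSN$ with $\bh=\boldsymbol{\iota}_Z\circ\bh'$; the only thing left to verify is that $\bh'$ is supremum preserving, after which $\bh'$ is automatically a morphism of $\mathsf{LLD_{sp}}$ and uniqueness is inherited from $\sfMSN$.

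I expect this last verification to be the one genuine computation. Passing to Boolean algebras turns the factorization into $\mathsf{Bool}(\bh)=\mathsf{Bool}(\bh')\circ\mathsf{Bool}(\boldsymbol{\iota}_Z)$. Here $\mathsf{Bool}(\boldsymbol{\iota}_Z)\colon\calA/\calN\to\calA_Z/\calN_Z$ sends $[A]$ to $[A\cap Z]$, so it is surjective, and it is supremum preserving because $\boldsymbol{\iota}_Z$ is. Given a family $\frE\subset\calA_Z/\calN_Z$ admitting a supremum $s$, I would lift it along this surjection to a family $\frE'\subset\calA/\calN$, take its supremum $s'$ (available since $(X,\calA,\calN)$ is localizable), and use supremum preservation of $\mathsf{Bool}(\boldsymbol{\iota}_Z)$ to get $\mathsf{Bool}(\boldsymbol{\iota}_Z)(s')=s$. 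Then, using $\mathsf{Bool}(\boldsymbol{\iota}_Z)(\frE')=\frE$ for the outer terms and supremum preservation of $\mathsf{Bool}(\bh)$ for the middle equality,
\[
\mathsf{Bool}(\bh')(s)=\mathsf{Bool}(\bh)(s')=\sup\mathsf{Bool}(\bh)(\frE')=\sup\mathsf{Bool}(\bh')(\frE),
\]
which shows that $\bh'$ is supremum preserving.

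Finally, preservation by the forgetful functor $\mathsf{LLD_{sp}}\to\sfMSN$ is immediate: the equalizer just constructed is literally $[(Z,\calA_Z,\calN_Z),\boldsymbol{\iota}_Z]$, which is exactly the equalizer of $\mathbf{f},\bg$ in $\sfMSN$ according to Proposition~\ref{equalizerMSN}. Thus the forgetful functor carries equalizers to equalizers, completing the argument.
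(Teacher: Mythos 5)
Your proof is correct and takes essentially the same route as the paper's: the same set $Z=\{f=g\}$, the same appeal to Proposition~\ref{equalizerMSN} for the factorization $\bh=\boldsymbol{\iota}_Z\circ\bh'$ and to Proposition~\ref{subMSNLLDsp} for $(Z,\calA_Z,\calN_Z)$ being lld with $\boldsymbol{\iota}_Z$ supremum preserving, and the same Boolean-algebra identity $\mathsf{Bool}(\bh)=\mathsf{Bool}(\bh')\circ\mathsf{Bool}(\boldsymbol{\iota}_Z)$ to conclude. Your explicit lifting computation simply spells out the cancellation fact the paper invokes in one line (a surjective supremum-preserving morphism can be cancelled on the right), so the two arguments coincide in substance.
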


\begin{proof}
  Consider a pair of supremum preserving morphisms $\mathbf{f}, \bg
  \colon (X, \calA, \calN) \to(Y, \calB, \calM)$ in the category
  $\mathsf{LLD_{sp}}$, represented by maps $f \in \mathbf{f}$ and $g
  \in \mathbf{g}$. Let $\bh \colon (T, \calC, \calP) \to (X, \calA,
  \calN)$ be another supremum preserving morphism in
  $\mathsf{LLD_{sp}}$, such that $\mathbf{f} \circ \bh = \bg \circ
  \bh$.

  Set $Z = \{f = g\}$. We know since Proposition~\ref{equalizerMSN}
  that $((Z, \calA_Z, \calN_Z), \boldsymbol{\iota}_Z)$ is the
  equalizer of the pair $\mathbf{f}, \bg$ in the category
  $\mathsf{MSN}$, so there is a unique morphism $\bh' \colon (T,
  \calC, \calP) \to (X, \calA, \calN)$ such that $\bh =
  \boldsymbol{\iota}_Z \circ \bh'$. By the
  proposition~\ref{subMSNLLDsp}, $\boldsymbol{\iota}_Z$ is
  supremum-preserving and $(Z, \calA_Z, \calN_Z)$ is lld.  It remains
  to prove that $\bh'$ is supremum preserving. This follows from the
  fact that $\mathsf{Bool}(\bh) = \mathsf{Bool}(\bh') \circ
  \mathsf{Bool}(\boldsymbol{\iota}_Z)$, where $\mathsf{Bool}(\bh)$ is
  supremum preserving and $\mathsf{Bool}(\boldsymbol{\iota}_Z)$ is
  supremum preserving and surjective.
\end{proof}

\section{Gluing measurable functions}

\begin{Empty}
  \label{defGluing}
  Let $(X, \calA, \calN)$ be an MSN and $(Y, \calB)$ a measurable
  space. Let $\calE \subset \calA$ be a collection. A {\em family
    subordinated to $\calE$} is a family of functions $\la f_E \ra_{E
    \in \calE}$ such that:
  \begin{enumerate}
    \item[(1)] $f_E \colon E \to Y$ is $(\calA_E, \calB)$-measurable
      for every $E \in \calE$.
  \end{enumerate}
  We further say that $\la f_E \ra_{E \in \calE}$ is {\em compatible} whenever
  \begin{enumerate}
    \item[(2)] for all pairs $E, E' \in \calE$ one has $E \cap E' \cap
      \{f_E \neq f_{E'}\} \in \calN$.
  \end{enumerate}
  A {\em gluing} of a compatible family $\la f_E \ra_{E \in \calE}$
  subordinated to $\calE$ is a function $f \colon X \to Y$ such that
  \begin{enumerate}
  \item[(3)] $f$ is $(\calA, \calB)$-measurable;
  \item[(4)] $E \cap \{f \neq f_{E}\} \in \calN$ for every $E \in
    \calE$.
  \end{enumerate}
  
  In this section, we will be mainly concerned about the existence of
  gluings, as they will be of use in the construction of the cccc
  version of a locally ccc MSN in Section~\ref{sec:slv}. This turns
  out to depend both on the domain and the target space. In case where
  $(Y, \calB)$ is the real line equipped with its Borel
  $\sigma$-algebra $(\R, \calB(\R))$, we can glue measurable functions
  together if $(X, \calA, \calN)$ is localizable. In fact, this
  important property is a characterization of localizability. The
  interested reader may find a proof of this classical result
  expressed in the language of MSNs in \cite[Proposition
    3.13]{DEP.19.b}. Only the measurable structure of $(\R,
  \calB(\R))$ is involved, thus, the result holds in the more general case where
  $(Y, \calB)$ is a standard Borel space, see \cite[Chapter~3]{SRIVASTAVA}.
  
  Many questions arise when we remove the condition that $(Y, \calB)$
  is a standard Borel space. In this case, we need some additional
  assumptions on $(X, \calA, \calN)$. We will focus on two cases: $(X,
  \calA, \calN)$ is cccc or lld. But first, we prove that a gluing
  inherits some of the properties of the functions $f_E$.
\end{Empty}

\begin{Lemma}
  \label{propGluing}
  Let $(X, \calA, \calN)$ be a saturated MSN, $(Y, \calB, \calM)$
  an MSN, and $\calE \subset \calA$ an $\calN$-generating collection. We let $\la f_E \ra_{E \in \calE}$ be
  a compatible family of functions subordinated to $\calE$ and we
  assume that:
  \begin{enumerate}
  \item[(1)] for every $E \in \calE$, the map $f_E$ is $[(\calA_E,
    \calN_E), (\calB, \calM)]$-measurable;
  \item[(2)] the family $\la f_E \ra_{E \in \calE}$ has a gluing $f$.
  \end{enumerate}
  Then
  \begin{enumerate}
  \item[(A)] the gluing $f$ is
    $[(\calA, \calN), (\calB, \calM)]$-measurable;
  \item[(B)] if $f_E$ is supremum preserving, for every $E \in \calE$,
    then so is $f$.
  \end{enumerate}
\end{Lemma}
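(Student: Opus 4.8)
The plan is to reduce both assertions to one local-to-global principle: since $\calE$ is $\calN$-generating, $X$ is an $\calN$-essential supremum of $\calE$, so by the Distributivity Lemma~\ref{supDistrib} every $A \in \calA$ is an $\calN$-essential supremum of $\{A \cap E : E \in \calE\}$; consequently, if $A \cap E \in \calN$ for all $E \in \calE$, then $A \in \calN$. In each part I would produce, for every $E \in \calE$, a local negligibility estimate obtained by comparing $f$ with $f_E$ on $E$ — the two agree off the $\calN$-set $E \cap \{f \neq f_E\}$ supplied by the gluing condition~(4) — and then invoke this principle. The recurring device is the pair of set-theoretic inclusions, valid for any $B \in \calB$: $f_E^{-1}(B) \subset (f^{-1}(B) \cap E) \cup (E \cap \{f \neq f_E\})$ and symmetrically $(f^{-1}(B) \cap E) \subset f_E^{-1}(B) \cup (E \cap \{f \neq f_E\})$.

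For part~(A), fix $M \in \calM$. As $f$ is a gluing it is $(\calA, \calB)$-measurable, so $f^{-1}(M) \in \calA$, and it only remains to see it is negligible. For each $E \in \calE$ I would use $E \cap f^{-1}(M) \subset f_E^{-1}(M) \cup (E \cap \{f \neq f_E\})$: hypothesis~(1) gives $f_E^{-1}(M) \in \calN_E \subset \calN$ (here using $E \in \calA$, so that $\calN_E = \calN \cap \{N : N \subset E\}$), and~(4) gives $E \cap \{f \neq f_E\} \in \calN$. Hence $E \cap f^{-1}(M) \in \calN$ for every $E$, and the principle above yields $f^{-1}(M) \in \calN$.

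For part~(B), let $\calF \subset \calB$ admit an $\calM$-essential supremum $S$. That $f^{-1}(S)$ is an $\calN$-essential upper bound of $f^{-1}(\calF)$ is routine from part~(A), since $f^{-1}(F) \setminus f^{-1}(S) = f^{-1}(F \setminus S)$ and $F \setminus S \in \calM$. The substance is minimality, so let $U \in \calA$ be any $\calN$-essential upper bound of $f^{-1}(\calF)$. First I would verify that $U \cap E$ is an $\calN_E$-essential upper bound of $f_E^{-1}(\calF)$: for $F \in \calF$, the inclusion $f_E^{-1}(F) \setminus U \subset (f^{-1}(F) \setminus U) \cup (E \cap \{f \neq f_E\})$ exhibits $f_E^{-1}(F) \setminus U$ as a subset of an $\calN$-set, hence (being contained in $E$) as a member of $\calN_E$. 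Since $f_E$ is supremum preserving, $f_E^{-1}(S)$ is an $\calN_E$-essential supremum of $f_E^{-1}(\calF)$, whence $f_E^{-1}(S) \setminus U \in \calN_E \subset \calN$. Transferring back through $(f^{-1}(S) \cap E) \setminus f_E^{-1}(S) \subset E \cap \{f \neq f_E\} \in \calN$ gives $(f^{-1}(S) \setminus U) \cap E \in \calN$ for every $E \in \calE$. As $f^{-1}(S) \setminus U \in \calA$, the local-to-global principle forces $f^{-1}(S) \setminus U \in \calN$, which is precisely the remaining essential-supremum condition.

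I expect the only genuine obstacle to lie in part~(B), specifically the claim that $U \cap E$ bounds $f_E^{-1}(\calF)$ essentially: one must pass the exceptional set $E \cap \{f \neq f_E\}$ through the comparison between $f$ and $f_E$ in \emph{both} directions — once to relate $f_E^{-1}(F)$ to $f^{-1}(F)$, and again to relate $f^{-1}(S)$ back to $f_E^{-1}(S)$ — while keeping each intermediate set measurable so that saturation returns the subsets of $\calN$-sets to $\calN$. Everything else is bookkeeping, and the Distributivity Lemma performs all of the global gluing.
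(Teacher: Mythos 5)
Your proof is correct and follows essentially the same route as the paper's: the same key observation that $f_E^{-1}(B)$ and $E \cap f^{-1}(B)$ differ only inside the negligible set $E \cap \{f \neq f_E\}$, combined with the supremum-preserving property of each $f_E$ and the Distributivity Lemma~\ref{supDistrib} to pass from local to global. The only difference is cosmetic: in part~(B) the paper concludes by an interchange of iterated $\calN$-essential suprema, whereas you verify the essential-supremum conditions directly against an arbitrary upper bound $U$ --- which is exactly what that interchange amounts to when unpacked.
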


\begin{proof}
We start with the following easy observation. For each $E \in \calE$ and $B \in \calB$ one has $f_E^{-1}(B) \ominus (E \cap f^{-1}(B)) \subset E \cap \{ f_E \neq f \} \in \calN$.

  (A) As the gluing $f$ is $(\calA, \calB)$-measurable by definition,
  we need only show that $f^{-1}(M) \in \calN$ for $M \in \calM$.  Since $f_E^{-1}(M)$ is $\calN$-negligible, the above observation applied with $B=M$ ensures that $E \cap f^{-1}(M) \in \calN$ for any $E
  \in \calE$. We next use Lemma~\ref{supDistrib} to assert that
  $f^{-1}(M)$ is an $\calN$-essential supremum of $\{E \cap f^{-1}(M)
  : E \in \calE\}$. This forces $f^{-1}(M)$ to be $\calN$-negligible.

  (B) Let $\calF \subset \calB$ be a collection that admits an
  $\calM$-essential supremum $S$. Since $f_E$ is supremum preserving
  for every $E \in \calE$, $f_E^{-1}(S)$ is an $\calN$-essential supremum 

of $\{ f_E^{-1}(F) : F \in \calF \}$ and it ensues from the observation above, applied with $B \in \{S\} \cup \calF$, that $E \cap
   f^{-1}(S)$ is an $\calN$-essential supremum of $\{E \cap f^{-1}(F) : F 
\in
  \calF\}$. Therefore,
  \begin{equation*}
  \begin{split}
  \quad\quad f^{-1}(S) & = \calN\text{-}\rmesssup_{E \in \calE} E \cap f^{-1}(S)
  \intertext{(by Lemma \ref{supDistrib})}
  & = \calN\text{-}\rmesssup_{E \in \calE} \left( \calN\text{-}\rmesssup_{F \in \calF} E \cap f^{-1}(F)\right)
  \intertext{(from what precedes)}
  & = \calN\text{-}\rmesssup_{F \in \calF} \left( \calN\text{-}\rmesssup_{E \in \calE} E \cap f^{-1}(F)\right) \\
  & = \calN\text{-}\rmesssup_{F \in \calF} f^{-1}(F)
  \intertext{(by Lemma \ref{supDistrib}).}
  \end{split}
  \end{equation*}
\end{proof}

\begin{Proposition}
  \label{uniqueGluing}
  Let $(X, \calA, \calN)$ be a locally determined MSN and $(Y, \calB)$
  be any nonempty measurable space. Let $\calE \subset \calA$ be an
  $\calN$-generating collection. If a compatible family $\la f_E \ra_{E
    \in \calE}$ has a gluing, then it is unique up to equality almost
  everywhere.
\end{Proposition}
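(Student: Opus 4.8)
The plan is to take two gluings $f,f'$ of the compatible family $\la f_E \ra_{E \in \calE}$ and prove directly that $\{f \neq f'\} \in \calN$. The first thing to notice is the obstacle: since $(Y,\calB)$ is an arbitrary measurable space, with no assumption that the diagonal of $Y \times Y$ is $\calB \otimes \calB$-measurable, the set $\{f \neq f'\}$ need not be $\calA$-measurable. Hence we cannot reason globally from the outset, and the whole point will be to exploit local determination to bypass this.

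First I would work over each $E \in \calE$ separately. For fixed $E$, the defining property of a gluing gives $E \cap \{f \neq f_E\} \in \calN$ and $E \cap \{f' \neq f_E\} \in \calN$. Since a point of $E$ at which $f$ and $f'$ disagree must disagree with $f_E$ through at least one of them, we have the pointwise inclusion
\[
E \cap \{f \neq f'\} \subset \big(E \cap \{f \neq f_E\}\big) \cup \big(E \cap \{f' \neq f_E\}\big),
\]
and the right-hand side lies in $\calN$. Because $(X,\calA,\calN)$ is locally determined it is in particular saturated, so the subset $E \cap \{f \neq f'\}$ of this negligible set is itself negligible. This step is exactly where saturation earns its keep: it lets us declare $E \cap \{f \neq f'\} \in \calN$ without first establishing its measurability, which is precisely what the arbitrariness of $(Y,\calB)$ prevents us from doing directly.

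Finally I would globalize. We now have $E \cap \{f \neq f'\} \in \calN$ for every $E$ in the $\calN$-generating family $\calE$. Recall from~\ref{def.ld} that a locally determined MSN has \emph{locally determined negligible sets}, that is, whenever $\calE$ is $\calN$-generating and $N \cap E \in \calN$ for all $E \in \calE$, then $N \in \calN$. Applying this with $N = \{f \neq f'\}$ yields $\{f \neq f'\} \in \calN$, i.e. $f = f'$ almost everywhere, which is the desired uniqueness. The only genuinely nontrivial ingredient is this local-to-global passage, and it is furnished verbatim by local determination; everything else is bookkeeping with the definition of a gluing, and the Distributivity Lemma~\ref{supDistrib} is not even needed here.
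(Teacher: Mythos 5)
Your proof is correct and is essentially the paper's own argument: the same pointwise inclusion $E \cap \{f \neq f'\} \subset (E \cap \{f \neq f_E\}) \cup (E \cap \{f' \neq f_E\})$, saturation to conclude $E \cap \{f \neq f'\} \in \calN$ despite the non-measurability issue, and the same local-to-global passage over the $\calN$-generating family $\calE$. The only difference is packaging: the paper inlines the argument you cite from \ref{def.ld} (local determination gives $\{f \neq f'\} \in \calA$, then the Distributivity Lemma \ref{supDistrib} gives negligibility), so your closing remark that Lemma \ref{supDistrib} is not needed is true only in the sense that it is hidden inside the ``locally determined negligible sets'' fact you invoke.
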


\begin{proof}
  Let $f, g \colon X \to Y$ be two gluings of $\la f_E \ra_{E \in
    \calE}$. We warn the reader that the measurability of $\{f \neq g\}$
  is not immediate, since the diagonal $\{(y, y) : y
  \in Y\}$ may not be measurable in $(Y^2, \calB \otimes \calB)$. Notwithstanding, for
  all $E \in \calE$, we have $E \cap \{f \neq g\} \subset (E \cap \{f
  \neq f_E\}) \cup (E \cap \{g \neq f_E\})$. Since $f, g$ are gluings
  and $(X, \calA, \calN)$ is saturated, it follows that $E \cap \{f
  \neq g\} \in \calN$. This happens for any $E$ in the
  $\calN$-generating set $\calE$. By local determination and the
  distributivity lemma~\ref{supDistrib}, $\{f \neq g\} \in \calN$.
\end{proof}

\begin{Proposition}
  \label{Gluingcccc}
  Let $(X, \calA, \calN)$ be a cccc MSN and $(Y, \calB)$ be any
  nonempty measurable space. Let $\calE \subset \calA$ be an
  $\calN$-generating collection. Any compatible family $\la f_E \ra_{E
    \in \calE}$ subordinated to $\calE$ admits a unique gluing $f$ up
  to equality $\calN$-almost everywhere.
\end{Proposition}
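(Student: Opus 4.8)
The plan is to dispatch uniqueness in one line and then concentrate on existence, the latter by reducing to the case of a single saturated ccc MSN. For uniqueness I would note that a cccc MSN is locally determined, by Proposition~\ref{elemld}(E), so Proposition~\ref{uniqueGluing} applies verbatim and any two gluings agree $\calN$-almost everywhere. For existence, since being cccc means being isomorphic in $\sfMSNsp$ to a coproduct $\coprod_{i \in I}(X_i, \calA_i, \calN_i)$ of saturated ccc MSNs, and since the notions of compatible family and of gluing are phrased entirely through measurability, negligibility and essential suprema --- all preserved by isomorphisms of $\sfMSNsp$ --- I would routinely check that admitting gluings is invariant under such isomorphisms and assume without loss of generality that $(X, \calA, \calN)$ is literally such a coproduct.

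The heart of the matter is the ccc case, which I would treat first. Assume $(X, \calA, \calN)$ is ccc, $\calE \subset \calA$ is $\calN$-generating, and $\la f_E \ra_{E \in \calE}$ is compatible. By Proposition~\ref{ccc=>loc} there is a countable $\calE' = \{E_1, E_2, \dots\} \subset \calE$ whose union is an $\calN$-essential supremum of $\calE$; as $\calE$ is $\calN$-generating this forces $X \setminus \bigcup \calE' \in \calN$. Disjointifying, I set $A_n \defeq E_n \setminus (E_1 \cup \dots \cup E_{n-1})$, pick any $y_0 \in Y$ (here the nonemptiness of $Y$ is used), and define $f$ to equal $f_{E_n}$ on $A_n$ and $y_0$ on the negligible set $X \setminus \bigcup \calE'$. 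Then $f$ is $(\calA,\calB)$-measurable because for $B \in \calB$ the set $f^{-1}(B)$ is the countable union of the measurable sets $A_n \cap f_{E_n}^{-1}(B)$ together with a subset of the negligible (hence measurable) set $X \setminus \bigcup \calE'$. To verify the gluing property I fix $E \in \calE$ and observe that on each $A_n$ one has $E \cap A_n \cap \{f \neq f_E\} = E \cap A_n \cap \{f_{E_n} \neq f_E\} \subset E \cap E_n \cap \{f_{E_n} \neq f_E\}$, which is negligible by compatibility; hence $E \cap \{f \neq f_E\}$ is contained in the countable union $\bigcup_n (E \cap E_n \cap \{f_{E_n} \neq f_E\}) \cup (X \setminus \bigcup \calE')$, a negligible set, so $E \cap \{f \neq f_E\} \in \calN$ by saturation.

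Finally I would assemble the coproduct. For each $i \in I$, the Distributivity Lemma~\ref{supDistrib} shows that $X_i$ is an $\calN_i$-essential supremum of $\calE_i \defeq \{X_i \cap E : E \in \calE\}$, so $\calE_i$ is $\calN_i$-generating in the ccc MSN $(X_i, \calA_i, \calN_i)$; choosing for each $D \in \calE_i$ some $E_D$ with $X_i \cap E_D = D$ and setting $f_D^{(i)} \defeq f_{E_D}|_D$ yields a compatible family there, compatibility being inherited since $X_i \cap E \cap E' \cap \{f_E \neq f_{E'}\} \in \calN_i$. The ccc case provides a gluing $f^{(i)} \colon X_i \to Y$, and I define $f$ on $X = \coprod_i X_i$ by $f|_{X_i} = f^{(i)}$; it is $(\calA, \calB)$-measurable because $f^{-1}(B) \cap X_i = (f^{(i)})^{-1}(B) \in \calA_i$ for all $i$, which is exactly the condition defining the coproduct $\sigma$-algebra. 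The gluing property is checked fibrewise: for $E \in \calE$ and each $i$, writing $D = X_i \cap E$, the set $D \cap \{f^{(i)} \neq f_E\}$ is contained in $(D \cap \{f^{(i)} \neq f_{E_D}\}) \cup (D \cap \{f_{E_D} \neq f_E\})$, both members lying in $\calN_i$ (the first because $f^{(i)}$ glues $\la f_D^{(i)} \ra$, the second by compatibility), so $X_i \cap E \cap \{f \neq f_E\} \in \calN_i$ for every $i$, whence $E \cap \{f \neq f_E\} \in \calN$ by definition of the coproduct $\sigma$-ideal. I expect the main obstacle throughout to be that the target $(Y, \calB)$ carries no measurable diagonal, so that sets such as $\{f \neq f_E\}$ and $\{f^{(i)} \neq f_{E_D}\}$ are not a priori measurable; this is circumvented by only ever exhibiting the relevant sets as subsets of genuinely negligible sets and invoking saturation, exactly as in the warning preceding Proposition~\ref{uniqueGluing}.
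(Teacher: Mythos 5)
Your proof is correct and takes essentially the same route as the paper's: uniqueness via Propositions~\ref{elemld}(E) and~\ref{uniqueGluing}, the ccc case via Proposition~\ref{ccc=>loc} plus disjointification and saturation, and the coproduct case by restricting to each $X_i$ (using Lemma~\ref{supDistrib}), gluing fibrewise, and reassembling. The only deviation --- reindexing the fibre families by the trace collection $\calE_i$ with chosen representatives $E_D$, rather than keeping $\calE$ itself as the index set as the paper does --- is an inessential variant that you handle correctly, at the cost of one extra compatibility check.
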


\begin{proof}
  First observe that the uniqueness of the gluing up to almost
  everywhere equality follows from Proposition~\ref{uniqueGluing}, as
  a cccc MSN is locally determined by Proposition~\ref{elemld}(E).
  
  Let us treat the special case where $(X, \calA, \calN)$ is a
  saturated ccc MSN. According to Proposition~\ref{ccc=>loc}, we can
  find a sequence of sets $\la E_i \ra_{i \in \N}$ in $\calE$ such
  that $\bigcup_{i \in \N} E_i$ provides an $\calN$-essential supremum
  of $\calE$. We then define the $(\calA, \calB)$-measurable map $f
  \colon X \to Y$ which, for all $i \in \N$, coincides with $f_i$ on
  the set $E_i \setminus \bigcup_{j < i} E_j$, and maps the negligible
  set $N \defeq X \setminus \bigcup_{i \in \N} E_i$ to some arbitrary
  point. Let $E \in \calE$. For every $i \in \N$, we have $N_i \defeq
  E \cap E_i \cap \{f_E \neq f_{E_i}\} \in \calN$, by hypothesis. Thus,
  $E \cap \{f \neq f_{E}\} \subset N \cup \bigcup_{i \in \N} N_i$ is
  negligible. 

  Suppose now that $(X, \calA, \calN)$ is a coproduct $\coprod_{i \in
    I}(X_i, \calA_i, \calN_i)$ of saturated ccc MSNs. For each $E \in
  \calE$ and $i \in I$, call $f_{E,i}$ the restriction of $f_E$ to $E
  \cap X_i$. By Lemma~\ref{supDistrib}, $X_i$ is an $\calN_i$-essential supremum of the collection $\{E \cap X_i : E \in
  \calE\}$. Also, $\la f_{E,i}\ra_{E\in \calE}$ is a compatible family
  of measurable maps subordinated to $\la E \cap X_i \ra_{E \in
    \calE}$. From what precedes, it admits a gluing $f_i \colon X_i
  \to Y$. Define $f \defeq \coprod_{i \in I} f_i$. The verification
  that $f$ is a gluing of $\la f_E \ra_{E\in \calE}$ is
  routine. 
\end{proof}

\begin{Empty}
  It would be interesting to know whether cccc MSNs are the only lld
  MSNs such that gluings can always be performed, with no restriction
  on the target space $(Y, \calB)$. This property will be used in the
  next section and justifies the special role played by cccc MSN.
\end{Empty}

\begin{Empty}[Countably separated measurable spaces]  
  A measurable space $(Y, \calB)$ is called {\em countably separated}
  whenever there is a countable set $\calC \subset \calB$ such that
  for any distinct $y_1, y_2 \in Y$ there exists $C \in \calC$ such
  that $y_1 \in C \not\ni y_2$ or $y_1 \not\in C \ni y_2$. Here is a
  well-known characterization of countably separated spaces.
\end{Empty}

\begin{Proposition}
  \label{caracCS}
  Let $(Y, \calB)$ be a measurable space. The following statements are
  equivalent:
  \begin{enumerate}
  \item[(A)] $(Y, \calB)$ is countably separated;
  \item[(B)] there is an injective measurable map $(Y, \calB) \to (\R,
    \calB(\R))$;
  \item[(C)] there is an injective measurable map $(Y, \calB) \to (X,
    \calB(X))$ to a Polish space $X$.
  \end{enumerate}
\end{Proposition}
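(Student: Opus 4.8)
The plan is to establish the cycle (A) $\Rightarrow$ (B) $\Rightarrow$ (C) $\Rightarrow$ (A). The implication (B) $\Rightarrow$ (C) is immediate, since $(\R, \calB(\R))$ is itself a Polish space; an injective measurable map into $\R$ already witnesses (C) with $X = \R$.

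For (A) $\Rightarrow$ (B), I would encode a countable separating family $\calC = \{C_n : n \in \N\} \subset \calB$ into a single real-valued function. Setting
\[
f(y) \defeq \sum_{n \in \N} \frac{\ind_{C_n}(y)}{3^{\,n+1}},
\]
each partial sum is $(\calB, \calB(\R))$-measurable because $C_n \in \calB$, and the series converges uniformly, so $f$ is measurable as a uniform limit of measurable functions. The main point to verify is injectivity, and this is exactly where the base-$3$ choice (with digit sequence valued in $\{0,1\}$) matters. If $y_1 \neq y_2$, then by the separating property some $C_n$ contains exactly one of them, so the two digit sequences $\la \ind_{C_n}(y_1) \ra_n$ and $\la \ind_{C_n}(y_2) \ra_n$ differ; letting $m$ be the first index of disagreement, the tail estimate $\sum_{k > m} 3^{-(k+1)} < 3^{-(m+1)}$ shows that the $m$-th digit dominates, whence $f(y_1) \neq f(y_2)$. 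A base-$2$ encoding would fail here because of the carrying ambiguity $0.0111\ldots = 0.1000\ldots$, which the ternary encoding avoids.

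For (C) $\Rightarrow$ (A), suppose $g \colon (Y, \calB) \to (X, \calB(X))$ is injective and measurable with $X$ Polish. Since $X$ is second countable, fix a countable base $\{U_m : m \in \N\}$ of its topology. This base separates points of $X$: given distinct points, some basic open set contains one but not the other. I would then take $\calC \defeq \{g^{-1}(U_m) : m \in \N\}$, which is a countable subfamily of $\calB$ by measurability of $g$. It separates points of $Y$ by injectivity: if $y_1 \neq y_2$, then $g(y_1) \neq g(y_2)$ are separated by some $U_m$, hence $y_1, y_2$ are separated by $g^{-1}(U_m)$.

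The only genuinely delicate step is injectivity in (A) $\Rightarrow$ (B), which is precisely the reason for choosing a ternary rather than a binary encoding; the measurability of $f$ and both directions involving (C) are routine once the correct countable families are identified.
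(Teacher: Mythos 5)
Your proof is correct and follows essentially the same route as the paper's: the same ternary encoding $\sum_n 3^{-(n+1)}\ind_{C_n}$ with the same geometric tail estimate to get injectivity for (A) $\Rightarrow$ (B), the same trivial observation for (B) $\Rightarrow$ (C), and the same pull-back of a countable topological base under the injective measurable map for (C) $\Rightarrow$ (A). The only cosmetic differences are the indexing of the powers of $3$ and your explicit remark that measurability follows from uniform convergence, which the paper leaves implicit.
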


\begin{proof}
  (A) $\implies$ (B) Let $\calC \subset \calB$ be a countable set
  that separates the points of $Y$. Let $\la C_n \ra_{n \in \N}$ be a
  enumeration of $\calC$ and $h \colon Y \to \R$ be the map $h =
  \sum_{n=0}^\infty 3^{-n} \ind_{C_n}$. The sets $C_n$ are measurable,
  therefore $h$ is measurable.

  Let $y_1, y_2$ be distinct points in $Y$ and $n_0 \defeq \min \{n
  \in \N : \ind_{C_n}(y_1) \neq \ind_{C_n}(y_2)\}$. Then
  \[
  |h(y_1) - h(y_2)| \geq 3^{-n_0} - \sum_{n=n_0+1}^\infty 3^{-n}
  |\ind_{C_n}(y_1) - \ind_{C_n}(y_2)| \geq 3^{-n_0} -
  \frac{3^{-n_0}}{2} > 0
  \]
  thus $h(y_1) \neq h(y_2)$, which shows that $f$ is injective.

  (B) $\implies$ (C) is obvious.

  (C) $\implies$ (A) Let $\calU$ be a countable basis for the
  topology of $X$. If there is an injective measurable map $h \colon
  (Y, \calB) \to (X, \calB(X))$, then $h^{-1}(\calU) \subset \calB$ is
  a countable set that separates points.
\end{proof}

\begin{Proposition}
  \label{prop68}
  Let $\la (Y_i, \calB_i) \ra_{i \in I}$ be a family of countably
  separated measurable spaces. If $\rmcard I \leq \mathfrak{c}$, then
  $\coprod_{i \in I} (Y_i, \calB_i)$ is countably separated.
\end{Proposition}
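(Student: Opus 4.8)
The plan is to reduce everything to the characterization furnished by Proposition~\ref{caracCS}. For each $i \in I$, since $(Y_i,\calB_i)$ is countably separated, that proposition supplies an injective $(\calB_i,\calB(\R))$-measurable map $h_i \colon Y_i \to \R$. The cardinality hypothesis $\rmcard I \leq \mathfrak{c}$ lets me fix an injection $\lambda \colon I \to \R$. I then assemble these data into a single map $H \colon Y \to \R^2$ on the coproduct $Y = \coprod_{i \in I} Y_i$ by setting $H(y) = (\lambda(i), h_i(y))$ whenever $y \in Y_i$. My goal is to show that $H$ is injective and measurable; since $\R^2$ is Polish, Proposition~\ref{caracCS}(C) will then conclude that $\coprod_{i \in I}(Y_i,\calB_i)$ is countably separated.

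Injectivity is immediate: if $H(y) = H(y')$ with $y \in Y_i$ and $y' \in Y_j$, then the first coordinates give $\lambda(i) = \lambda(j)$, which forces $i = j$ because $\lambda$ is injective; the second coordinates then give $h_i(y) = h_i(y')$, which forces $y = y'$ because $h_i$ is injective.

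For measurability, I would write $H = (c,s)$, where $c(y) = \lambda(i)$ and $s(y) = h_i(y)$ for $y \in Y_i$, and verify that each real-valued coordinate is measurable against the coproduct $\sigma$-algebra $\calB$, characterized by $B \in \calB$ if and only if $B \cap Y_i \in \calB_i$ for all $i \in I$. For $s$, given a Borel set $A \subset \R$ one has $s^{-1}(A) \cap Y_i = h_i^{-1}(A) \in \calB_i$ for every $i$, so $s$ is $(\calB,\calB(\R))$-measurable. For $c$, which is constant equal to $\lambda(i)$ on each $Y_i$, the trace $c^{-1}(A) \cap Y_i$ equals $Y_i$ or $\emptyset$ according to whether $\lambda(i) \in A$, and is in either case a member of $\calB_i$, so $c$ is $(\calB,\calB(\R))$-measurable as well. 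Since $\calB(\R^2) = \calB(\R) \otimes \calB(\R)$ (both factors being second countable), the pair $H = (c,s)$ is $(\calB,\calB(\R^2))$-measurable.

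The substantive point of the argument --- and the only place where the hypothesis $\rmcard I \leq \mathfrak{c}$ is used --- is the construction of the index-encoding coordinate $c$: labeling the summands by distinct real numbers through $\lambda$ is exactly what lets the disjoint union over a possibly uncountable index set remain separated by a countable family in the Polish space $\R^2$ (equivalently, after composing $H$ with a Borel isomorphism $\R^2 \to \R$, by a countable family in $\R$, which returns us to Proposition~\ref{caracCS}(B)). Everything else is a routine check of measurability against the defining property of the coproduct $\sigma$-algebra, so I anticipate no genuine obstacle beyond getting this encoding right.
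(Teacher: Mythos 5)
Your proof is correct and is essentially the paper's own argument: both constructions combine the injective measurable maps $h_i \colon Y_i \to \R$ from Proposition~\ref{caracCS} with an injection of the index set $I$ into $\R$ (possible since $\rmcard I \leq \mathfrak{c}$) to build an injective measurable map $\coprod_{i \in I} Y_i \to \R^2$, and then invoke Proposition~\ref{caracCS} again. The only cosmetic difference is in checking measurability: the paper verifies directly that the preimage of a Borel set $B \subset \R^2$ meets each $Y_i$ in $h_i^{-1}(B^{g(i)})$, a horizontal section, whereas you check the two coordinates separately and use $\calB(\R^2) = \calB(\R) \otimes \calB(\R)$; both routes are routine and valid.
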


\begin{proof}
  For each $i \in I$, there is an injective $(\calB_i,\calB(\R))$-measurable map $h_i \colon
  Y_i \to \R$ by Proposition~\ref{caracCS}.
  Choose an arbitrary injective map $g \colon I \to \R$. Let $h \colon
  \coprod_{i \in I} Y_i \to \R^2$ be the map defined by $h(y_i) \defeq
  (h_i(y_i), g(i))$ for all $i \in I$ and $y_i \in Y_i$. Let $B
  \subset \R^2$ be a Borel set.
  Then, for any $i \in I$, we have $h^{-1}(B) \cap Y_i = h_i^{-1}(\R \cap \{x
   : (x, g(i)) \in B\}) = h_i^{-1}(B^{g(i)})$. This last set is $\calB_i$-measurable
  as $h_i$ is measurable and the horizontal section $B^{g(i)}$ is Borel. As $i$ is arbitrary, we conclude that
  $h^{-1}(B)$ is measurable. This means that $h$ is measurable. By
  Proposition~\ref{caracCS}, it follows that $\coprod_{i \in I} (Y_i,
  \calB_i)$ is countably separated.
\end{proof}

\begin{Remark}
  The restriction on the cardinal of $I$ is necessary, since a
  countably measurable space must have cardinal less or equal than
  $\mathfrak{c}$ by Proposition~\ref{caracCS}(B).
\end{Remark}

\begin{Proposition}
  \label{gluelld}
  Let $(X, \calA, \calN)$ be an lld MSN and $(Y, \calB)$ be a nonempty
  countably separated measurable space. Let $\calE \subset \calA$ be
  $\calN$-generating. Any compatible family $\la f_E \ra_{E \in \calE}$
  subordinated to $\calE$ admits a gluing, unique up to equality
  almost everywhere.
\end{Proposition}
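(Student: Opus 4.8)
The plan is to reduce to the case of real-valued functions, where localizability alone provides a gluing, and then transport the result back to $Y$ along an injective measurable map, using local determination to recover measurability of the transported function. Uniqueness will be free.

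First I would invoke countable separation: by Proposition~\ref{caracCS} there is an injective $(\calB, \calB(\R))$-measurable map $\theta \colon Y \to \R$. Composing with $\theta$ produces the family $\la \theta \circ f_E \ra_{E \in \calE}$, which is subordinated to $\calE$ (each $\theta \circ f_E$ is $(\calA_E, \calB(\R))$-measurable) and compatible, since $E \cap E' \cap \{\theta \circ f_E \neq \theta \circ f_{E'}\} \subset E \cap E' \cap \{f_E \neq f_{E'}\} \in \calN$ as $\theta$ is a function. Because $(X, \calA, \calN)$ is localizable and the target $(\R, \calB(\R))$ is standard Borel, the classical gluing theorem recalled in~\ref{defGluing} (see \cite[Proposition 3.13]{DEP.19.b}) yields a $(\calA, \calB(\R))$-measurable map $g \colon X \to \R$ with $E \cap \{g \neq \theta \circ f_E\} \in \calN$ for every $E \in \calE$.

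Next I would define the candidate gluing. Fixing a point $y_0 \in Y$ (possible since $Y \neq \emptyset$), set $f(x) = \theta^{-1}(g(x))$ whenever $g(x) \in \theta(Y)$ and $f(x) = y_0$ otherwise. Condition (4) is then immediate: on the conegligible part of $E$ where $g = \theta \circ f_E$ one has $g(x) = \theta(f_E(x)) \in \theta(Y)$, whence $f(x) = \theta^{-1}(g(x)) = f_E(x)$ by injectivity of $\theta$, so $E \cap \{f \neq f_E\} \subset E \cap \{g \neq \theta \circ f_E\} \in \calN$.

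The main obstacle is measurability of $f$: since $\theta(Y)$ need not be Borel, $f^{-1}(B)$ cannot be read off $g$ directly, and this is exactly where local determination enters. I would argue locally. Fix $B \in \calB$ and $E \in \calE$. For $x \in E$ with $f(x) = f_E(x)$, membership in $f^{-1}(B)$ and in $f_E^{-1}(B)$ agree, so $(f^{-1}(B) \cap E) \ominus f_E^{-1}(B) \subset E \cap \{f \neq f_E\} \in \calN$. As $(X, \calA, \calN)$ is saturated this symmetric difference lies in $\calN \subset \calA$, and since $f_E^{-1}(B) \in \calA$ (recall $E \in \calA$) we obtain $f^{-1}(B) \cap E \in \calA$. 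This holds for every $E$ in the $\calN$-generating family $\calE$, so local determination forces $f^{-1}(B) \in \calA$; thus $f$ is $(\calA, \calB)$-measurable and is the required gluing. Finally, uniqueness up to equality almost everywhere is precisely Proposition~\ref{uniqueGluing}, which applies because an lld MSN is in particular locally determined.
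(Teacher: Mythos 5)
Your proof is correct and takes essentially the same route as the paper's: embed $Y$ into $\R$ via Proposition~\ref{caracCS}, glue the real-valued family $\la h \circ f_E \ra_{E \in \calE}$ using localizability, pull back along the injection, and recover $(\calA,\calB)$-measurability of the resulting map through local determination, with uniqueness supplied by Proposition~\ref{uniqueGluing}. The only cosmetic difference is that the paper first proves $g^{-1}(\R \setminus h(Y)) \in \calN$ (using that local determination yields locally determined negligible sets) and then modifies $g$ to take values in $h(Y)$, whereas you send that set to a default point $y_0$; both devices correctly handle the fact that $h(Y)$ need not be Borel.
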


\begin{proof}
  Let $h$ be a measurable injective map $(Y, \calB) \to (\R,
  \calB(\R))$, whose existence follows from Proposition~\ref{caracCS}.
  Now, $\la h \circ f_E \ra_{E \in \calE}$ is still a compatible
  family of measurable functions, this time with values in $(\R,
  \calB(\R))$. As $(X, \calA, \calN)$ is localizable, it admits a
  gluing $g \colon X \to \R$. For all $E \in \calE$, one has $E \cap
  g^{-1}(\R \setminus h(Y)) \subset E \cap \{g \neq h \circ
  f_E\}$. Therefore $E \cap g^{-1}(\R \setminus h(Y))$ is
  negligible. This holds for any $E$ in the $\calN$-generating set
  $\calE$. By local determination and Lemma \ref{supDistrib}, we deduce that $g^{-1}(\R \setminus
  h(Y)) \in \calN$. Thus, we lose no generality in supposing, from now on, that $g$
  takes values in $h(Y)$. Define $f \defeq h^{-1} \circ g$. We claim
  that $f$ is a gluing. For $E \in \calE$, we observe that $E \cap \{f
  \neq f_E\} \subset E \cap \{g \neq h \circ f_E\} \in \calN$, since
  $h$ is injective. Therefore, condition~(4) of~\ref{defGluing} is
  satisfied.

  Also, let $B \in \calB$, then $(E \cap f^{-1}(B)) \ominus
  f_E^{-1}(B) = E \cap \{f \neq f_E\} \in \calN$. Since $f_E$ is
  measurable, we have $f_E^{-1}(B) \in \calA$ and, in turn, $E \cap f^{-1}(B) \in \calA$. Since $E$ is arbitrary,
  we deduce that $f^{-1}(B) \in \calA$, by local determination, showing
  that $f$ is measurable. Of course, the uniqueness of the gluing is
  given by Proposition~\ref{uniqueGluing}.
\end{proof}

\begin{Empty}
  \label{NoGluing}
  In this paragraph, we exhibit an lld MSN $(X,\calA, \calN)$, a
  measurable space $(Y, \calB)$ and, within this setting, a compatible
  family of measurable maps that cannot be glued. With regards to
  Proposition~\ref{Gluingcccc}, it is natural to turn towards
  Fremlin's example in \cite[\S{}216E]{FREMLIN.II} of a localizable,
  locally determined but not strictly localizable\footnote{In the
  context of measure spaces, we follow the terminology
  of~\cite{FREMLIN.II}: $(X, \calA, \mu)$ is strictly localizable
  whenever there is a measurable partition $\la X_i \ra_{i \in I}$
  such that a set $A \subset X$ is measurable whenever the sets $A
  \cap X_i$ are, and in that case $\mu(A) = \sum_{i \in I} \mu(A \cap
  X_i)$.} measure space $(X, \calA, \mu)$. Let us recall its
  construction. Fix a set $Y$ with cardinal greater than
  $\mathfrak{c}$ and we set $X \defeq \{0, 1\}^{\calP(Y)}$. For any $y
  \in Y$, we define $x_y \in X$ by
  \[
  \forall Z \in \calP(Y), \qquad x_y(Z) = \begin{cases}
    1 & \text{if } y \in Z \\
    0 & \text{if } y \not\in Z
  \end{cases}
  \]
  Let $\calK \subset \calP(\calP(Y))$ be the family of countable
  subsets of $\calP(Y)$. For any $K \in \calK$ and $y \in Y$, we
  define $F_{y, K} \defeq X \cap \{x : x(Z) = x_y(Z) \text{ for all }
  Z \in K\}$. Then we define, for all $y \in Y$,
  \[
  \calA_y = \calP(X) \cap \{A : \text{there is } K \in \calK
  \text{ such that } F_{y, K} \subset A \text{ or } F_{y, K}
  \subset X \setminus A\}
  \]
  Let us prove that $\calA_y$ is a $\sigma$-algebra. Clearly
  $\emptyset \in \calA_y$ and $\calA_y$ is closed under
  complementations. Let $\la A_n \ra_{n \in \N}$ be a sequence in
  $\calA_y$. Suppose there is some $n_0 \in \N$ and $K \in \calK$
  such that $F_{y, K} \subset A_{n_0}$. Then $F_{y, K}
  \subset \bigcup_{n \in \N} A_n$, which implies $\bigcup_{n \in \N}
  A_n \in \calA_y$. Suppose on the contrary that for all $n \in
  \N$, there is $K_n \in \calK$ such that $F_{y, K_n} \subset X
  \setminus A_n$. Then $\bigcap_{n \in \N} F_{y, K_n} =
  F_{y, \bigcup_{n \in \N} K_n} \subset X \setminus \bigcup_{n
    \in \N} A_n$ which also gives that $\bigcup_{n \in \N} A_n \in
  \calA_y$.
  
  Finally set $\calA \defeq \bigcap_{y \in Y} \calA_y$ and
  define the measure $\mu \colon \calA \to [0, \infty]$ by
  \[
  \forall A \in \calA, \qquad \mu(A) = \rmcard(Y \cap \{ y :
  x_y \in A\})
  \]
  For the rest of the discussion, we admit that $(X, \calA, \mu)$ is
  complete, localizable, locally determined and not strictly
  localizable. The proof of the latter relies on a non trivial fact in
  infinitary combinatorics; we refer to \cite[216E(f)(g)]{FREMLIN.II}
  for more details. The associated MSN $(X, \calA, \calN_\mu)$ is
  saturated, localizable, and it is locally determined, by
  Proposition~\ref{elemld}(E).

  Define $E_y = X \cap \{x : x(\{y\}) = 1\}$ for all $y \in Y$. This
  set is $\calA$ measurable, because $F_{y, \{\{y\}\}} = E_y$ (hence
  $E_y \in \calA_y$), and for any $z \in Y \setminus \{y\}$, we have
  $F_{z, \{\{y\}\}} = X \setminus E_y$ (hence $E_y \in
  \calA_{z}$). Note that $Y \cap \{z : x_z \in E_y\} = \{y\}$.

  We now choose $\calB$ to be the countable cocountable
  $\sigma$-algebra of $Y$. For any $y \in Y$, we define the measurable
  map $f_y \colon (E_y, \calA_{E_y}) \to (Y, \calB)$ that is constant
  equal to $y$. We claim that $\la f_y \ra_{y \in Y}$ is a
  compatible family of measurable maps subordinated to $\la E_y \ra_{y
    \in Y}$. This ensues from the fact that $E_y \cap E_{z} \in
  \calN_{\mu}$ for any distinct $y, z \in Y$. Assume by contradiction
  that we can find a gluing $f \colon X \to Y$. We will use the
  decomposition $\la f^{-1}(\{y\})\ra_{y \in Y}$ to show $(X, \calA,
  \mu)$ is strictly localizable.

  Let $A \subset X$ such that $A \cap f^{-1}(\{y\}) \in \calA$
  for all $y$. We want to show that $A \in \calA$. For $y
  \in Y$, we have
  \begin{itemize}
  \item Case $x_y \in A$: as $A \cap f^{-1}(\{y\})\in
    \calA_y$ and $x_y \in A \cap f^{-1}(\{y\})$, there
    is $K \in \calK$ such that $F_{y,K} \subset A \cap
    f^{-1}(\{y\}) \subset A$ (because $x_y \in F_{y,K}$, this is the branch of the dichotomy, in the definition of $\calA_y$, that occurs). Therefore $A \in \calA_y$.
  \item Case $x_y \not\in A$: since $x_y \not\in A \cap
    f^{-1}(\{y\}) \in \calA_y$, we can find $K \in \calK$
    such that $F_{y, K} \subset X \setminus (A \cap
    f^{-1}(\{y\}))$.  We deduce that $F_{y, K} \cap
    f^{-1}(\{y\}) \subset X \setminus A$. But $x_y \in
    f^{-1}(\{y\}) \in \calA_y$, therefore there is $K' \in
    \calK$ such that $F_{y, K'} \subset
    f^{-1}(\{y\})$. Whence $F_{y, K \cup K'} = F_{y, K}
    \cap F_{y, K'} \subset F_{y, K} \cap f^{-1}(\{y\})
    \subset X \setminus A$. It follows that $A \in \calA_y$.
  \end{itemize}
  In any case, we have shown that $A \in \calA_y$. As $y \in Y$ is
  arbitrary, $A \in \calA$.

  Now, one observes that the only $z \in Y$ such that $x_z \in
  f^{-1}(\{y\})$ is $y$. Therefore $\mu(A \cap f^{-1}(\{y\}))$ equals
  $1$ if $x_y \in A$ and $0$ otherwise. In consequence, we have
  $\mu(A) = \sum_{y \in Y} \mu(A \cap f^{-1}(\{y\}))$ as desired.
\end{Empty}

\section{Existence of cccc and lld versions}
\label{sec:slv}

%

\begin{Theorem}
  \label{thm61}
  Let $(X, \calA, \calN)$ be a saturated MSN and $\calE \subset \calA
  \setminus \calN$. We suppose that
  \begin{enumerate}
  \item[(1)] $(Z, \calA_Z, \calN_Z)$ is cccc for every $Z \in \calC$;
  \item[(2)] $\calE$ is almost disjointed;
  \item[(3)] $\calE$ is $\calN$-generating.
  \end{enumerate}
  Then the pair consisting of the MSN
  \[
  (\hat{X}, \hat{\calA}, \hat{\calN}) = \coprod_{Z \in \calE} (Z,
  \calA_Z, \calN_Z)
  \]
  and the morphism $\bp = \coprod_{Z \in \calE} \boldsymbol{\iota}_Z$
  is the cccc version of $(X, \calA, \calN)$ (as usual
  $\boldsymbol{\iota}_Z$ is the morphism induced by the inclusion map
  $\iota_Z \colon Z \to X$).
\end{Theorem}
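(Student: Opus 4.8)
The plan is to verify the three ingredients required by the definition of a $\calP$-version in~\ref{pversion} with $\calP$ being ``cccc'': that $(\hat{X}, \hat{\calA}, \hat{\calN})$ is cccc, that $\bp$ is supremum preserving, and that $\bp$ enjoys the coreflection property against cccc objects. The first two are immediate. Since each $(Z, \calA_Z, \calN_Z)$ is by hypothesis~(1) a coproduct of saturated ccc MSNs and a coproduct of coproducts is again a coproduct, $(\hat{X}, \hat{\calA}, \hat{\calN}) = \coprod_{Z \in \calE}(Z, \calA_Z, \calN_Z)$ is itself a coproduct of saturated ccc MSNs, hence cccc and saturated. Each coproduct injection is supremum preserving by Proposition~\ref{propsp}(B), so $\bp = \coprod_{Z \in \calE} \boldsymbol{\iota}_Z$ is supremum preserving by Proposition~\ref{propsp}(D).

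For the universal property I fix a cccc saturated MSN $(Y, \calB, \calM)$ and a supremum preserving morphism $\bq$ with representative $q \in \bq$. I set $Y_Z \defeq q^{-1}(Z) \in \calB$ and let $q_Z \colon Y_Z \to \hat{X}$ be the restriction of $q$ followed by the injection of the copy $\tilde{Z}$ of $Z$ inside $\hat{X}$. Because $\calE$ is $\calN$-generating and $q$ is supremum preserving, $\{Y_Z : Z \in \calE\} = q^{-1}(\calE)$ is $\calM$-generating. For distinct $Z, Z' \in \calE$ almost disjointedness gives $Z \cap Z' \in \calN$, whence $Y_Z \cap Y_{Z'} = q^{-1}(Z \cap Z') \in \calM$; thus $\langle q_Z \rangle_{Z \in \calE}$ is a compatible family subordinated to the $\calM$-generating collection $\{Y_Z\}$. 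As $(Y, \calB, \calM)$ is cccc, Proposition~\ref{Gluingcccc} furnishes a gluing $r \colon Y \to \hat{X}$, unique up to $\calM$-almost everywhere equality, and I take $\br$ to be its class.

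To see that $\br$ lies in $\sfMSNsp$ I invoke Lemma~\ref{propGluing}. Each $q_Z$ is $[(\calB_{Y_Z}, \calM_{Y_Z}), (\hat{\calA}, \hat{\calN})]$-measurable, being the composite of the restriction $q|_{Y_Z}$ (which is $[(\calB_{Y_Z}, \calM_{Y_Z}), (\calA_Z, \calN_Z)]$-measurable) with a coproduct injection, so part~(A) makes $r$ a $[(\calB, \calM), (\hat{\calA}, \hat{\calN})]$-measurable map. For part~(B) I must check that each $q_Z$ is supremum preserving; this rests on the elementary observation that, for $Z \in \calA$, a family of subsets of $Z$ has the same essential supremum computed in $(Z, \calA_Z, \calN_Z)$ as in $(X, \calA, \calN)$, so the restriction of a supremum preserving map to the preimage of a measurable set remains supremum preserving, after which I compose with the supremum preserving injection. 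Lemma~\ref{propGluing}(B) then gives that $r$ is supremum preserving. Finally, $p \circ r$ and $q$ are both gluings of the $X$-valued compatible family $\langle q|_{Y_Z}\rangle$, so Proposition~\ref{uniqueGluing} (applicable since cccc MSNs are locally determined, \ref{elemld}(E)) yields $p \circ r = q$ almost everywhere, i.e. $\bp \circ \br = \bq$.

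The delicate point, which I expect to be the main obstacle, is uniqueness. Given any supremum preserving $\br'$ with $\bp \circ \br' = \bq$ and representative $r'$, I want $r' = r$ almost everywhere. The key is that $p$ is injective on each copy $\tilde{Z}$ and agrees there with $\iota_Z$: if $r'(y) \in \tilde{Z'}$ then almost everywhere $q(y) = p(r'(y)) \in Z'$, and for $y \in Y_Z$ this forces $q(y) \in Z \cap Z'$, which by almost disjointedness can occur on a non-negligible set only when $Z' = Z$. Hence almost everywhere on $Y_Z$ one has $r'(y) \in \tilde{Z}$ and then $r'(y) = q_Z(y)$ by injectivity of $p|_{\tilde{Z}}$, so $r'$ is again a gluing of $\langle q_Z \rangle$ and Proposition~\ref{uniqueGluing} gives $\br' = \br$. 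I anticipate that this copy-bookkeeping, together with the restriction-of-essential-suprema lemma used above, will be the genuine technical hurdles; the remainder is assembly of the coproduct and gluing machinery already in place (the degenerate case $\hat{X} = \emptyset$, equivalently $X \in \calN$, being absorbed by the conventions of~\ref{MSNMorphism}).
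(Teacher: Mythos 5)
Your proposal reproduces the paper's proof in all essentials: the same verification that the coproduct is cccc and that $\bp$ is supremum preserving via Proposition~\ref{propsp}(B) and (D), the same compatible family $\la q_Z \ra_{Z \in \calE}$ subordinated to $q^{-1}(\calE)$, the same appeal to Propositions~\ref{Gluingcccc} and~\ref{propGluing} to manufacture $\br$, and the same idea of proving uniqueness by showing that any factorization must itself be a gluing of $\la q_Z \ra_{Z \in \calE}$. Your two small variants --- deducing $\bp \circ \br = \bq$ from uniqueness of gluings (Proposition~\ref{uniqueGluing}) rather than by the paper's direct estimate, and spelling out why each $q_Z$ is supremum preserving --- are both sound.

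There is, however, one genuine flaw, located exactly where you predicted the difficulty: the uniqueness step. For fixed $Z$ you argue that, for each $Z' \neq Z$, the set of $y \in Y_Z$ with $r'(y)$ in the copy $\tilde{Z}'$ is $\calM$-negligible (being contained, up to the negligible set $\{p \circ r' \neq q\}$, in $q^{-1}(Z \cap Z')$), and you then conclude ``hence almost everywhere on $Y_Z$ one has $r'(y) \in \tilde{Z}$''. That aggregation is invalid: $\calE$ may be uncountable, and an uncountable pairwise disjoint family of negligible measurable sets can have non-negligible union (singletons in $[0,1]$ under Lebesgue measure); no chain condition on $Y$ rescues the inference. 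The correct argument does not go $Z'$ by $Z'$. Observe instead that $p^{-1}(Z) \setminus \tilde{Z}$ is a \emph{single} $\hat{\calN}$-negligible set: its trace on each $\tilde{Z}'$, $Z' \neq Z$, is the copy of $Z \cap Z'$, which lies in $\calN_{Z'}$ by almost disjointedness, so the coproduct description of $\hat{\calN}$ applies. Since $r'$ represents a morphism, it is $[(\calB,\calM),(\hat{\calA},\hat{\calN})]$-measurable, whence $r'^{-1}\bigl(p^{-1}(Z) \setminus \tilde{Z}\bigr) \in \calM$. Then $Y_Z \cap \{y : r'(y) \notin \tilde{Z}\} \subset \{p \circ r' \neq q\} \cup r'^{-1}\bigl(p^{-1}(Z)\setminus \tilde{Z}\bigr) \in \calM$, and the remainder of your argument (injectivity of $p$ on $\tilde{Z}$ gives $r' = q_Z$ almost everywhere on $Y_Z$, so $r'$ is a gluing and Proposition~\ref{uniqueGluing} applies) goes through unchanged. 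The paper's own wording at this point (``$p(r(x)) = q(x)$ \ldots implies that $r(x) \in Z \subset \hat{X}$'') is terse, but this negligible-set correction is precisely what it is eliding.
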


\begin{proof}
  %
  The MSN $(\hat{X}, \hat{\calA}, \hat{\calN})$ is cccc as a coproduct
  of cccc MSNs (this is a general fact, in any category, a coproduct
  of coproducts is a coproduct,
  see~\cite[Proposition~2.2.3]{BORCEUX.1}), and $\bp$ is supremum
  preserving, according to \ref{propsp}(B) and~(D). Observe that each
  $Z \in \calE$ is also a subset of $\hat{X}$ and we denote by
  $\hat{\iota}_Z \colon Z \to \hat{X}$ the corresponding inclusion
  map.

  Let $(Y, \calB, \calM)$ be a cccc MSN and $\bq \colon (Y, \calB,
  \calM) \to (X, \calA, \calN)$ be a supremum preserving morphism,
  represented by $q \in \bq$. For all $Z \in \calE$, call $q_Z \defeq
  \hat{\iota}_Z \circ (q \hel q^{-1}(Z)) \colon q^{-1}(Z) \to
  \hat{X}$. Because $\bq$ is supremum preserving, $Y = q^{-1}(X)$ is
  an $\calM$-essential supremum of the family $\la q^{-1}(Z)\ra_{Z \in
    \calE}$. The family $\calE$ being almost disjointed and $q$ being
  measurable, $q^{-1}(Z) \cap q^{-1}(Z') = q^{-1}(Z \cap Z') \in
  \calM$ for any distinct $Z, Z' \in \calE$. As a result, the family
  $\la q_Z \ra_{Z \in \calE}$ subordinated to $\la q^{-1}(Z) \ra_{Z
    \in \calE}$ is compatible. By Proposition~\ref{Gluingcccc}, this family
  has a gluing $r \colon Y \to \hat{X}$ and, by Lemma~\ref{propGluing},
  $r$ is $[(\calB, \calM), (\hat{\calA}, \hat{\calN})]$-measurable and
  supremum preserving.  Call $p \defeq \coprod_{Z \in \calE}
  \iota_Z$. For each $Z \in \calE$, we have
  \[
  q^{-1}(Z) \cap \{p \circ r \neq q\} \subset q^{-1}(Z) \cap \{r \neq
  \hat{\iota}_Z \circ (q \hel q^{-1}(Z))\} \in \calM.
  \]
  The family $\{q^{-1}(Z) : Z \in \calE\}$ is $\calM$-generating and
  $(Y, \calB, \calM)$ is locally determined, so we conclude that $\{p
  \circ r \neq q\} \in \calM$, that is, $\bp \circ \br = \bq$.

  As for uniqueness, let $\br$ be any morphism such that $\bp \circ
  \br = \bq$, and call $r \in \br$ one of its representatives. Fix $Z
  \in \calE$. Observe that $\hat{\iota}_Z (p(z)) = z$ for all $z \in
  Z$. For $\calM$-almost every $x \in q^{-1}(Z)$, we have $p(r(x)) =
  q(x)$ which implies that $r(x) \in Z \subset \hat{X}$. For such an
  $x$, we find that $r(x) = \hat{\iota}_Z (p(r(x))) =
  \hat{\iota}_Z(q(x)) = q_Z(x)$. Hence, $r$ is a gluing of the
  compatible family $\la q_Z \ra_{Z \in \calE}$ and we invoke the
  uniqueness part of Proposition~\ref{Gluingcccc} to conclude.
\end{proof}

\begin{Empty}
  Consider the following example, taken
  from~\cite[216D]{FREMLIN.II}. Let $X$ be a set of cardinality greater
  or equal than $\aleph_2$. For each $x,y \in X$, we define $H_y = X
  \times \{y\}$ and $V_x = \{x\} \times X$. Sets of this form are
  respectively called horizontal and vertical lines. We define a
  $\sigma$-algebra $\calA$ of $X^2$ by declaring that $A \in \calA$
  iff for all $x, y \in X$, the trace $A \cap H_y$ (resp. $A \cap
  V_x$) is either countable or cocountable in $H_y$
  (resp. $V_x$). Also, we define the $\sigma$-ideal $\calN$ of
  $\calA$ as follows: $N \in \calN$ if and only if the intersection of $N$ with any
  line is countable. Clearly, $(X^2, \calA, \calN)$ is saturated.

  We assert that it is not localizable. Suppose if possible that the
  family of horizontal lines $\{H_y : y \in X\}$ has an
  $\calN$-essential supremum $S$. Then for all $y \in X$, the
  intersection $S \cap H_y$ is cocountable in $H_y$, that is, $N_y
  \defeq X \cap \{x : (x, y) \not\in S\}$ is countable. Let $Z$ be a
  subset of $X$ of cardinality $\aleph_1$. Then $\rmcard \bigcup_{y
    \in X} N_y \leq \aleph_1$, hence the existence of $x \in X
  \setminus \bigcup_{y \in X} N_y$.  This means that $V_x \subset S$.
  However, $S \setminus V_x$ is easily checked to be an
  $\calN$-essential upper bound of $\{H_y : y \in X\}$, as $H_y \cap
  V_x$ is negligible for all $y$. Since $V_x = S \setminus (S
  \setminus V_x) \not\in \calN$, we get a contradiction.

  The family of all lines $\calE \defeq \{H_y : y \in X\} \cup \{V_x :
  x \in X\}$ satisfies the three hypotheses of
  Theorem~\ref{thm61}. Applying the theorem, we see that the cccc
  version of $(X^2, \calA, \calN)$ can be described as the coproduct
  of all lines. Doing so, we see that each point $(x, y)$ in the base
  MSN $(X^2, \calA, \calN)$ is duplicated in the cccc version: the
  ``fibers'' $p^{-1}(\{(x,y)\})$ contains two elements, which
  represent the horizontal and vertical directions emanating from the
  point $(x, y)$.

  If a given MSN has no obvious choice of a family satisfying the
  conditions of Theorem~\ref{thm61}, we can justify the existence of a
  cccc version in a non constructive way.
\end{Empty}

\begin{Lemma}
  \label{lemma72}
  Let $(X, \calA, \calN)$ be a saturated MSN and $\calC \subset A$ an
  $\calN$-generating collection such that $(Z, \calA_Z, \calN_Z)$ is
  ccc for all $Z \in \calC$. Then we can find a collection $\calE
  \subset \calA \setminus \calN$ that satisfies conditions~(1), (2)
  and~(3) of Theorem~\ref{thm61} and such that each of its members is
  a subset of some member of $\calC$. Moreover, we can suppose
  $\rmcard \calE \leq \max\{\aleph_0, \rmcard \calC\}$.
\end{Lemma}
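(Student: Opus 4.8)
The plan is to obtain $\calE$ directly from Lemma~\ref{zorn} and then read off each required property, the cardinality bound being an automatic consequence of the countable chain condition imposed on the members of $\calC$.

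First I would apply Lemma~\ref{zorn} to the $\calN$-generating collection $\calC$. This produces a family $\calE \subset \calA \setminus \calN$ that is almost disjointed (conclusion (A) of that lemma), each of whose members is contained in some member of $\calC$ (conclusion (B)), and which is itself $\calN$-generating (conclusion (C)). Conclusions (A) and (C) are precisely conditions (2) and (3) of Theorem~\ref{thm61}, while (B) is the ``subset of some member of $\calC$'' clause requested here.

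Next I would verify condition (1) of Theorem~\ref{thm61}, namely that the subMSN $(Z, \calA_Z, \calN_Z)$ is cccc for each $Z \in \calE$. Fix such a $Z$ and, using (B), choose $C \in \calC$ with $Z \subset C$. Since $C, Z \in \calA$ and $Z \subset C$, one has $\calA_Z = \calA_C \cap \{A : A \subset Z\}$ and $\calN_Z = \calN_C \cap \{N : N \subset Z\}$, so every almost disjointed family in $\calA_Z \setminus \calN_Z$ is an almost disjointed family in $\calA_C \setminus \calN_C$; as $(C, \calA_C, \calN_C)$ is ccc, such a family is at most countable, whence $(Z, \calA_Z, \calN_Z)$ is ccc. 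Moreover it is saturated, being a subMSN of the saturated MSN $(X, \calA, \calN)$. A saturated ccc MSN is a coproduct of ccc MSNs over a one-point index set, hence cccc by definition, which gives (1).

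Finally, the cardinality estimate holds for \emph{any} $\calE$ meeting (A) and (B), and this is the only substantive point. Invoking the axiom of choice I would fix a map $Z \mapsto C_Z \in \calC$ with $Z \subset C_Z$. For each fixed $C \in \calC$, the fiber $\calE \cap \{Z : C_Z = C\}$ is an almost disjointed subfamily of $\calA_C \setminus \calN_C$, hence at most countable by the ccc character of $C$. Since there are at most $\rmcard \calC$ nonempty fibers, I conclude $\rmcard \calE \leq \aleph_0 \cdot \rmcard \calC = \max\{\aleph_0, \rmcard \calC\}$, the degenerate case $\calC = \emptyset$ (forcing $\calN = \calA$ and $\calE = \emptyset$) being trivial. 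I do not expect a genuine obstacle: the construction and conditions (1)--(3) are immediate from Lemma~\ref{zorn} together with heredity of the ccc property, and the whole delicacy is the bookkeeping identifying $\calA_Z, \calN_Z$ inside $\calA_C, \calN_C$ so that both the heredity argument and the fiberwise counting apply cleanly.
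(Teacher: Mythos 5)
Your proposal is correct and follows essentially the same route as the paper's own proof: apply Lemma~\ref{zorn} to $\calC$, observe that ccc is inherited by subMSNs (so each $Z \in \calE$ is a saturated ccc, hence cccc, subMSN), and bound $\rmcard \calE$ by counting the at most countable almost disjointed fibers of $\calE$ lying inside each member of $\calC$. The only cosmetic difference is that you make the fibers disjoint via a choice function $Z \mapsto C_Z$, whereas the paper simply covers $\calE$ by the possibly overlapping families $\calE_C = \calE \cap \{Z' : Z' \subset C\}$, $C \in \calC$; both yield the same bound.
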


\begin{proof}
Let $\calE$ be associated with $\calC$ in Lemma \ref{zorn}. It clearly satisfies conditions (1), (2), and (3)
  of~\ref{thm61}, since a subMSN of a ccc MSN is ccc as well. If $\calC$ is infinite, then for all $Z \in
  \calC$, call $\calE_Z \defeq \calE \cap \{Z' : Z' \subset Z\}$. Then,
  each $\calE_Z$ is at most countable, since it is an almost
  disjointed family in the ccc MSN $(Z, \calA_Z, \calN_Z)$. As $\calE
  = \bigcup_{Z \in \calC} \calE_Z$, we conclude that $\rmcard \calE
  \leq \rmcard \calC$.
\end{proof}

\begin{Corollary}
  \label{existsccccv}
  Every saturated locally ccc MSN admits a cccc version.
\end{Corollary}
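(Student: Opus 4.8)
The plan is to assemble the two results immediately preceding the corollary, since all the substantive work has already been carried out in Lemma~\ref{lemma72} and Theorem~\ref{thm61}. First I would unwind the hypothesis: by definition, a saturated MSN $(X,\calA,\calN)$ is locally ccc precisely when the collection
\[
\calC \defeq \calE_{\rmccc} = \calA \cap \{Z : (Z,\calA_Z,\calN_Z) \text{ is ccc}\}
\]
is $\calN$-generating. Thus $\calC$ is an $\calN$-generating collection all of whose members $Z$ give rise to a ccc subMSN $(Z,\calA_Z,\calN_Z)$, which is exactly the standing hypothesis of Lemma~\ref{lemma72}.

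Next I would invoke Lemma~\ref{lemma72} with this choice of $\calC$ to extract a subcollection $\calE \subset \calA \setminus \calN$ that is almost disjointed, $\calN$-generating, and each of whose members is a subset of some member of $\calC$, hence itself gives a ccc subMSN. As recorded in the lemma, such $\calE$ satisfies conditions~(1), (2) and~(3) of Theorem~\ref{thm61} (condition~(1) because every ccc MSN is in particular cccc, being the coproduct of the single-index family consisting of itself). Feeding $\calE$ into Theorem~\ref{thm61} then yields that the pair
\[
\Big( \coprod_{Z \in \calE} (Z,\calA_Z,\calN_Z),\ \bp = \coprod_{Z \in \calE} \boldsymbol{\iota}_Z \Big)
\]
is a cccc version of $(X,\calA,\calN)$, which establishes existence.

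There is essentially no obstacle left to overcome at the level of the corollary itself: it is a bookkeeping assembly of Lemma~\ref{lemma72} and Theorem~\ref{thm61}. The genuine difficulties lie upstream and have already been dispatched — namely, the Zorn's Lemma argument of Lemma~\ref{zorn}/\ref{lemma72} that turns the $\calN$-generating collection $\calC$ into an almost disjointed $\calN$-generating refinement, and the gluing argument underlying Theorem~\ref{thm61} (via Proposition~\ref{Gluingcccc} and Lemma~\ref{propGluing}) that verifies the universal property in $\sfMSNsp$. I would therefore keep the proof of the corollary to a single short paragraph, merely naming $\calC = \calE_{\rmccc}$, citing Lemma~\ref{lemma72} and then Theorem~\ref{thm61}.
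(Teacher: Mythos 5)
Your proposal is correct and follows exactly the paper's own route: the paper's proof is literally ``Apply Lemma~\ref{lemma72} to the family $\calC \defeq \calA \cap \{Z : (Z, \calA_Z, \calN_Z) \text{ is ccc}\}$ and then Theorem~\ref{thm61}.'' Your additional remark that a ccc MSN is cccc (as a coproduct of the one-element family consisting of itself) is a harmless and correct way of bridging hypothesis~(1) of Theorem~\ref{thm61}, which Lemma~\ref{lemma72} already guarantees.
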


\begin{proof}
  Apply Lemma~\ref{lemma72} to the family $\calC \defeq \calA \cap \{Z
  : (Z, \calA_Z, \calN_Z) \text{ is ccc}\}$ and then
  Theorem~\ref{thm61}.
\end{proof}

\begin{Empty}
  \label{slversion}
  It is worth noticing that all the arguments contained in
  Theorem~\ref{thm61} and Corollary~\ref{existsccccv} remain valid
  provided we replace ``ccc'' by ``strictly localizable'', ``locally
  ccc'' by ``locally strictly localizable'', and ``cccc'' by ``strictly
  localizable''. Summing up, a saturated locally strictly localizable
  MSN $(X, \calA, \calN)$ has a strictly localizable version, which is
  constructed as the coproduct of subMSNs whose underlying sets
  belongs to a family $\calE$ that satisfies hypotheses (2), (3) of
  Theorem~\ref{thm61} and
  \begin{enumerate}
  \item[(1')] $(Z, \calA_Z, \calN_Z)$ is strictly localizable for
    every $Z \in \calC$.
  \end{enumerate}
  Since (1') implies (1) we can apply Theorem~\ref{thm61} again to
  conclude that the cccc and strictly localizable versions of $(X,
  \calA, \calN)$ are the same.

  As for the existence of lld versions, we have a partial result,
  which applies for most locally ccc MSNs that one is likely to
  encounter in analysis.
\end{Empty}

\begin{Theorem}
  \label{existslld}
  Let $(X, \calA, \calN)$ be a saturated MSN with a collection $\calC
  \subset \calA$ such that
  \begin{enumerate}
  \item[(1)] $(Z, \calA_Z, \calN_Z)$ is ccc for all $Z \in \calC$;
  \item[(2)] $\calC$ is $\calN$-generating;
  \item[(3)] $\rmcard \calC \leq \mathfrak{c}$.
  \end{enumerate}
  The following hold.
  \begin{enumerate}
  \item[(A)] If $(X, \calA, \calN)$ has an lld version, then it
    coincides with the cccc version.
  \item[(B)] If moreover $(Z, \calA_Z)$ is countably separated for all
    $Z \in \calC$, then the lld version exists.
  \end{enumerate}
\end{Theorem}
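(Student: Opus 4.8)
The plan is to fix, once and for all, the data produced by Lemma~\ref{lemma72} applied to $\calC$: an almost disjointed, $\calN$-generating family $\calE \subset \calA \setminus \calN$ with each $Z \in \calE$ contained in a member of $\calC$ (so $(Z,\calA_Z,\calN_Z)$ is ccc) and $\rmcard \calE \leq \max\{\aleph_0, \rmcard \calC\} \leq \mathfrak{c}$. By Theorem~\ref{thm61} the pair $[(\hat{X}, \hat{\calA}, \hat{\calN}), \bp] = [\coprod_{Z \in \calE}(Z,\calA_Z,\calN_Z),\ \coprod_{Z} \boldsymbol{\iota}_Z]$ is the cccc version of $(X,\calA,\calN)$, and it is lld by \ref{coprodLoc}, \ref{ccc=>loc} and \ref{elemld}(E). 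Both parts amount to comparing this fixed cccc version with the lld version.

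For (A), suppose an lld version $[(\hat{X}', \hat{\calA}', \hat{\calN}'), \bp']$ exists and fix $p' \in \bp'$. The first key step is to show that $\hat{X}'$ is itself cccc. Since lld is hereditary, Proposition~\ref{localiso} applies: for each $Z \in \calE$ the restriction $\bp'_Z$ is the lld version of the ccc (hence lld, by \ref{elemld}(C) and \ref{ccc=>loc}) MSN $(Z,\calA_Z,\calN_Z)$, so it is an isomorphism; thus each $(p')^{-1}(Z)$ is ccc, and $\{(p')^{-1}(Z) : Z \in \calE\}$ is almost disjointed, $\hat{\calN}'$-generating (as $\bp'$ is supremum preserving), of cardinality $\leq \mathfrak{c}$. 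The main work is to convert this \emph{almost} disjointed family into a \emph{genuine} measurable partition. To that end I would inject $\calE \hookrightarrow \R$, $Z \mapsto t_Z$, and glue the constant maps $(p')^{-1}(Z) \to \R$, $x \mapsto t_Z$ (a compatible family, by almost disjointness) into a single labelling $g \colon \hat{X}' \to \R$, using that $\hat{X}'$ is localizable and the target is standard Borel (the characterization of localizability recalled in \ref{defGluing} and \cite{DEP.19.b}). Writing $N_Z \defeq (p')^{-1}(Z) \cap \{g \neq t_Z\} \in \hat{\calN}'$ and $D_Z \defeq (p')^{-1}(Z) \setminus N_Z$, distinct blocks $D_Z$ are genuinely disjoint, since a point in $D_Z \cap D_{Z'}$ would force $t_Z = t_{Z'}$. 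Testing against the generating family $\{(p')^{-1}(Z)\}$ and invoking local determination together with locally determined negligibles (see \ref{def.ld}), I would check that the remainder $R \defeq \hat{X}' \setminus \bigcup_Z D_Z$ is both $\hat{\calA}'$-measurable and $\hat{\calN}'$-negligible. Then $\calF \defeq \{D_Z : Z \in \calE\} \cup \{R\}$ is a measurable partition of $\hat{X}'$ into ccc blocks, and Proposition~\ref{disjointed.coproduct} (whose conditions (1),(2) are exactly the local determination of $\hat{X}'$) exhibits $\hat{X}' \cong \coprod_{F \in \calF}(F,\ldots)$, i.e. $\hat{X}'$ is cccc.

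Once $\hat{X}'$ is cccc, I finish (A) by a purely categorical argument: the cccc universal property of $\hat{X}$ yields a supremum preserving $\bv \colon \hat{X}' \to \hat{X}$ with $\bp \circ \bv = \bp'$, and the lld universal property of $\hat{X}'$, applied to the lld object $\hat{X}$, yields $\bu \colon \hat{X} \to \hat{X}'$ with $\bp' \circ \bu = \bp$. As $\bu \circ \bv$ and $\bv \circ \bu$ lie over $X$, the uniqueness clauses of the two universal properties force them to be the respective identities, so $\bu$ is an isomorphism with $\bp' \circ \bu = \bp$, and the lld version coincides with the cccc version in the sense of \ref{pversion}. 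The hard part of the whole proof is the partition step above: manufacturing from a merely almost disjointed, size-$\mathfrak{c}$ family a genuine measurable partition. The labelling $g$ into $\R$ (the sole use of $\rmcard \calE \leq \mathfrak{c}$, via the injection) is what renders the disjointified blocks $D_Z$ measurable, while local determination is what rescues the measurability and negligibility of the leftover $R$, despite $R$ being built from a complement of $\mathfrak{c}$-many pieces and thus a priori outside the $\sigma$-algebra.

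For (B), with the additional hypothesis that each $(Z,\calA_Z)$ is countably separated, I would instead verify directly that the fixed cccc version $[(\hat{X},\hat{\calA},\hat{\calN}),\bp]$ already satisfies the lld universal property, so the lld version exists and equals it. Given an lld $(Y,\calB,\calM)$ and a supremum preserving $\bq \colon Y \to X$ with $q \in \bq$, set $q_Z \defeq \hat{\iota}_Z \circ (q \hel q^{-1}(Z)) \colon q^{-1}(Z) \to \hat{X}$. As in Theorem~\ref{thm61}, $\{q^{-1}(Z)\}$ is almost disjointed and, since $\bq$ is supremum preserving, $\calM$-generating, while $\la q_Z \ra_{Z \in \calE}$ is compatible. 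Because $\rmcard \calE \leq \mathfrak{c}$ and each $(Z,\calA_Z)$ is countably separated, Proposition~\ref{prop68} gives that $(\hat{X},\hat{\calA}) = \coprod_Z(Z,\calA_Z)$ is countably separated; Proposition~\ref{gluelld} then produces a gluing $r \colon Y \to \hat{X}$, which by Lemma~\ref{propGluing} is $[(\calB,\calM),(\hat{\calA},\hat{\calN})]$-measurable and supremum preserving. The computation of Theorem~\ref{thm61} gives $\bp \circ \br = \bq$, and uniqueness of $\br$ follows since any competitor is again a gluing of $\la q_Z \ra$, unique up to equality almost everywhere by Proposition~\ref{uniqueGluing}. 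This establishes the universal property, so the lld version of $(X,\calA,\calN)$ exists and is its cccc version.
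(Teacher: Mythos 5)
Your proof is correct and follows essentially the same route as the paper's: in part (A) you show the lld version is itself cccc by gluing an injective real-valued labelling of the almost disjointed family from Lemma~\ref{lemma72}, disjointifying, and invoking Propositions~\ref{localiso} and~\ref{disjointed.coproduct}, and in part (B) you verify the lld universal property for the cccc version via Propositions~\ref{prop68} and~\ref{gluelld}, exactly as the paper does. The only differences are presentational: your explicit remainder set $R$ cleanly handles the fact that the labelling $g$ may take values outside $\{t_Z : Z \in \calE\}$ (a point the paper glosses over when it asserts that $\la f^{-1}\{c_Z\} \ra_{Z \in \calE}$ is a partition of $\hat{X}$), and your two-morphism argument at the end of (A) spells out the universal-property comparison that the paper leaves implicit.
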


\begin{proof}
  (A) Recall $(X,\calA,\calN)$ has a cccc version, according to Corollary 
\ref{existsccccv}. Suppose $(X, \calA, \calN)$ has an lld version $[(\hat{X},
  \hat{\calA}, \hat{\calN}), \bp]$. In view of Proposition \ref{elemld}(E), conclusion (A) will be established if we prove that $(\hat{X},
  \hat{\calA}, \hat{\calN})$ is cccc. To this end, we need to find a
  suitable decomposition in $\hat{X}$. Apply Lemma~\ref{lemma72} to
  get an almost disjointed $\calN$-generating family $\calE$ such that
  $\rmcard \calE \leq \mathfrak{c}$ and $(Z, \calA_Z, \calN_Z)$ is ccc
  for all $Z \in \calE$. Choose an injection $c : \calE \to \R : Z \mapsto c_Z$ and $p \in \bp$. Let $f_Z
  \colon p^{-1}(Z) \to \R$ be the constant map equal to $c_Z$; it is readily $(\hat{\calA}_{p^{-1}(Z)},\calB(\R))$-measurable. The family $\la f_Z 
\ra_{Z \in \calE}$ is obviously compatible, since $\calE$ is almost disjointed. As $(\hat{X}, \hat{\calA},
  \hat{\calN})$ is localizable, this family has an $(\hat{\calA},
  \calB(\R))$-measurable gluing $f \colon \hat{X} \to \R$.

  We now show that $\la f^{-1}\{c_Z\} \ra_{Z \in \calE}$ is a
  partition of $\hat{X}$ into ccc measurable pieces.  Since $c$ is injective, the family $\la f^{-1}\{c_Z\} \ra_{Z \in \calE}$ is, indeed, a partition of $\hat{X}$ and, since $f$ is $(\hat{\calA},\calB(\R))$-measurable, 
$f^{-1}\{c_Z\} \in \hat{\calA}$ for all $Z \in \calE$. As $f$ is a gluing 
of $\la f_Z \ra_{Z \in \calE}$, we
  have $p^{-1}(Z) \setminus
  f^{-1}\{c_Z\} = p^{-1}(Z) \cap \{f \neq f_Z\}  \in \hat{\calN}$ for all $Z \in \calE$. Moreover, since $c$ is injective, for
  all $Z' \in \calE$ distinct from $Z$, one has $p^{-1}(Z') \cap
  (f^{-1}\{c_Z\} \setminus p^{-1}(Z)) \subset p^{-1}(Z') \cap \{f \neq
  f_{Z'}\} \in \hat{\calN}$. Also, $p^{-1}(Z) \cap (f^{-1}\{c_Z\}
  \setminus p^{-1}(Z)) = \emptyset$ is clearly negligible. Recalling that $(\hat{X},\hat{\calA},\hat{\calN})$ is saturated and that $p^{-1}(\calE)$ is $\hat{\calN}$-generating (because $\calE$ is $\calN$-generating and $p$ is supremum preserving), one infers from the
  Distributivity Lemma \ref{supDistrib} that $f^{-1}\{c_Z\} \setminus
  p^{-1}(Z) \in \hat{\calN}$. Thus $p^{-1}(Z) \ominus f^{-1}\{c_Z\} \in
  \hat{\calN}$. As $\bp$ is a local isomorphism, according to Propositions \ref{localiso}(B) and \ref{elemld}(A), $(p^{-1}(Z),
  \hat{\calA}_{p^{-1}(Z)}, \hat{\calN}_{p^{-1}(Z)})$ is ccc. By what
  precedes, so is $(f^{-1}\{c_Z\}, \hat{\calA}_{f^{-1}\{c_Z\}},
  \hat{\calN}_{f^{-1}\{c_Z\}})$. Therefore, the MSN
  \[
  (Y,\calB,\calM) \defeq \coprod_{Z \in \calE}
  (f^{-1}\{c_Z\}, \hat{\calA}_{f^{-1}\{c_Z\}}, \hat{\calN}_{f^{-1}\{c_Z\}})
  \]
  is cccc, by definition. It remains to establish that $(\hat{X},\hat{\calA},\hat{\calN})$ and $(Y,\calB,\calM)$ are isomorphic in $\sfMSNsp$. This is a consequence of Proposition \ref{disjointed.coproduct} applied to the measurable partition $\calF = \{ f^{-1}\{c_Z\} : Z \in \calE \}$. Recalling that $p^{-1}(\calE)$ is $\hat{\calN}$-generating, it ensues from the preceding paragraph that so is $\calF$. Since $(\hat{X},\hat{\calA},\hat{\calN})$ is locally determined (whence, has locally determined negligible sets, recall \ref{def.ld}), $\calF$ satisfies hypotheses (1) and (2) 
of Proposition \ref{disjointed.coproduct}.
  
  (B) Apply Lemma~\ref{lemma72} to $\calC$ and let
  $\calE$ be the family thus obtained. By Theorem~\ref{thm61}, the MSN $(\hat{X}, \hat{\calA},
  \hat{\calN}) \defeq \coprod_{Z \in \calE} (Z, \calA_Z, \calN_Z)$ and
  the morphism $\bp$ induced by $p = \coprod_{Z \in \calE} \iota_Z$
  constitute the cccc version of $(X, \calA, \calN)$. Furthermore, $(\hat{X},
  \hat{\calA})$ is countably separated, by Proposition~\ref{prop68}. In
  order to prove that $[(\hat{X}, \hat{A}, \hat{\calN}), \bp]$ is an
  lld version, we need to adapt the end of the proof of~\ref{thm61}.

  Let $(Y, \calB, \calM)$ be an lld MSN and $\bq \colon (Y, \calB,
  \calM) \to (X, \calA, \calN)$ a supremum preserving morphism
  represented by $q \in \bq$. As before, we let $\hat{\iota}_Z \colon Z
  \to \hat{X}$ be the inclusion map and $q_Z \defeq \hat{\iota}_Z
  \circ (q \hel q^{-1}(Z))$ for all $Z \in \calE$. The family $\la q_Z
  \ra_{Z \in \calE}$ subordinated to $\la q^{-1}(Z) \ra_{Z \in \calE}$
  is compatible. This time we use the gluing result~\ref{gluelld}
  instead, that provides a gluing $r \colon Y \to X$ of $\la q_Z
  \ra_{Z \in \calE}$. We argue as before to show that $r$ induces the
  unique supremum preserving morphism $\br \colon (Y, \calB, \calM)
  \to (X, \calA, \calN)$ such that $\bp \circ \br = \bq$.
\end{proof}

\section{Strictly localizable version of a measure space}

\begin{Lemma}
  \label{lemma72-clash!!!}
  Let $(X, \calA, \mu)$ be a measure space and $\calE \subset \calA$
  an $\calN_\mu$-generating collection that is closed under finite
  union. Then, for every $A \in \calA$, we have $\mu(A) = \sup \left\{
  \mu(A \cap Z) : Z \in \calE \right\}$.
\end{Lemma}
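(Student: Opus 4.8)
The plan is to establish the nontrivial inequality $\mu(A) \le \sup\{\mu(A \cap Z) : Z \in \calE\}$, the reverse being immediate from monotonicity $\mu(A \cap Z) \le \mu(A)$. Write $s \defeq \sup\{\mu(A \cap Z) : Z \in \calE\}$. If $s = \infty$ there is nothing to prove, so I assume $s < \infty$. First I would exploit the closure of $\calE$ under finite union to replace a sequence $\la Z_n' \ra$ with $\mu(A \cap Z_n') \to s$ by the \emph{increasing} sequence $Z_n \defeq Z_1' \cup \dots \cup Z_n' \in \calE$, which still satisfies $\mu(A \cap Z_n) \to s$ and $\mu(A \cap Z_n) \le s < \infty$. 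Setting $Z_\infty \defeq \bigcup_n Z_n \in \calA$, continuity of $\mu$ from below gives $\mu(A \cap Z_\infty) = s$.

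The heart of the argument is to show that the remainder $B \defeq A \setminus Z_\infty$ is $\mu$-negligible, which I would do in two steps. Fix $Z \in \calE$. Then $Z_n \cup Z \in \calE$, so $\mu(A \cap (Z_n \cup Z)) \le s$; decomposing $A \cap (Z_n \cup Z)$ into the disjoint union of $A \cap Z_n$ and $A \cap Z \setminus Z_n$ and using that the finite quantity $\mu(A \cap Z_n)$ may be subtracted, I obtain $\mu(A \cap Z \setminus Z_n) \le s - \mu(A \cap Z_n) \to 0$. Since $B \cap Z = A \cap Z \setminus Z_\infty \subset A \cap Z \setminus Z_n$ for every $n$, this forces $\mu(B \cap Z) = 0$; that is, $B \cap Z \in \calN_\mu$ for every $Z \in \calE$.

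Second, I would upgrade this to $B \in \calN_\mu$ using that $\calE$ is $\calN_\mu$-generating. This is exactly the fact recorded at the end of Paragraph~\ref{essentialSup}, itself a direct consequence of the Distributivity Lemma~\ref{supDistrib}: since $X$ is an $\calN_\mu$-essential supremum of $\calE$ and $B \in \calA$, the set $B = B \cap X$ is an $\calN_\mu$-essential supremum of $\{B \cap Z : Z \in \calE\}$; as each member of this family is negligible, so is its essential supremum $B$. Finally, $A$ is the disjoint union of $A \cap Z_\infty$ and $B$, whence $\mu(A) = \mu(A \cap Z_\infty) + \mu(B) = s + 0 = s$.

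The main obstacle is the middle step: the passage $\mu(A \cap Z \setminus Z_n) \to 0$ crucially needs both the hypothesis $s < \infty$ (so that the finite quantity $\mu(A \cap Z_n)$ can legitimately be subtracted) and the closure of $\calE$ under finite union (so that $Z_n \cup Z$ stays in $\calE$ and its $A$-trace is bounded by $s$). Without either hypothesis the conclusion can fail; it is precisely the mechanism of approximating from below by an increasing sequence of pieces of bounded $A$-measure that makes the supremum exhaust $\mu(A)$.
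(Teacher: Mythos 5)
Your proof is correct and takes essentially the same route as the paper's: an increasing sequence in $\calE$ exhausting the supremum (via closure under finite union), continuity from below, the disjoint decomposition $A \cap (Z_n \cup Z) = (A \cap Z_n) \sqcup (A \cap Z \setminus Z_n)$ to show the remainder $A \setminus Z_\infty$ meets each $Z \in \calE$ negligibly, and the Distributivity Lemma~\ref{supDistrib} together with the generating hypothesis to conclude that the remainder is $\mu$-null. The only cosmetic difference is that the paper runs the middle step as a proof by contradiction, whereas you pass to the limit $\mu(A \cap Z \setminus Z_n) \to 0$ directly.
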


\begin{proof}
  If $\alpha \defeq \sup \{ \mu(A \cap Z) : Z \in \calE\}$ is
  infinite, there is nothing to prove. Otherwise, select an increasing
  sequence $\la Z_n \ra_{n \in \N}$ such that $\lim_n \mu(A \cap Z_n)
  = \alpha$. Set $A' \defeq A \cap \bigcup_{n \in \N} Z_n$. Suppose
  that $\mu((A \setminus A') \cap Z) > 0$ for some $Z \in \calE$. Then
  \[
  \alpha \geq \mu(A \cap (Z_n \cup Z)) \geq \mu(A \cap Z_n) +
  \mu((A \setminus A') \cap Z)
  \]
  Letting $n \to \infty$ gives a contradiction. So we conclude that
  $(A \setminus A') \cap Z$ is negligible for all $Z \in \calE$. With
  the help of Lemma~\ref{supDistrib}, we obtain that $\mu(A \setminus
  A') = 0$. Consequently, $\mu(A) = \mu(A') = \lim_{n \to \infty}
  \mu(A \cap Z_n) = \alpha$.
\end{proof}

\begin{Empty}[Pushforward of a measure by a morphism]
  Let $(X, \calA, \calN)$ be a saturated MSN. A measure $\mu \colon
  \calA \to [0, \infty]$ is {\em absolutely continuous with respect to
    $\calN$} whenever $N \in \calN$ implies $\mu(N) = 0$. Let $\bq
  \colon (X, \calA, \calN) \to (Y, \calB, \calM)$ a morphism of
  saturated MSNs. We define the {\em pushforward} measure $\bq_\# \mu
  \defeq q_\# \mu$, where $q$ is any representative of $\bq$. This
  definition makes sense, because, for all $q' \in \bq$ and $A \in
  \calA$, we have $\mu(q^{-1}(A) \ominus (q')^{-1}(A)) = 0$, owing to
  the absolute continuity of $\mu$. Trivially, $\bq_\#\mu$ is
  absolutely continuous with respect to $\calM$.
\end{Empty}

\begin{Empty}[Pre-image Measure]
  \label{para72}
  Let $(X, \calA, \mu)$ be a complete semi-finite measure space. To
  simplify the notations, we abbreviate $\calN_\mu$ to
  $\calN$. Following the discussion in Paragraph~\ref{locally}, $(X,
  \calA, \calN)$ is locally ccc. Recording
  Corollary~\ref{existsccccv}, $(X, \calA, \calN)$ has a cccc version
  $[(\hat{X}, \hat{\calA}, \hat{\calN}), \bp]$ and we shall show that
  there is a unique measure $\hat{\mu}$ on $(\hat{X}, \hat{\calA})$
  such that
  \begin{enumerate}
  \item[(1)] $\calN_{\hat{\mu}} = \hat{\calN}$;
  \item[(2)] $\bp_{\#} \hat{\mu} = \mu$.
  \end{enumerate}
  Such a measure $\hat{\mu}$ is referred to as the {\em pre-image measure} of
  $\mu$. Moreover, we will show that the measure space $(\hat{X},
  \hat{\calA}, \hat{\mu})$ is strictly localizable; we say that
  $[(\hat{X}, \hat{\calA}, \hat{\mu}), \bp]$ is the {\em strictly
    localizable version} of the measure space $(X,\calA, \mu)$.
  
  We start to prove the uniqueness of $\hat{\mu}$. Fix a
  representative $p \in \bp$. For any $F \in \calA$ we define $\hat{F}
  \defeq p^{-1}(F)$ and call $p_F\colon \hat{F} \to F$ the restriction
  of $p$, which induces, as usual, a morphism $\bp_F \colon (\hat{F},
  \hat{\calA}_{\hat{F}}, \hat{\calN}_{\hat{F}}) \to (F, \calA_F,
  \calN_F)$. Call $\calA^f \defeq \calA \cap \{F : \mu(F) <
  \infty\}$. A pre-image measure $\hat{\mu}$ must satisfy $\bp_{F_\#}(\hat{\mu}
  \hel \hat{F}) = \mu \hel F$ for every $F \in \calA^f$. But $(F,
  \calA_F, \calN_F)$ is ccc, so by Proposition~\ref{localiso}, $\bp_F$
  is an isomorphism, forcing $\hat{\mu} \hel \hat{F} =
  (\bp_{F}^{-1})_\# (\mu \hel F)$ to hold. Since $p$ is supremum
  preserving, the collection $\{\hat{F}: F \in \calA^f\}$ admits
  $\hat{X}$ as an $\hat{\calN}$ essential supremum. By~(1) and
  Lemma~\ref{lemma72-clash!!!} we infer that
  \[
  \hat{\mu}(A) = \sup \left\{ \hat{\mu}(A \cap \hat{F}) : F \in
  \calA^f\right\} = \sup \left\{ (\bp_{F}^{-1})_\#(\mu \hel F)(A \cap
  \hat{F}) : F \in \calA^f\right\}
  \]
  for all $A \in \hat{\calA}$, from which the uniqueness of the
  pre-image measure follows straightforwardly.
\end{Empty}

\begin{Empty}
  \label{para74}
  To deal with the existence of pre-image measures, we will fix a cccc version,
  obtained by an application Theorem~\ref{thm61} to the family
  $\calA^f$ defined above. As all cccc versions of $(X, \calA, \calN)$
  are isomorphic, there is no restriction in considering this special
  case.

  Henceforth we suppose that $(\hat{X}, \hat{\calA}, \hat{\calN}) =
  \coprod_{Z \in \calE} (Z, \calA_Z,\calN_Z)$, where $\calE \subset
  \calA^f \setminus \calN$ is a collection such that (A), (B) and
  (C) of Theorem~\ref{thm61} hold. We now define $\hat{\mu}$ on
  $(\hat{X}, \hat{\calA})$ by
  \[
  \hat{\mu}\left( \coprod_{Z \in \calE} A_Z\right) \defeq \sum_{Z \in
    \calE} \mu(A_Z)
  \]
  each $A_Z$ being an arbitrary measurable subset of $Z$. We choose
  the representative $p = \coprod_{Z \in \calE} \iota_Z$ of $\bp$,
  each $\iota_Z \colon Z \to X$ being the inclusion map.

  Obviously, $(\hat{X}, \hat{\calA}, \hat{\mu})$ is a strictly
  localizable measure space and $\calN_{\hat{\mu}} = \hat{\calN}$,
  which is condition~(1) of Paragraph~\ref{para72}. The next result
  gathers some facts about the measure $\hat{\mu}$. In particular,
  condition~(2) of Paragraph~\ref{para72} is proven in
  Proposition~\ref{prophatmu}(B).
\end{Empty}

\begin{Proposition}
  \label{prophatmu}
  With the notations of paragraph~\ref{para74}:
  \begin{enumerate}
  \item[(A)] For all $A \in \calA$, one has $\mu(A) = \sum_{Z \in
    \calE} \mu(A \cap Z)$.
  \item[(B)] $\bp_\# \hat{\mu} = \mu$;
  \item[(C)] For every set $A \in \hat{\calA}$ with $\sigma$-finite
    $\hat{\mu}$-measure, there is $B \in \calA$ with $\sigma$-finite
    $\mu$-measure such that $\hat{\mu}(A \ominus \hat{B}) = 0$, where
    $\hat{B} \defeq p^{-1}(B)$.
  \end{enumerate}
\end{Proposition}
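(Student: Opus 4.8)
The plan is to establish (A) first, derive (B) as an immediate corollary, and then treat (C), which carries essentially all of the work.

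For (A), I would not apply Lemma~\ref{lemma72-clash!!!} to $\calE$ directly, since $\calE$ is only almost disjointed and need not be closed under finite unions. Instead I would pass to the collection $\calE^\cup$ of all finite unions of members of $\calE$: it contains $\calE$, hence is $\calN_\mu$-generating, and it is closed under finite union, so Lemma~\ref{lemma72-clash!!!} gives $\mu(A)=\sup\{\mu(A\cap W):W\in\calE^\cup\}$. For a finite union $W=Z_1\cup\dots\cup Z_n$ of distinct members of $\calE$, hypothesis (2) of Theorem~\ref{thm61} forces $Z_i\cap Z_j\in\calN=\calN_\mu$, whence $\mu(A\cap Z_i\cap Z_j)=0$ and $\mu(A\cap W)=\sum_{i=1}^n\mu(A\cap Z_i)$. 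Taking the supremum over finite subfamilies yields $\mu(A)=\sum_{Z\in\calE}\mu(A\cap Z)$, which is (A). Then (B) is immediate: writing $Z$ also for its copy $\hat\iota_Z(Z)\subset\hat X$, one has $\hat\iota_Z^{-1}(p^{-1}(A))=Z\cap A$ for each $Z$, so $\hat\mu(p^{-1}(A))=\sum_{Z\in\calE}\mu(A\cap Z)=\mu(A)$ by (A).

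For (C), fix $A\in\hat\calA$ of $\sigma$-finite $\hat\mu$-measure and set $A_Z:=\hat\iota_Z^{-1}(A)\in\calA_Z$ for each $Z\in\calE$, a measurable subset of $Z\subset X$ with $\mu(A_Z)=\hat\mu(A\cap\hat\iota_Z(Z))$, so that $\hat\mu(S)=\sum_Z\mu(\hat\iota_Z^{-1}(S))$ for all $S\in\hat\calA$. The first key step is that $\calE_A:=\{Z\in\calE:\mu(A_Z)>0\}$ is countable: writing $A=\bigcup_n A_n$ with $\hat\mu(A_n)<\infty$, each finite sum $\sum_Z\mu(\hat\iota_Z^{-1}(A_n))=\hat\mu(A_n)$ has only countably many nonzero terms, and $\calE_A$ is the union over $n$ of these countable index sets. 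I would then set $B:=\bigcup_{Z\in\calE_A}A_Z$, a countable union of finite-measure measurable subsets of $X$; thus $B\in\calA$ and $B$ is $\sigma$-finite, as required.

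The remaining, and main, task is to verify $\hat\mu(A\ominus p^{-1}(B))=0$, which I would carry out fiberwise using $\hat\mu(\,\cdot\,)=\sum_Z\mu(\hat\iota_Z^{-1}(\,\cdot\,))$. Here the obstacle is the interaction between distinct fibers. Since $p\circ\hat\iota_Z=\iota_Z$, one has $\hat\iota_Z^{-1}(p^{-1}(B))=Z\cap B=\bigcup_{Z'\in\calE_A}(Z\cap A_{Z'})$, in which the cross terms $Z\cap A_{Z'}\subset Z\cap Z'$ with $Z'\neq Z$ are $\calN_\mu$-null by almost disjointedness. Consequently $Z\cap B$ differs from $A_Z$ by a $\mu$-null set when $Z\in\calE_A$, while both $A_Z$ and $Z\cap B$ are $\mu$-null when $Z\notin\calE_A$ (all surviving terms then having $Z'\neq Z$). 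In either case $\hat\iota_Z^{-1}(A\ominus p^{-1}(B))=A_Z\ominus(Z\cap B)$ is $\mu$-null, and summing over $Z\in\calE$ gives $\hat\mu(A\ominus p^{-1}(B))=0$. The only delicate points are bookkeeping the identifications $\hat\iota_Z$ and confirming that the countably many cross terms contribute measure zero, both controlled by hypotheses (2) and (3) of Theorem~\ref{thm61}.
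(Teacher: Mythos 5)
Your proof is correct, and it splits into two cases relative to the paper: parts (B) and (C) follow essentially the paper's own argument (the paper also defines $B$ as the union of the countably many $A_Z$ with $\mu(A_Z)>0$ and checks fiberwise that $A_Z \ominus (B \cap Z)$ is null, splitting on whether $Z$ belongs to the countable exceptional family; you merely supply more detail on why that family is countable). Part (A), however, is where you genuinely diverge. The paper proves (A) directly by a case distinction: if $\calE \cap \{Z : \mu(A\cap Z)>0\}$ is uncountable, some $\calE_\alpha = \{Z : \mu(A \cap Z) > \alpha\}$ is infinite and almost disjointedness forces both sides to be $+\infty$; if it is countable, the remainder $A' = A \setminus \bigcup \calE'$ is shown to be null via the Distributivity Lemma~\ref{supDistrib}, and countable additivity finishes. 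You instead pass to the closure $\calE^\cup$ of $\calE$ under finite unions and invoke Lemma~\ref{lemma72-clash!!!} (which the paper proves but only uses elsewhere, in~\ref{para72} and in the Radon--Nikod\'ym section), then identify the supremum over finite unions with the unordered sum using almost disjointedness. Your route is slightly shorter and handles the finite and infinite cases uniformly, since the unordered sum is by definition the supremum of finite partial sums; the paper's route is self-contained modulo~\ref{supDistrib} and does not need the auxiliary closure $\calE^\cup$. Both are sound; the only points worth stating explicitly in your write-up are that $\calE^\cup$ inherits the $\calN_\mu$-generating property because it contains $\calE$, and that $\mu(A \cap W) = \sum_i \mu(A \cap Z_i)$ requires writing $W$ as a union of \emph{distinct} members, which you did note.
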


\begin{proof}
  (A) When $\calE \cap \{Z : \mu(A \cap Z) > 0\}$ is uncountable, the
  result follows easily, for there is $\alpha > 0$ such that
  $\calE_\alpha \defeq \calE \cap \{Z : \mu(A \cap Z) > \alpha\}$ is
  infinite. Taking a countable subset $\calE_\alpha' \subset
  \calE_\alpha$, then
  \[
  \mu(A) \geq \mu\left(A \cap \bigcup \calE_\alpha'\right) = \sum_{Z
    \in \calE_\alpha'} \mu(A \cap Z) = \infty
  \]
  because $\calE_\alpha'$ is almost disjointed.

  On the other hand, suppose $\calE' \defeq \calE \cap \{Z : \mu(A
  \cap Z) > 0\}$ is countable and set $A' \defeq A \setminus \bigcup
  \calE'$. Then $\mu(A' \cap Z) = 0$ for every $Z \in \calE$. By
  Lemma~\ref{supDistrib}, $A'$ is an $\calN$ essential supremum of
  $\{A' \cap Z : Z \in \calE\}$, which forces $A'$ to be
  negligible. Consequently,
  \[
  \mu(A) = \mu\left(A \cap \bigcup \calE'\right) = \sum_{Z \in \calE'}
  \mu(A \cap Z) = \sum_{Z \in \calE} \mu(A \cap Z)
  \]

  (B) For any $A \in \calA$, one has
  \[
  \bp_\#\hat{\mu}(A) = \hat{\mu}(p^{-1}(A)) =
  \hat{\mu}\left(\coprod_{Z \in \calE} A \cap Z \right) = \sum_{Z \in
    \calE} \mu(A \cap Z)
  \]
  We conclude by means of (A).
  
  (C) Let $A \in \hat{\calA}$ a set of $\sigma$-finite $\hat{\mu}$
  measure. Writing $A = \coprod_{Z \in \calE} A_Z$, each $A_Z$ being a
  measurable subset of $Z$, the set $\calE' \defeq \calE \cap \{Z :
  \mu(A_Z) > 0\}$ must be countable. Define $B \defeq \bigcup \{A_Z :
  Z \in \calE' \}$. We claim that $A \ominus \hat{B} = \coprod_{Z
    \in \calE} \left( A_Z \ominus (B \cap Z)\right)$ is negligible,
  or, equivalently, all $A_Z \ominus (B \cap Z)$ are
  negligible. Indeed, for $Z \in \calE'$, one has $A_Z \ominus (B \cap
  Z) \subset \bigcup \{ A_{Z'} \cap Z :Z' \in \calE' \text{ and } Z'
  \neq Z\} \in \calN$. If $Z \not\in \calE'$, then both $A_Z$ and $B
  \cap Z$ are negligible.
\end{proof}

\begin{Proposition}
  \label{prop74}
  The Banach spaces $\bL_1(X, \calA, \mu)$ and $\bL_1(\hat{X},
  \hat{\calA}, \hat{\mu})$ are isometrically isomorphic.
\end{Proposition}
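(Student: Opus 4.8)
The plan is to exhibit the pullback along a representative $p \in \bp$ as the desired isometric isomorphism. Fix $p \in \bp$ and define $p^* \colon \bL_1(X,\calA,\mu) \to \bL_1(\hat X,\hat\calA,\hat\mu)$ by $p^*(f) = f \circ p$. First I would check that this is well defined, linear and isometric. Measurability of $f \circ p$ is clear, and if $f = f'$ $\mu$-almost everywhere then $\{f \neq f'\} \in \calN_\mu = \calN$, so $p^{-1}(\{f \neq f'\}) \in \hat\calN = \calN_{\hat\mu}$ because $p$ is $[(\hat\calA,\hat\calN),(\calA,\calN)]$-measurable; thus $p^*$ descends to $\bL_1$-classes. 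The isometry property is the change of variables formula for the pushforward: using Proposition~\ref{prophatmu}(B),
\begin{equation*}
\int_{\hat X} |f \circ p| \, d\hat\mu = \int_X |f| \, d(\bp_\# \hat\mu) = \int_X |f| \, d\mu .
\end{equation*}
In particular $p^*$ is injective, and it only remains to prove surjectivity.

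For surjectivity, fix $g \in \bL_1(\hat X,\hat\calA,\hat\mu)$. Since $g$ is integrable, $\{g \neq 0\}$ has $\sigma$-finite $\hat\mu$-measure, so Proposition~\ref{prophatmu}(C) provides $B \in \calA$ of $\sigma$-finite $\mu$-measure with $\hat\mu(\{g\neq 0\} \ominus \hat B) = 0$, where $\hat B = p^{-1}(B)$; hence $g = g \cdot \ind_{\hat B}$ $\hat\mu$-almost everywhere. The subspace measure space $(B,\calA_B,\mu_B)$ is complete and $\sigma$-finite, so the MSN $(B,\calA_B,\calN_B)$ is strictly localizable, in particular localizable (Propositions~\ref{finite=>loc}, \ref{ccc=>loc} and~\ref{coprodLoc}). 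Working inside this $\sigma$-finite part is the crux of the argument: the ambient MSN $(X,\calA,\calN)$ need not be localizable, and this is exactly why we cannot glue over all of $X$ directly. Recall from~\ref{para74} that $\hat X = \coprod_{Z \in \calE} Z$ with $p$ restricting to the inclusion $\iota_Z$ on each copy; write $g_Z$ for the restriction of $g$ to $Z$. As $\calE$ is almost disjointed, the family $\la g_Z \ra_{Z \in \calE}$ is automatically compatible, and by the Distributivity Lemma~\ref{supDistrib} the collection $\{B \cap Z : Z \in \calE\}$ is $\calN_B$-generating. Restricting each $g_Z$ to $B \cap Z$ and invoking localizability of $(B,\calA_B,\calN_B)$ for real-valued (hence standard Borel) targets, see~\ref{defGluing}, yields a gluing $f \colon B \to \R$; I then extend it by zero to an $\calA$-measurable $\tilde f \colon X \to \R$.

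It remains to verify that $\tilde f \circ p = g$ $\hat\mu$-almost everywhere. Here the decomposition $\hat X = \coprod_Z Z$ does the work: on $B \cap Z$ one has $\tilde f \circ p = f = g_Z = g$ up to a $\calN_B \subset \hat\calN$ set, while on $Z \setminus \hat B$ both $\tilde f \circ p$ and $g$ vanish $\hat\mu$-almost everywhere (the former since $\tilde f$ is zero off $B$, the latter since $g$ is supported on $\hat B$). Thus $Z \cap \{\tilde f \circ p \neq g\}$ is $\hat\mu$-negligible for every $Z \in \calE$, and since $\hat\mu$ of a set in the coproduct is the sum of the $\hat\mu$-measures of its traces on the pieces $Z$ (\ref{para74}), we obtain
\begin{equation*}
\hat\mu(\{\tilde f \circ p \neq g\}) = \sum_{Z \in \calE} \hat\mu\big(Z \cap \{\tilde f \circ p \neq g\}\big) = 0 .
\end{equation*}
Once this is established, the change of variables formula gives $\int_X |\tilde f| \, d\mu = \int_{\hat X} |\tilde f \circ p| \, d\hat\mu = \int_{\hat X} |g| \, d\hat\mu < \infty$, so $\tilde f \in \bL_1(X,\calA,\mu)$ and $p^*(\tilde f) = g$ in $\bL_1(\hat X,\hat\calA,\hat\mu)$. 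I expect the main obstacle to be precisely this last step: because $\calE$ is in general uncountable, an a priori uncountable union of null traces could fail to be null, and the argument survives only thanks to the additivity of the strictly localizable measure $\hat\mu$ over the partition $\calE$, together with the reduction to the $\sigma$-finite set $B$ that makes the gluing available despite the possible non-localizability of $(X,\calA,\calN)$.
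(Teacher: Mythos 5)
Your proof is correct, and its surjectivity step takes a genuinely different route from the paper's. Both proofs share the same skeleton: the pullback $p^*(f)=f\circ p$ is well defined and isometric because $\bp_\#\hat\mu=\mu$ (Proposition~\ref{prophatmu}(B)), and surjectivity is reduced, via Proposition~\ref{prophatmu}(C), to a set $B\in\calA$ of $\sigma$-finite $\mu$-measure carrying $g$ up to a $\hat\mu$-null set. The difference is in how the preimage function on $B$ is produced. The paper invokes the abstract local isomorphism property: since $(B,\calA_B,\calN_B)$ is strictly localizable (hence has the property defining the version), Proposition~\ref{localiso}(B) makes $\bp_B\colon(\hat B,\hat\calA_{\hat B},\hat\calN_{\hat B})\to(B,\calA_B,\calN_B)$ an isomorphism, and the sought function is simply $\hat f\circ q_B$ with $q_B\in\bp_B^{-1}$, extended by zero. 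You instead work with the explicit coproduct description of \ref{para74}: you restrict $g$ to the pieces $B\cap Z$, observe that almost disjointedness of $\calE$ makes the family compatible and that the Distributivity Lemma~\ref{supDistrib} makes $\{B\cap Z:Z\in\calE\}$ an $\calN_B$-generating family, and then glue using the localizability of the $\sigma$-finite subMSN $(B,\calA_B,\calN_B)$ with the real-valued (standard Borel) target, as recalled in \ref{defGluing}; the identity $\tilde f\circ p=g$ a.e.\ then follows piecewise, the possibly uncountable union of null traces being handled by the very definition of $\hat\calN$ (equivalently, of $\hat\mu$) on the coproduct. The paper's argument is shorter and reuses the universal-property machinery already developed; yours is more hands-on and essentially re-proves, in this concrete situation, the local isomorphism that Proposition~\ref{localiso} provides abstractly, at the cost of depending on the particular coproduct representative of the cccc version fixed in \ref{para74} (which is legitimate, since all versions are isomorphic). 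One cosmetic caveat: your inclusion ``$\calN_B\subset\hat\calN$'' is an abuse of notation, since these ideals live on different base sets; what is meant, and what your argument actually uses, is that a subset of $B\cap Z$ lying in $\calN_B$ sits inside the piece $Z$ of $\hat X$ and is there $\hat\calN$-negligible.
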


\begin{proof}
  For any $f,f' \in \mathbf{f} \in \bL_1(X, \calA, \mu)$ we check that $f\circ p$
  and $f' \circ p$ coincide almost everywhere. Thus, the linear map $\varphi \colon
  \bL_1(X, \calA, \mu) \to \bL_1(\hat{X}, \hat{\calA}, \hat{\mu})$ which
  assigns to $\mathbf{f}$ the equivalence class of $f \circ p$ is
  well-defined. Furthermore, we have
  \begin{align*}
  \int_X |f| d\mu & = \int_0^\infty \mu(\{ |f| > t\}) dt =
  \int_0^\infty \bp_\# \hat{\mu}(\{|f| > t\}) dt \\ & = \int_0^\infty
  \hat{\mu}(\{|f \circ p| > t\}) dt = \int_{\hat{X}} |f \circ p| d
  \hat{\mu},
  \end{align*}
  showing that $\varphi$ is an isometry.

  Let us show that $\varphi$ is onto. Let $\hat{f}$ be an integrable
  function on $(\hat{X}, \hat{\calA}, \hat{\mu})$. As $\{\hat{f}\neq
  0\}$ has $\sigma$-finite $\hat{\mu}$ measure,
  Proposition~\ref{prophatmu}(B) provides a set $B \in \calA$ of
  $\sigma$-finite $\mu$ measure such that $\hat{\mu}(\{\hat{f} \neq
  0\} \ominus \hat{B}) = 0$. But $(B, \calA_B, \calN_B)$ is strictly
  localizable, and by Proposition~\ref{localiso}, the morphism $\bp_B
  \colon (\hat{B}, \hat{\calA}_{\hat{B}}, \hat{\calN}_{\hat{B}}) \to
  (B, \calA_B, \calN_B)$ induced by the restriction $p_B \colon
  \hat{B} \to B$ of $p$ is an isomorphism. We choose $q_B \colon B \to
  \hat{B}$ a representative of $\bp_B^{-1}$ and define the map $f
  \colon X \to \R$ by $f(x) \defeq \hat{f}(q_B(x))$ for $x \in B$ and
  $f(x) \defeq 0$ otherwise. Finally, because $\{\hat{f} \neq f \circ
  p\} \subset (\hat{B} \cap \{x : q_B(p(x)) \neq x\}) \cup (\{\hat{f}
  \neq 0\} \setminus \hat{B})$, the maps $\hat{f}$ and $f \circ p$
  coincide almost everywhere.
\end{proof}

\begin{Corollary}[Dual of $\bL_1$]
  \label{dualL1}
  The dual of $\bL_1(X, \calA, \mu)$ is $\bL_\infty(\hat{X}, \hat{\calA},
  \hat{\mu})$.
\end{Corollary}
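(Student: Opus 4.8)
The plan is to obtain this as a formal consequence of Proposition~\ref{prop74} together with the classical $\bL_1$--$\bL_\infty$ duality recalled as criterion~(C) in the Foreword. First I would observe that the measure space $(\hat{X},\hat{\calA},\hat{\mu})$ constructed in Paragraph~\ref{para74} is strictly localizable, hence localizable; concretely, it is semi-finite (being a coproduct of finite measure spaces) and, since $\calN_{\hat{\mu}}=\hat{\calN}$ by construction, its Boolean algebra $\hat{\calA}/\calN_{\hat{\mu}} = \hat{\calA}/\hat{\calN}$ is Dedekind complete because $(\hat{X},\hat{\calA},\hat{\calN})$ is localizable. Thus both hypotheses of criterion~(C) are met for $(\hat{X},\hat{\calA},\hat{\mu})$.

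Applying criterion~(C) to $(\hat{X},\hat{\calA},\hat{\mu})$, the canonical embedding
\[
\Upsilon \colon \bL_\infty(\hat{X},\hat{\calA},\hat{\mu}) \to \bL_1(\hat{X},\hat{\calA},\hat{\mu})^*
\]
is an isometric isomorphism. On the other hand, Proposition~\ref{prop74} furnishes an isometric isomorphism $\varphi \colon \bL_1(X,\calA,\mu) \to \bL_1(\hat{X},\hat{\calA},\hat{\mu})$. I would then pass to the Banach space adjoint $\varphi^* \colon \bL_1(\hat{X},\hat{\calA},\hat{\mu})^* \to \bL_1(X,\calA,\mu)^*$, which is again an isometric isomorphism since $\varphi$ is.

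Composing these maps yields an isometric isomorphism
\[
\varphi^* \circ \Upsilon \colon \bL_\infty(\hat{X},\hat{\calA},\hat{\mu}) \to \bL_1(X,\calA,\mu)^*,
\]
which is exactly the assertion of the Corollary. There is essentially no hard step here: the content is entirely carried by Proposition~\ref{prop74} and the classical duality, and the only points to check are the bookkeeping that strictly localizable measure spaces satisfy the hypotheses of criterion~(C) and that the dual of an isometric isomorphism is again an isometric isomorphism. If one wishes to make the identification explicit, one can trace through that $\varphi^*(\Upsilon(\hat{g}))$ acts on $\mathbf{f} \in \bL_1(X,\calA,\mu)$ by $\int_{\hat{X}} (f\circ p)\,\hat{g}\,d\hat{\mu}$, which by the change-of-variables identity in the proof of Proposition~\ref{prop74} exhibits the pairing concretely, but this verification is routine and not needed for the statement as phrased.
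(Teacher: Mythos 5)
Your proof is correct and is essentially the paper's own argument: the paper likewise deduces the corollary from Proposition~\ref{prop74} together with the classical duality for (strictly) localizable measure spaces (criterion~(C), i.e.\ \cite[243G]{FREMLIN.II}). You merely spell out the routine bookkeeping -- semi-finiteness, Dedekind completeness of $\hat{\calA}/\hat{\calN}$, and passing to the adjoint of the isometry $\varphi$ -- which the paper leaves implicit.
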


\begin{proof}
  It follows from Proposition~\ref{prop74} and the (strict)
  localizability of $(\hat{X}, \hat{\calA}, \hat{\mu})$, see {\it e.g.} \cite[243G]{FREMLIN.II}.
\end{proof}

\begin{Empty}[Semi-finite version]
\label{sfv}
We report on \cite[213X(c)]{FREMLIN.II}.
Let $(X,\calA,\mu)$ be a measure space. We define a measure $\check{\mu}$ 
on $\calA$ by the formula
\[
\check{\mu}(A) = \sup \{ \mu(A  \cap F) : F \in \calA^f \} ,
\]
$A \in \calA$. As usual, $\calA^f = \calA \cap \{ A : \mu(A) < \infty\}$. The following hold.
\begin{enumerate}
\item[(1)] $(X,\calA,\check{\mu})$ is semi-finite.
\item[(2)] If $A \in \calA$ and $\mu \hel A$ is $\sigma$-finite, then $\mu \hel A = \check{\mu} \hel A$.
\item[(3)] If $A \in \calA$ and $\check{\mu}(A) < \infty$, then there are 
$F \in \calA^f$ and $N \in \calN_{\check{\mu}}$ such that $A = F \cup N$. 
\item[(4)] The Banach space $\bL_1(X,\calA,\mu)$ and $\bL_1(X,\calA,\check{\mu})$ are isometrically isomorphic.
\end{enumerate}
These all straightforwardly follow from the definition. 
\par 
If we let $(X,\tilde{A},\tilde{\mu})$ be the completion of $(X,\calA,\check{\mu})$, it follows from (4) that $\bL_1(X,\calA,\mu)$ is isometrically 
isomorphic to $\bL_1(X,\tilde{A},\tilde{\mu})$ and, in turn, to $\bL_1(\hat{X},\hat{\calA},\hat{\mu})$, according to Proposition \ref{prop74}. In other words, we have associated with {\em each} measure space $(X,\calA,\mu)$ a strictly localizable ``version'', and we have identified the dual of $\bL_1(X,\calA,\mu)$. However, reference to Zorn's Lemma in Section \ref{sec:slv} (by means of Lemma \ref{zorn}) makes it difficult to understand the corresponding space $\hat{X}$. This is why we determine $\hat{X}$ explicitly in Sections \ref{sec:pu} and \ref{sec:appl}, in some special cases of interest. 
\end{Empty}

\section{A directional Radon-Nikod\'ym theorem}
\label{sec:RN}

In this section, we prove an extension of the Radon-Nikod\'ym theorem
for measure spaces that are not necessarily localizable, in connection
with the duality outlined in Corollary~\ref{dualL1}. So to speak, it
involves a generalized Radon-Nikod\'ym density that also depends on
the direction: as a function, it is defined on the strictly
localizable version.

This result is a slight extension of the Radon-Nikod\'ym theorem that
was discovered independently by McShane \cite[Theorem 7.1]{MCS.62} and
Zaanen \cite{ZAA.61}. Using Fremlin's version of the Radon-Nikod\'ym
theorem \cite[232E]{FREMLIN.II} in the proof below instead of the
standard one (between measure spaces of finite measure), we are able
to weaken one of the hypotheses in~\cite{MCS.62} and ask~(2)
instead. But the main difference with~\cite{MCS.62} and~\cite{ZAA.61}
is in terms of formulation. In their work, the Radon-Nikod\'ym density
takes the form of a ``quasi-function'' or a ``cross-section'', a
notion that is very close to that of a compatible family of measurable
functions.

\begin{Theorem}
  Let $(X, \calA, \mu)$ be a complete semi-finite measure space and
  $\nu$ a semi-finite measure on $(X, \calA)$. We let $[(\hat{X},
    \hat{\calA}, \hat{\mu}),\bp]$ be the strictly localizable version
  of $(X, \calA, \mu)$. Suppose that
  \begin{enumerate}
  \item[(1)] $\nu$ is absolutely continuous with respect to $\mu$.
  \item[(2)] For all $A \in \calA$ such that $\nu(A) > 0$, there is an
    $\calA$-measurable subset $F \subset A$ such that $\mu(F) <
    \infty$ and $\nu(F) > 0$.
  \end{enumerate}
  Then there is a $\hat{\calA}$-measurable function $f \colon \hat{X}
  \to \R^+$, unique up to equality $\hat{\mu}$-almost everywhere, such
  that $\nu= \bp_\#(f \hat\mu)$.
\end{Theorem}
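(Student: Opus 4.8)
The plan is to reduce the global statement to a fiberwise Radon--Nikod\'ym construction over the explicit strictly localizable version. By~\ref{para74} there is no loss in taking $(\hat X,\hat\calA,\hat\calN)=\coprod_{Z\in\calE}(Z,\calA_Z,\calN_Z)$, with $\calE\subset\calA^f\setminus\calN$ almost disjointed and $\calN_\mu$-generating, $p=\coprod_{Z\in\calE}\iota_Z$, and $\hat\mu$ as defined there. Since $p^{-1}(A)=\coprod_{Z\in\calE}(A\cap Z)$ and a nonnegative $\hat\calA$-measurable $f$ is exactly a family $\la f_Z\ra_{Z\in\calE}$ of nonnegative $\calA_Z$-measurable maps (with $f\hel Z=f_Z$), the required identity $\nu=\bp_\#(f\hat\mu)$ unwinds, for $A\in\calA$, to
\[
\bp_\#(f\hat\mu)(A)=\int_{p^{-1}(A)}f\,d\hat\mu=\sum_{Z\in\calE}\int_{A\cap Z}f_Z\,d\mu .
\]
So it suffices to (i) build, for each $Z$, a nonnegative $\calA_Z$-measurable $f_Z$ with $\nu(A\cap Z)=\int_{A\cap Z}f_Z\,d\mu$ for all $A\in\calA$, and (ii) prove the additivity identity $\nu(A)=\sum_{Z\in\calE}\nu(A\cap Z)$.

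For (i), fix $Z\in\calE$, so $\mu(Z)<\infty$ and, by \ref{finite=>loc} and \ref{ccc=>loc}, $(Z,\calA_Z,\calN_Z)$ is ccc, hence localizable. Let $S\in\calA_Z$ be an $\calN_Z$-essential supremum of $\calD\defeq\calA_Z\cap\{W:\nu(W)<\infty\}$. Using that $\nu$ is semi-finite together with \ref{supDistrib}, a nonnull measurable subset of $Z\setminus S$ carrying positive $\nu$-measure would contain a finite-$\nu$-measure piece escaping $S$, which is impossible; one checks this forces $\nu(Z\setminus S)=0$. On each $W\in\calD$ with $W\subset S$ the finite measures satisfy $\nu\hel W\ll\mu\hel W$ by~(1), so the Radon--Nikod\'ym theorem in the form of Fremlin's \cite[232E]{FREMLIN.II} supplies $f_W\geq0$ with $\nu(A\cap W)=\int_{A\cap W}f_W\,d\mu$; by uniqueness of densities on finite measure spaces these agree almost everywhere on overlaps. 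Thus $\la f_W\ra$, together with the constant map $0$ on $Z\setminus S$, is a compatible family subordinated to the $\calN_Z$-generating collection $\{W\in\calD:W\subset S\}\cup\{Z\setminus S\}$. Since $Z$ is localizable and the target is $(\R,\calB(\R))$, \ref{defGluing} yields a gluing $f_Z\colon Z\to\R^+$. Exhausting $S$ by a countable disjointed subfamily of $\calD$ (possible by \ref{ccc=>loc}) and using countable additivity of $\nu$ upgrades the per-$W$ identity to $\nu(A\cap S)=\int_{A\cap S}f_Z\,d\mu$; combined with $\nu(Z\setminus S)=0$ this gives the identity on all of $Z$.

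For (ii), set $\calE'\defeq\calE\cap\{Z:\nu(A\cap Z)>0\}$. If $\calE'$ is uncountable, then infinitely many of its members have $\nu(A\cap Z)$ bounded below by a common positive constant, and countable additivity over pieces with $\nu$-null overlaps forces $\nu(A)=\infty=\sum_Z\nu(A\cap Z)$. If $\calE'$ is countable, then $\nu(A\cap\bigcup\calE')=\sum_{Z\in\calE'}\nu(A\cap Z)$, and it remains to see that $A'\defeq A\setminus\bigcup\calE'$ is $\nu$-null. Here hypothesis~(2) is the crux: if $\nu(A')>0$ it provides $F\subset A'$ with $\mu(F)<\infty$ and $\nu(F)>0$; only countably many $Z$ meet $F$ in positive $\mu$-measure, $F$ minus their union is $\mu$-null hence (by~(1)) $\nu$-null, and each $\nu(F\cap Z)\leq\nu(A'\cap Z)=0$, so the argument of \ref{prophatmu}(A) gives $\nu(F)=0$, a contradiction. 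Hence $\nu(A')=0$ and $\nu(A)=\sum_Z\nu(A\cap Z)$, which together with~(i) establishes $\nu=\bp_\#(f\hat\mu)$.

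For uniqueness, suppose $f,f'$ both satisfy $\nu=\bp_\#(f\hat\mu)=\bp_\#(f'\hat\mu)$. Fixing $Z_0\in\calE$ and any measurable $B\subset Z_0$, almost disjointness makes $\mu(B\cap Z)=0$ for $Z\neq Z_0$, so the unwound formula collapses to $\int_B f_{Z_0}\,d\mu=\int_B f'_{Z_0}\,d\mu$ for all such $B$; since $\mu(Z_0)<\infty$ this forces $f_{Z_0}=f'_{Z_0}$ $\mu$-almost everywhere, whence $f=f'$ $\hat\mu$-almost everywhere. I expect the main obstacle to be step~(ii), and within it the use of hypothesis~(2) to annihilate $\nu(A')$; a secondary delicate point is managing the possibly infinite measure $\nu\hel Z$ in~(i), which is precisely what the passage through finite-$\nu$-measure pieces and the semi-finiteness of $\nu$ are designed to resolve.
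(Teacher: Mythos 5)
Your proof is correct, but it follows a genuinely different route from the paper's. The paper builds its compatible family on the collection $\{F : \nu(F) < \infty\}$ of sets of \emph{finite $\nu$-measure}: it first shows this collection is $\calN_\mu$-generating, then applies Fremlin's Radon--Nikod\'ym theorem for \emph{truly continuous} functionals \cite[232E]{FREMLIN.II} to each $\nu \hel F$ versus $\mu \hel F$ (where $\mu \hel F$ may be infinite) --- this is exactly where hypothesis~(2) enters, via Fremlin's characterization \cite[232B]{FREMLIN.II} of true continuity --- and finally performs a single global gluing over the cccc space $\hat{X}$ via Proposition~\ref{Gluingcccc}, checking the identity $\nu = \bp_\#(f\hat{\mu})$ with the supremum lemma and proving uniqueness through semi-finiteness of $\hat{\mu}$ and Proposition~\ref{prophatmu}(C). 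You instead work directly with the almost disjointed family $\calE \subset \calA^f$ of the explicit coproduct realization of \ref{para74}: inside each ccc piece $Z$ you only ever invoke the \emph{classical} Radon--Nikod\'ym theorem for pairs of finite measures, taming the possibly infinite $\nu \hel Z$ by the essential supremum $S$ of the finite-$\nu$-measure subsets (semi-finiteness of $\nu$ plus Lemma~\ref{supDistrib} kill $\nu(Z \setminus S)$), gluing within $Z$ by localizability (\ref{finite=>loc}, \ref{ccc=>loc}); hypothesis~(2) then surfaces in an entirely different place, namely your additivity identity $\nu(A) = \sum_{Z \in \calE} \nu(A \cap Z)$, where it rules out $\nu$-mass on $A \setminus \bigcup \calE'$. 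Your uniqueness argument (restricting to each $Z_0$ and exploiting $\mu(Z_0) < \infty$) is also more elementary than the paper's. What the paper's route buys is brevity and independence from the particular coproduct presentation, since Fremlin's 232E encapsulates all the semi-finiteness bookkeeping; what your route buys is self-containedness (no truly-continuous machinery) and a transparent accounting of exactly where hypotheses (1) and (2) and the semi-finiteness of $\nu$ are used --- at the cost of leaning on the explicit realization of the strictly localizable version, a reduction the paper itself also makes in \ref{para74}, so this is legitimate.
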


\begin{proof}
  We let $\calA^f \defeq \{F : \nu(F) < \infty\}$. We claim that this
  family is $\calN_\mu$-generating. Let $U \in \calA$ be an
  $\calN_\mu$-essential upper bound of $\calA^f$. Then $\mu(F
  \setminus U) = 0$ for all $F \in \calA^f$. By absolute continuity,
  it follows that $\nu((X \setminus U) \cap F) = \nu(F \setminus U) =
  0$ for all $F \in \calA^f$. However, $\calA^f$ is
  $\calN_\nu$-generating, by semi-finiteness of $\nu$, and a routine
  application of the distributivity lemma~\ref{supDistrib} shows that
  $\nu(X \setminus U) = 0$. Hence $X \setminus U \in \calA^f$ and
  $\mu(X \setminus U) = \mu((X \setminus U) \setminus U) = 0$.

  Now, the measure $\nu \hel F$ is truly continuous with respect to
  $\mu \hel F$, for $F \in \calA^f$. Indeed, the hypotheses
  of~\cite[232B(b)]{FREMLIN.II} are all satisfied. Thus we can apply
  Fremlin's version of the Radon-Nikod\'ym theorem. It says that $\nu
  \hel F$ has a Radon-Nikod\'ym density $g_F \colon F \to \R^+$ with
  respect to $\mu \hel F$. It is easy to show that, for any $F, F' \in
  \calA^f$, one has $\mu(F \cap F' \cap \{g_F \neq g_F'\}) = 0$. Hence
  $\la g_F \ra_{F \in \calA^f}$ is a compatible family subordinated to
  $\calA^f$.

  Fix a representative $p \in \bp$ and set $\hat{F} \defeq p^{-1}(F)$
  and $f_F \defeq g_F \circ p_F$ for each $F \in \calA^f$.  where $p_F
  \colon\hat{F} \to F$ is the restriction of $p$. We claim that $\la
  f_F \ra_{F \in \calA^f}$ is a compatible family subordinated to $\la
  \hat{F} \ra_{F \in \calA^f}$. Indeed, for distinct $F, F' \in
  \calA^f$, we have $\hat{F} \cap \hat{F}' \cap \{f_F \neq f_{F'}\} =
  p^{-1}(F \cap F' \cap \{g_F \neq g_{F'}\})$. Since $p$ is
  $[(\hat{\calA}, \calN_{\hat\mu}), (\calA, \calN_\mu)]$-measurable,
  $\hat{F} \cap \hat{F}' \cap \{f_F \neq f_{F'}\} \in
  \calN_{\hat\mu}$. Owing to the supremum preserving character of $p$,
  the family $\{\hat{F} : F \in \calA^f\}$ is
  $\calN_{\hat\mu}$-generating. By Proposition~\ref{Gluingcccc}, the
  family $\la f_F \ra_{F \in \calA^f}$ has a gluing $f \colon \hat{X}
  \to \R^+$. For every $A \in \calA$ and $F \in \calA^f$, we have
  \begin{align*}
    \nu(A \cap F) & = \int \ind_{A \cap F} g_F d\mu & \text{Radon-Nikod\'ym Theorem} \\
    & = \int \ind_{A \cap F} g_F  d\bp_\# \hat\mu & \hat\mu \text{ is a 
pre-image measure}\\
    & = \int \ind_{p^{-1}(A \cap F)} f_F d \hat\mu \\
    & = \int_{p^{-1}(A) \cap \hat{F}} f d \hat\mu & f = f_F \text{ a.e on } \hat{F} \\
    & = (f \hat\mu)(p^{-1}(A) \cap \hat{F})
  \end{align*}
  Applying Lemma~\ref{lemma72-clash!!!}, we obtain $\nu(A) = \sup
  \{\nu(A \cap F) : F \in \calA^f\}$. Also, if we set $Z \defeq
  \hat{X} \cap \{x : f(x) > 0\}$, then in the subMSN $(Z,
  \hat{\calA}_Z, (\calN_{\hat\mu})_Z)$ the family $\{Z \cap \hat{F} :
  F \in \calA^f\}$ admits $Z$ as an $(\calN_{\hat\mu})_Z$-essential
  supremum, because $p \circ \iota_Z$ is supremum preserving ($\iota_Z
  \colon Z \to \hat{X}$, being the inclusion map, is supremum
  preserving, and the composition of supremum preserving maps is
  supremum preserving). Since $(\calN_{\hat\mu})_Z = \calN_{f\hat\mu
    \shel Z}$, we can apply Lemma~\ref{lemma72-clash!!!} again and
  deduce
  \begin{align*}
  (f\hat\mu)(p^{-1}(A)) & = (f\hat\mu \hel Z)(p^{-1}(A) \cap Z) \\ & = 
\sup
    \left\{ (f\hat\mu \hel Z)(p^{-1}(A) \cap Z \cap \hat{F}) : F \in
    \calA^f\right\} \\ & = \sup \left\{ (f\hat\mu)(p^{-1}(A) \cap
    \hat{F}) : F \in \calA^f \right\}
  \end{align*}
  Hence $\nu(A) = \bp_\#(f \hat\mu)(A)$.

  Now we prove the uniqueness of $f$. Let $f'$ be another density, and
  suppose $\hat{\mu}(\{f' > f\}) > 0$. By semi-finiteness of $\hat\mu$
  there is a set $A \in \hat{\calA}$ such that $A \subset \{f' > f\}$
  and $0 < \hat{\mu}(A) < \infty$. By Proposition~\ref{prophatmu}(C),
  there is $B \in \calA$ such that $\hat{\mu}(A \ominus p^{-1}(B)) =
  0$. However, we have $\bp_\#(f' \hat\mu)(B) = (f' \hat\mu)(A) > (f
  \hat\mu)(A) = \bp_\#(f \hat\mu)(B)$, which is a contradiction. It
  follows that $f' \leq f$ almost everywhere. Similarly, we prove the
  reverse inequality.
\end{proof}

\section{Cccc version deduced from a compatible family of lower densities}
\label{sec:pu}

We devote this section to an explicit construction of the cccc and lld
version under some extra assumptions. It will be applied in the next
section.

\begin{Empty}
  \label{ldens}
  Let $(X, \calA, \calN)$ be an MSN. A {\em lower density} for $(X,
  \calA, \calN)$ is a function $\Theta \colon \calA \to \calA$ such
  that
  \begin{enumerate}
  \item[(1)] $\Theta(A) = \Theta(B)$ for all $A, B \in \calA$ such
    that $A \ominus B \in \calN$;
  \item[(2)] $A \ominus \Theta(A) \in \calN$ for all $A \in \calA$;
  \item[(3)] $\Theta(\emptyset) = \emptyset$;
  \item[(4)] $\Theta(A \cap B) = \Theta(A) \cap \Theta(B)$ for all $A,
    B \in \calA$.
  \end{enumerate}
\end{Empty}

\begin{Proposition}
  \label{ldens=>cccc}
  Let $(X, \calA,\calN)$ be a saturated MSN, $\calE \subset \calA$, and $\Theta \colon \calA \to \calA$ a lower
  density. Assume that
  \begin{enumerate}
  \item[(A)] for all $Z \in \calE$, the subMSN $(Z, \calA_Z, \calN_Z)$ is 

ccc;
  \item[(B)] $\calE$ is $\calN$-generating;
  \item[(C)] One has
  \begin{enumerate}
  \item[(i)] $\forall A \subset X : \big[ \forall Z \in \calE : A \cap Z \in \calA \big] \Rightarrow A \in \calA$;
  \item[(ii)] $\forall N \subset X : \big[ \forall Z \in \calE : N \cap Z 
\in \calN \big] \Rightarrow N \in \calN$.
  \end{enumerate}
  \end{enumerate}
  Then $(X, \calA, \calN)$ is cccc.
\end{Proposition}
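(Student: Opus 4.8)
The plan is to exhibit a genuine measurable partition $\calF$ of $X$ into ccc pieces and then invoke Proposition~\ref{disjointed.coproduct}, which identifies $(X,\calA,\calN)$ with the coproduct $\coprod_{F\in\calF}(F,\calA_F,\calN_F)$ in $\sfMSNsp$; since each piece will be ccc, this coproduct is cccc by definition. First I would apply Lemma~\ref{zorn} to the $\calN$-generating collection $\calE$ to obtain an almost disjointed $\calN$-generating family $\calE'\subset\calA\setminus\calN$ each of whose members is contained in some $Z\in\calE$. As a subMSN of a ccc MSN is ccc, assumption~(A) guarantees that $(E',\calA_{E'},\calN_{E'})$ is ccc for every $E'\in\calE'$. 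The role of the lower density is now to promote the \emph{almost} disjointed family $\calE'$ into a genuinely disjoint one: for distinct $E',E''\in\calE'$ we have $E'\cap E''\in\calN$, so the axioms \ref{ldens}(4), (1), (3) give $\Theta(E')\cap\Theta(E'')=\Theta(E'\cap E'')=\Theta(\emptyset)=\emptyset$. Thus $\calG\defeq\{\Theta(E'):E'\in\calE'\}$ is a pairwise disjoint subfamily of $\calA$.

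Each $\Theta(E')$ satisfies $\Theta(E')\ominus E'\in\calN$ by \ref{ldens}(2), hence represents the same class as $E'$ in $\calA/\calN$; so $\Theta(E')\notin\calN$, the subMSN $(\Theta(E'),\calA_{\Theta(E')},\calN_{\Theta(E')})$ is again ccc (an uncountable almost disjointed family in $\Theta(E')$ would, after intersecting its members with $E'$, produce such a family in $E'$, contradicting ccc-ness of $E'$), and $\calG$ is $\calN$-generating because it shares with $\calE'$ the same essential supremum $X$. I would then set $G_0\defeq X\setminus\bigcup\calG$. The main obstacle appears here: $\calG$ may be uncountable, so $\bigcup\calG$ need not be measurable, and I cannot directly conclude that $G_0$ is measurable or negligible. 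The resolution uses assumption~(A) for the members of $\calE$ themselves: fixing $Z\in\calE$, the family $\{Z\cap\Theta(E'):E'\in\calE'\}\subset\calA_Z$ lives in the ccc MSN $(Z,\calA_Z,\calN_Z)$, so by Proposition~\ref{ccc=>loc} a countable subfamily $\{Z\cap\Theta(E'_n):n\in\N\}$ has union $S_Z\defeq\bigcup_n(Z\cap\Theta(E'_n))$ that is an $\calN_Z$-essential supremum; by the Distributivity Lemma~\ref{supDistrib} this essential supremum equals $Z$, so $R_Z\defeq Z\setminus S_Z\in\calN$. Since $S_Z\subset\bigcup\calG$ we get $G_0\cap Z\subset R_Z\in\calN$, whence $G_0\cap Z\in\calN$ by saturation; as this holds for every $Z\in\calE$, assumption~(C)(ii) yields $G_0\in\calN$, and in particular $G_0\in\calA$.

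It then remains to check that $\calF\defeq\calG\cup\{G_0\}$ (omitting $G_0$ when empty) satisfies the three hypotheses of Proposition~\ref{disjointed.coproduct}: by construction it is a partition of $X$ into measurable sets. For hypothesis~(1), given $A\subset X$ with $A\cap F\in\calA$ for all $F\in\calF$, I would fix $Z\in\calE$ and use $A\cap Z=\bigcup_n\big((A\cap\Theta(E'_n))\cap Z\big)\cup(A\cap R_Z)$; each term of the countable union lies in $\calA$ since $\Theta(E'_n)\in\calF$ and $Z\in\calA$, while $A\cap R_Z\in\calA$ by saturation as $R_Z\in\calN$, so $A\cap Z\in\calA$, and assumption~(C)(i) then gives $A\in\calA$. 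Hypothesis~(2) follows from the identical computation with $\calN$ in place of $\calA$, invoking assumption~(C)(ii). Finally each member of $\calF$ yields a ccc subMSN (the pieces $\Theta(E')$ as noted, and $G_0$ trivially as it is negligible), so $\coprod_{F\in\calF}(F,\calA_F,\calN_F)$ is cccc; Proposition~\ref{disjointed.coproduct} provides an isomorphism in $\sfMSNsp$ between $(X,\calA,\calN)$ and this coproduct, and therefore $(X,\calA,\calN)$ is cccc.
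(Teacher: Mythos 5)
Your proof is correct and follows essentially the same route as the paper's: refine $\calE$ via Lemma~\ref{zorn} into an almost disjointed generating family, use the lower density to turn it into a genuinely disjoint family (via $\Theta(E')\cap\Theta(E'')=\Theta(E'\cap E'')=\emptyset$), show the leftover complement is negligible and transfer hypotheses (C)(i)--(ii) to the new family by the ccc/countable-subfamily argument inside each $Z\in\calE$, and conclude with Proposition~\ref{disjointed.coproduct}. The only difference is organizational — the paper transfers hypothesis (C) to the intermediate families $\calE_1$ and $\calE_2$ as standalone claims, while you verify the hypotheses of Proposition~\ref{disjointed.coproduct} directly — but the underlying ideas coincide.
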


\begin{proof}
 Let $\calE_1$ be associated with $\calE$ in Lemma \ref{zorn}. Thus, $\calE_1$ is almost disjointed and $\calN$-generating, and $(Z,\calA_Z,\calN_Z)$ is ccc for all $Z \in \calE_1$.
 
 We claim that $\calE$ may be replaced by $\calE_1$ in hypothesis (C).
  Let $A \in \calP(X)$ be such that $A \cap Z \in \calA$ for every $Z \in
  \calE_1$. Let $Z' \in \calE$. Define $\calZ \defeq \{Z \cap Z' : Z \in
  \calE_1 \text{ and } Z \cap Z' \not\in \calN\}$. Notice that $(Z', \calA_{Z'},
  \calN_{Z'})$ is ccc and $\calZ$ is almost disjointed. Thus $\calZ$
  is countable and $\bigcup \calZ$ is an $\calN$-essential supremum of
  $\calZ$.  Besides, by the Distributivity Lemma~\ref{supDistrib}, the family $\{Z \cap
  Z' : Z \in \calE_1\}$ admits $Z'$ as an $\calN$-essential
  supremum. This family differs from $\calZ$ only by negligible
  sets. Therefore, $Z' \ominus \bigcup \calZ \in \calN$. Since $(X,\calA,\calN)$ is saturated, we deduce that
  $(A \cap Z') \ominus \left(A \cap \bigcup \calZ\right) \in
  \calN$. Thus, one needs to
  establish that $A \cap \bigcup \calZ \in \calA$ in order to show that $A \cap
  Z' \in \calA$. This is readily done by observing that $A \cap
  \bigcup \calZ = \bigcup \{A \cap Z \cap Z' : Z \in \calE_1 \text{ and
  } Z \cap Z' \not\in \calN\}$ is a countable union of measurable
  sets. We just proved that $A \cap Z' \in \calA$ for all $Z' \in
  \calE$. By hypothesis~(C)(i), $A \in \calA$. Now assume that $A \cap Z \in \calN$, for each $Z \in \calE_1$, and let $Z'$ and $\calZ$ be as above. Since $Z'$ is an $\calN$-essential supremum of $\calZ$, it follows from Lemma \ref{supDistrib} that $A \cap Z'$ is an $\calN$-essential supremum of $\{ A \cap Z \cap Z' : Z \in \calE_1 \text{ and } Z \cap Z' \not \in 
\calN \}$. Therefore, $A \cap Z' \in \calN$. Since $Z'$ is arbitrary, it follows that $A \in \calN$, by hypothesis (C)(ii)

Next we define $\calE_2 = \{ \Theta(Z) : Z \in \calE_1 \}$. The family $\calE_2$ is disjointed, for $\Theta(Z) \cap \Theta(Z') = \Theta(Z \cap 
Z') =
  \emptyset$ for any distinct $Z, Z' \in \calE_1$, since $Z \cap Z' \in
  \calN$. We next claim that $\calE$ (or, for that matter, $\calE_1$) may 
be replaced by $\calE_2$ in hypothesis (C). Indeed, for every $A \subset X$ and every $Z \in \calE_1$, $(A \cap Z) \ominus (A \cap \Theta(Z)) \subset Z \ominus \Theta(Z) \in \calN$, therefore (i) $A \cap Z \in \calA$ if 
and only if $A \cap \Theta(Z) \in \calA$, and (ii) $A \cap Z \in \calN$ if and only if $A \cap \Theta(Z) \in \calN$, since $(X,\calA,\calN)$ is saturated. In particular, letting $N \defeq X \setminus \cup \calE_2$, we infer from (C)(ii) with $\calE$ replaced by $\calE_2$ that $N \in \calN$. Finally, the conclusion follows from Proposition \ref{disjointed.coproduct} applied to $\calF = \calE_2 \cup \{N\}$.
\end{proof}

\begin{Empty}
  \label{compatFamily}
  Let $(X, \calA, \calN)$ be a saturated MSN and $\calE \subset \calA$. A 
{\em compatible family of lower densities} is a
  family $\la \Theta_Z \ra_{Z \in \calE}$ such that
  \begin{enumerate}
  \item[(1)] For all $Z \in \calE$, the map $\Theta_Z \colon \calA_Z
    \to \calA_Z$ is a lower density for $(Z, \calA_Z, \calN_Z)$;
  \item[(2)] For all $Z, Z' \in \calE$ and $A \subset Z \cap Z'$ a
    measurable set, $\Theta_Z(A) = \Theta_{Z'}(A)$;
  \item[(3)] $\Theta_Z(Z) = Z$ for all $Z \in \calE$.
  \end{enumerate} 
  Condition~(3) is merely of technical nature. If a family satisfies only 

(1) and
  (2), we can enforce (3) by replacing $\calE$ with $\{\Theta_Z(Z) : Z
  \in \calE\}$ and observing that $\Theta_Z \colon \calA_Z \to
  \calA_Z$ restricts to $\calA_{\Theta_Z(Z)} \to \calA_{\Theta_Z(Z)}$. This, indeed, follows from the fact that $\Theta_Z(A) \subset \Theta_Z(B)$, 

whenever $A,B \in \calA_Z$ and $A \subset B$, and $\Theta_Z \circ \Theta_Z = \Theta_Z$, as one easily checks from the definition of lower density.
\end{Empty}

\begin{Empty}[Germ space]
  \label{para95}
  In the sequel, we consider a saturated MSN $(X, \calA, \calN)$ that
  has a compatible family of lower densities $\la \Theta_Z \ra_{Z \in
    \calE}$, where $\calE$ is a family such that
  \begin{enumerate}
  \item[(1)] $\calE$ is $\calN$-generating;
  \item[(2)] $(Z, \calA_Z, \calN_Z)$ is ccc for each $Z \in \calE$.
  \end{enumerate}
  Under these assumptions, we will now construct a new MSN $(\hat{X}, \hat{\calA}, \hat{\calN})$ that we call the {\em germ space of $(X,\calA,\calN)$ associated with $\calE$ and $\la \Theta_Z \ra_{Z \in \calE}$}.

  For every $x \in X$, we set $\calE_x \defeq \calE \cap \{Z : x \in
  Z\}$ and we define the relation $\sim_x$ on $\calE_x$ by $Z \sim_x
  Z' \iff x \in \Theta_Z(Z \cap Z')$. We claim that it is an
  equivalence relation. Indeed, it is reflexive because
  of~\ref{compatFamily}(3); it is symmetric because of the set
  equality $\Theta_Z(Z \cap Z') = \Theta_{Z'}(Z \cap Z')$ implied
  by~\ref{compatFamily}(2). Let us check that it is transitive. For
  $Z, Z', Z'' \in \calE_x$ such that $Z \sim_x Z' \sim_x Z''$, we have
  \begin{align*}
  x \in \Theta_{Z'}(Z \cap Z') \cap \Theta_{Z'}(Z' \cap Z'') & =
  \Theta_{Z'}(Z \cap Z' \cap Z'') & \text{\ref{ldens}(4)} \\
  & = \Theta_{Z}(Z \cap Z' \cap Z'') & \text{\ref{compatFamily}(2)} \\
  & \subset \Theta_{Z}(Z \cap  Z'')
  \end{align*}
  hence $Z \sim_x Z''$.

  We define the quotient set $\Gamma_x \defeq \calE_x /
  \sim_x$. The equivalence class of $Z \in \calE_x$ is
  denoted $[Z]_x \in \Gamma_x$.  Next we define the set $\hat{X}
  \defeq \{(x, [Z]_x) : x \in X \text{ and } [Z]_x \in \Gamma_x\}$ and
  the projection map $p \colon \hat{X} \to X$ which assigns $(x,
  \bZ)$ to $x$. For each $Z \in \calE$, we define the map $\gamma_Z
  \colon Z \to \hat{X}$ by $\gamma_Z(x) = (x, [Z]_x)$ for $x \in
  Z$. We define a $\sigma$-algebra $\hat{\calA}$ and a $\sigma$-ideal
  $\hat{\calN}$ on $\hat{X}$ by
  \begin{gather*}
    \hat{\calA} \defeq \calP(\hat{X}) \cap \left\{ A :  \gamma_Z^{-1}(A) \in \calA_Z \,, \forall Z \in \calE  \right\}\\
    \hat{\calN} \defeq \calP(\hat{X}) \cap \left\{ N :  \gamma_Z^{-1}(N) \in \calN_Z \,, \forall Z \in \calE  \right\}.
  \end{gather*}
  Actually, $\hat{\calA}$ and $\hat{\calN}$ are the finest
  $\sigma$-algebra and $\sigma$-ideal such that the maps $\gamma_Z$
  become $[(\calA_Z, \calN_Z), (\hat{\calA},
    \hat{\calN})]$-measurable.  Clearly, $(\hat{X}, \hat{\calA},
  \hat{\calN})$ is a saturated. Let us check that the projection map
  $p \colon \hat{X} \to X$ is $[(\hat{\calA}, \hat{\calN}), (\calA,
    \calN)]$-measurable. If $A \in \calA$ then for any $Z \in \calE$
  we have $\gamma_Z^{-1}(p^{-1}(A)) = (p \circ \gamma_Z)^{-1}(A) = Z
  \cap A \in \calA_Z$, so by definition $p^{-1}(A) \in
  \hat{\calA}$. One proves similarly that $p^{-1}(N) \in \hat{\calN}$
  for all $N \in \calN$.
\end{Empty}

%

\begin{Theorem}
\label{germ.theorem}
  Let $(X, \calA, \calN)$ be a saturated MSN that has a compatible
  family of lower density $\la \Theta_Z \ra_{Z \in \calE}$, where
  $\calE \subset \calA$ is a family such that conditions~(1) and~(2)
  of~\ref{para95} hold. Then the germ space $(\hat{X}, \hat{\calA},
  \hat{\calN})$ constructed in~\ref{para95} together with $\bp$ is the
  cccc version of $(X, \calA, \calN)$. It is also its lld version in case 
$\rmcard \calE \leq \mathfrak{c}$ and $(Z,\calA_Z)$ is countably separated for all $Z \in \calE$.
\end{Theorem}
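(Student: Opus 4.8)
The plan is to establish first that $[(\hat X, \hat\calA, \hat\calN), \bp]$ is the cccc version of $(X,\calA,\calN)$, and then to read off the lld claim from Theorem~\ref{existslld}. Throughout I abbreviate $\hat Z \defeq p^{-1}(Z)$ and record two structural facts about the maps $\gamma_Z$ of~\ref{para95}. Using $\gamma_{Z'}^{-1}(\gamma_Z(A)) = A \cap \Theta_{Z'}(Z \cap Z')$ for $A \in \calA_Z$, one checks that each $\gamma_Z$ is an isomorphism in $\sfMSN$ of $(Z, \calA_Z, \calN_Z)$ onto the set $\gamma_Z(Z) \in \hat\calA$; and since $\gamma_{Z'}^{-1}(\hat Z \setminus \gamma_Z(Z)) = (Z \cap Z') \setminus \Theta_{Z'}(Z \cap Z')$, property~\ref{ldens}(2) forces $\gamma_Z(Z)$ to be conegligible in $\hat Z$, whence the restriction $\bp_Z \colon (\hat Z, \hat\calA_{\hat Z}, \hat\calN_{\hat Z}) \to (Z, \calA_Z, \calN_Z)$ is an isomorphism. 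These same computations show that $\hat\calE \defeq \{\gamma_Z(Z) : Z \in \calE\}$ consists of ccc subMSNs and, directly from the definition of $\hat\calA$ and $\hat\calN$, that it is $\hat\calN$-generating and satisfies condition~(C) of Proposition~\ref{ldens=>cccc}.

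To invoke Proposition~\ref{ldens=>cccc} I would build a lower density on all of $\hat X$ by declaring $\gamma_Z^{-1}(\hat\Theta(\hat A)) \defeq \Theta_Z(\gamma_Z^{-1}(\hat A))$ for every $Z$; equivalently $(x, [Z]_x) \in \hat\Theta(\hat A)$ iff $x \in \Theta_Z(\gamma_Z^{-1}(\hat A))$. The crux, and what I expect to be the \emph{main technical obstacle}, is showing that this is well defined on germs, i.e. independent of the representative $Z$ of $[Z]_x$. This rests on the compatibility~\ref{compatFamily}(2),(3), the multiplicativity~\ref{ldens}(4), and the idempotency $\Theta_Z \circ \Theta_Z = \Theta_Z$ recorded in~\ref{compatFamily}: for $Z \sim_x Z'$ put $W = \Theta_Z(Z\cap Z') = \Theta_{Z'}(Z \cap Z')$, note that $\gamma_Z^{-1}(\hat A)$ and $\gamma_{Z'}^{-1}(\hat A)$ agree on $W$, and compute $\Theta_Z(\gamma_Z^{-1}(\hat A)) \cap W = \Theta_Z(\gamma_Z^{-1}(\hat A) \cap W) = \Theta_{Z'}(\gamma_{Z'}^{-1}(\hat A) \cap W) = \Theta_{Z'}(\gamma_{Z'}^{-1}(\hat A)) \cap W$, which contains $x$ on the same side. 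Once well-definedness is secured, axioms~\ref{ldens}(1)--(4) for $\hat\Theta$ transfer coordinatewise from the $\Theta_Z$, and Proposition~\ref{ldens=>cccc} gives that $(\hat X, \hat\calA, \hat\calN)$ is cccc.

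It then remains to verify the universal property of~\ref{pversion}. First, $\bp$ is supremum preserving: for $\calF \subset \calA$ with essential supremum $S$ and any essential upper bound $\hat U$ of $p^{-1}(\calF)$, pulling back along $\gamma_Z$ reduces the claim to the assertion that $S \cap Z$ is the $\calN_Z$-essential supremum of $\{F \cap Z : F \in \calF\}$, which is the Distributivity Lemma~\ref{supDistrib}. Next, given a cccc $(Y, \calB, \calM)$ and a supremum preserving $\bq$ with representative $q$, I would mimic Theorem~\ref{thm61}: set $r_Z \defeq \gamma_Z \circ (q \hel q^{-1}(Z))$, observe that $\la r_Z \ra_{Z \in \calE}$ is a compatible family (compatibility uses $(Z \cap Z') \ominus \Theta_Z(Z\cap Z') \in \calN$) subordinated to the $\calM$-generating family $\la q^{-1}(Z)\ra$, each $r_Z$ being supremum preserving. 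Proposition~\ref{Gluingcccc} supplies a gluing $r$, which is $[(\calB, \calM), (\hat\calA, \hat\calN)]$-measurable and supremum preserving by Lemma~\ref{propGluing}; that $\bp \circ \br = \bq$ follows from $p \circ \gamma_Z = \iota_Z$ together with the locally determined negligibles of $(Y, \calB, \calM)$ (recall~\ref{def.ld}). For uniqueness, any morphism $\br'$ with $\bp \circ \br' = \bq$ satisfies $r'(y) \in \hat Z$ for $\calM$-almost every $y \in q^{-1}(Z)$; since $\gamma_Z(Z)$ is conegligible in $\hat Z$ and $r'$ is measurable, in fact $r'(y) \in \gamma_Z(Z)$, i.e. $r'(y) = r_Z(y)$, almost everywhere there, so $\br'$ is a gluing of $\la r_Z\ra$ and equals $\br$ by the uniqueness in Proposition~\ref{Gluingcccc}.

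Finally, the lld statement is immediate from the machinery in place: under the extra hypotheses $\rmcard \calE \leq \mathfrak{c}$ and $(Z, \calA_Z)$ countably separated, the family $\calE$ fulfills all hypotheses of Theorem~\ref{existslld} (with $\calC = \calE$), so by~\ref{existslld}(B) an lld version exists, and by~\ref{existslld}(A) it coincides with the cccc version. As the cccc version has just been identified with the germ space, $[(\hat X, \hat\calA, \hat\calN), \bp]$ is also the lld version.
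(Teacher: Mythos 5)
Your proposal is correct and follows essentially the same route as the paper's own proof: you patch the $\Theta_Z$ into a global lower density on the germ space (with the same germ-independence argument resting on \ref{compatFamily}(2) and \ref{ldens}(1),(4)), invoke Proposition~\ref{ldens=>cccc} to get the cccc property, verify the universal property by gluing via Proposition~\ref{Gluingcccc} exactly as in Theorem~\ref{thm61}, and deduce the lld statement from Theorem~\ref{existslld}. The only (cosmetic) deviation is that you feed Proposition~\ref{ldens=>cccc} the family $\{\gamma_Z(Z) : Z \in \calE\}$ rather than the paper's $\{p^{-1}(Z) : Z \in \calE\}$, which lets you read the generation and local-determination hypotheses directly off the definitions of $\hat{\calA}$ and $\hat{\calN}$ instead of routing them through the supremum-preservation of $p$.
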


\begin{proof}
The second conclusion is a consequence of the first and of Theorem \ref{existslld}.

  {\em Step 1: we prove that $(\hat{X}, \hat{\calA}, \hat{\calN})$
    possesses a lower density $\Theta$, obtained by ``patching
    together'' the lower densities $\Theta_Z$ for $Z \in \calE$}.
  For every $A \in \hat{\calA}$, we set
  \[
  \Theta(A) \defeq \hat{X} \cap \{(x, [Z]_x) : x \in
  \Theta_Z(\gamma_Z^{-1}(A))\}
  \]
  The condition $x \in \Theta_Z(\gamma_Z^{-1}(A))$ does not depend on the
  representative $Z$ of $[Z]_x$. Indeed, if $Z' \sim_x Z$ for some $Z'
  \in \calE_x$, then $x \in \Theta_Z(\gamma_Z^{-1}(A)) \cap \Theta_Z(Z \cap
  Z') = \Theta_Z(\gamma_Z^{-1}(A) \cap Z')$. Note that the sets
  $\gamma_Z^{-1}(A) \cap Z'$ and $\gamma_{Z'}^{-1}(A) \cap Z$ coincide $\calN$-almost everywhere, as
  \begin{align*}
  (\gamma_Z^{-1}(A) \cap Z') \ominus (\gamma_{Z'}^{-1}(A) \cap Z) & \subset Z
    \cap Z' \cap \{y : [Z]_y \neq [Z']_y\} \\ & \subset Z \cap Z'
    \setminus \Theta_Z(Z \cap Z')
  \end{align*}
  is negligible by~\ref{ldens}(2). Consequently,
  \begin{align*}
  \Theta_Z(\gamma_{Z}^{-1}(A) \cap Z') & = \Theta_{Z}(\gamma_{Z'}^{-1}(A) \cap
  Z) & \text{\ref{ldens}(2)} \\
  & = \Theta_{Z'}(\gamma_{Z'}^{-1}(A) \cap Z) & \text{\ref{compatFamily}(2)} \\
  & \subset \Theta_{Z'}(\gamma_{Z'}^{-1}(A))
  \end{align*}
  and in turn $x \in \Theta_{Z'}(\gamma_{Z'}^{-1}(A))$, as expected.
  
  Next we show that $\Theta$ satisfies the four properties required to be
  a lower density:
  \begin{itemize}
  \item Let $A, B \in \hat{\calA}$ such that $A \ominus B \in
    \hat{\calN}$. Then $\gamma_Z^{-1}(A) \ominus \gamma_Z^{-1}(B) \in \calN$ for
    all $Z \in \calE$, which implies
    \begin{align*}
      (x, [Z]_x) \in \Theta(A) & \iff x \in \Theta_Z(\gamma_Z^{-1}(A)) \\
      & \iff x \in \Theta_Z(\gamma_Z^{-1}(B)) & \text{\ref{ldens}(1)}\\
      & \iff (x, [Z]_x) \in \Theta(B)
    \end{align*}
    and we conclude that $\Theta(A) = \Theta(B)$.
  \item Let $A \in \hat{\calA}$. By construction, $\gamma_Z^{-1}(\Theta(A))
    = \Theta_Z(\gamma_Z^{-1}(A))$, for all $Z \in \calE$. This gives that
    $\gamma_Z^{-1}(A \ominus \Theta(A)) = \gamma_Z^{-1}(A) \ominus
    \gamma_Z^{-1}(\Theta(A)) \in \calN$. By definition of the
    $\sigma$-ideal $\hat{\calN}$, we infer that $A \ominus \Theta(A)
    \in \hat{\calN}$.
  \item That $\Theta(\emptyset) = \emptyset$ is straightforward.
  \item Let $A, B \in \hat{\calA}$. We have
    \begin{align*}
      (x, [Z]_x) \in \Theta(A \cap B) & \iff x \in \Theta_Z(\gamma_Z^{-1}(A \cap B)) \\
      & \iff x \in \Theta_Z(\gamma_Z^{-1}(A) \cap \gamma_Z^{-1}(B)) \\
      & \iff x \in \Theta_Z(\gamma_Z^{-1}(A)) \cap \Theta_Z(\gamma_Z^{-1}(B)) \\
      & \iff (x, [Z]_x) \in \Theta(A) \cap \Theta(B).
    \end{align*}
  \end{itemize}

  {\em Step 2: we establish that $\bp$ is a ``local isomorphism''}.
  Set $\hat{Z} \defeq p^{-1}(Z)$ for all $Z \in \calE$. We also call
  $p_Z$ and $s_Z$ the respective restrictions of $p$ and $\gamma_Z$ to
  $\hat{Z} \to Z$ and $Z \to \hat{Z}$. First, we remark that $p_Z
  \circ s_Z = \rmid_Z$.

  Let us show that $\hat{Z} \setminus s_Z(Z) \in \hat{\calN}$. For $Z'
  \in \calE$, we find that
  \begin{align*}
    x \in \gamma_{Z'}^{-1}(\hat{Z} \setminus s_Z(Z)) & \iff x \in
    Z' \text{ and } (x, [Z']_x) \in \hat{Z} \setminus s_Z(Z) \\ &
    \iff x \in Z \cap Z' \text{ and } [Z']_x \neq [Z]_x \\ & \iff x
    \in Z \cap Z' \setminus \Theta_{Z'}(Z \cap Z').
  \end{align*}
  So $\gamma_{Z'}^{-1}(\hat{Z} \setminus s_Z(Z)) \in \calN_{Z'}$. As
  this holds for all $Z' \in \calE$, we deduce that $\hat{Z} \setminus
  s_Z(Z)$ is $\hat{\calN}$-negligible. 

  Since $\hat{Z} \cap \{\xi : (s_Z \circ p_Z)(\xi) \neq \xi\} = \hat{Z}
  \setminus s_Z(Z)$, this shows that $s_Z \circ p_Z$ and
  $\rmid_{\hat{Z}}$ coincide $\hat{\calN}$-almost everywhere. As a
  consequence, the morphisms $\bp_Z$ and $\bs_Z$ induced by $p_Z$ and
  $s_Z$ are reciprocal isomorphisms of $\sfMSN$ between $(Z, \calA_Z, \calN_Z)$
  and $(\hat{Z}, \hat{\calA}_{\hat{Z}}, \hat{\calN}_{\hat{Z}})$. They are 
supremum preserving, according to Proposition \ref{propsp}(A).

  {\em Step 3: $(\hat{X}, \hat{\calA}, \hat{\calN})$ is ``locally
    determined'' (in the sense of Proposition \ref{ldens=>cccc}(C)) by the family $\hat{\calE} \defeq \{\hat{Z} : Z \in
    \calE\}$}. Let $A$ a subset of $\hat{X}$. By definition of
  $\hat{\calA}$, we have
  \begin{align*}
    A \in \hat{\calA} & \iff \forall Z \in \calE : \gamma_Z^{-1}(A)
    \in \calA_Z \\
    & \iff \forall Z \in \calE : s_Z^{-1}(A \cap \hat{Z}) \in \calA_Z \\
    & \iff \forall Z \in \calE : A \cap \hat{Z} \in \hat{\calA}_{\hat{Z}}.
  \end{align*}
  The direct implication of the last equivalence is justified as
  follows: if $s_Z^{-1}(A \cap \hat{Z})$ is measurable, then so is
  $p_Z^{-1}(s_Z^{-1}(A \cap \hat{Z}))$, from which $A \cap \hat{Z}$
  differs only by an $\hat{\calN}$ negligible set.
  We prove analogously that a set $N \subset \hat{X}$ is negligible if
  and only if $N \cap \hat{Z} \in \hat{\calN}$ for all $Z \in\calE$.

  {\em Step 4: $p$ is supremum preserving}. Let $\calF \subset \calA$
  be a collection which has an $\calN$-essential supremum denoted
  $S$. Clearly, $p^{-1}(S)$ is an $\hat{\calN}$-essential upper bound
  of $p^{-1}(\calF) \defeq \{p^{-1}(F) : F \in \calF\}$.  Let $U$ be
  an arbitrary $\hat{\calN}$-essential upper bound of $p^{-1}(\calF)$. We
  need to prove that $p^{-1}(S) \setminus U \in \hat{\calN}$, that is,
  $\gamma_Z^{-1}(p^{-1}(S) \setminus U) \in \hat{\calN}$ for all $Z
  \in \calE$. But $\gamma_Z^{-1}(p^{-1}(S) \setminus U) = (p \circ
  \gamma_Z)^{-1}(S) \setminus \gamma_Z^{-1}(U) = Z \cap S \setminus
  \gamma_Z^{-1}(U)$. By Lemma~\ref{supDistrib} we recognize $Z \cap S$
  as an $\calN$-essential supremum of $\{Z \cap F : F \in \calF\}$.
  This last collection can be also written
  $\{\gamma_Z^{-1}(p^{-1}(F)) : F \in \calF\}$, of which $\gamma_Z^{-1}(U)$ is an $\calN$-essential upper bound,
  leading to $Z \cap S \setminus \gamma_Z^{-1}(U) \in \calN$.

  {\em Step 5: $(\hat{X}, \hat{\calA}, \hat{\calN})$ is cccc}. This is
  an application of Proposition~\ref{ldens=>cccc} with the collection
  $\hat{\calE}$. We check that all the hypotheses are satisfied. For $Z
  \in \calE$, the subMSN $(\hat{Z}, \hat{\calA}_{\hat{Z}},
  \hat{\calN}_{\hat{Z}})$ is ccc because of the isomorphism $\bp_Z
  \colon (\hat{Z}, \hat{\calA}_{\hat{Z}}, \hat{\calN}_{\hat{Z}}) \to
  (Z, \calA_Z, \calN_Z)$. Since $p$ is supremum preserving, $\hat{X}$
  is an $\hat{\calN}$-essential supremum of $\hat{\calE}$. The ``local
  determination'' property was established in step 3.

  {\em Step 6: The pair $((\hat{X}, \hat{\calA}, \hat{\calN}), \bp)$
    satisfies the universal property of Definition~\ref{pversion}}. We
  finish the proof in a way similar to the proof of Theorem~\ref{thm61}. Let
  $(Y, \calB, \calM)$ a cccc MSN and $\bq \colon (Y, \calB, \calM) \to
  (X, \calA,\calN)$ a supremum preserving morphism, represented by a
  map $q \in \bq$. For every $Z \in \calE$, we define $q_Z = \gamma_Z
  \circ (q \hel q^{-1}(Z)) \colon q^{-1}(Z) \to \hat{X}$. We claim
  that $\la q_Z \ra_{Z \in \calE}$ is a compatible family subordinated
  to $\la q^{-1}(Z) \ra_{Z \in \calE}$. Indeed, for distinct $Z, Z' \in \calE$,
  \begin{align*}
    q^{-1}(Z) \cap q^{-1}(Z') \cap \{q_Z \neq q_{Z'}\} & = q^{-1}(Z
    \cap Z' \cap \{\gamma_Z \neq \gamma_{Z'}\}) \\ & = q^{-1}(Z \cap
    Z' \setminus \Theta_Z(Z \cap Z'))
  \end{align*}
  is negligible, using that $\Theta_Z$ is a lower density and $q$ is
  $[(\calB, \calM), (\calA, \calN)]$-measurable. Then, by
  Proposition~\ref{Gluingcccc}, the family $\la q_Z \ra_{Z \in \calE}$ has
  a gluing that we denote $r \colon Y \to \hat{X}$. That $r$ is
  $[(\calB, \calM), (\hat{\calA}, \hat{\calN})]$-measurable and
  supremum preserving follows from Lemma~\ref{propGluing}. Indeed, each $\gamma_Z$ is supremum preserving. This follows from the same property of $s_Z$, proved in Step 2, and the Distributivity Lemma \ref{supDistrib}.

  We need to show that $\{p \circ r \neq q\}$ is
  $\calM$-negligible. In fact, for any $Z \in \calE$ and $y \in
  q^{-1}(Z)$, we note that $p(q_Z(y)) = p(\gamma_Z(q(y)) = q(y)$, so
  $q^{-1}(Z) \cap \{p \circ r \neq q\} \subset \left(q^{-1}(Z) \cap
  \{r \neq q_Z\}\right)$ is $\calM$-negligible. We then use that $(Y,
  \calB, \calM)$ has locally determined negligible sets (see
  Proposition~\ref{elemld}(E) and the preceding
  Paragraph~\ref{def.ld}) to conclude that $p \circ r = q$ almost
  everywhere. We have found a supremum preserving morphism $\br \colon
  (Y, \calB, \calM) \to (\hat{X}, \hat{\calA}, \hat{\calN})$, namely
  the one induced from $r$, such that $\bp \circ \br = \bq$.

  We now prove that this factorization is unique. Let $\br$ be a
  supremum preserving morphism such that $\bp \circ \br = \bq$ and $r
  \in \br$. For $Z \in\calE$ and almost every $y \in q^{-1}(Z)$, we
  have $p(r(y)) = q(y) \in Z$. Therefore $r(y) \in \hat{Z}$ for almost
  all $y \in q^{-1}(Z)$. For such a $y$, we have $q(y) = p(r(y)) =
  p_Z(r(y))$, which implies that $q_Z(y) = \gamma_Z(q(y)) = s_Z(q(y))
  = s_Z(p_Z(r(y))$. But $s_Z \circ p_Z$ and $\rmid_{\hat{Z}}$ coincide
  almost everywhere on $\hat{Z}$ as we saw in Step~2. This implies
  that $r(y) = q_Z(y)$ for almost all $y \in q^{-1}(Z)$. The map $r$
  must be a gluing of $\la q_Z \ra_{Z \in \calE}$, so it is unique up
  to equality almost everywhere according to Proposition~\ref{Gluingcccc}.
\end{proof}

\section{Applications}
\label{sec:appl}

Here, we apply Theorem \ref{germ.theorem} to two different
situations. For this result to apply to an MSN $(X,\calA,\calN)$, the
following conditions need to be met.

\begin{enumerate}
\item[(i)] $(X,\calA,\calN)$ is saturated.
\item[(ii)] An $\calN$-generating family $\calE \subset \calA$ is
  given.
\item[(iii)] For every $Z \in \calE$, the MSN $(Z,\calA_Z,\calN_Z)$ is
  ccc.
\item[(iv)] For every $Z \in \calE$, the measurable space
  $(Z,\calA_Z)$ is countably separated.
\item[(v)] $\rmcard \calE \leq \mathfrak{c}$.
\item[(vi)] For every $Z \in \calE$, a lower density $\Theta_Z$ is
  given for $(Z,\calA_Z,\calN_Z)$, so that $\Theta_Z(Z)=Z$.
\item[(vii)] For every $Z,Z' \in \calE$ and $A \in \calA$ such that $A
  \subset Z \cap Z'$, one has $\Theta_Z(A) = \Theta_{Z'}(A)$.
\end{enumerate}

In that case, the corresponding germ space
$(\hat{X},\hat{\calA},\hat{\calN})$ constructed in \ref{para95} is the
cccc version and the lld version of $(X,\calA,\calN)$.

\begin{Empty}[Purely unrectifiable negligibles]
  \label{unrectifiablenull}
  Fix integers $1 \leq k \leq m-1$. Recall \cite[3.2.14]{GMT} that a
  subset $N \subset \R^m$ is called {\em purely $(\calH^k,
    k)$-unrectifiable} whenever $\calH^k(N \cap M) = 0$ for every
  $k$-rectifiable set $M \subset \R^m$. This is equivalent to
  $\calH^k(N \cap M)=0$ for every $k$-dimensional embedded submanifold
  $M \subset \Rm$ of class $C^1$ with $\calH^k(M) < \infty$, by
  \cite[3.1.15]{GMT}. We denote by $\calN_{\rmpu,k}$ the collection of
  purely $(\calH^k, k)$-unrectifiable subsets of $\R^m$. It is a
  $\sigma$-ideal of $\calP(\R^m)$. We also introduce the Borel
  $\sigma$-algebra $\calB(\R^m)$ of $\R^m$ and its completion
  $\overline{\calB(\R^m)} \defeq \{B \ominus N : B \in \calB(\R^m), N
  \in \calN_{\rmpu,k}\}$. We shall show that the MSN $(\R^m,
  \overline{\calB(\R^m)}, \calN_{\rmpu,k})$ can be associated with a
  germ space, as in \ref{para95}. We notice that, by definition, this
  MSN is saturated.  We let $\calE$ be the collection of all
  $k$-dimensional (embedded) submanifolds $M \subset \R^m$ of class
  $C^1$, \cite[3.1.19]{GMT}, such that $\calH^k \hel M$ is locally
  finite (that is $\calH^k(M \cap B) < \infty$ for every bounded Borel
  set $B \subset \Rm$). Clearly, each member of $\calE$ is Borel.
  \par (ii) We now show that $\calE$ is
  $\calN_{\rmpu,k}$-generating. Let $U \in \overline{\calB(\R^m)}$ be
  such that $\R^m \setminus U \not \in \calN_{\rmpu,k}$. By definition
  of this $\sigma$-ideal, there exists $M \in \calE$ such that
  $\calH^k((\R^m \setminus U)\cap M) > 0$. In other words, $M \setminus U
  \not \in \calN_{\rmpu,k}$, i.e. $U$ is not an
  $\calN_{\rmpu,k}$-essential upper bound of $\calE$.
  \par (iii) We next claim that
  $(M,\overline{\calB(\R^m)}_M,(\calN_{\rmpu,k})_M)$ is ccc, for every
  $M \in \calE$. To this end, we notice that for every $M \in \calE$
  the following holds:
\begin{equation}
\label{eq.pu.zero}
\text{For every }S \subset M : S \in \calN_{\rmpu,k} \text{ if and only if }\calH^k(S)=0.\tag{$\bigstar$}
\end{equation}
 In other words, $(M,\overline{\calB(\Rm)}_M,(\calN_{\rmpu,k})_M)$ is the 
saturation of the MSN associated with the measure space $(M,\calB(M),\calH^k \hel M)$. Since the latter is $\sigma$-finite, the claim follows from 
Proposition \ref{finite=>loc}. 
\par  
 We also record the following useful consequence of \eqref{eq.pu.zero}, for $M \in \calE$: 
\begin{equation}
\label{eq.pu.measurable}
\text{If } S \in \overline{\calB(\R^m)}_M \text{ then } S = B \ominus N 
\text{ for some } B  \in \calB(M) \text{ and } N \subset M \text{ with } \calH^k(N)=0 .\tag{$\blacklozenge$}
\end{equation} 
 Indeed, $S = B' \ominus N'$, $B' \in \calB(\R^m)$, $N' \in \calN_{\rmpu,k}$. Thus $S = M \cap S = (M \cap B') \ominus (M \cap N')$, which proves \eqref{eq.pu.measurable}. In particular, $S$ is $\calH^k$-measurable, even though some $S \in \overline{\calB(\R^m)}$ may not be $\calH^k$-measurable.
\par 
(iv) Let $M \in \calE$. We observe that the canonical embedding $(M,\overline{\calB(\R^m)}_M) \to (\Rm,\calB(\Rm))$ is, indeed, injective and measurable. Therefore, $(M,\overline{\calB(\Rm)}_M)$ is countably separated, according to Proposition \ref{caracCS}.
\par 
(v) Since $\calE \subset \calB(\Rm)$ we infer that $\rmcard \calE \leq \mathfrak{c}$, according to \cite[3.3.18]{SRIVASTAVA}.
\par 
(vi) In order to define lower densities, we recall \cite[2.10.19]{GMT} the density numbers $\Theta^k_*(\phi,x)$ and $\Theta^{*\,k}(\phi,x)$, defined by means of closed Euclidean balls, associated with an outer measure $\phi$ on $\Rm$ and $x \in \Rm$. Given $M \in \calE$ we abbreviate $\phi_M 
= \calH^k \hel M$ and we define 
\[ 
\Theta_M(A) = M \cap \{ x : \Theta^k_*(\phi_M,x) = 1 \},
\]
whenever $A \in \overline{\calB(\Rm)}_M$. Given $x \in \Rm$, the function 
$r \mapsto \phi_M(\bB(x,r))$ is right continuous, since $\phi_M$ is locally finite. It easily follows that $x \mapsto \Theta^k_*(\phi_M,x)$ is Borel measurable and, in turn, that $\Theta_M(A) \in \calB(\Rm)$. In particular, $\Theta_M$ maps $\overline{\calB(\Rm)}_M$ to itself. The following is the main point of the construction:
\begin{equation}
\label{eq.pu.density.1}
\text{For every } x \in M :   \Theta^m_*(\phi_M,x) = 1.\tag{$\clubsuit$}
\end{equation}
See for instance the proof of \cite[3.6.1]{DEP.05c}. For instance, it follows that $\Theta_M(M)=M$. We now turn to checking that $\Theta_M$ is a 
lower density. If $A,B \subset M$ are such that $A \ominus B \in \calN_{\rmpu,k}$ then $\calH^k(A \ominus B)=0$, recall (iii). Consequently, $\phi_M(A \cap \bB(x,r)) = \phi_M(B \cap \bB(x,r))$ for all $x \in \Rm$ and $r > 0$. Thus, $\Theta^k_*(A,x) = \Theta^k_*(B,x)$. Since $x$ is arbitrary, $\Theta_M(A) = \Theta_M(B)$. This proves condition of \ref{ldens}. Condition (3) of \ref{ldens} is trivial. In view of proving \ref{ldens}(3) we let $A \in \overline{\calB(\Rm)}_M$. According to condition (1) just proved and \eqref{eq.pu.measurable}, there is no restriction to assume that $A$ is Borel. We ought to show that the equation $\Theta^k_*(\phi_M \hel A ,x) = \ind_A(x)$ holds for $\calH^k$-almost every $x \in M$. Letting $\psi = \phi_M \hel A$, we infer from the Besicovitch Covering Theorem as in \cite[2.12]{MATTILA} that $\lim_{r  \to 0^+} \frac{\psi(\bB(x,r))}{\phi_M(\bB(x,r))} = \ind_A(x)$ for $\phi_M$-almost every every $x \in \Rn$. In view of \eqref{eq.pu.density.1}, it ensues that the sought 
for equation holds $\calH^k$-almost everywhere on $M$. To establish that $\Theta_M$ is a lower density, it remains to proves \ref{ldens}(4). Let $A, B  \in \overline{\calB(\Rm)}_M$. We observe that $\Theta_*^k(\phi_M \hel (A \cap B),x) \geq \Theta^k_*(\phi_M,x) - \Theta^{*\,k}(\phi_M \hel (M 
\setminus A),x) - \Theta^{*\,k}(\phi_M \hel (M \setminus B),x)$, for all $x \in M$. Now, as $A$ and $B$ are $\phi_M$-measurable, according to \eqref{eq.pu.measurable}, if $x \in \Theta_M(A) \cap \Theta_M(B)$, then it follows from \eqref{eq.pu.density.1} that $ \Theta^{*\,k}(\phi_M \hel (M \setminus A),x) =  \Theta^{*\,k}(\phi_M \hel (M \setminus B),x) = 0$ and, in turn, referring to \eqref{eq.pu.density.1} again, that $\Theta_*^k(\phi_M \hel (A \cap B),x)=1$. Thus, $x \in \Theta_M(A \cap B)$. We have 
shown that $\Theta_M(A) \cap \Theta_M(B) \subset \Theta_M(A \cap B)$. The 
other inclusion is trivial, so that $\Theta_M$ is, indeed, a lower density. 
\par 
(vii) Let $M, M' \in \calE$ and $A \in \overline{\calB(\Rm)}$ be such that $A \subset M \cap M'$. Notice that $A = A \cap M' = A \cap M$ and $\phi_M \hel A \cap M' = \phi_{M'} \hel A \cap M$. Therefore, if $x \in \Theta_M(A)$, then $1 = \Theta^k_*(\phi_M \hel A ,x) = \Theta^k_*(\phi_M \hel A \cap M,x) = \Theta^k_*(\phi_{M'} \hel A \cap M,x) = \Theta^k_*(\phi_{M'} \hel A,x)$. Since also $x \in M'$, we conclude that $x \in 
\Theta_{M'}(A)$. Switching the r\^oles of $M$ and $M'$ we conclude that $\Theta_M(A) = \Theta_{M'}(A)$.
\par 
It is interesting to try to understand the corresponding germ space. Each 
$(x,\bM) \in \widehat{\Rm}$ consists of a pair where $x \in \Rm$ belongs to the base space $\Rm$ and $\bM$ is an equivalence class of a $k$-dimensional submanifolds passing through $x$. If $M \ni x \in M'$ are two such submanifolds, then $M \sim_x M'$ if and only if 
\[
\lim_{r \to 0^+} \frac{\calH^k(M \cap M' \cap \bB(x,r))}{\balpha(k)r^k}= 
1 .
\]
This relation is finer than the usual notion of a germ of a $k$-dimensional submanifold passing through $x$. Of course if $M$ and $M'$ belong to the same, classically defined, germ, i.e. if there exists a neighborhood $V$ of $x$ in $\Rm$ such that $M \cap V = M' \cap V$, then $M \sim_x M'$. Notwithstanding, the following example illustrates the difference. Let $x \in \Rm$, let $W \subset \Rm$ be a $k$-dimensional affine subspace containing $x$, and let $C \subset W$ be closed with empty interior and such that $\Theta^k_*(\phi_W \hel C,x) =1$. Choose a $k$-dimensional submanifold $M \subset \Rm$ of class $C^1$ ``that sticks to $W$ exactly along 
$C$'', that is $W \cap M = C$. It follows that $W \sim_x M$, yet $M \cap V \neq W \cap V$, for every neighborhood $V$ of $x$. We note, however, that if $M \sim_x M'$, then $\rmTan(M,x) = \rmTan(M',x)$.
\par 
The construction here could be repeated by replacing $\calE$ by $\calE'$, 
the collection of all Borel measurable, countably $(\calH^k,k)$-rectifiable subsets $M$ of $X$ such that $\calH^k \hel M$ is locally finite, {\em and} $\Theta^k_*(\phi_M,x)=1$ for every $x \in M$. The latter does not hold in general for rectifiable sets, unlike the case of (embedded) submanifolds. It is critical when establishing that condition \ref{ldens}(4) holds. 
\end{Empty}

\begin{Empty}[Integral geometric measure]
\label{igm}
Here, we show that the methods of \ref{unrectifiablenull} apply, in
fact, to a special measure space. We keep the same notations as in
\ref{unrectifiablenull} and we let $\calI^k_\infty$ be the integral
geometric outer measure on $\Rm$ defined in \cite[2.10.5(1)]{GMT} or
\cite[5.14]{MATTILA}. The measure space $(\Rm,\calB(\Rm),\calI^k_\infty)$
is not semi-finite (for the case $1=k=m-1$, see
\cite[3.3.20]{GMT}). Thus, recalling \ref{sfv}, we introduce the
following:
\[
\check{\calI}^k_\infty(A) = \sup \left\{ \calI^k_\infty(A \cap B) : B \in \calB(\Rm), B \subset A \text{ and } \calI^k_\infty(B) < \infty \right\},
\]
for $A \in \calB(\Rm)$. The measure space $(\Rm,\calB(\Rm),\check{\calI}^k_\infty)$ is semi-finite, and $\check{\calI}^k_\infty(A)=0$ whenever $A \in \calB(\Rm)$ is purely $\calI^k_\infty$-infinite, i.e. $A$ itself and all its Borel subsets of nonzero measure have infinite measure. We denote by $(\Rm,\widetilde{\calB(\Rm)},\tilde{\calI}^k_\infty)$ its completion. Our goal is to describe its cccc, lld, and strictly localizable version. The corresponding MSN $(\Rm,\widetilde{\calB(\Rm)},\calN_{\tilde{\calI}^k_\infty})$ is readily saturated. We will check conditions (ii) through 
(vii) at the beginning of this section.
\par 
(ii) We claim that $\calE$ is
$\calN_{\tilde{\calI}^k_\infty}$-generating in the MSN
$(\Rm,\widetilde{\calB(\Rm)},\calN_{\tilde{\calI}^k_\infty})$, where
$\calE$ is as in \ref{unrectifiablenull}. We know that the collection
$\calA \defeq \widetilde{\calB(\Rm)} \cap \{ A :
\tilde{\calI}^k_\infty(A) < \infty \}$ is
$\calN_{\tilde{\calI}^k_\infty}$-generating, by \ref{semi-finite}. It
is easy to check that it suffices to establish the following: For
every $A \in \calA$ there is a sequence $\la M_n \ra_{n \in \N}$ in
$\calE$ such that $A \setminus \bigcup_{n \in \N} M_n \in
\calN_{\tilde{\calI}^k_\infty}$. Let $A \in \calA$. By definition of
completion of a measure space, there are $B \in \calB(\Rm)$, $N \in
\calN_{\check{\calI}^k_\infty}$, and $N' \subset N$ such that $A = B
\ominus N'$. Since $\tilde{\calI}^k_\infty(N')=0$, it suffices to
prove the existence of a sequence $\la M_n \ra_{n \in \N}$ in $\calE$
such that $B \setminus \bigcup_{n \in \N} M_n \in
\calN_{\tilde{\calI}^k_\infty}$. Since $\check{\calI}^k_\infty(B) =
\tilde{\calI}^k_\infty(B) = \tilde{\calI}^k_\infty(A) < \infty$, there
are Borel sets $F$ and $N$ such that $B = F \cup N$,
$\calI^k_\infty(F) < \infty$, and $\check{\calI}^k_\infty(N)=0$, by
\ref{sfv}(3). It follows from the Besicovitch Structure Theorem
\cite[3.3.14]{GMT} that $F$ is $(\calI^k_\infty,k)$-rectifiable. In
particular, there is a sequence $\la M_n \ra_{n \in \N}$ in $\calE$
such that $F \setminus \bigcup_{n \in \N} M_n \in \calN_{\calH^k}
\subset \calN_{\calI^k_\infty}$. Since $B \setminus F \in
\calN_{\check{\calI}^k_\infty} \subset
\calN_{\tilde{\calI}^k_\infty}$, the proof is complete.  \par In order
to establish (iii) through (vii), it suffices to observe that for each
$M \in \calE$ the MSNs
$(M,\overline{\calB(\Rm)}_M,(\calN_{\rmpu,k})_M)$ and $(M,
\widetilde{\calB(\Rm)}_M,(\calN_{\tilde{\calI}^k_\infty})_M)$ are the
same. We recall from \ref{unrectifiablenull}(iii) that the former is
the saturation of $(M,\calB(\Rm)_M,\calH^k \hel M)$. Let us prove that
the latter has the same property. Let $S \in
\widetilde{\calB(\Rm)}_M$. There are $B \in \calB(\Rm)$ and $N \in
\calN_{\tilde{\calI}^k_\infty}$ such that $S = B \ominus N$. Since $S
= S \cap M = (B \cap M) \ominus (M \cap N)$, there is no restriction
to assume that both $B$ and $N$ are contained in $M$. Therefore, we
ought to show that $\calH^k(N)=0$. There exists a Borel set $N'
\subset M$ containing $N$ and such that
$\check{\calI}^k_\infty(N')=0$. We observe that
$\check{\calI}^k_\infty \hel M = \calI^k_\infty \hel M = \calH^k \hel
M$, where the second equality follows from \cite[3.2.26]{GMT} and
first equality follows from \ref{sfv}(2) and the fact that $M$ has
$\sigma$-finite $\calI^k_\infty$ measure. Thus, $\calH^k(N')=0$ and we
are done.  \par It follows that the germ space
$(\widehat{\Rm},\hat{\calA},\hat{\calN})$ constructed in
\ref{unrectifiablenull} is, in fact, also the cccc and lld version of
the MSN
$(\Rm,\widetilde{\calB(\Rm)},\calN_{\tilde{\calI}^k_\infty})$. Furthermore,
if $\hat{\calI}^k_\infty$ denotes the pre-image measure of
$\tilde{\calI}^k_\infty$ along the projection map $p \colon
\widehat{\Rm} \to \Rm$, then
$(\widehat{\Rm},\widehat{\widetilde{\calB(\Rm)}},{\hat{\calI}^k_\infty})$
is the strictly localizable version of
$(\Rm,\widetilde{\calB(\Rm)},\tilde{\calI}^k_\infty)$.
\end{Empty}

\begin{Empty}[Hausdorff measures]
Here, we briefly comment on why the lower densities set up so far in
this section do not help to describe explicitly the cccc and lld
version of the saturation of the MSN
$(\Rm,\calB(\Rm),\calN_{\calH^k})$. The main reason is that we would
need to enlarge the collection $\calE$ for it to be generating, since
there are (much) less $\calH^k$-negligible sets than there are purely
$k$-unrectifiable sets. In doing so we loose \eqref{eq.pu.density.1},
which was critical for implementing the techniques of the previous
section. In fact, if $M \subset \Rm$ is Borel, $\phi_M = \calH^k \hel M$
is locally finite, and $\Theta^k(\phi_M,x)=1$ for $\calH^k$-almost
every $x \in M$, then $M$ is countably $(\calH^k,k)$-rectifiable, see
{\it e.g.} \cite[17.6(1)]{MATTILA}. Since we ought to include non
$\calH^k$-negligible, purely $k$-unrectifiable sets in an
$\calN_{\calH^k}$-generating family, our only choice is, if possible,
to change the definition of the lower densities $\Theta_M$. So far, we
do not know how to construct, in this case, a compatible family of
lower densities.
\end{Empty}


\bibliographystyle{amsplain}
\bibliography{../../../Bibliography/thdp.bib}


\end{document}